\newtheorem{theorem}{Theorem}[section]
\newtheorem{pro}[theorem]{Proposition}
\newtheorem{cor}[theorem]{Corollary}
\newtheorem{lemma}[theorem]{Lemma}
\theoremstyle{definition}
\newtheorem{definition}[theorem]{Definition}
\theoremstyle{remark}
\numberwithin{equation}{section}
\begin{document}

\title[Chebyshev morphisms on affine varieties]{Dynamics of Chebyshev endomorphisms on some affine algebraic varieties}

%    Information for first author
\author{ Keisuke Uchimura}
%    Address of record for the research reported here%
\address{Department of Mathematics, Tokai University,  Hiratsuka, 259-1292, Japan}
\email{uchimura@tokai-u.jp. }

%    General info
\subjclass[2010]{Primary 14A05,  37F45; Secondary  37F10.}
\keywords{Dynamical system, Chebyshev map, affine algebraic variety, orbit variety, Lie algebra.}
\renewcommand{\thefootnote}{\fnsymbol{footnote}}
\footnote[0]{This work was supported by the Research Institute for Mathematical Sciences, a Joint Usage/Research Center located in Kyoto University.}

\begin{abstract}
The  Chebyshev polynomials \(T_d\) in one variable are typical chaotic maps on   \({\mathbb C}\).   Chebyshev endomorphisms  \(P_{A_n}^{d} : {\mathbb C}^n \to {\mathbb C}^n\)  \ are also chaotic.  We consider the action of the dihedral group \(D_{n+1}\) \ on  \({\mathbb C}^n\).   The  endomorphism  \(P_{A_n}^{d}\) maps any  \(D_{n+1}-\)orbit  of \({\bf z}  \in {\mathbb C}^n\)  to a \ \(D_{n+1}\)-orbit of \(P_{A_n}^{d}({\bf z})\).  The  endomorphism  \(P_{A_n}^{d}\)  induces a mapping  on \ \({\mathbb C}^n/D_{n+1}\).

Using invariant theory we embed \ \({\mathbb C}^n/D_{n+1}\) \ as an affine subvariety \(X\) in \({\mathbb C}^m\).  Then we have morphisms \(g_d\) on \(X\).  We study the cases \ \(n = 2\) \ and  \(3\).  In these cases the morphisms \(g_d\) are  defined over \({\mathbb Z}\).  We find a class of affine subvarieties \(V\) of \(X\) which are invariant under \(g_d\).  These varieties are concerned with branch loci or critical loci.The class contains \ \({\mathbb C}^2\),  a cuspidal cubic,  a parabola,  a quadric hypersurface in \({\mathbb C}^4\),  an affine algebraic surface in \({\mathbb C}^4\)  which is birationally equivalent to an affine  quadric cone in \({\mathbb C}^3\),  and others.  For each affine variety \(V\) in the class, there exists a polynomial parametrization \(P_V\) satisfying \ \(g_d\mid_V(P_V(y_1, \dots , y_k)) = P_V(T_d(y_1), \dots , T_d(y_k))\), \ where \ \(T_d(z)\) \ is a Chebyshev polynomial in one variable.  Then we  determine the set of bounded orbits of \ \(g_d\mid_V\) in each invariant set \(V\) and give relations between them.  
\end{abstract}

\maketitle

\section{Introduction}
  Chebyshev endomorphisms  \(P_{A_n}^d\) of degree \(d\) on  \({\mathbb C}^n\)  are originally defined by  Veselov  \cite{V}  and Hoffman and Withers \cite{HW}.  The endomorphism \(P_{A_n}^{d}\) is associated to the Lie algebra of type \(A_n\) .      We studied dynamics of \(P_{A_2}^d\)   on   \({\mathbb C}^2\)  and   \(P_{A_3}^d\)  on  \({\mathbb C}^3\)  in  \cite{U2}  and  \cite{U3}, respectively.

In this paper,  we study the action of the dihedral group \(D_{n+1}\) \ of order \(2(n+1)\) \ on  \({\mathbb C}^n\).  

Petersen  and Sinclair \cite{PS}  studied conjugate reciprocal polynomials \(f\):
\[f(x) =  x^{N} + \sum_{ i=1} ^Nc_ix^{N-i}  
\quad \mbox{where} \quad c_N=1, \quad  c_{N-i} = \bar{c}_i \quad \mbox{for} \quad 1 \le i \le N-1 .\]
They defined the set \quad \(W_N \subset {\mathbb R}^{N-1}\)\quad such that  \(W_N\)  is one to one correspondence with the set of conjugate reciprocal polynomials of degree \(N\)  with all roots on the unit circle.  Theorem 1.3 in  \cite{PS} states that the group of isometries of  \(W_N\)  is isomorphic to the dihedral group  \(D_N\) of order \(2N\).

We find that the set  \(W_{n+1}\) coincides with the set of  bounded orbits of the Chebyshev endomorphism  \(P_{A_n}^d\).  The  set \(K(f)\) of  bounded orbits   of a regular polynomial  endomorphism \(f\) on  \({\mathbb C}^n\)  is defined by 
 \[K(f) : = \{  z \in {\mathbb C}^n : \quad \{ f^k(z) : k = 1, 2, \dots \ \}\quad \mbox{is   bounded} \},\]
where \(f^k\) denotes the \(k\)-th iterate of \(f\). 
See \cite{U1}, \cite{U2} and \cite{U3}. We call \(K(f)\) the \(K\) set of \(f\).

In this paper we extend this group action.  We consider the action of   \(D_{n+1}\)  on  \({\mathbb C}^n\).  We find that \ \(P_{A_n}^d\) \ maps  any  \(D_{n+1}-\)orbit  of \({\bf z} \in  {\mathbb C}^n\)  to a \ \(D_{n+1}\)-orbit of \(P_{A_n}^{d}({\bf z})\).   Then we study maps  on the orbit space \({\mathbb C}^n/D_{n+1}\) .  Indeed.  The dihedral group  \(D_{n+1}\)  is generated by two elements \(R\)  and  \(C\).  That is,  \(D_{n+1} = <R, C>\) .   We will show that 
%%%%%%%%%%%%%%%%%%%%%%%%%%%%%%%equation1.1
\begin{equation}
R^d \circ  P_{A_n}^d =  P_{A_n}^d \circ R \quad \mbox{and} \quad C \circ  P_{A_n}^d =  P_{A_n}^d \circ C.
\end{equation}
Hence  we can define the induced map of  \( P_{A_n}^d\)  on  \({\mathbb C}^n/D_{n+1}\) . 

We define a subspace  \(R_n\)  of  \({\mathbb C}^n\)  by
\[R_n : = \{(z_1,z_2, \dots, z_n) \mid z_j \in {\mathbb C}, \quad z_{n+1-j} = \bar{z}_j,\quad \mbox{for any} \quad j = 1, \dots, n\}.\]
Note that \quad \(R_n \simeq {\mathbb R}^n.\)  We will show that \( P_{A_n}^d\), \(R\)  and \(C\)   admit an invariant subspace \(R_n\).  These maps restricted to \(R_n\) satisfy the equality  (1.1).  The generators \(R\) and \(C\) restricted to \(R_n\) can be represented by matrices in \(GL(n,  {\mathbb R})\).  Hence we regard \(D_{n+1}\)  as a finite matrix group in  \(GL(n,  {\mathbb R})\).  We denote the map \( P_{A_n}^d\) restricted to \(R_n\)  by  \(f_d(x_1,  \dots, x_n)\).  The map \(f_d(x_1,  \dots, x_n)\)  is  a polynomial map over  \({\mathbb Z}\).  

Next, we consider complexification.  We regard  \(x_1,  \dots, x_n\)  as  complex variables and study the orbit space  \({\mathbb C}^n/D_{n+1}\)   where  \(D_{n+1}\) is the finite matrix group in \ \(GL(n,  {\mathbb R})\).  We use invariant theory.  See \cite{CLS}, \cite{DK} and \cite{St}.  We consider the set of all polynomials which are invariant under the action of \(D_{n+1}\)  : 
\[ {\mathbb C}[x_1,  \dots, x_n ]^{ D_{n+1}} : = \{f \in {\mathbb C}[x_1,  \dots, x_n ]  : \forall \pi \in D_{n+1}(f = f\circ \pi)\}.\]
Then the orbit space \({\mathbb C}^n/D_{n+1}\)  can be regarded as  an affine algebraic variety whose coordinate ring is \ \({\mathbb C}[x_1,  \dots, x_n ]^{ D_{n+1}}\).  The affine variety \({\mathbb C}^n/D_{n+1}\) is called an orbit variety and  it is embedded as an affine subvariety into   \({\mathbb C}^m\),  where \(m\) is the number of generators of \ \({\mathbb C}[x_1,  \dots, x_n ]^{ D_{n+1}}\).
By selecting a basis  \(\{p_1,  \dots, p_m\}\)  of \({\mathbb C}[x_1,  \dots, x_n ]^{ D_{n+1}},\)  we can construct a map \(g_d(p_1,  \dots, p_m)\) on  \({\mathbb C}^n/D_{n+1}\)  from the map  \(f_d\).  The map  \(g_d\) is really a morphism defined over  \({\mathbb R}\).  In Sections 3 and 4, we will study  morphisms  \(g_d\) on  \({\mathbb C}^2/D_{3}\)  and   \({\mathbb C}^3/D_{4}\).  We will show that these are  morphisms defined over  \({\mathbb Z}\).

In Section 3, we consider  the dynamics of  \(g_d\) on  \({\mathbb C}^2/D_{3}  \simeq {\mathbb C}^2.\)  We will give a polynomial parametrization \ \(P_{{\mathbb C}^2}\) of \ \( {\mathbb C}^2\) satisfying 
 %%%%%%%%%%%%%%%%%%%%%%%%%%%%%%%equation1.2
\begin{equation}
g_d(P_{ {\mathbb C}^2}(p, q))  = P_{ {\mathbb C}^2}(T_d(p),  T_d(q)), 
\end{equation}
where \ \(T_d(z)\) 
\ is the \ \(d^{th}\) \ Chebyshev polynomial in one variable.

In the theory of complex dynamics, it is important to analyze the orbit of the set of critical points of a map.  In our case,  Chebyshev endomorphism \( P_{A_n}^d\)  is postcritically  finite.  More precisely, the set of critical values of  \( P_{A_n}^d\)  is invariant under  \( P_{A_n}^d\).  Inspired by this fact,  we  consider the branch locus of a morphism \(g_d\) for \(d = 2, 3\).  Then we find that these two branch loci coincide and the branch locus consists of two affine curves.   Each of these affine curves  is  invariant under the morphisms \(g_d\) for any \(d \in {\mathbb N}\).  

We see that \(g_2\) is a finite morphism and its branch locus consists of two curves  \(C_C\)  and  \(C_D\).  (The same holds for \(g_3\) .)  The curve \(C_C\) is the cuspidal cubic  \(V(p_1^3 - p_2^2)\)  and  \(C_D\)  is a parabola \(V(p_1^2 + 18p_1 - 8p_2 -27)\)  which corresponds to the relative orbit variety (deltoid)\(/D_3\).  The deltoid is the boundary of  \(K(P_{A_2}^d)\).  And it is equal to the set of critical values of \(P_{A_2}^d\)  in   \({R}_2\).  See \cite[p.998]{U2}.  
%The pullbacks of the branch divisors  \(C_C\)  and  \(C_D\)  by \(g_2\)  are written in the form 

%(The similar holds for \(g_3\).)
%\medskip

We will show that 
%%%%%%%%%%%%%%%%%%%%%%%%%%%%%%%equation
\begin{equation*}
g_d(C_C) = C_C, \enskip  \mbox{and}  \quad g_d(C_D )= C_D \quad \mbox{for any} \quad g_d\enskip  (d \in {\mathbb N}) .
\end{equation*}
We will show the dynamics of \(g_d\) on such affine algebraic curves \(V\) where \ \(V = C_C\) \ or \ \(C_D\).  For each affine curve  \(V\),  there exists a polynomial parametrization \(P_V\) satisfying
%%%%%%%%%%%%%%%%%%%%%%%%%%%%%%%equation1.3
\begin{equation}
g_d(P_V(z)) = P_V(T_d(z)).
\end{equation}

From (1.2) and (1.3), we can determine the sets \ \(K(g_d\mid_{V})\), \ for \ \(V = {\mathbb C}^2, \ C_C,\) \ and \(C_D\).  The  set  \(K(g_d\mid_{{\mathbb C}^2})\) \ is a closed region on the real slice \ \({\mathbb R}^2\) \ of \ \({\mathbb C}^2\).  It is invariant under \(g_d\).
The \(K\) sets \ \(K(g_d\mid_{C_C})\) \ and \ \(K(g_d\mid_{C_D})\) \ are two arcs with same end points.  Then we connect these arcs and get a Jordan curve \(\gamma\) on which \(g_d\) is invariant and chaotic. The Jordan curve  \(\gamma\)  is the boundary of the closed region \ \(K(g_d\mid_{{\mathbb C}^2})\)  in  \({\mathbb R}^2\).

Let \(\bar{C_C}\) be the projective closure of  \(C_C\).  The  morphism  \(g_d\)  restricted  to  \(C_C\)  can be extended to a 
morphism  \(\bar{g_d}\)  from \(\bar{C_C}\)  to \(\bar{C_C}\).  We can extend this morphism to a morphism 
from \({\mathbb P}^2({\mathbb C})\)  to   \({\mathbb P}^2({\mathbb C})\).   The same holds for 
\(\bar{C_D}\).

The curves \ \(C_C\) \ and \  \(C_D\) \ are invariant under \ \(g_d\enskip  (d \in {\mathbb N}) \). We will also show that there is an infinite number of invariant affine algebraic curves under \ \(g_d\enskip  (d \in {\mathbb N}) \).

 In Section 4, we study the orbit variety   \({\mathbb C}^3/D_{4}\)  and  consider the dynamics of morphisms  \(g_d\) on  \({\mathbb C}^3/D_{4}\).  We can choose a fundamental system of  invariants of \ \( {\mathbb C}[x_1,  x_2, x_n ]^{ D_{4}} \) such that 
\[ {\mathbb C}[x_1,  x_2, x_3 ]^{ D_{4}}  =  {\mathbb C}[p_1,  p_2, p_3, p_4 ], \]
with \(p_j \in  {\mathbb Z}[x_1,  x_2, x_3 ] \)  and that the  syzygy   ideal   \(I_F\)  for   \(F = (p_1, p_2, p_3, p_4)\)  is written as \(I_F = (p_2p_4-p_3^2)\).  Then \({\mathbb C}^3/D_{4}  \simeq V(I_F)\).  That is , \({\mathbb C}^3/D_{4}\)  has the structure of   the affine quadric hypersurface   \(V(I_F)\)  in  \({\mathbb C}^{4}\). We denote \(V(I_F)\)  by \(X_Q\).   We will show that \(g_d : X_Q \to X_Q\)  is a morphism defined over \({\mathbb Z}\).  

We may choose any other fundamental system of invariants of \ \({\mathbb C}[x,  y, z ]^{ D_{4}} \).  Let \ \(F'\) \ be the ideal generated by that fundamental system of invariants.  Then we have another  syzygy  ideal  \ \(I_{F'}\), \ an  affine algebraic set \ \(V(I_{F'})\) \ and morphisms \ \(g'_d\) on \ \(V(I_{F'})\).  We will show that there is an isomorphism \ \(\varphi : V(I_F)  \to V(I_{F'})\)  such that \ \(\varphi \circ g_d = g'_d \circ \varphi\).

As in (1.2), we will give a polynomial parametrization \ \(P_{X_Q}\) of \(X_Q\) satisfying 
%%%%%%%%%%%%%%%%%%%%%%%%%%%%%%%equation1.4
\begin{equation}
g_d(P_{X_Q}(u, v, w)) = P_{X_Q}(T_d(u), T_d(v), T_d(w)).
\end{equation}

The morphism \(g_2\)   is finite and deg\enskip\(g_2 = 8\) .   The hypersurface \(X_Q\)  is normal.  As in the case \ \({\mathbb C}^2/D_{3}\),  we consider the branch locus of  \(g_2\).  The branch locus of  \(g_2\)  consists of three subvarieties;\\
(1) a line \(L_{1}\),   (2) an affine algebraic surface  \(S_P\),  (3)   an affine algebraic surface  \(S_A\).\\
The subvarieties are written in the form
\[ L_{1} = V(p_2, p_3, p_4),  S_{P}  =  V(p_1^2-p_4,  p_2p_4-p_3^2) ,   S_A = V(A_{h}, p_2p_4-p_3^2) ,\]
where \(A_h\)  is a polynomial in \ \( {\mathbb Z}[p_1,  p_2,  p_3, p_4 ] \). 
Branch divisors   \(S_P\)  and   \(S_A\)  have the similar properties as those in  the former case.  The  line  \(L_{1}\)  is preperiodic for any  \(g_d\) \((d \in {\mathbb N})\).  Set \ \(C_1 = g_2(L_1)\).   Then the affine curve \(C_1\) is invariant under \(g_d\) for any \ \(d \in {\mathbb N}\).

We will show the dynamics of \(g_d\) on the affine varieties \(V\),  where \ \(V = S_A\) \ or \ \(S_P\).  For  each affine variety \(V\),  there exists a polynomial parametrization \(P_V\) satisfying 
%%%%%%%%%%%%%%%%%%%%%%%%%%%%%%%equation1.5
\begin{equation}
g_d(P_V(w, t)) = P_V(T_d(w), T_d(t)).
\end{equation}

Then,  we will show that  \(g_d(S_P) = S_P\)  and  \(g_d(S_A) = S_A\)   for any  morphism  \(g_d\). 
The singular locus of \(S_P\) is the line  \( L_{2} = V(p_1, p_3, p_4)\).  The  line  \(L_{2}\)  is also preperiodic for any  \(g_d\) \((d \in {\mathbb N})\).
Set \ \(C_2 = g_2(L_2)\).  Then the affine curve \(C_2\) is invariant under \(g_d\) for any \ \(d \in {\mathbb N}\).

From (1.4) and (1.5), we can determine the set \ \(K(g_d\mid_{V})\) \ with \ \(d \ge 2\), \ for \ \(V = X_Q, \ S_A\) \ and \(S_P\).  The union \  \(K(g_d\mid_{S_A})\cup K(g_d\mid_{S_P})\) \ is a real 2- dimensional surface  and is invariant under \(g_d\). 
We have \ \(S_A \cap S_P = C_1 \cup C_2 \) \ and \ 
\(\ K(g_d\mid_{S_A})  \cap  K(g_d\mid_{S_P}) = K(g_d\mid_{C_1})  \cup  K(g_d\mid_{C_2}) .\)

The  surface  \(S_A\)  has relation to the relative  orbit variety  \(\mathcal{A}/D_4\),  where  \(\mathcal{A}\) is an astroidalhedron defined in \cite[p.205]{U3}.     It  is shown in \cite{U3} that \(\mathcal{A}\)   is a ruled  surface in the sense of differential geometry.  The projective closure of \(S_A\) is a birationally ruled surface. Indeed.  The  affine algebraic surface  \(S_A\)  is  birationally  equivalent to  an affine quadric cone \(S_C\).  The projective closure of \(S_C\) is a birationally ruled surface.  
%The  blowing-up of the projective closure of the cone \(S_C\)  at the vertex \(P_0\)  is isomorphic to the Hiltzeburch surface  \(F_2\).  
 Let  \(\psi : S_C -\to S_A\)  and \(\phi : S_A -\to S_C\)  be rational maps such that  \(\phi \circ \psi = id_{S_C}\) and \(\psi \circ \phi = id_{S_A}\).  We will show that the map \(\phi \circ g_d \circ \psi \)  on  \(S_C\)   is a morphism defined over \({\mathbb Z}\) and is described by Chebyshev polynomials in one variable.   It can be extended to a morphism on \({\mathbb P}^3({\mathbb C})\).

The singular locus of \(S_A\) is an affine algebraic curve  \(C_A\).  The curve  \(C_A\) corresponds to the relative orbit variety \ \(\alpha_3/D_4\), \ where \(\alpha_3\) \ is the astroid curve in  \({\mathbb R}^3\).   See \cite[p.206]{U3}.   We will show that  \(g_d(C_A) =  C_A\),  for any  \(d \in {\mathbb N}\).

We consider the dynamics of on the affine curves \(V\), where \ \(V = C_1, C_2\) \ or \ \(C_A\).  For each affine curve \(V\),  there exists a polynomial parametrization \(P_V\) satisfying
%%%%%%%%%%%%%%%%%%%%%%%%%%%%%%%equation1.6
\begin{equation}
g_d(P_V(z)) = P_V(T_d(z)).
\end{equation}

We explain our main ideas of the proofs.  We studied complex dynamical system of  
\(P_{A_n}^d\)  on \({\mathbb C}^n\)  in  \cite{U2, U3}.  In this paper, we make use of these results for the research of morphisms on affine algebraic varieties.  In particular we construct parametric representation  \(P_V\) of affine
algebraic varieties \(V\)  by using parametrizations of several sets appeared in complex dynamical system of \  \(P_{A_n}^d\) \  on \({\mathbb C}^n\).   From (1.2),  (1.3),  (1.4), (1.5) and (1.6),  we see that the morphisms \(g_d\) on the affine algebraic varieties behave well in accordance with these parametric representations.  

We also use the theory of  Gr\"obner  bases of ideals.  See  \cite{CLS} .

 K\"u\c{c}\"uksakalli \cite{K1, K2}  studied  Chebyshev maps \  \(P_{A_2}^d\) \ over finite field,  based on \cite{U2}.  Our results will have some relations to arithmetic dynamics. 
%%%%%%%%%%%%%%%Section2
\section{From holomorphic maps to morphisms on varieties}
%%%%%%%%Subsection2.1
\subsection{\rm{Chebyshev endomorphisms and dihedral groups}}\hspace{0cm}

Chebyshev endomorphism  \(P_{A_n}^d\)  of degree  \(d\)  on  \({\mathbb C}^n\)  is associate to the Lie algebra of type \({A_n}\).  For short, we denote \(P_{A_n}^d\) by \(T_d\).

We begin with constructing   Chebyshev endomorphisms \(T_d\) on  \( {\mathbb C}^{n}\).\\
Let \ \(t_j, \ 1 \le j \le n+1, \) be variables satisfying  \(t_1t_2 \cdot \cdot \cdot t_{n+1} = 1\).\\
Let  \ \(z_j, \ 1 \le j \le n, \)   be the \(j\) -th  elementary symmetric polynomial  in  \(t_1, t_2, \dots  ,  t_{n+1}\).

 We define a map \(\Phi\) from   \(( {\mathbb C}^*)^{n+1}\) \ to \ \( {\mathbb C}^{n}\)  by
  %%%%%%%%%%%%%%%%%%%%%%%%%%%%%%%equation2.1
\begin{equation}
\Phi(t_1, \dots, t_{n+1}) = (z_1, \dots, z_{n}). 
\end{equation}
 Let 
\[(z_1^{(d)}, \dots, z_{n}^{(d)}) : = \Phi (t_1^{d}, \dots, t_{n+1}^{d}),  \ d \in  \  {\mathbb N}. \]
Since  \ \(z_j^{(d)}\) \ is a symmetric polynomial  in   \(t_1, t_2,  \dots , t_{n+1} \) \ with coefficients in \({\mathbb Z}\), \ each \ \(z_j^{(d)}\) \ can be expressed as a polynomial in \ \(z_1, \dots , z_{n}\)  \ with coefficients in \({\mathbb Z}\).  We define Chebyshev morphism  \(T_{d}\) on \ \( {\mathbb C}^{n}\)  of degree \  \(d \ge 1\) \ over \({\mathbb Z}\) \  by
  %%%%%%%%%%%%%%%%%%%%%%%%%%%%%%%equation2.2
\begin{equation}
 T_{d} (z_1, \dots, z_{n})  = (z_1^{(d)}, \dots, z_{n}^{(d)}).   
\end{equation} 
See \cite{U2, U3}  for the case that  \(n = 2\) or \(3\).

Note that 
\[z_{n+1-k} = \sum t_{i_1} \cdot \cdot \cdot t_{i_{n+1-k}}  = \sum 1/(t_{j_1} \cdot \cdot \cdot t_{j_{k}}),\] \ where \ \(\{i_1, \dots , i_{n+1-k},  j_1, \dots , j_{k}\} =\{1, 2, \dots , n+1\}.\)\\
Considering the correspondence of  \(t_i\) \ and \ \(1/t_i,  i = 1,  \dots,  n+1\), 
we have 
\[ z_{n+1-j}^{(d)}(z_{1}, \dots , z_{n}) =  z_{j}^{(d)}(z_{n}, \dots , z_{1}) .\]
If \ \(z_{j} = \bar{z}_{n+1-j}\), \ for any \ \(j = 1, \dots , n\),\ then 
\[\overline{z_{j}^{(d)}(z_{1}, \dots , z_{n})} =  z_{j}^{(d)}(z_{n}, \dots , z_{1}) .\]
Hence if \ \(z_j = \bar{z}_{n+1-j}\) for any \(j = 1 \dots , n\), \ then \ \({  z_{n+1-j}^{(d)}} = \overline{ z_{j}^{(d)}},\) \ for any  \  \(j = 1 \dots , n.\)\\
Then we consider a subspace  \(R_{n}\) \ of  \( {\mathbb C}^{n}\) given by 
\[R_{n} : =  \{(z_{1}, \dots , z_{n}) \in   {\mathbb C}^{n} : z_j = \bar{z}_{n+1-j}, \mbox{for any} \ j = 1, \dots , n\}.\]
The maps  \(T_{d}\) admit the invariant subspace  \(R_{n}\). The subspace \(R_n\)  appears also in \cite{B, U1}.

The set of bounded orbits of \(T_{d}\), denoted by  \(K(T_{d})\), is defined by 
\[K(T_{d}) : = \{z \in  {\mathbb C}^{n} : \{T_{d}^m(z) : m = 1, 2,  \dots  \} \ \mbox{is bounded}\},\]
where  \(T_{d}^m\)  denotes the \(m\)-th iterate of  \(T_{d}\).  It is known that \ \(K(T_d) \subset R_n\)  in \cite{U3}.  Since 
\[\Phi(t_1^{-1}, \dots, t_{n+1}^{-1}) = (z_n,  z_{n-1} \dots, z_{1}),\]
we can define  \(T_{-d}(z_1, \dots, z_n)\) \enskip and have that 
\[T_{-d}(z_1, \dots, z_n) = T_d (z_n,  z_{n-1} \dots, z_{1}).\]
The endomorphisms \(T_{-d}\)  admit  also the subspace  \(R_n\).
Note that
%%%%%%%%%%%%%%%%%%%%%%%%%%%%%%%equation2.3
\begin{equation}
T_j \circ T_k = T_{jk} \quad \mbox{for any } \quad j, k \in {\mathbb Z} \setminus \{0\}.
\end{equation}

Next we define actions  \(R\)  and  \(C\) on \( {\mathbb C}^{n}\).  
Set  \(\zeta = e^{2\pi i/(n+1)}\).   We define  
\[R(t_j) = \zeta t_j, \enskip t_j \in {\mathbb C}^*, \enskip j = 1,\enskip \dots , n+1.\]
Note that\quad \(R(t_1)R(t_1) \cdots R(t_{n+1})  = 1\).  Since
\[z_{k} = \sum t_{j_1} \cdots t_{j_{k}}   \ \mbox{where} \ \{j_1, \dots , j_{k}\} \subset \{
 1, 2, \dots , n+1\},\]
we have
\[R(z_k) = \zeta ^kz_k, \quad k = 1, \dots , n.\]
We set, for  \((z_1, \dots , z_n) \in {\mathbb C}^n,\)
\[ R(z_{1}, \dots , z_{n})  :  = ( R(z_1), \dots , R(z_n)) = (\zeta z_{1}, \ \zeta^2 z_{2}, \ \dots , \ \zeta^n z_n) .\]

%%%%pro2.1
 \begin{pro} \label{pro2.1}
 For any \ \(d \in \mathbb Z \setminus \{0\},\) we have
%%%%%%%%%%%%%%%%%%%%%%%%%%%%%%%equation2.4
\begin{equation}
R^d \circ T_d = T_d \circ R .
\end{equation}
 \end{pro}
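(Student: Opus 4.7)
The plan is to prove the identity by lifting both sides to the torus \(T = \{(t_1, \dots, t_{n+1}) \in ({\mathbb C}^*)^{n+1} : t_1 \cdots t_{n+1} = 1\}\) via the surjective parametrization \(\Phi\) of (2.1), where both \(R\) and \(T_d\) admit transparent descriptions. On \(T\), the endomorphism \(T_d\) is induced by the coordinate-wise \(d\)-th power map \((t_1, \dots, t_{n+1}) \mapsto (t_1^d, \dots, t_{n+1}^d)\), which preserves the relation \(\prod t_j = 1\) for every nonzero integer \(d\); and the rotation \(R\) lifts to \((t_1, \dots, t_{n+1}) \mapsto (\zeta t_1, \dots, \zeta t_{n+1})\), which preserves the relation because \(\zeta^{n+1} = 1\).

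Given any \({\bf z} = (z_1, \dots, z_n) \in {\mathbb C}^n\), I would pick \((t_1, \dots, t_{n+1}) \in T\) with \(\Phi(t_1, \dots, t_{n+1}) = {\bf z}\); such a preimage exists because the \(t_j\) may be taken as the roots of the monic polynomial \(t^{n+1} - z_1 t^n + z_2 t^{n-1} - \cdots + (-1)^{n+1}\), whose constant term forces \(\prod t_j = 1\). The right-hand side then lifts to
\[
T_d(R({\bf z})) = \Phi\bigl(\zeta^d t_1^d, \dots, \zeta^d t_{n+1}^d\bigr),
\]
whose \(k\)-th coordinate is \(e_k(\zeta^d t_1^d, \dots, \zeta^d t_{n+1}^d) = \zeta^{dk}\, e_k(t_1^d, \dots, t_{n+1}^d) = \zeta^{dk} z_k^{(d)}\), because every monomial in the \(k\)-th elementary symmetric polynomial has exactly \(k\) factors. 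On the other hand, \(R^d(T_d({\bf z})) = (\zeta^d z_1^{(d)}, \zeta^{2d} z_2^{(d)}, \dots, \zeta^{nd} z_n^{(d)})\) follows at once from \(R(z_k) = \zeta^k z_k\). Comparing coordinates yields the claimed identity.

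I do not anticipate a serious obstacle; the calculation is essentially forced once one commits to lifting through \(\Phi\). The only points meriting a sentence of care are the surjectivity of \(\Phi\) (handled by the explicit polynomial above) and the case \(d < 0\), where one uses the relation \(T_{-d}({\bf z}) = T_d(z_n, \dots, z_1)\) recalled just before (2.3): since \(\zeta^d\) is still a well-defined \((n+1)\)-st root of unity and the lift \(t_j \mapsto t_j^d\) makes sense on the torus \(T \subset ({\mathbb C}^*)^{n+1}\), the same lift-and-compute argument applies verbatim. Alternatively, one may verify the case \(d = -1\) by direct inspection and then invoke the multiplicativity \(T_j \circ T_k = T_{jk}\) of (2.3) to reduce every negative case to a positive one.
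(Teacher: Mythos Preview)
Your proof is correct and follows essentially the same route as the paper: lift through \(\Phi\) by taking \((t_1,\dots,t_{n+1})\) as the roots of the monic polynomial with elementary symmetric functions \((z_1,\dots,z_n)\), observe that \(R\) and \(T_d\) lift to \(t_j\mapsto \zeta t_j\) and \(t_j\mapsto t_j^d\) respectively, and compare the \(k\)-th elementary symmetric polynomial on both sides. Your additional remarks on surjectivity and the case \(d<0\) are more explicit than the paper's, but the core argument is identical.
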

%%%%%%%%%%%%%%%%%%%%%%%%%%%%%proof
\begin{proof}
We have
\[R^d \circ T_d(z_{1}, \dots , z_{n})  = (\zeta^d z_{1}^{(d)}, \ \zeta^{2d} z_{2}^{(d)}, \ \dots , \ \zeta^{nd} z_n^{(d)}), \ (z_1, \dots , z_n) \in {\mathbb C}^n .\] 
Any element \ \((z_1, \dots , z_n) \) \  determines a set \(\{t_1, \dots , t_{n+1}\}\) \  satisfying (2.1) as roots of the following equation ;
\[t^{n+1} + (-1)z_1t^n + \  . . . \ + (-1)^nz_nt + (-1)^{n+1} = 0.\] 
  Since
\[z_{k}^{(d)} = \sum t_{j_1}^d \cdots t_{j_{k}}^d   \ \mbox{where} \ \{j_1, \dots , j_{k}\} \subset \{
 1, 2, \dots , n+1\},\]
we have
\[\zeta^{kd}z_k^{(d)} = \sum(\zeta t_{j_1})^d  \cdots (\zeta t_{j_{k}})^d.   \]

On the other hand, the element \ \(R(z_{1}, \dots , z_{n})\) \ determines a set \(\{\zeta t_1, \dots , \zeta t_{n+1}\}\) \  satisfying (2.1) .  Then the \(k\)-th element of  \ \(T_d \circ R(z_{1}, \dots , z_{n})\) \ is equal to
\[\sum(\zeta t_{j_1})^d  \cdots (\zeta t_{j_{k}})^d.   \]
\end{proof}

Next we define an action  \(C\)  on  \({\mathbb C}^n\).  We define 
\[C(t_j) = \bar{t}_j, \  t_j \in {\mathbb C}^*, \ j=1,  \dots , n+1.\]
Note that 
\[C(t_1) \ \cdots \ C(t_{n+1}) = 1.\]
Clearly \   \(C(z_k) = \bar{z}_k\).  Since \ \(z_k^{(d)}\)  is a polynomial in  \ \(z_1, \ \dots\ , z_n\) \  over \({\mathbb Z} \),   it follows that 
\[\overline{T_d(z_1, \dots z_n)} =  T_{d}(\bar{z}_{1}, \dots , \bar{z}_{n}) .\]
Set
\[ C(z_{1}, \dots , z_{n})  :  = ( C(z_1), \dots , C(z_n)) = (\bar {z}_{1}, \ \dots , \ \bar{ z}_n) .\]
Then we have the following.
%%%%pro2.2
 \begin{pro} \label{pro2.2}
 For any \ \(d \in \mathbb Z \setminus \{0\},\) we have
%%%%%%%%%%%%%%%%%%%%%%%%%%%%%%%equation2.5
\begin{equation}
C \circ T_d = T_d \circ C .
\end{equation}
 \end{pro}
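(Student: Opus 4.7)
The plan mirrors the proof of Proposition 2.1, but the work is almost already done because complex conjugation is automatically compatible with any polynomial expression having integer coefficients. The key ingredient, explicitly noted in the paragraph just above the proposition statement, is that each \(z_k^{(d)}\) is a polynomial in \(z_1,\dots,z_n\) over \(\mathbb{Z}\).

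First I would handle \(d > 0\). Unwinding the definitions directly, any \((z_1,\dots,z_n)\in\mathbb{C}^n\) determines a multiset \(\{t_1,\dots,t_{n+1}\}\) (via the monic polynomial written in the proof of Proposition 2.1), and \(C(z_1,\dots,z_n)=(\bar z_1,\dots,\bar z_n)\) corresponds to the conjugate multiset \(\{\bar t_1,\dots,\bar t_{n+1}\}\), because each elementary symmetric polynomial commutes with conjugation. The \(k\)-th coordinate of \(T_d\circ C(z_1,\dots,z_n)\) is therefore
\[\sum \bar t_{j_1}^d\cdots \bar t_{j_k}^d \;=\; \overline{\sum t_{j_1}^d\cdots t_{j_k}^d} \;=\; \overline{z_k^{(d)}},\]
which is the \(k\)-th coordinate of \(C\circ T_d(z_1,\dots,z_n)\). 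Equivalently and more briefly, the ring-homomorphism property of complex conjugation, combined with the integrality of the coefficients of \(T_d\), yields \(\overline{T_d(z_1,\dots,z_n)}=T_d(\bar z_1,\dots,\bar z_n)\), which is exactly the claimed identity.

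For \(d<0\), I would reduce to the previous case via the relation \(T_{-|d|}(z_1,\dots,z_n)=T_{|d|}(z_n,z_{n-1},\dots,z_1)\) already displayed in the text. Coordinate reversal commutes with componentwise conjugation, so the two sides of \(C\circ T_d=T_d\circ C\) both transform to the positive-\(d\) identity applied to the reversed tuple, and thus agree.

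I do not anticipate any real obstacle: every ingredient (integrality of \(T_d\), the negative-degree formula, behaviour of symmetric polynomials under conjugation) has been set up in the preceding paragraphs. The only thing to be careful about is not to confuse \(C\), which acts \emph{without} reversal on \((z_1,\dots,z_n)\), with the reversal that appears in the definition of \(T_{-d}\); writing the \(t_j\)-level computation out once makes this transparent.
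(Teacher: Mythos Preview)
Your proposal is correct and follows the same approach as the paper: the paper's entire argument (given in the paragraph immediately preceding the proposition, with no separate proof environment) is exactly your ``brief'' version, namely that since each \(z_k^{(d)}\) is a polynomial in \(z_1,\dots,z_n\) over \(\mathbb{Z}\), complex conjugation commutes with \(T_d\). Your separate treatment of \(d<0\) is slightly more than the paper does explicitly, but is harmless---one could also just note that the reversal formula \(T_{-d}(z_1,\dots,z_n)=T_d(z_n,\dots,z_1)\) already exhibits \(T_{-d}\) as an integer-coefficient polynomial map, so the integrality argument applies uniformly without a case split.
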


Clearly
\[CRC(z_{1}, \dots , z_{n}) = (\bar{\zeta}z_1, \bar{\zeta}^2z_2, \dots , \bar{\zeta}^nz_n) = ( {\zeta}^{-1}z_1,   {\zeta}^{-2}z_2, \dots ,  {\zeta}^{-n}z_n)\]
\[=( {\zeta}^{n}z_1,   {\zeta}^{2n}z_2, \dots ,  {\zeta}^{n^2}z_n) = R^{n}(z_1, \dots\ , z_n) = R^{-1}(z_1, \ \dots\ , z_n).\]
Hence  \(CRC = R^{-1}\).
Then the group generated by the two elements  \(R\)  and  \(C\)  is the dihedral group  \  \(D_{n+1} = <R, C>\) \ of order  \  \(2n+2\).

We consider the\ \(D_{n+1}\)-orbit of \ \({\bf z} \in{\mathbb C}^n\) \ defined by
\[D_{n+1}\cdot{\bf z} : = \{\pi {\bf z} : \pi \in D_{n+1}, \ {\bf z} \in {\mathbb C}^n\}.\]
Then by Propositions \ref{pro2.1}  and \ref{pro2.2},  we  see that  \(T_d\) maps an orbit \  \(D_{n+1}\cdot{\bf z}\)  \ to another orbit 
\(D_{n+1}\cdot{T_d}({\bf z})\).  Hence we can define an induced map of \(T_d\) on the orbit space 
\({\mathbb C}^n/D_{n+1}\).

Recall that \(R_n\) is the subspace of \ \({\mathbb C}^n\).  Clearly,
\[R(R_n) \subset R_n \ \mbox{and}\ C(R_n) \subset R_n .\]
Hence the maps \  \(T_d, \ R\) \ and \(C\) admit the subspace \ \(R_{n}\) .    Then from (2.4) and (2.5) we have  
%%%%%%%%%%%%%%%%%%%%%%%%%%%%%%%equation2.6
\begin{equation}
(R \mid _{R_n})^d \circ (T_d \mid _{R_n}) = (T_d \mid _{R_n}) \circ (R \mid _{R_n}),
\end{equation}
%%%%%%%%%%%%%%%%%%%%%%%%%%%%%%%equation2.7
\begin{equation}
(C \mid _{R_n}) \circ (T_d \mid _{R_n}) = (T_d \mid _{R_n}) \circ (C \mid _{R_n}).
\end{equation}
Note that \ \(R_{n} \simeq  {\mathbb R}^{n} \).   Sometimes we regard \(R_{n}\)  as   \({\mathbb R}^{n} \).  We set
\[z_1 = x_1 + ix_2, \  z_2 = x_3 + ix_4, \dots  ,\ \   z_{n/2} = x_{n-1} + ix_n \ \mbox{ if}  \ n \ \mbox{ is\enskip even},\]
and
\[z_1 = x_1 + ix_2, \dots , \    z_{(n-1)/2} = x_{n-2} + ix_{n-1},  \  z_{(n+1)/2} = x_{n}\ \mbox{ if} \ n \ \mbox {is\enskip odd}.\]

Then  we have
%%%%%%%%%%%%%%%%%%%%%%%%%%%%%%%equation2.8
\begin{equation}
 ^t(z_1, \dots , z_n)  = M_n\ ^t(x_1, \dots , x_n),
\end{equation}
where \(M_n\)  is a non-singular matrix.

We denote the map  \(T_d\)  restricted to  \(R_n\) by  \ \(f_d(x_1, \dots , x_n)\).  Note that \ \(f_d(x_1, \dots , x_n)\) \ is a polynomial map defined over  \({\mathbb Z}\).   Properties of \ \(f_d(x_1, x_2)\) \ are studied in \cite{U1} and \cite{U2}.  The generators  \(R\) and  \(C\) restricted to  \(R_n\)  can be represented by matrices in \ \(GL(n,  {\mathbb R}) \).  Those are written in the following block-diagonal matrices.  The generator \ \(R \mid_{R_n}\) \ is represented in the form:

\[R \mid_{R_n}  = \left(
  \begin{array}{cccccc}
      R_{11} &           &        &          &   &   \\
               & R_{22}  &        &         &   $\mbox{\Huge0}$ &   \\
               &    &    &     &    &   \\
                 &   &   \ddots  & & \\
               &    &    &     &    &   \\
               &   $\mbox{\Huge0}$   &    &     &    &   \\
     &   &    &     &  & R_{mm}  
    \end{array}
  \right) .\]
If \(n\) is even,  \(m = n/2\) \  and \  \(R_{jj} \)   is a block of (2, 2) type  defined by
\[R_{jj} = \left(
  \begin{array}{ccc}
      \cos\frac{2\pi j}{n+1} &           &  - \sin\frac{2\pi j}{n+1}      \\
      \sin\frac{2\pi j}{n+1}   &   & \cos\frac{2\pi j}{n+1}   \\
               \end{array}
  \right) , \  \mbox{for} \  \ 1 \le j \le m.\]
If \(n\) is odd,  \(m = (n+1)/2\) \  and \  \(R_{jj} \)   is  the same as above for  \ \(1 \le j \le (n-1)/2\) \ and \ \(R_{mm}  = (-1)\)  \  is a block of (1, 1) type.   The generator \ \(C \mid_{R_n}\) \ is represented in the form:

\[C \mid _{R_n} = \left(
  \begin{array}{cccccc}
      C_{11} &           &        &          &   &   \\
               & C_{22}  &        &         &   $\mbox{\Huge0}$ &   \\
               &    &    &     &    &   \\
                 &   &   \ddots  & & \\
               &    &    &     &    &   \\
               &   $\mbox{\Huge0}$   &    &     &    &   \\
     &   &    &     &  & C_{mm}  
    \end{array}
  \right) .\]
If \(n\) is even,  \(m = n/2\) \  and \  \(C_{jj} \)   is a block of (2,2) type  defined by
\[C_{jj} = \left(
  \begin{array}{cc}
      1  &    0     \\
      0   &  -1   \\
               \end{array}
  \right) , \  \mbox{for} \  \ 1 \le j \le m.\]

If \(n\) is odd,  \(m = (n+1)/2\) \  and \  \(C_{jj} \)   is  the same as above for  \ \(1 \le j \le (n-1)/2\) \ and \ \(C_{mm}  = (1)\)  \  is a block of (1, 1) type.
%%%%%%%%Subsection2.2
\subsection{\rm{Morphisms on  orbit varieties}}\hspace{0cm}

From (2.6) and (2,7) we can define an induced map of \(f_d\) on \ \({\mathbb R}^n/D_{n+1}\), \ where \(D_{n+1}\) is the finite matrix group in \ \(GL(n,  {\mathbb R})\).   Let \(k\) be the field  \({\mathbb R}\) \ or the field\ \({\mathbb C}\) .  The \  \(D_{n+1}\)-orbit  of \ \({\bf a} \in k^n\) \  is a set 
\[D_{n+1}\cdot {\bf a} = \{A\cdot{\bf a}  : A \in  D_{n+1}\}. \] 
The set of  all  \(D_{n+1}\)-orbits  in  \(k^n\)  is denoted by \ \(k^n/D_{n+1}\) \ and is called the orbit space.  

%Now we consider complexification.  We will construct a morphism  \ \(g_d\) \ on the orbit variety \ \({\mathbb C}^n/D_{n+1} \simeq V_F\).

We begin with the construction of the maps on \ \({\mathbb R}^n/D_{n+1}\).  We use invariant  theory.  See \cite{CLS}, \cite{DK} and \cite{St}.

Let  \ \(k[x_1, \cdots , x_n]^{D_{n+1}}\) \ is the set of invariant polynomials  \ \(f({\bf x}) \in k[x_1, \cdots , x_n]\)  \ such that  \ \(f({\bf x}) = f(A\cdot{\bf x})\), \ for all  \ \(A \in D_{n+1}\).  By Emmy Noether's theorem (Theorem 5 in \cite[Chapter 7, \S3]{CLS}),  we know that \ \(k[x_1, \dots , x_n]^{D_{n+1}}\) \ is generated by finitely many homogeneous invariants in \  \(k[x_1, \dots , x_n]\).   We denote these homogeneous invariants by \ \(p_1({\bf x}), \dots , p_m({\bf x})\) .  
Since \  \(D_{n+1} \subset GL(n,  {\mathbb R})\),  \ by the proof of Theorem 5 in \cite[Chapter 7, \S3]{CLS},  
we can select a basis \ \(\{p_1({\bf x}), \dots , p_m({\bf x})\}\)\ 
%with \ \(p_j({\bf x})  \in  { \mathbb R}[x_1, \dots , x_n] \ (j = 1, \dots , m) \) 
\ such that 
%%%%%%%%%%%%%%%%%%%%%%%%%%%%%%%equation2.9
\begin{equation}
\begin{split}
 { \mathbb R}[x_1, \dots , x_n]^{D_{n+1}} = { \mathbb R}[p_1, \dots , p_m],\\
\mbox { and} \hspace{5cm}\\  
 { \mathbb C}[x_1, \dots , x_n]^{D_{n+1}} = { \mathbb C}[p_1, \dots , p_m].
\end{split}
\end{equation}
%Note that \ \( F = (p_1, \dots , p_m).\)  Hence we can define the map \( \tilde{F}\) similarly in two cases.
%Note that \  \(D_{n+1} \subset GL(n,  {\mathbb R})\).  Then by the proof of the above theorem, we can see that \ \(p_j({\bf x})\) \ is a polynomial defined over \ \({\mathbb R}, \ j = 1, \dots , m\).

We need some more results on invariant theory.  See \cite[Chapter 7, \S4]{CLS}.  Using a basis \ \(\{p_1, . . . , p_m\}\) \ satisfying (2.9),  we have  that \ \(k[x_1, \dots , x_n]^{D_{n+1}} = k[p_1, \dots , p_m] \).  We let the ideal  \ \(F = (p_1, \dots , p_m) \).  Then we define the syzygy ideal \(I_F\) for \(F\) :
\[I_F : = \{h \in k[y_1, \dots , y_m] : h(p_1, \dots , p_m) = 0 \ \mbox{in} \  k[x_1, \dots , x_n]\}.\]
Then we have the affine  variety  \ \(V_F : = V(I_F)\).  Let  \(k[V_F]\)  be the coordinate ring of  \(V_F\).  Then there is a ring isomorphism
 \ \(k[V_F] \simeq k[x_1, \dots , x_n]^{D_{n+1}}\).
%Theorem2.3
\begin{theorem} \label{theorem1}
(\cite[Chapter 7, \S4, Theorem10]{CLS})

We assume that \ \(k = {\mathbb C}\).  Then :\\
(1)  The polynomial map  \(\tilde{F} :{ \mathbb C}^n \to  V_F\) \ defined by \ \(\tilde{F} ({\bf a}) = (p_1({\bf a}), \dots , p_m({\bf a}))\) \ is subjective.\\
(2)  The map sending the\ \(D_{n+1}\)-orbit \  \(D_{n+1}\cdot {\bf a} \subset { \mathbb C}^n\) \ to a point \ \(\tilde{F} ({\bf a}) \in V_F\)  \ includes a one-to-one correspondence \ 
\( { \mathbb C}^n / D_{n+1} \simeq V_F\).
\end{theorem}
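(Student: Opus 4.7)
The plan is to prove (1) and (2) by combining integrality of the invariant subring with a Reynolds-averaging separation argument. First note the easy inclusion: for any $\mathbf{a}\in\mathbb{C}^n$ and any $h\in I_F$, we have $h(p_1(\mathbf{a}),\dots,p_m(\mathbf{a}))=0$ by definition of $I_F$, so $\tilde{F}(\mathbb{C}^n)\subseteq V_F$. The coordinate ring $\mathbb{C}[V_F]=\mathbb{C}[y_1,\dots,y_m]/I_F$ is canonically isomorphic to $\mathbb{C}[p_1,\dots,p_m]=\mathbb{C}[x_1,\dots,x_n]^{D_{n+1}}$, so $\tilde{F}^{*}$ is precisely the inclusion of the invariant subring into $\mathbb{C}[x_1,\dots,x_n]$.

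For part (1), I would show that this ring extension is integral: for each coordinate function $x_i$, the polynomial $\prod_{\sigma\in D_{n+1}}(t-\sigma(x_i))\in\mathbb{C}[t]$ has coefficients in $\mathbb{C}[x_1,\dots,x_n]^{D_{n+1}}$ and is monic with $x_i$ as a root. Integrality combined with the Nullstellensatz (equivalently, going-up for integral extensions of finitely generated $\mathbb{C}$-algebras) gives that the induced map on maximal spectra is surjective, which yields surjectivity of $\tilde{F}$ onto the closed points of $V_F$. Alternatively, since $\tilde{F}$ is a finite morphism, its image is closed in $V_F$; density of the image then forces surjectivity, and density follows from the fact that the induced homomorphism $\mathbb{C}[V_F]\to\mathbb{C}[x_1,\dots,x_n]$ is injective.

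For part (2), the factorization through the orbit space is immediate since each $p_i$ is $D_{n+1}$-invariant, so $\tilde{F}(\sigma\mathbf{a})=\tilde{F}(\mathbf{a})$ for all $\sigma\in D_{n+1}$. For injectivity at the orbit level, suppose $\mathbf{a},\mathbf{b}\in\mathbb{C}^n$ with $D_{n+1}\cdot\mathbf{a}\neq D_{n+1}\cdot\mathbf{b}$, i.e.\ the two finite sets $D_{n+1}\cdot\mathbf{a}$ and $D_{n+1}\cdot\mathbf{b}$ are disjoint. By Lagrange-style interpolation on this finite set choose $f\in\mathbb{C}[x_1,\dots,x_n]$ with $f\equiv 0$ on $D_{n+1}\cdot\mathbf{a}$ and $f\equiv 1$ on $D_{n+1}\cdot\mathbf{b}$. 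Apply the Reynolds operator
\begin{equation*}
\tilde f(\mathbf{x}):=\frac{1}{|D_{n+1}|}\sum_{\sigma\in D_{n+1}}f(\sigma\mathbf{x}),
\end{equation*}
which produces an invariant polynomial with $\tilde f(\mathbf{a})=0$ and $\tilde f(\mathbf{b})=1$. Writing $\tilde f=h(p_1,\dots,p_m)$ using (2.9), we get $h(\tilde F(\mathbf{a}))=0\ne 1=h(\tilde F(\mathbf{b}))$, so $\tilde F(\mathbf{a})\ne\tilde F(\mathbf{b})$. Combined with (1), this gives the bijection $\mathbb{C}^n/D_{n+1}\simeq V_F$.

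The main obstacle I expect is the surjectivity in (1): proving that every point of $V_F$ is actually attained requires either invoking integrality plus Nullstellensatz, or equivalently the fact that finite morphisms are closed, which is the essential piece of commutative algebra needed. The separation argument in (2) is more elementary once one has the Reynolds operator and the identification $\mathbb{C}[x_1,\dots,x_n]^{D_{n+1}}=\mathbb{C}[p_1,\dots,p_m]$ from (2.9).
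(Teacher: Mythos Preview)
Your argument is correct and is essentially the standard proof of this result. Note, however, that the paper does not supply its own proof of this theorem: it is quoted verbatim as \cite[Chapter 7, \S4, Theorem 10]{CLS} and used as a black box. The proof you have written---integrality of $\mathbb{C}[x_1,\dots,x_n]$ over the invariant subring via the polynomial $\prod_{\sigma\in D_{n+1}}(t-\sigma(x_i))$ to obtain surjectivity, and the Reynolds-operator separation argument for injectivity on orbits---is precisely the argument given in Cox--Little--O'Shea at the cited location, so there is no divergence to discuss.
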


We recall that the map \ \(f_d({\bf x})\) \ is the map \(T_d\) restricted to \ \(R_n \simeq { \mathbb R}^n \).  Then \(f_d\) is the map from \  \( { \mathbb R}^n \) \ to  \( { \mathbb R}^n \).  We  know from (2.9)  that  \ \(p_j({\bf x})\) \ is a polynomial over  \( { \mathbb R}\).  Let 
\[P_j({\bf x}) : = p_j(f_d({\bf x})), \  j = 1, \dots , m.\] 
Then we define a map \ \(\tilde{f}_d\) from \ \(\tilde{F}( { \mathbb R}^n) \) \ to  \({ \mathbb R}^m \) \ by 
\[\tilde{f}_d(p_1({\bf x}), \dots , p_m({\bf x})) = (P_1({\bf x}), \dots , P_m({\bf x})).\]
We will verify that \ \(\tilde{f}_d\) \ is well-defined.  We suppose \ \(\tilde{F}({\bf x}) = \tilde{F}({\bf x}^\prime) \) \ for\  \({\bf x} , {\bf x}^\prime \in   {\mathbb R}^n\).
From Theorem \ref{theorem1}(2), we see that \ \( {\bf x}^\prime \)  belongs to the \ \(D_{n+1}\)-orbit \(D_{n+1}\cdot{\bf x}\).  Then there exists an element \ \(A \in D_{n+1}\) \ such \  \({\bf x}^\prime =  A\cdot{\bf x}\).  Then by (2.6) and (2.7), there exists an element \ \(B \in D_{n+1}\) \ such that \  \(f_d(A\cdot{\bf x}) = B\cdot f_d({\bf x})\).  Hence
\[P_j(A\cdot{\bf x})  = p_j(f_d(A\cdot{\bf x})) =  p_j(B\cdot f_d({\bf x})) = p_j(f_d({\bf x}))  = P_j({\bf x}) .\] 
Then \ \(\tilde{f}_d\) \ is well-defined.  And we also show that 
\[P_j(A\cdot{\bf x})  = P_j({\bf x}), \ \mbox{for any} \ A \in D_{n+1}. \]
Then 
\[P_j({\bf x})  \in  { \mathbb R}[x_1, \dots , x_n]^{D_{n+1}} = { \mathbb R}[(p_1({\bf x}), \dots , p_m({\bf x})].\]
Hence\ \(P_j({\bf x}) \) \ is a polynomial \(P_j(p_1({\bf x}), \dots , p_m({\bf x}))\)\ in \(p_1({\bf x}), \dots , p_m({\bf x})\)\ over\ \({ \mathbb R}\).  Here we use the same symbol  \(P_j\)  for a polynomial of  \(p_1, \dots , p_m\).
We set 
\[f({\bf x}):  = P_j(p_1({\bf x}), \dots , p_m({\bf x})) - (p_j(f_d({\bf x})).\]
Then 
\(f \in  { \mathbb R}[x_1, \dots , x_n] \ \mbox{and}\   f :   { \mathbb R}^n \to  { \mathbb R}\) is the zero function.  Hence  by Proposition 5 in \cite[Chapter 1, \S1]{CLS},  we have \ \(f = 0\)  in  \( { \mathbb R}[x_1, \dots , x_n] \).
 That is,
 %%%%%%%%%%%%%%%%%%%%%%%%%%%%%%%equation
\begin{equation*}
P_j(p_1({\bf x}), \dots , p_m({\bf x})) \equiv  p_j(f_d({\bf x})).
\end{equation*}
Then
%%%%%%%%%%%%%%%%%%%%%%%%%%%%%%%equation2.10
\begin{equation}
\begin{split}
\tilde{f}_d(p_1({\bf x}), \dots , p_m({\bf x})) = (P_1(p_1, \dots , p_m)({\bf x}), \dots , P_m(p_1, \dots , p_m)({\bf x})) ,\\
\mbox { where} \  P_j(p_1, \dots , p_m)({\bf x}) : = P_j(p_1({\bf x}), \dots , p_m({\bf x}) ),  j = 1, \dots , m.
\end{split}
\end{equation}
And so, for\  \({\bf x} \in { \mathbb R}^n\),  we have
%%%%%%%%%%%%%%%%%%%%%%%%%%%%%%%equation2.11
\begin{equation}
\tilde{f}_d \circ \tilde{F}({\bf x}) = \tilde{F} \circ f_d({\bf x}) .
\end{equation}
We have the following commutative diagram.
\def\spmapright#1{\smash{
  \mathop{\hbox to 1.3cm{\rightarrowfill}}
   \limits^{#1}}}
 \def\sbmapright#1{\smash{
  \mathop{\hbox to 1.3cm{\rightarrowfill}}
   \limits^{#1}} } 
\def\lmapdown#1{\Big\downarrow \llap{$\vcenter{\hbox{$\scriptstyle#1\,$}}$}}
\def\rmapdown#1{\Big\downarrow \rlap{$\vcenter{\hbox{$\scriptstyle#1\,$}}$}}
\def\lmapup#1{\Big\uparrow \llap{$\vcenter{\hbox{$\scriptstyle#1\,$}}$}}
\def\rmapup#1{\Big\uparrow \rlap{$\vcenter{\hbox{$\scriptstyle#1\,$}}$}}
\def\lmapupdown#1{\Big\updownarrow  \llap{$\vcenter{\hbox{$\scriptstyle#1\,$}}$}}
%%%%%%equation 2.12
\begin{equation}
\begin {array}{ccc}
  \{t_1,  \dots ,  t_{n+1}\} & \spmapright{ } & \{t_1^d, \dots , t_{n+1}^d\}\\
  \lmapupdown{} &  & \lmapupdown{} \\
  (z_1, \dots , z_n) & \sbmapright{T_d} & (z_1^{(d)},  \dots , z_n^{(d)}) \\
 \rmapup{M_n} &  & \rmapup{M_n} \\
  (x_1, \dots , x_n) & \sbmapright{f_d} & (X_1, \dots , X_n) \\
\lmapdown{} &  & \lmapdown{} \\
  (p_1, \dots ,  p_m) & \sbmapright{} & (P_1, \dots , P_m),
 \end{array}
\end{equation}
where \ \(T_d :  { \mathbb C}^n  \to   { \mathbb C}^n\) \ and \ \(f_d :  { \mathbb R}^n  \to  { \mathbb R}^n\).

To use the correspondence \ \( { \mathbb C}^n / D_{n+1} \simeq V_F\)  in Theorem \ref{theorem1}(2), we consider complexification.  We regard \ \(x_1, \dots , x_n\) \ as complex variables and \(f_d\) as a polynomial map from \  \( { \mathbb C}^n \) \ to  \( { \mathbb C}^n \).
The map  \(M_n\) in (2.8)  is an invertible linear map from \( { \mathbb C}^n\) \ to \( { \mathbb C}^n\).
We consider also complexification of  \( \tilde{f_d}\).  Recall that the bijection caused by \( \tilde{F}\) gives \ \({ \mathbb C}^n / D_{n+1}\) \ the structure of the affine algebraic variety  \ \(V_F\).   Then from (2.10), we can define a map  \(g_d\) from \(V_F\) to \(V_F\).

Then we have the following diagram. 
%%%%%%equation 2.13
\begin{equation}
\begin {array}{ccc}
  \{t_1,  \dots ,  t_{n+1}\} & \spmapright{ } & \{t_1^d, \dots , t_{n+1}^d\}\\
  \lmapupdown{} &  & \lmapupdown{} \\
  (z_1, \dots , z_n) & \sbmapright{T_d} & (z_1^{(d)},  \dots , z_n^{(d)}) \\
 \rmapdown{M_n^{-1}} &  & \rmapdown{M_n^{-1}} \\
  (x_1, \dots , x_n) & \sbmapright{f_d} & (X_1, \dots , X_n) \\
\rmapdown{\tilde{F}} &  & \rmapdown{\tilde{F}} \\
  (p_1, \dots ,  p_m) & \sbmapright{g_d} & (P_1, \dots , P_m),
 \end{array}
\end{equation}
where
 \(t_j \in  { \mathbb C}^*\),   
\(T_d :  { \mathbb C}^n  \to   { \mathbb C}^n,\)\ \(f_d :  { \mathbb C}^n  \to   { \mathbb C}^n\) \ and \ \(g_d :  V_F  \to  V_F\).

Then the element \((t_1, \dots , t_{n+1})\)  may take any value of \ \(({\mathbb C}^*)^{n+1}\)  with \(t_1 \cdots  t_{n+1} = 1.\)
Note that  \(T_d\) and \(f_d\) are polynomial maps and the diagram (2.13) is commutable if \ \((x_1, \dots , x_n) \in { \mathbb R}^n.\)  Then the diagram (2.13) is also commutable for any  \((x_1, \dots , x_n) \in { \mathbb C}^n.\)

%Definition2.4
\begin{definition} \label{definition1}
We define a map  \(g_d\) from \(V_F\) to \(V_F\)  by 
\[g_d (p_1, \dots , p_m) = (P_1 (p_1, \dots , p_m), \dots , P_m (p_1, \dots , p_m)).\]
\end{definition}

Note that \(P_j\) is a polynomial in \ \(p_1, \dots , p_m\) \ over  \(\mathbb R \).
Then we have the following.
%%%%pro2.5
 \begin{pro} \label{pro2.5}
 Let \(g_d\) be a map in Definition 2.4.  The map  \(g_d\)  is a morphism from \ \(V_F\)  to \(V_F\)  defined  over \(\mathbb R \)
 and is surjective.
\end{pro}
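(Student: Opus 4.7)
The plan is to separate the claim into three pieces: that $g_d$ is well-defined as a self-map of $V_F$, that it is a morphism defined over $\mathbb R$, and that it is surjective. The first two follow almost immediately from the diagram (2.13) and the identity derived in the paragraph preceding Definition 2.4. The only real content is surjectivity, which will be reduced to surjectivity of $f_d$ on ${\mathbb C}^n$, and then to the obvious fact that the multi-valued $d$-th root extraction on the coordinates $t_j$ can be made compatible with the constraint $t_1\cdots t_{n+1}=1$.

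First I would argue that $g_d(V_F)\subseteq V_F$, so $g_d$ is a genuine self-map. Given any point $\mathbf q=(q_1,\dots,q_m)\in V_F$, Theorem~\ref{theorem1}(1) (applied to the complexified setting of diagram (2.13)) produces an $\mathbf x\in{\mathbb C}^n$ with $\tilde F(\mathbf x)=\mathbf q$. Using the polynomial identity $P_j(p_1({\mathbf x}),\dots,p_m({\mathbf x}))\equiv p_j(f_d({\mathbf x}))$ established just before Definition~\ref{definition1} --- which holds on all of ${\mathbb C}^n$ since it is a polynomial identity over $\mathbb R$ that already holds on ${\mathbb R}^n$ --- we get
\[g_d(\mathbf q)=\bigl(P_1(p_1(\mathbf x),\dots,p_m(\mathbf x)),\dots,P_m(p_1(\mathbf x),\dots,p_m(\mathbf x))\bigr)=\tilde F(f_d(\mathbf x))\in V_F.\]
This also shows $\mathbf q\mapsto g_d(\mathbf q)$ depends only on $\mathbf q$, not on the chosen preimage $\mathbf x$. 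Since each coordinate $P_j\in{\mathbb R}[p_1,\dots,p_m]$ by construction, $g_d$ is a morphism defined over $\mathbb R$.

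For surjectivity, the key identity from the previous step, namely $g_d\circ\tilde F=\tilde F\circ f_d$, reduces the problem to showing $f_d\colon{\mathbb C}^n\to{\mathbb C}^n$ is surjective: then given $\mathbf q\in V_F$ write $\mathbf q=\tilde F(\mathbf y)$, pick $\mathbf x$ with $f_d(\mathbf x)=\mathbf y$, and conclude $g_d(\tilde F(\mathbf x))=\tilde F(\mathbf y)=\mathbf q$. Since $M_n$ is invertible, surjectivity of $f_d$ is equivalent to surjectivity of $T_d$ on ${\mathbb C}^n$. Given $(z_1,\dots,z_n)$, the polynomial $t^{n+1}-z_1t^n+\cdots+(-1)^nz_nt+(-1)^{n+1}$ splits into $n+1$ nonzero roots $t_1,\dots,t_{n+1}$ with $t_1\cdots t_{n+1}=1$. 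Pick any $d$-th roots $s_j$ of $t_j$ for $j=1,\dots,n+1$; the product $s_1\cdots s_{n+1}$ is some $d$-th root of unity $\omega$. Replacing $s_1$ by $\omega^{-1}s_1$ preserves $s_1^d=t_1$ and now $s_1\cdots s_{n+1}=1$, so $(s_1,\dots,s_{n+1})$ corresponds via $\Phi$ to a preimage of $(z_1,\dots,z_n)$ under $T_d$.

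The only step requiring more than bookkeeping is the surjectivity of $T_d$, and the obstacle there is the mild combinatorial adjustment to make the $d$-th roots satisfy $t_1\cdots t_{n+1}=1$; once the Vieta picture is in place this is straightforward. Everything else is formal manipulation of the commutative diagram (2.13) combined with Theorem~\ref{theorem1}.
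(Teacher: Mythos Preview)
Your proof is correct and follows essentially the same route as the paper's: both reduce surjectivity of $g_d$ to the commutative diagram (2.13), using that $\tilde F$ (and $\Phi$, $M_n^{-1}$) are surjective and that the $d$-th power map on the $t_j$'s is surjective. You simply supply more detail than the paper does, in particular the explicit adjustment of one $d$-th root by a root of unity to enforce $s_1\cdots s_{n+1}=1$, which the paper leaves implicit in its one-line assertion that the top map is surjective.
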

%proof
\begin{proof}
It is enough  to prove that \(g_d\) is surjective.    Note that the vertical arrows in (2.13) are surjective and that  the morphism \(\{t_1, \dots , t_{n+1}\} \mapsto \{t_1^d, \dots , t_{n+1}^{d}\}\)  is surjective.  Then the morphism \(g_d\) is surjective.
\end{proof}
Let\ \( \{p_1({\bf x}), \dots , p_m({\bf x})\}\) \ be a basis of \ \( { \mathbb C}[x_1, \dots , x_n]^{D_{n+1}}\) satisfying (2.9).  Then by (2.10) and (2.11) we have 
%%%%%%%%%%%%%%%%%%%%%%%%%%%%%%%equation2.14
\begin{equation}
\begin{split}
g_d(p_1({\bf x}), \dots , p_m({\bf x})) = (P_1(p_1, \dots , p_m)({\bf x}), \dots , P_m(p_1, \dots , p_m)({\bf x})) \\
 =  {\hat f}_d(p_1({\bf x}), \dots , p_m({\bf x})),\qquad \qquad \qquad\qquad
\end{split}
\end{equation}
\hspace{2cm}           for any \ \({\bf x} \in  { \mathbb C}^n\).

In  Sections 3 and 4, we study the maps  \(f_d\)  on   \({\mathbb C}^2 \)  or  \({\mathbb C}^3 \)  and their induced maps \(g_d\).  In the cases, we can select a basis  \ \(\{p_1({\bf x}), \dots , p_m({\bf x})\}\)  satisfying the following properties : \\
\({\mbox (1)} \ p_j({\bf x}) \in  { \mathbb Z}[x_1, \dots , x_n], \  (j = 1, \dots , m), \ (n = 2\  {\mbox or}\  3),\)\\
(2) \ \(g_d\) is a morphism  defined over\ \( { \mathbb Z}\).

We consider the set \ \(\{g_d : d \in  { \mathbb Z}\setminus \{0\}\}\) of morphisms. 

Since \  \( f_1({\bf x}) = {\bf x}, \  {\tilde f}_1(p_1({\bf x}), \ \dots \ , \ p_m({\bf x})) =   (p_1({\bf x}),\ \dots \ , \ p_m({\bf x}))\).
Then  \(g_1\)  is an identity map.
  We show the set \ \(\{g_d : d \in  { \mathbb Z}\setminus \{0\}\}\) is a commutative monoid.
%pro2.6
\begin{pro} \label{pro2.6}
We have \ \(g_e \circ g_d = g_{de}, \) \ for any \ \( d, e \in  { \mathbb Z}\setminus \{0\}.\)
\end{pro}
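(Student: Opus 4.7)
The plan is to deduce $g_e \circ g_d = g_{de}$ from the analogous identity at the level of $T_d$ on $\mathbb{C}^n$ (namely equation (2.3)) by exploiting the surjectivity of the quotient map $\tilde F$ together with the commutation relation $g_d \circ \tilde F = \tilde F \circ f_d$ coming from diagram (2.13).

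First, I observe that $f_d$ is related to $T_d$ by the linear change of coordinates $M_n$ of (2.8), namely $f_d = M_n^{-1} \circ T_d \circ M_n$ on $\mathbb{C}^n$ after complexification. Since (2.3) gives $T_e \circ T_d = T_{de}$ on $\mathbb{C}^n$, conjugating by $M_n$ yields
\begin{equation*}
f_e \circ f_d = f_{de} \quad \text{on} \quad \mathbb{C}^n,
\end{equation*}
for any $d, e \in \mathbb{Z} \setminus \{0\}$.

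Next, I use equation (2.11) in its complexified form, which, combined with Definition 2.4 and (2.14), says that $g_d \circ \tilde F = \tilde F \circ f_d$ as maps $\mathbb{C}^n \to V_F$. Composing, I compute
\begin{equation*}
g_e \circ g_d \circ \tilde F \;=\; g_e \circ \tilde F \circ f_d \;=\; \tilde F \circ f_e \circ f_d \;=\; \tilde F \circ f_{de} \;=\; g_{de} \circ \tilde F.
\end{equation*}
Finally, by Theorem \ref{theorem1}(1) the polynomial map $\tilde F : \mathbb{C}^n \to V_F$ is surjective, so this equality of maps $\mathbb{C}^n \to V_F$ can be cancelled on the right to yield $g_e \circ g_d = g_{de}$ as morphisms $V_F \to V_F$.

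There is no real obstacle here; the only point requiring care is the cancellation step, which relies essentially on Theorem \ref{theorem1}(1) (surjectivity of $\tilde F$) — without it, the identity would only be known on the image $\tilde F(\mathbb{C}^n)$ rather than on all of $V_F$. Combined with the previous proposition that $g_1$ is the identity, this proposition shows that $\{g_d : d \in \mathbb{Z}\setminus\{0\}\}$ is a commutative monoid under composition, as desired.
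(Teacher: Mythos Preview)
Your proof is correct and follows essentially the same route as the paper: establish $f_e\circ f_d=f_{de}$ from (2.3), use the commutation $g_d\circ\tilde F=\tilde F\circ f_d$ coming from (2.11)/(2.14), and then pass to $V_F$. The only cosmetic difference is in the last step: the paper phrases the conclusion by showing the difference polynomial $h=g_e\circ g_d-g_{de}$ lies in the syzygy ideal $I_F$ (since $h(p_1(\mathbf{x}),\dots,p_m(\mathbf{x}))=0$ for all $\mathbf{x}\in\mathbb{C}^n$) and hence vanishes on $V_F=V(I_F)$, whereas you invoke surjectivity of $\tilde F$ from Theorem~\ref{theorem1}(1) directly; these are equivalent formulations of the same cancellation.
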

%%%%%%%%%%%%%%%%%%%%%%%%%%%%%proof
\begin{proof}
By (2.3),  we have\ \(T_e \circ T_d = T_{de}. \)  If  \(f_d\) is the map  \(T_d\) restricted to  \(R_n\),  then \ \(f_e \circ f_d = f_{de} \).  If  we view \ \(f_d\)  as a map from  \({ \mathbb C}^n  \to   { \mathbb C}^n\),  then the same formula holds.  We set
%%%%%%%%%%%%%%%%%%%%%%%%%%%%%%%equation2.15
\begin{equation}
g_e \circ g_d(p_1, \dots , p_m) - g_{de}(p_1, \dots , p_m) = h(p_1, \dots , p_m). 
\end{equation}
Note that  \ \(h(p_1, \dots , p_m) \in  { \mathbb R}[p_1, \dots , p_m].\)

We substitute  \(p_j({\bf x})\) \ for \ \(p_j, \ (j = 1, \dots , m)\) \ in (2.15).
By (2.10), (2.11) and (2.14), we deduce
\[g_d(p_1({\bf x}), \dots , p_m({\bf x})) = (P_1(p_1, \dots , p_m)({\bf x}), \dots , P_m(p_1, \dots , p_m)({\bf x})) \]
\[= {\hat f}_d(p_1({\bf x}), \dots , p_m({\bf x})) = \tilde{f}_d \circ \tilde{F}({\bf x}) = \tilde{F} \circ f_d({\bf x}) , \]
\[\mbox { where} \  P_j(p_1, \dots , p_m)({\bf x}) : = P_j(p_1({\bf x}), \dots , p_m({\bf x}) ),  j = 1, \dots , m.\]
And we have
\[g_e (P_1(p_1, \dots , p_m)({\bf x}), \dots , P_m(p_1, \dots , p_m)({\bf x}))  = {\tilde f}_e(P_1(p_1, \dots , p_m)({\bf x}), \dots , P_m(p_1, \dots , p_m({\bf x}))\]
\[= {\tilde f}_e \circ \tilde{F} \circ f_d({\bf x}) = \tilde{F} \circ f_e \circ f_d({\bf x}) = \tilde{F} \circ f_{ed}({\bf x}). \]
Hence \ \(g_e \circ g_d(p_1({\bf x}), \dots , p_m({\bf x})) =  g_{de}(p_1({\bf x}), \dots , p_m({\bf x})).\)\\
Then \ \(h(p_1({\bf x}), \dots , p_m({\bf x})) = 0\), \  for any \ \({\bf x} \in  { \mathbb C}^n\).
Therefore \ \(h(y_1, \dots , y_m) \) \ belongs to the syzygy ideal  \(I_F\)  for \(F\).  Thus, for any \ \((p_1, \dots , p_m) \in V_F = V(I_F)\),\ we have  \ \(g_e \circ g_d(p_1, \dots , p_m) =  g_{de}(p_1, \dots , p_m).\)
\end{proof}
%%%%%%%%%%%%%%%%%%%%%%%%%%%%%%%%%%%%%%Corollary 2.7
\begin{cor} \label{cor1}
For any  \ \(d \in  { \mathbb N}\), we have \ \(g_{-d} = g_d\).
\end{cor}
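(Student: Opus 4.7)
The plan is to deduce the corollary from Proposition~\ref{pro2.6} by first establishing that $g_{-1}$ is the identity morphism on $V_F$. Once this is known, Proposition~\ref{pro2.6} applied with $e=-1$ gives
\[
g_{-d} \;=\; g_{(-1)\cdot d} \;=\; g_{-1}\circ g_d \;=\; \mathrm{id}_{V_F}\circ g_d \;=\; g_d,
\]
so the whole corollary reduces to the single equality $g_{-1} = \mathrm{id}_{V_F}$.

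To prove $g_{-1}=\mathrm{id}_{V_F}$, I would start from the formula $T_{-d}(z_1,\dots,z_n) = T_d(z_n,\dots,z_1)$ established before (2.3). Since $T_1$ is the identity (the $j$-th elementary symmetric polynomial in $t_1,\dots,t_{n+1}$ is $z_j$), this specializes to $T_{-1}(z_1,\dots,z_n) = (z_n,\dots,z_1)$. On the real slice $R_n$, the defining condition $z_{n+1-j} = \bar z_j$ makes this swap coincide with the action of $C$, namely $(z_n,\dots,z_1) = (\bar z_1,\dots,\bar z_n) = C(z_1,\dots,z_n)$. Transporting through the linear change of coordinates $M_n$ of (2.8), this means that $f_{-1}|_{\mathbb R^n}$ agrees with the linear map given by the matrix $C\mid_{R_n}$ described at the end of Section~2.1. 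Both $f_{-1}$ and the action $\mathbf{x}\mapsto C\cdot\mathbf{x}$ are polynomial (indeed, linear) maps $\mathbb C^n\to\mathbb C^n$ with real coefficients that coincide on the Zariski-dense subset $\mathbb R^n$, so they are equal on all of $\mathbb C^n$. Thus $f_{-1}(\mathbf{x}) = C\cdot\mathbf{x}$ for every $\mathbf{x}\in\mathbb C^n$.

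Now $C \in D_{n+1}$, so the invariant map $\tilde F$ satisfies $\tilde F(C\cdot\mathbf{x}) = \tilde F(\mathbf{x})$, and the commutative diagram (2.13), together with (2.11), yields $g_{-1}\circ\tilde F = \tilde F\circ f_{-1} = \tilde F$. The surjectivity of $\tilde F$ from Theorem~\ref{theorem1}(1) then forces $g_{-1} = \mathrm{id}_{V_F}$, completing the proof. The one point that requires a moment's care is the passage from $R_n$ to $\mathbb C^n$: the computation naturally lives on the real slice (where the swap on $z$-coordinates becomes conjugation), and one must notice that the resulting identity $f_{-1}=C$ is an identity of linear maps with real coefficients, hence extends from $\mathbb R^n$ to $\mathbb C^n$ by Zariski density—after that the rest is mechanical.
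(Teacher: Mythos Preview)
Your proof is correct and follows essentially the same route as the paper's own argument: both reduce to showing $g_{-1}=g_1$ by identifying $f_{-1}$ with the action of $C\in D_{n+1}$ on the $x$-coordinates, then invoking the $D_{n+1}$-invariance of the $p_j$. Your version is slightly more explicit about the passage from $\mathbb{R}^n$ to $\mathbb{C}^n$ and about the role of the surjectivity of $\tilde F$, but the substance is the same.
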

%%%%%%%%%%%%%%%%%%%%%%%%%%%%proof
\begin{proof}
 It suffices to show\ \(g_{-1} = g_1\).    We see in Section 2.1 that
\[ T_{-1}(z_{1}, \dots , z_{n})  =T_1(\bar {z}_{1}, \ \dots , \ \bar{ z}_n) \ {\mbox in} \ R_n .\]
Then \ \(f_{-1}(x_1, x_2, \dots , x_n) = (x_1,  -x_2, x_3, \dots, (-1)^{n-1}x_n)\).

Recall that  \ \(f_1(x_1, \dots , x_n) = (x_1,   \dots , x_n)\).
Then   \ \((C\mid _{R_n})(f_1(x_1, \dots, x_n) = f_{-1}(x_1,   \dots,  x_n)\).
Hence\ \(p_j(f_{-1}({\bf x}))  = p_j( {\bf x}),  \ j = 1, \dots, m \).  Therefore 
\[\tilde{f}_{-1}(p_1({\bf x}), \dots , p_m({\bf x})) = (p_1({f}_{-1}({\bf x}), \dots , p_m({f}_{-1}({\bf x})) =  (p_1({\bf x}),  \dots , p_m({\bf x})) .\] 
Thus we have \ \(g_{-1} = g_1\).
\end{proof}

From now on we consider only morphisms \ \(g_d\) \ with \ \(d \in  { \mathbb N}\).

We consider the properties of the morphisms \(g_d\).
We use the following theorem.
%%%%%%%%%%%%%%%%%%%%%%%%theorem 2.8
\begin{theorem} \label{theorem6} (\cite[II, \S5, 3, Theorems 6 and 7 ]{Sh})

 If a morphism \ \(f : X \to Y\) \ is finite, dominant and separable, the varieties \(X\) and \(Y\) are irreducible of equal dimension, and \(Y\) is normal, then any point of some non-empty subsets \ \(U \subset Y\) \ has \ deg \(f\)  \ distinct inverse images, and any point of the complement has fewer inverse images.
\end{theorem}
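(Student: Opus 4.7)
The plan is to reduce the fiber-counting problem to counting roots of a single separable polynomial over the residue field. I work locally on affine opens \(V = \mathrm{Spec}\,A \subset Y\) with \(f^{-1}(V) = \mathrm{Spec}\,B\); these exist because \(f\) is finite, \(B\) is a finite \(A\)-module, and \(A\) is a normal domain since \(Y\) is normal. Dominance together with equidimensionality makes \(K := k(Y) \hookrightarrow L := k(X)\) a finite field extension of degree \(n = \deg f\), and this extension is separable by hypothesis. By the primitive element theorem, pick \(\alpha \in L\) with \(L = K(\alpha)\); after clearing denominators, arrange \(\alpha \in B\).

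Let \(F(T) \in K[T]\) be the minimal polynomial of \(\alpha\), of degree \(n\). Its coefficients, being elementary symmetric functions of the \(K\)-conjugates of \(\alpha\), are integral over \(A\) and already lie in \(K\); normality of \(A\) then forces them into \(A\), so \(F(T) \in A[T]\). Let \(d = \mathrm{disc}(F) \in A\); separability of \(L/K\) gives \(d \neq 0\). Since \(A[\alpha] \subset B\) have the same fraction field \(L\) and \(B\) is a finite \(A\)-module, the cokernel \(B/A[\alpha]\) is a finitely generated torsion \(A\)-module, hence annihilated by some nonzero \(g \in A\). I take \(U = D(dg) \cap V\), a non-empty principal open in \(V\), and glue over an affine cover of \(Y\).

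For \(y \in U\) with residue field \(\kappa(y)\), the fact that \(g(y) \neq 0\) makes \(B \otimes_A \kappa(y) \simeq \kappa(y)[T]/(\bar F)\), and \(\bar F\) is separable since \(d(y) \neq 0\). The primes of \(B\) above \(\mathfrak{p}_y\) therefore correspond to irreducible factors of \(\bar F\) over \(\kappa(y)\); passing to an algebraic closure, the \(n\) distinct roots of \(\bar F\) are in bijection with the \(n\) geometric points of \(f^{-1}(y)\). Off \(U\), either \(d\) vanishes (so \(\bar F\) has a repeated root and the number of distinct geometric fiber points drops) or \(g\) vanishes (so the presentation \(B \otimes_A \kappa(y) \simeq \kappa(y)[T]/(\bar F)\) degenerates and the count drops as well).

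The main obstacle is the comparison between \(A[\alpha]\) and the full module \(B\): the equality \(B = A[\alpha]\) generally fails globally, so one must either shrink \(Y\) by the annihilator \(g\) as above or appeal to a trace-pairing argument using that \(B\) lies between \(A[\alpha]\) and the dual module \(\mathrm{Hom}_A(A[\alpha],A)\). Normality of \(Y\) is essential throughout: without it, \(F\) need not lie in \(A[T]\) and the discriminant would fail to define a closed subscheme of \(Y\), which is exactly what lets us cut out the open locus \(U\).
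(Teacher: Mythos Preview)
The paper does not prove this statement; it is quoted from Shafarevich's \emph{Basic Algebraic Geometry} as a known result and then used as a tool for Proposition~2.9. So there is no proof in the paper to compare against, and your sketch should be read on its own merits. The approach you take---primitive element, minimal polynomial pushed down to \(A[T]\) via normality, discriminant locus---is the classical one and matches what Shafarevich does.

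The first half is solid: on \(D(dg)\) you have \(B\otimes_A\kappa(y)\cong\kappa(y)[T]/(\bar F)\) with \(\bar F\) separable of degree \(n\), hence exactly \(n\) geometric fiber points. The gap is in your complement claim. When \(g(y)=0\) you write ``the presentation degenerates and the count drops as well,'' but this does not follow: the failure of the isomorphism \(B\otimes_A\kappa(y)\cong\kappa(y)[T]/(\bar F)\) says nothing by itself about how many maximal ideals \(B\otimes_A\kappa(y)\) has. Your open set \(D(dg)\) depends on the non-canonical choices of \(\alpha\) and of the annihilator \(g\), so there is no reason it should coincide with the honest locus \(\{y:|f^{-1}(y)|=n\}\); its complement may well contain unramified fibers.

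The standard fix is to argue separately that \emph{every} fiber has at most \(n\) points: for distinct preimages \(x_1,\dots,x_m\) of \(y\), choose \(u\in B\) with the \(u(x_i)\) pairwise distinct; since \([L:K]=n\) the element \(u\) satisfies a monic degree-\(n\) relation over \(K\), and normality of \(A\) forces the coefficients into \(A\); reducing modulo \(\mathfrak m_y\) exhibits the \(u(x_i)\) as roots of a degree-\(n\) polynomial over \(\kappa(y)\), whence \(m\le n\). With this bound in hand, let \(U\) be the locus where equality holds; your \(D(dg)\) sits inside \(U\) and shows \(U\neq\varnothing\), and the complement statement becomes tautological. Note that this second use of normality---not just clearing denominators in \(F\), but bounding the generic fiber degree everywhere---is the one you are missing.
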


We have the following proposition.
%pro2.9
\begin{pro} \label{pro2.9}
(1)  The affine algebraic variety \(V_F \simeq { \mathbb C}^n / D_{n+1} \)  is normal.\\
(2)  We assume that \(g_d\)  is a finite morphism.  Then \ deg  \( g_d = d^n\).
\end{pro}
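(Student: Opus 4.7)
For part (1), I would apply the classical fact that the invariant subring of a normal noetherian domain under a faithful action of a finite group is itself normal. Here $R = \mathbb{C}[x_1,\dots,x_n]$ is a UFD and hence integrally closed in $L = \mathrm{Frac}(R)$. Since $D_{n+1}$ acts faithfully, the fixed field $L^{D_{n+1}}$ coincides with $\mathrm{Frac}(R^{D_{n+1}})$ and $L/L^{D_{n+1}}$ is Galois with group $D_{n+1}$. If $\alpha \in \mathrm{Frac}(R^{D_{n+1}})$ is integral over $R^{D_{n+1}}$, then $\alpha$ is integral over $R$ and hence lies in $R$ by normality; being also $D_{n+1}$-invariant, $\alpha \in R \cap L^{D_{n+1}} = R^{D_{n+1}}$. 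Therefore $\mathbb{C}[V_F] = R^{D_{n+1}}$ is integrally closed, i.e., $V_F$ is normal.

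For part (2), my first step is to compute $\deg T_d = d^n$ using the upper portion of diagram (2.13). The variety $\mathbb{T} := \{(t_1,\dots,t_{n+1}) \in (\mathbb{C}^*)^{n+1} : t_1 t_2 \cdots t_{n+1} = 1\}$ is an $n$-dimensional algebraic torus, on which the top row of (2.13) is the $d$-th power endomorphism $[d]$. As a surjective homomorphism of algebraic groups with kernel of order $d^n$, it has $\deg [d] = d^n$. The morphism $\Phi$ from (2.1) is the geometric quotient of $\mathbb{T}$ by the coordinate-permutation action of $S_{n+1}$, so $\deg \Phi = (n+1)!$. From the commutative square $\Phi \circ [d] = T_d \circ \Phi$ and multiplicativity of degree,
\[
(n+1)! \cdot \deg T_d \;=\; d^n \cdot (n+1)!,
\]
so $\deg T_d = d^n$. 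Because $M_n \in GL(n,\mathbb{R})$ is a linear automorphism, $\deg f_d = \deg T_d = d^n$ as well.

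Finally, I descend to $g_d$ using the lower portion of (2.13). The morphism $\tilde F : \mathbb{C}^n \to V_F$ is the quotient by $D_{n+1}$ acting faithfully on $\mathbb{C}^n$ (faithfulness is immediate from the explicit block-diagonal formulas for $R|_{R_n}$ and $C|_{R_n}$ in Section 2.1), and $V_F \simeq \mathbb{C}^n/D_{n+1}$ is irreducible of dimension $n$; hence $\deg \tilde F = |D_{n+1}| = 2(n+1)$. By hypothesis $g_d$ is finite, and by Proposition 2.5 it is surjective, so it has a well-defined degree $[\mathbb{C}(V_F) : g_d^* \mathbb{C}(V_F)]$. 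The commutativity $\tilde F \circ f_d = g_d \circ \tilde F$ gives $f_d^* \tilde F^* = \tilde F^* g_d^*$ on function fields; computing the index $[\mathbb{C}(\mathbb{C}^n) : \tilde F^* g_d^* \mathbb{C}(V_F)]$ through the tower via $\tilde F^* \mathbb{C}(V_F)$ and through the tower via $f_d^* \mathbb{C}(\mathbb{C}^n)$ yields
\[
2(n+1) \cdot \deg g_d \;=\; d^n \cdot 2(n+1),
\]
and hence $\deg g_d = d^n$. The main bookkeeping issues are to ensure that $\Phi$ and $\tilde F$ really are quotient morphisms of the stated degrees (both reducing to faithfulness of the group actions, which is explicit here) and that the relevant function-field extensions are pairwise linearly disjoint over the common bottom $E := \tilde F^* g_d^* \mathbb{C}(V_F)$; I do not expect any genuine obstacle beyond this.
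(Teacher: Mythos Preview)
Your proposal is correct and follows the same approach as the paper, only with considerably more detail. For (1), the paper simply cites Derksen--Kemper for the fact that the invariant ring of an integrally closed ring under a finite group is integrally closed, which is exactly the lemma you spell out; for (2), the paper just observes that all vertical maps in diagram~(2.13) are finite surjective and hence $\deg g_d$ equals the degree of the top $d$-th power map on the torus, which is $d^n$---this is precisely your argument, and your tower-of-fields computation makes explicit what the paper leaves as ``easy to see.'' Note, incidentally, that your closing worry about linear disjointness is unnecessary: the equality $2(n+1)\cdot \deg g_d = d^n \cdot 2(n+1)$ follows from pure multiplicativity of degrees in the two towers with common top $\mathbb{C}(\mathbb{C}^n)$ and common bottom $\tilde F^* g_d^* \mathbb{C}(V_F)$, together with injectivity of $\tilde F^*$ and $f_d^*$ on function fields; no disjointness hypothesis is required.
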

%%%%%%%%%%%%%%%%%%%%%%%%%%%%%proof
\begin{proof}
(1)  Clearly \( { \mathbb C}[x_1, \dots, x_n]\) is integrally closed.  Then by Propsition 2.3.11 in \cite{DK} we know that \( { \mathbb C}[x_1, \dots, x_n]^{D_{n+1}}\)  is also integrally closed. 
Recall that \( { \mathbb C}[V_F]  \simeq  { \mathbb C}[x_1, \dots, x_n]^{D_{n+1}}\).
Then by \cite[II, \S5 ]{Sh},  the affine variety \(V_F\) is normal.\\
(2)  Note that the vertical arrrows in the diagram (2.13) are at most finite surjective morphisms.  Then it is easy to see that the degree of \(g_d\) is the same as the morphism \(\{t_1, \dots , t_{n+1}\} \mapsto \{t_1^d, \dots , t_{n+1}^{d}\}\).  Then \ deg \( g_d = d^n\).
\end{proof}

%%%%%%%%%%%%%%%%%%%%%%%%%%%%%%%%%%%%%%%%Section 3
\section{Morphisms on \ \( { \mathbb C}^2 / D_{3}\)}
%%%%%%%%Subsection3.1
\subsection{\rm{Morphisms and branch locus}}\hspace{0cm}

We begin to study the orbit variety \ \( { \mathbb C}^2 / D_{3} \simeq V_F\).  We set \ \(x : = x_1\) \ and \  \( y : = x_2\).   By Algorithm 2.5.14 in \cite{St},  we know that \ \( { \mathbb C}[x, y]^{D_3} =  { \mathbb C}[x^2+y^2,  x^3-3xy^2]\).  See also Exercises(3) in  \cite[\S2.2]{St}.  Clearly \ \(x^2+y^2\) \ and \ \(x^3-3xy^2\) \ are algebraically independent.  Set \ \(p = x^2+y^2,   q = x^3-3xy^2\) \ and \ \(F = (p, q)\).  Then the syzygy ideal \ \(I_F\) for \(F\) is equal to the zero ideal.  Hence \ \(V_F = V(I_F) \simeq { \mathbb C}^2 \).    We use a coordinate \ \((p, q)\) of \ \( { \mathbb C}^2 \). 

We consider another fundamental system \(\{p^\prime, q^\prime \}\) of invariants of  \ \( { \mathbb C}[x, y]^{D_3} \).  By Molien' theorem (\cite[Theorem 2.2.1]{St}) , the Hilbert series \  \(\Phi_{D_3}(t)\)\  of the invariant ring \ \( { \mathbb C}[x, y]^{D_3} \) \ is written as 
\[\Phi_{D_3}(t) = 1 + t^2 + t^3 +  t^4 + t^5 +  2t^6 + t^7 +  2t^8 + 2t^9  + \it{O}(t^{10}).\]
Then \ \(dim( { \mathbb C}[x, y]_2^{D_3}) =  dim( { \mathbb C}[x, y]_3^{D_3}) = 1\).
Since  \( x^2 + y^2\)  and  \(x^3 - 3xy^2 \in { \mathbb C}[x, y]^{D_3} \), \ then it follows that  \ \(p^\prime  = a( x^2 + y^2)\) and \ \(q^\prime  = b(x^3 - 3xy^2)\), where \(a\) and  \(b\) are non-zero complex numbers.  Hence the fundamental system
\ \(\{p, q\}\) \ of invariants of 
\ \( { \mathbb C}[x, y]^{D_3} \) is unique up to constants.

By \cite[p.996]{U2}, we have \ \(f_2(x, y) = (x^2 - y^2 -2x, \ 2xy+ 2y)\), \ \(f_3(x, y) = (x^3 - 3xy^2 - 3x^2 - 3y^2 +3, \ 3x^2y - y^3 ).\)
We compute two morphisms \ \(g_2\) and \ \(g_3\) on \(V_F\) : 
%%%%%%%%%%%%%%%%%%%%%%%%equation 3.1
\begin{equation}
g_2(p, q) = (4 p + p^2 - 4 q, 12 p^2 - 8 q - 6 p q + 2 q^2 - p^3),
\end{equation}
%%%%%%%%%%%%%%%%%%%%%%%%equation 3.2
\begin{equation}
\begin{split}
g_3(p, q) = 
(9 - 18 p + 9 p^2 + p^3 + 6 q - 6 p q,  \qquad \qquad \qquad \qquad \qquad \qquad\\
9 p^4-3 p^3 q-36 p^3+27 p^2 q+81 p^2-18 p q^2-54 p q-81 p+4 q^3+18 q^2+27 q+27).
\end{split}
\end{equation}

These are morphisms defined over  \( { \mathbb Z}\).  This fact holds for general  morphisms \ \(g_d\) on \ \( { \mathbb C}^2 / D_{3}\).
%%%%%%%%%%%%%%%%%%%%%%%%theorem 3.1
\begin{theorem} \label{theorem2}\hspace{0cm}\\
(1) Let \ \(p = x^2+y^2 \  {\mbox and} \  q = x^3-3xy^2\).  Then we have \({ \mathbb R}[x, y]^{D_3} \cap  { \mathbb Z}[x, y] = { \mathbb Z}[p, q].\)  \\
(2) Let  \(g_d\) \ be a morphism on \ \( { \mathbb C}^2 / D_{3}\)  \ in Definition 2.4.  
Then \ \(g_d\)  is  defined over  \( { \mathbb Z}\)  for any \(d \in  { \mathbb N}. \)
\end{theorem}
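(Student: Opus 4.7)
The plan is to derive part (2) from part (1) and focus the work on part (1). By construction, each component of $g_d$ satisfies $P_j(p,q) = p_j \circ f_d$ for $(p_1, p_2) = (p, q)$. Since $f_d$ is a polynomial map defined over $\mathbb{Z}$ (by Section 2.1) and $p, q \in \mathbb{Z}[x, y]$, each composite $p_j \circ f_d$ lies in $\mathbb{Z}[x, y]$; by the intertwining relations (2.6) and (2.7), it is also $D_3$-invariant. Granted part (1), $p_j \circ f_d \in \mathbb{Z}[p, q]$, and by the algebraic independence of $p, q$ (i.e., $I_F = (0)$) the polynomial $P_j$ with $P_j(p, q) = p_j \circ f_d$ is unique and lies in $\mathbb{Z}[Y_1, Y_2]$. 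Thus $g_d$ is defined over $\mathbb{Z}$.

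For part (1), the inclusion $\mathbb{Z}[p, q] \subseteq \mathbb{R}[x, y]^{D_3} \cap \mathbb{Z}[x, y]$ is immediate. For the reverse, fix $f$ in the right-hand side. Since $p, q$ are algebraically independent, there is a unique $h \in \mathbb{R}[Y_1, Y_2]$ with $f = h(p, q)$, and we must show $h \in \mathbb{Z}[Y_1, Y_2]$. Because both the ring of invariants and $\mathbb{Z}[x, y]$ decompose into weighted-homogeneous pieces (with $\deg p = 2$, $\deg q = 3$), we may assume $f = \sum_{2i + 3j = n} c_{ij}\, p^i q^j$ has fixed weighted degree $n$, and it suffices to show each $c_{ij} \in \mathbb{Z}$.

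The key structural observation is that $q = x(x^2 - 3y^2)$ has $x$-valuation exactly $1$, so $p^i q^j$ has minimum $x$-degree $j$ and its coefficient of $x^j y^{n-j}$ (the monomial of maximum $y$-degree in $p^i q^j$) is $(-3)^j$. Ordering the monomials $\{x^j y^{n-j}\}_j$ by increasing $j$ therefore gives a triangular system of integer constraints on the $c_{ij}$'s with diagonal entries $(-3)^j$. In particular, reading off the coefficient of $y^n$ gives $c_{n/2, 0} \in \mathbb{Z}$ (for $n$ even); then, subtracting $c_{n/2, 0}\, p^{n/2}$ and repeating with the next monomial isolates each successive $c_{(n-3j)/2, j}$ up to a factor of $3^j$.

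The main obstacle is eliminating these $3^j$ denominators. The remedy is to combine the triangular constraints with the complementary constraint from the monomial $x^n$ (whose coefficient in every $p^i q^j$ equals $1$), and from further intermediate monomials $x^a y^b$. Equivalently, one must show that the greatest common divisor of the $d_n \times d_n$ minors of the full transition matrix from $\{p^i q^j\}_{2i+3j=n}$ to the monomial basis of $\mathbb{Z}[x, y]_n$ equals $1$, so that the $\mathbb{Z}$-span of $\{p^i q^j\}$ is saturated in $\mathbb{Z}[x, y]_n \cap \mathbb{R}[x, y]^{D_3}$. Direct verification for small $n$ (e.g.\ $n \le 12$) is routine from the binomial expansion of $p^i q^j$, and a uniform proof by induction on $n$ via a Smith Normal Form reduction completes the proof of part (1).
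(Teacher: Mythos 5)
Your reduction of part (2) to part (1) is sound and matches the paper: each component of $g_d$ equals $p_j\circ f_d$, which is a $D_3$-invariant polynomial in $\mathbb{Z}[x,y]$, and uniqueness of the representation in $p,q$ (since $I_F=(0)$) then forces the coefficients into $\mathbb{Z}$ once part (1) is known.

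Part (1), however, has a genuine gap at its crux. Your triangular system built from the monomials $x^jy^{n-j}$ of minimal $x$-degree has diagonal entries $(-3)^j$, so on its own it only yields $3^jc_{(n-3j)/2,\,j}\in\mathbb{Z}$, i.e.\ $c_{ij}\in\mathbb{Z}[1/3]$. You correctly identify that the $3^j$ denominators must be eliminated by bringing in the remaining coefficient constraints, and you reformulate this as the statement that the gcd of the maximal minors of the transition matrix from $\{p^iq^j\}_{2i+3j=n}$ to the monomial basis equals $1$ --- but you then assert rather than prove it: ``direct verification for small $n$'' plus ``a uniform proof by induction on $n$ via a Smith Normal Form reduction'' is exactly the missing content, and it is not routine. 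The paper sidesteps this obstruction entirely by a different peeling order: first read off the coefficient of $y^n$ (which isolates the unique pure power $\alpha p^{n/2}$ with unit coefficient, since every term containing $q$ is divisible by $x$), conclude $\alpha\in\mathbb{Z}$, then factor the remainder as $q\cdot H$ and observe that multiplication by $q=x^3-3xy^2$, read off in \emph{descending} $x$-degree, gives a triangular system with unit diagonal ($a_0,\ a_1-3a_0,\ a_2-3a_1,\dots\in\mathbb{Z}$ forces all $a_j\in\mathbb{Z}$); induction on the weighted degree (dropping by $3$ at each step) then finishes without any denominators ever appearing. To repair your argument you would either need to carry out the minor/SNF computation in general, or switch to an argument of this ``divide by $q$ and induct'' type.
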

%%%%%%%%%%%%proof
\begin{proof}\quad (1) Suppose  \(P(p, q)\) be a polynomial such that
%%%%%%%%%%%%%%%%%%%%%%%%equation 3.3
\begin{equation}
P(p({\bf x}),  q({\bf x})) \in  { \mathbb R}[p({\bf x}),  q({\bf x})] \cap  { \mathbb Z}[x, y] .
\end{equation}

Then we will show that 
%%%%%%%%%%%%%%%%%%%%%%%%equation 
\begin{equation*}
P(p,  q) \in  { \mathbb Z}[p, q].
\end{equation*}

We define the xy-degree of \ \(p^iq^j\) to be \ \(2i + 3j\).   Let \ \(h_k(p, q)\)  denote the sum of terms \ \(a_{ij}p^iq^j\) \ in \(P(p,  q) \) \ with the xy-degree  \(k\).  Then 
\[P(p,  q) =   \sum_kh_k(p, q).\]
It is enough to prove that \ \(h_k(p,  q) \in  { \mathbb Z}[p, q]\) \ for any  \(k\).  Clearly
%%%%%%%%%%%%%%%%%%%%%%%%equation 3.4
\begin{equation}
h_k(p({\bf x}),  q({\bf x})) \in  { \mathbb R}[p({\bf x}),  q({\bf x})] \cap  { \mathbb Z}[x, y] .
\end{equation}
Let
\[h_k(p,  q)  = \alpha p^j + qH(p, q), \ {\mbox where }\ \alpha \in  { \mathbb R} \ {\mbox and} \ H(p, q) \in  { \mathbb R}[p, q] ,\]
and the xy-degree of any term in  \(H(p, q)\) \ is \ \(k-3\).\\
If \ \(\alpha \ne 0\), \ then \ \(k = 2j\).  We consider the monomial \ \(y^{2j}\) \ in \ \(\alpha p({\bf x})^j +  q({\bf x})H(p({\bf x}),  q({\bf x})).\)  The monomial \ \(y^{2j}\)  occurs only in the term of \ \(\alpha p({\bf x})^j\).  Since \ \(h_k(p({\bf x}),  q({\bf x})) \in  { \mathbb Z}[x, y] ,\)  we have \(\alpha \in { \mathbb Z}\)  and so 

%%%%%%%%%%%%%%%%%%%%%%%%equation 3.5
\begin{equation}
q({\bf x})H(p({\bf x}),  q({\bf x})) \in   { \mathbb Z}[x, y] .
\end{equation}
We consider the case \ \(k\) \ is odd.  Since \ \(p(x, y), \ q(x, y) \in   { \mathbb Z}[x, y^2], \)  we may set 
\[H(p({\bf x}),  q({\bf x}))  =   \sum_{j=0}^{(k-3)/2}a_jx^{k-3-2j}y^{2j}, \ a_j \in  { \mathbb R}.\]
Then
\[ q({\bf x})H(p({\bf x}),  q({\bf x}))  =  a_0x^k  + (\sum_{j=1}^{(k-3)/2}(a_j-3a_{j-1})x^{k-2j}y^{2j}) - 3a_{(k-3)/2}xy^{k-1} .\]
By (3.5), we see that
\[a_0, a_1-3a_0, a_2-3a_1, \dots , a_{(k-3)/2}-3a_{(k-5)/2} \in   { \mathbb Z}.\]
Hence
\[a_0, a_1, \dots , a_{(k-3)/2} \in   { \mathbb Z}.\]
Therefore
%%%%%%%%%%%%%%%%%%%%%%%%equation 3.6
\begin{equation}
H(p({\bf x}),  q({\bf x})) \in  { \mathbb R}[p({\bf x}),  q({\bf x})] \cap  { \mathbb Z}[x, y] .
\end{equation}
For the case \(k\) is even, we can prove (3.6) similarly.

We get (3.6) from (3.4).  Repeating this procedure, eventually we have a polynomial \ \(H_*(p({\bf x}),  q({\bf x}))\) satisfying\\
(i) the xy-degree of any monomial in \ \(H_*(p,  q)\)  is 2 or 0,\\
(ii) \(H_*(p({\bf x}),  q({\bf x})) \in  { \mathbb R}[p({\bf x}),  q({\bf x})] \cap  {\mathbb Z}[x, y] .\)

Then \ \(H_*(p,  q) = bp\)  or  \(b, \ b \in   { \mathbb Z}.\)  The assertions (1) follows.\\
 (2)  We see in (2.10) that
\[\tilde{f}_d(p({\bf x}),  q({\bf x})) = (P_1({\bf x}),  P_2({\bf x})) ,
\mbox { with} \ {\bf x}  = (x,y),  \]
\[\mbox{and} \ P_j({\bf x}) =  P_j(p({\bf x}),  q({\bf x})) \in  { \mathbb R}[p({\bf x}),  q({\bf x})], \  j = 1, 2 .\]
Since \ \(T_d(z_1, z_2) \) \ is a polynomial map over  \( {\mathbb Z}\), \ \(f_d({\bf x})\) \ is a polynomial map defined over  \( { \mathbb Z}\).  Then \(P_j(p({\bf x}),  q({\bf x})), (j = 1, 2),\) satisfies (3.3).  Then  \(P_j(p, q) \in  { \mathbb Z}[p, q]\).
%(3) We have to check that  \ \(f_d : { \mathbb C}^2  \to  { \mathbb C}^2 \) \ is surjective, then (2.10) and Theorem \ref{theorem1}(1)  assert that \ \(g_d : V_F \to V_F\) \ is surjective.   To check  that  \ \(f_d : { \mathbb C}^2  \to  { \mathbb C}^2 \) \ is surjective, we will show that  \(f_d\)  extends to a morphism  \({\bar f}_d\)  of \( { \mathbb P}^2\).  The fiber \  \(({\bar f})_d^{-1}(a)\) \ contains \(d^2\) points, counted with multiplicity, for every \ \(a \in { \mathbb P}^2\)   (See \cite[Proposition 1.2]{DS2}).

%Then the dominant homogeneous part \(f_d^+\)   of  \(f_d\)  is written in the form : \\
%\(f_d^+(x, y) = (h_1(x, y),  \ h_2(x, y))\).  To see that  \(f_d\)  extends to a morphism of \( { \mathbb P}^2\),  it suffices to show that \ \((f_d^+)^{-1}(0, 0) = \{(0, 0)\}.\)  
%We suppose that\ \(h_1 = 0\) \ and  \ \(h_2 = 0\).  Then \ \(0 = h_1 + ih_2 = (x + iy)^d\).  
 %Then \ \(x = y = 0\).  For the case \(d\) is odd, we can prove the same result similarly.
\end{proof}
We note that since \ \(I_F = 0,\) \ the expression of \ \(P_j(p, q)\) \ is unique.

We study the dynamics of  \(g_d\)  on \ \( {\mathbb C}^2/D_3\).  We have seen that \ \( {\mathbb C}^2/D_3 \simeq {\mathbb C}^2\).  To start,  we introduce a new parametrization of \( {\mathbb C}^2\) \ which is convenient for understanding the dynamics of \(g_d\).  From (2.13), we have
%%%%%%equation 3.7
\begin{equation}
\begin {array}{ccc}
  \{t_1, t_2, t_3\} & \spmapright{ }& \{t_1^d, t_2^d, t_3^d\}\\
  \lmapupdown{} &  & \lmapupdown{} \\
  (z_1,z_2) & \sbmapright{T_d} & (z_1^{(d)}, z_2^{(d)}) \\
 \rmapdown{M_2^{-1}} &  & \rmapdown{M_2^{-1}} \\
  (x, y) & \sbmapright{f_d} & (X, Y) \\
\rmapdown{\tilde{F}} &  & \rmapdown{\tilde{F}} \\
  (p, q) & \sbmapright{g_d} & (P_d, Q_d) .
 \end{array}
\end{equation}
Set \quad \(t_1 =  e^{i(\alpha+\beta)}, \ t_2 = e^{i(\alpha-\beta)}, \ t_3 =  e^{-2i\alpha}, \) \ for  \(\alpha, \beta \in   {\mathbb C}.\)   Then
%We consider the points These points can be written as 
%%%%%%%%%%%%%%%%%%%%%%%%equation 3.8
\begin{equation}
z_1 =  e^{i(\alpha+\beta)} +  e^{i(\alpha-\beta)} +  e^{-2i\alpha} \ \mbox{and} \ z_2 = e^{-i(\alpha+\beta)} +  e^{-i(\alpha-\beta)} +  e^{2i\alpha}.
\end{equation}
%The point \(z_1\)  
%lies  inside or on the deltoid in \({\mathbb R}^2\)  (See \cite[Proposition 2.1]{U2}). 
%Set \ \(x = Re\ z_1\) \ and  \ \(y = Im\ z_1\).   
Hence by (2.8) we have
\[x = \cos(\alpha + \beta) +  \cos(\alpha - \beta) +  \cos2\alpha, \quad  y = \sin(\alpha + \beta) +  \sin(\alpha - \beta) -  \sin2\alpha.\]
Since \ \(p(x, y) = x^2 + y^2 \) \ and  \ \(q(x, y) = x^3 - 3xy^2 \) , it follows that 
%%%%%%%%%%%%%%%%%%%%%%%%equation 
\[p(x, y) = 3 + 2\cos(3\alpha - \beta) + 2 \cos 2\beta + 2 \cos(3\alpha + \beta) ,\]
\[q(x, y) = \cos6 \alpha + 2\cos\beta(5 \cos 3 \alpha + \cos(3 \alpha - 2 \beta) + 6 \cos\beta + \cos(3 \alpha + 2 \beta)).\]
Using trigonometrical identities, we can rewrite \ \(p(x, y)\) \ and \  \(q(x, y)\) \ as 
%%%%%equation 3.9
\begin{equation}
\begin{split}
p = 1 + 4\cos 3\alpha\cos \beta + 4(\cos \beta)^2\qquad \qquad\\
q = \frac 12(-2 + 4(\cos 3\alpha)^2 +  24(\cos \beta)^2 + 4\cos 3\alpha\cos \beta(3 + 4(\cos \beta)^2).
\end{split}
\end{equation}
Note that all the identities which we use to get (3.9) are valid for complex variables \(\alpha\) and \(\beta\).
Then setting \ \(u = 2\cos 3\alpha\) \ and \ \(v = 2\cos\beta\), we have
%%%%%equation 3.10
\begin{equation}
\begin{split}
p(u, v)  = 1 + uv + v^2,\\
q(u, v) = \frac 12(-2 + u^2 + 6v^2 + uv(3+v^2)).
\end{split}
\end{equation}
We can think of this as the mapping \ \(G_ {{\mathbb C}^2} :  {\mathbb C}^2 \to {\mathbb C}^2\) \ defined by  
\[G_{ {\mathbb C}^2}(u, v) = (1 + uv + v^2, \ \frac 12(-2 + u^2 + 6v^2 + uv(3+v^2)).\]
%%%%%%%%%%%%%%%%%%%%%%%%%%%%%%%%%%%%%%Proposition 3.2
\begin{pro} \label{pro3.1}
Under the above notations, we have 
\(G_{{\mathbb C}^2}({\mathbb C}^2) = {\mathbb C}^2\).   
\end{pro}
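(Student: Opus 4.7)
The plan is to use the commutative diagram (3.7) together with the explicit $(u,v)$-parametrization established in (3.8)--(3.10). Fix any target $(p_0,q_0) \in \mathbb{C}^2$; it suffices to exhibit $(u_0,v_0) \in \mathbb{C}^2$ with $G_{\mathbb{C}^2}(u_0,v_0)=(p_0,q_0)$.

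First I would observe that the composition $(t_1,t_2,t_3) \mapsto (p,q)$ along the left column of (3.7) is surjective onto $\mathbb{C}^2$. Given any $(z_1,z_2) \in \mathbb{C}^2$, the polynomial $t^3 - z_1 t^2 + z_2 t - 1$ has three roots in $\mathbb{C}^*$ (none zero, because the constant term is $-1$) whose product equals $1$, so $\Phi$ is surjective onto $\mathbb{C}^2$; the linear map $M_2^{-1}$ is invertible; and $\tilde{F}$ is surjective by Theorem \ref{theorem1}(1). Hence there exist $(t_1,t_2,t_3) \in (\mathbb{C}^*)^3$ with $t_1 t_2 t_3 = 1$ mapping to $(p_0,q_0)$.

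Next I would lift $(t_1,t_2,t_3)$ to $(\alpha,\beta) \in \mathbb{C}^2$ via the parametrization $t_1 = e^{i(\alpha+\beta)}$, $t_2 = e^{i(\alpha-\beta)}$, $t_3 = e^{-2i\alpha}$. Since the complex exponential is surjective onto $\mathbb{C}^*$, I can pick $\alpha$ with $e^{-2i\alpha}=t_3$ and $\beta$ with $e^{2i\beta}=t_1/t_2$. Then $(e^{i(\alpha+\beta)})^2 = e^{2i\alpha}\cdot e^{2i\beta} = (t_1 t_2)(t_1/t_2) = t_1^2$, so $e^{i(\alpha+\beta)} = \pm t_1$, and the sign can be flipped by replacing $\alpha$ with $\alpha+\pi$ (which leaves both $e^{-2i\alpha}$ and $e^{2i\beta}$ unchanged). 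Setting $u_0 := 2\cos 3\alpha$ and $v_0 := 2\cos\beta$, the trigonometric identities producing (3.10)---valid for complex $\alpha,\beta$ as noted after (3.9)---yield $G_{\mathbb{C}^2}(u_0,v_0)=(p_0,q_0)$, completing the argument.

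The only real obstacle is the sign matching in the second step, and it is resolved by the $\alpha \mapsto \alpha+\pi$ freedom. If one prefers to avoid the diagram altogether, a direct algebraic approach also works: substituting $uv = p-1-v^2$ (assuming $v \neq 0$) into the second defining equation of $G_{\mathbb{C}^2}$ and clearing denominators gives a cubic $V^3 - (p+3)V^2 + (2q+3-p)V - (p-1)^2 = 0$ in $V = v^2$, which has a complex root for every $(p,q)$; the degenerate case $v = 0$ forces $p = 1$, and one then solves $u^2 = 2q+2$.
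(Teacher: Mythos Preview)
Your argument is correct and follows the same route as the paper: surjectivity of $\tilde{F}$, $M_2$, and $\Phi$ reduces the problem to showing that every triple $(t_1,t_2,t_3)$ with $t_1t_2t_3=1$ arises from some complex $(\alpha,\beta)$, after which (3.9)--(3.10) yield the conclusion. You supply more detail than the paper on the sign-matching step (the $\alpha\mapsto\alpha+\pi$ fix) and append a direct elimination argument as an alternative, both of which are correct.
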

%%%%%%%%%%%%%%%%%%%%%%%%%%%proof
\begin{proof}
It suffices to prove that the polynomial parametrization \(G_{{\mathbb C}^2}({\mathbb C}^2) \)   covers all points of \({\mathbb C}^2\).   Since \(\tilde{F}\)  is surjective, for any 
\((p, q) \in {\mathbb C}^2\),  there exists an element  \((x, y) \in {\mathbb C}^2\) satisfying  \(\tilde{F}(x, y) = (p, q)\).  Then there exists an element  \(^t(z_1, z_2) = M_2\ ^t(x, y)\).  Since \ \(\alpha, \beta \in  {\mathbb C}\),  the parametrization \ \(t_1 = e^{i(\alpha+\beta)}, \ t_2 = e^{i(\alpha-\beta)}, \ t_3 = e^{-2\alpha i}\) \ covers all points of \ \(\{t_1, t_2, t_3\}\) \ with \(t_1t_2t_3 = 1\).  Hence we have an element \ \((\alpha, \beta) \in  {\mathbb C}^2\) such that the two elements \((x, y)\) and \((\alpha, \beta)\) satisfy  
\[x = \cos(\alpha + \beta) +  \cos(\alpha - \beta) +  \cos2\alpha, \quad  y = \sin(\alpha + \beta) +  \sin(\alpha - \beta) -  \sin2\alpha.\]
Then \ \(G_{{\mathbb C}^2}(2\cos 3\alpha, 2\cos\beta) = (p, q).\)
\end{proof}

Through we have a simpler proof in this case \(n = 2\),  we note this proof because  this proof is valid for \ \(n = 2, 3.\) See Section 4.
%We can show that  \ \(G_{{\mathbb C}^2}({\mathbb C}^2) =  {\mathbb C}^2\).  Then we have a polynomial parametrization  \(G_{ {\mathbb C}^2}\) of  an affine algebraic variety \( {\mathbb C}^2\)  that covers all points of \( {\mathbb C}^2\) .

 For any invariant  affine algebraic variety \(V\)  in \({\mathbb C}^m\) under \(g_d\),   we define the set of bounded orbits of \  \(g_d\mid_{V}\) \ by
\[K(g_d\mid_{V}) = \{w \in V : \{g_d^k(w) : k = 1, 2, \cdots \} \  \mbox{is bounded in }\ {\mathbb C}^m\}.\]
Let \(T_d(z)\) \ be the \(d^{th}\) Chebyshev polynomial (See \cite[\S6.2]{S1}) satisfying \ \(T_d(s + s^{-1}) = s^d + s^{-d}\), \ with \ \(z = s + s^{-1}\).
%%%%%%%%%%%%%%%%%%%%%%%%%%%theorem 3.3
\begin{theorem} \label{theorem2b}
Let \(g_d\) \ be a morphism on \ \( { \mathbb C}^2 / D_{3}\) in Definition 2.4.
\[(1) \ g_d (G_{ {\mathbb C}^2}(u, v)) = G_{ {\mathbb C}^2}(T_d(u), T_d(v)), \quad \mbox{for any} \quad (u, v ) \in  {\mathbb C}^2.\qquad \qquad \qquad \qquad \qquad\]
%%%%%equation 
\[(2) \ K(g_d\mid_ {\mathbb C^2}) = \{ G_{ {\mathbb C}^2}(2\cos 3\alpha, \ 2\cos \beta) : 0 \le 3\alpha, \beta < 2\pi\}, \ \mbox{for} \ d \ge 2. \qquad \qquad \qquad\]
\end{theorem}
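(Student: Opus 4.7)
The plan is to exploit the complex parametrization $(t_1, t_2, t_3) = (e^{i(\alpha+\beta)}, e^{i(\alpha-\beta)}, e^{-2i\alpha})$ that was used to derive $G_{\mathbb{C}^2}$ in (3.10), together with the commutative diagram (3.7). Since $T_d$ raises each $t_j$ to the $d$-th power, at the level of parameters it corresponds to $(\alpha, \beta) \mapsto (d\alpha, d\beta)$; accordingly $u = 2\cos 3\alpha$ and $v = 2\cos\beta$ transform to $2\cos 3d\alpha = T_d(u)$ and $2\cos d\beta = T_d(v)$ by the defining identity of the Chebyshev polynomials. This observation underpins both parts.

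For Part (1), I fix real $(\alpha, \beta)$ and chase $(t_1, t_2, t_3)$ through the two paths of (3.7). The down-then-right path yields $g_d(G_{\mathbb{C}^2}(u, v))$; the right-then-down path yields the image of the parameters $(d\alpha, d\beta)$, namely $G_{\mathbb{C}^2}(T_d(u), T_d(v))$. Commutativity of (3.7) gives the identity on the real box $[-2, 2]^2$. Since both sides of (1) are polynomial in $(u, v) \in \mathbb{C}^2$ and $[-2, 2]^2$ is Zariski-dense in $\mathbb{C}^2$, the identity extends to all of $\mathbb{C}^2$ by applying Proposition 5 in \cite[Chapter 1, \S1]{CLS} to the difference.

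Part (2) has two inclusions. The inclusion $\supseteq$ is immediate from (1): if $u, v \in [-2, 2]$ then $g_d^k(G_{\mathbb{C}^2}(u, v)) = G_{\mathbb{C}^2}(T_d^k(u), T_d^k(v))$ lies in the continuous image $G_{\mathbb{C}^2}([-2, 2]^2)$ of a compact set, hence is bounded. For the converse, given $(p, q) \in K(g_d|_{\mathbb{C}^2})$ I choose any preimage $(u_0, v_0) \in \mathbb{C}^2$ using the surjectivity of $G_{\mathbb{C}^2}$ (Proposition 3.1); then the forward orbit is $G_{\mathbb{C}^2}(T_d^k(u_0), T_d^k(v_0))$. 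If $G_{\mathbb{C}^2}$ is proper this forces $(T_d^k(u_0), T_d^k(v_0))$ to be bounded, and the classical fact $K(T_d) = [-2, 2]$ for $d \ge 2$ then yields $u_0, v_0 \in [-2, 2]$; writing $u_0 = 2\cos 3\alpha$ and $v_0 = 2\cos\beta$ with $3\alpha, \beta \in [0, 2\pi)$ closes this direction.

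The main obstacle is thus the properness of $G_{\mathbb{C}^2}$. My intended proof is to eliminate $u$ from $G_{\mathbb{C}^2}(u, v) = (p, q)$ via $uv = p - 1 - v^2$ (handling $v = 0$ separately, where $p = 1$ and $u^2 = 2q + 2$), producing a monic degree-six polynomial in $v$ whose coefficients are polynomial in $(p, q)$; standard root bounds then control $|v|$ by $|p|, |q|$, after which $|u| = |p - 1 - v^2|/|v|$ is controlled as well, with the $v \to 0$ regime giving $|u|^2 \sim 2|q|$. This simultaneously identifies $G_{\mathbb{C}^2}$ as a finite morphism (generic fibre of six points, cubic in $v^2$), so that properness in the Euclidean topology follows and the argument is complete.
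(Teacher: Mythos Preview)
Your argument is correct. For Part~(1) the paper proceeds in essentially the same way: it uses the parametrisation $t_1=e^{i(\alpha+\beta)}$, $t_2=e^{i(\alpha-\beta)}$, $t_3=e^{-2i\alpha}$ and the fact that $T_d$ corresponds to $(\alpha,\beta)\mapsto(d\alpha,d\beta)$, obtaining (3.11) directly. The only cosmetic difference is that the paper works with complex $\alpha,\beta$ from the start (it has already noted that the trigonometric identities leading to (3.9) hold for complex arguments), whereas you first establish the identity on $[-2,2]^2$ and then pass to all of $\mathbb C^2$ by Zariski density; both extensions are immediate.

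For Part~(2) your route is genuinely different from the paper's. The paper writes $u=t_1+t_1^{-1}$, $v=t_2+t_2^{-1}$ with $|t_j|=r_j\ge 1$, expresses the iterated coordinates $p_n,q_n$ via (3.11), and argues by a dominant-term case analysis on the pair $(r_1,r_2)$ that if $(r_1,r_2)\ne(1,1)$ then either $p_n\to\infty$ or $q_n\to\infty$. You instead prove that $G_{\mathbb C^2}$ is proper (indeed finite: your elimination gives a monic sextic for $v$ over $\mathbb C[p,q]$, and substituting $uv=p-1-v^2$ into the $q$-equation yields $u^2=2q+2-6v^2-(p-1-v^2)(3+v^2)\in\mathbb C[p,q,v]$, so $u$ is integral as well --- this is the clean way to handle your ``$v\to 0$'' case uniformly), lift boundedness to the $(u,v)$ level, and invoke the classical $K(T_d)=[-2,2]$. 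Your approach is more modular and transports without change to the analogous Theorem~4.8(2), whereas the paper's direct estimate avoids citing the one-variable result and any appeal to properness of finite morphisms.
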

%%%%%%%%%%%%%%%%%%%%%%%%%%%proof
\begin{proof}
(1)  We consider the parametrization \ \(p(u, v)\) \ and \ \(q(u, v)\) \ in (3.10).  Set
\[g_d(p(u, v), \ q(u, v)) = (P_d(u, v), \ Q_d(u, v)).\] 
Setting \ \(s = e^{i3\alpha}\),  we have \ \(s + s^{-1} = 2\cos 3\alpha.\)  Hence \ \(T_d(2\cos 3\alpha) = 2\cos 3d\alpha\).   Also we have \ \(T_d(2\cos \beta) = 2\cos d\beta\).  

Note that \ \(T_{d} (z_1,  z_{2})  = (z_1^{(d)},   z_{2}^{(d)})\) \ in (3.7).  The map \(T_d\) gives rise to a map : \ \((t_1, t_2, t_3) \mapsto  (t_1^d, t_2^d, t_3^d)\) \  such that \ \(\Phi (t_1^d, t_2^d, t_3^d) = (z_1^{(d)},   z_{2}^{(d)}).\) \ Since \(z_1\) and \(z_2\) are  written as (3.8), we have 
\[z_1^{(d)} =  e^{id(\alpha+\beta)} +  e^{id(\alpha-\beta)} +  e^{-2di\alpha},\]
\[z_2^{(d)} =  e^{-id(\alpha+\beta)} +  e^{-id(\alpha-\beta)} +  e^{2di\alpha}.\]
Set \ \(f_d(x, y) = (X, Y).\)  So, \ \(^t(X, Y) = M_2^{-1}\ ^t(z_1^{(d)}, z_2^{(d)}).\)
%That is, \(X = Re\ z_1^{(d)}\) \ and \(Y = Im\ z_1^{(d)}\). 
\ Then we get \ \((X, Y)\) by substituting  \ \(d\alpha\) for \(\alpha\) and \(d\beta\) for \(\beta\) in \((x, y)\). 

And also we get \ \(P_d\) and \(Q_d\) by substituting \(d\alpha\) for \(\alpha\) and \(d\beta\) for \(\beta\)  in \ \((p, q)\) in (3.9).
 Therefore we have
%%%%%equation 3.11
\begin{equation}
\begin{split}
P_d(u, v) = 1 + T_d(u)T_d(v) + T_d(v)^2, \\
Q_d(u, v) = \frac 12(-2 + T_d(u)^2 + 6T_d(v)^2 + T_d(u)T_d(v)(3+T_d(v)^2)),
\end{split}
\end{equation}
for every \ \((u, v) \in { \mathbb C}^2.\)  \\
%Clearly \ \([-2, 2]\) \ is an infinite subset of  \({ \mathbb C}\) and \ \(P_d(u, v)\) \ and \  \(Q_d(u, v)\) \ are polynomials.  Then by Corollary 4.6 in \cite[Chapter V, \S4]{Ls},  we see that (3.11) holds for any \ \((u, v) \in { \mathbb C}^2\).\\
(2)  Let \ \(u = t_1 + t_1^{-1}\) \ and \ \(v = t_2 + t_2^{-1}\), \ with \ \(t_j = r_je^{i\theta_j}, \ r_j \geq 1, \ \theta_j \in { \mathbb R},\) \ for \ \(j = 1, 2.\)  Then \ \(T_d(u) = t_1^d + t_1^{-d}\).  Let  \ \(T_d^n(u)\) \ denote the \(n\)-th iterate of \(T_d\).  Then  \ \(T_d^n(u) = t_1^{d^n} + t_1^{{-d}^n}\).  Set
\[g_d^n(p(u, v), \ q(u, v)) = (p_n(u, v), \ q_n(u, v)).\] 
We assume that \ \(\{p_n(u, v) : n = 1, 2, \dots\}\) \ is bounded.  We suppose that \ \(r_1 \ne r_2\).  We assume \ \(r_1 > r_2  \geq 1\).  Then the dominant term of \ \(p_n(u, v)\) \ is \ \(T_d^n(u)T_d^n(v)\) \ and \ \(\lim_{n\to\infty}T_d^n(u)T_d^n(v) = \infty.\)  A contradiction.  Also from the assumption \  \(r_2 > r_1  \geq 1\),  we have a contradiction.  Then \ \(r_1 = r_2  \geq 1\).  We suppose that \ \(r_1 = r_2 > 1\).  Then 
 \(\lim_{n\to\infty}q_n(u, v) = \infty.\)   Hence if \ \(g_d^n(p(u, v), \ q(u, v)) : n = 1, 2, \dots\}\) \ is bounded, then \ \(r_1 = r_2 = 1\).
\end{proof}
The  set \ \(K(g_d\mid_{{\mathbb C}^2})\) \ is a closed region lying on the real slice \ \({\mathbb R}^2\) \ in \ \({\mathbb C}^2\).  It is invariant under \(g_d\).  It is depicted in Figure 1.
%%%%%%%%%%%%%%%%%%%%%%%%%%%%figure 1
\begin{figure}
\includegraphics[scale=0.7]{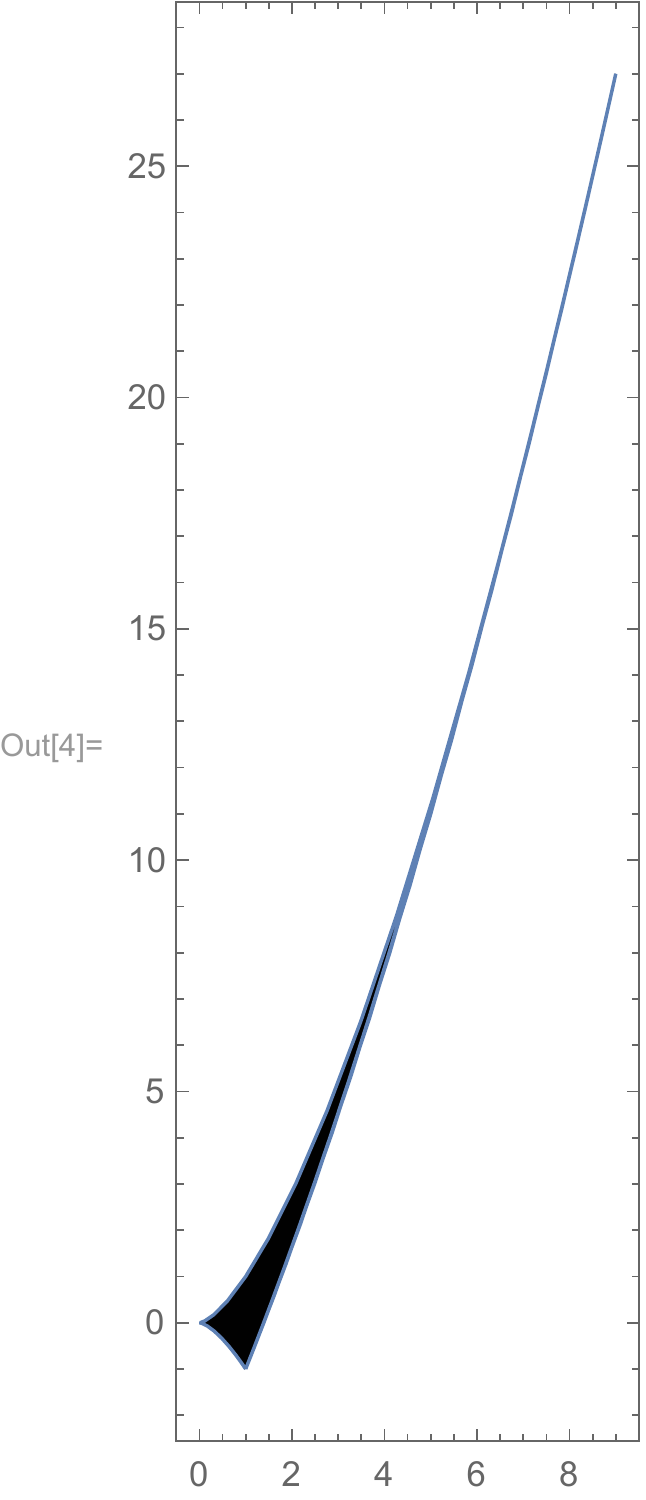}\hspace{2.5cm} \\
\caption{ An invariant closed domain.}
\label{Fig.1}
\end{figure}
It is known that Chebyshev endomorphism  \( P_{A_n}^d\)  on  \({\mathbb C}^n\) \ is postcritically finite.  More precisely,  the set of critical values of \( P_{A_n}^d\)  is invariant under \( P_{A_n}^d\)  (See \cite{U2} and \cite{U3}).
 
Inspired by the above result, we consider the branch points of the morphisms \(g_d\).  

We recall Theorem 2.8.  We use the notations in the theorem.
A point \ \(y \in Y\) \ is called a branch point of \(f\) if the number of inverse images of \(y\) under  \(f\) is less than the degree of  \(f\).  In this paper we use "a branch point" in place of   "a ramification point" in \cite[II, \S5]{Sh}.

We consider the inverse images of \((a, b)\) under the morphism  \(g_2\)  in (3.1).  Let \ \((P, Q)\) \  denote the right hand side of (3.1).  We consider the resultants
\[\mbox{ Res}(P-a, Q-b, q) \ \mbox{ and \ Res}(P-a, Q-b, p) \ \mbox{of} \  P-a \ \mbox{ and }\ Q-b\]
with respect to  \(q\) or  \(p\).\\
By direct computations, we have
\[\mbox{ Res}(P-a, Q-b, q) = 32 a + 2 a^2 - 16 b - 128 p + 8 a p + 96 p^2 - 4 a p^2 - 24 p^3 + 
 2 p^4\]
and 
\[\mbox{ Res}(P-a, Q-b, p) = 192 a^2 - a^3 - 256 b - 36 a b + b^2 - 2048 q + 864 a q - 24 a^2 q\]
\[ -  152 b q + 768 q^2 - 108 a q^2 - 4 b q^2 - 96 q^3 + 4 q^4.\]
Then \ \(g_2\)  is finite (See \cite[p.48]{Sh}).  By Proposition 2.9(2), we have  \ deg \ \(g_2 = 4.\)
To get the branch points of \ \(g_2\),  we compute the discriminant of\  Res\((P-a, Q-b, q)\) \ with respect to \(p\).  The discriminant is written in the form : \ \(-1048576(-27 + 18a + a^2 - 8b)(a^3 - b^2)\).
We have two affine varieties \ \(V(-27 + 18a + a^2 - 8b)\) \ and 
 \ \(V(a^3 - b^2)\).
We   denote the  cuspidal cubic \ \(V(p^3 - q^2)\) \ by  \(C_C\) \ and denote the parabola \ \(V(-27 + 18p + p^2 - 8q)\) \ by  \(C_D\). \ We will show that two curves \(C_C\)  and  \(C_D\) are branch locus of \(g_2\).  Let  \(O\) denote the origin \ \((0, 0)\)  and \(S\) denote the points of intersection \ \(C_C \cap C_D\) \ in \ \( {\mathbb C}^2\).  That is, \ \(S = \{(9, 27), (1, -1)\}\).
%%%%pro3.4
 \begin{pro} \label{pro3.2}
Under the above notations we have the followings : \\
(1) If \ \((a, b)  \in C_C\setminus\{O \cup S\}, \ \mbox{then } \   \sharp \ g_2^{-1}(a, b)= 3.\)  Two points lie on \(C_C\)  and one point with multiplicity 2 lies on \ \(V(3p - q -4)\).\\
(2)  If \ \((a, b)  \in C_D\setminus S, \ \mbox{then } \   \sharp \ g_2^{-1}(a, b)= 3.\)    Two points lie on \(C_D\)  and one point with multiplicity 2 lie on \ \(V(p - 1)\).\\
(3)  \( \sharp \ g_2^{-1}(p, q)  = 2,\)  \ for any point \ \((p, q)\) in \ \(O \cup S\).
 \end{pro}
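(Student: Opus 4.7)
The strategy is to combine the resultant
$R(p) := \tfrac{1}{2}\,\mathrm{Res}(P-a,\,Q-b,\,q)$
displayed above with an explicit description of the critical locus of $g_2$. Since $g_2$ is finite of degree $4$ by Proposition \ref{pro2.9}, the set $g_2^{-1}(a,b)$ is in bijection with the roots of $R(p)$, the $q$-coordinate of each preimage being recovered from $q = (4p+p^2-a)/4$. The multiplicity of a root of $R(p)$ equals the intersection multiplicity of $V(P-a)$ and $V(Q-b)$ at the corresponding point, hence equals the multiplicity asserted in the statement.

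A direct computation of the Jacobian of $g_2$ gives
\[\det\operatorname{Jac}(g_2) \;=\; -8\,(p-1)\,(3p-q-4),\]
so the critical locus of $g_2$ is $\ell_P \cup \ell_A$, where $\ell_P := V(p-1)$ and $\ell_A := V(3p-q-4)$. Substituting the free coordinate into $g_2$ on each line shows that $g_2(\ell_P)$ is parametrized by $(5-4q,\,2q^2-14q+11)$, which satisfies the equation of $C_D$, while $g_2(\ell_A)$ is parametrized by $((p-4)^2,\,-(p-4)^3)$, which satisfies the equation of $C_C$. These two lines are precisely the source of the ``extra'' preimages of multiplicity $2$ in parts (1) and (2).

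For (1), parametrize $C_C$ by $(a,b)=(T^2,T^3)$. Substituting into $R(p)$ and using the ansatz that $p=4-T$ is a double root yields the factorization
\[R(p) \;=\; \bigl(p^2-(4+2T)p+T^2\bigr)\bigl(p-(4-T)\bigr)^2,\]
which one verifies by matching coefficients of every power of $p$. The quadratic factor has discriminant $16(1+T)$ and shares a root with $p-(4-T)$ exactly when $4T(T-3)=0$. Thus for $T \notin\{-1,0,3\}$, equivalently $(a,b) \notin O\cup S$, the resultant has three distinct roots. Setting $s:=\sqrt{1+T}$, the two simple roots are $p_{\pm}=(s\pm 1)^2$, and computing $q$ via $q=(4p+p^2-a)/4$ yields $q_{\pm}=\pm(s\pm 1)^3$, so both simple preimages satisfy $q^2=p^3$ and lie on $C_C$. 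The double root $p=4-T$ yields $q=8-3T$, so $3p-q-4=0$ and this preimage lies on $\ell_A$, as claimed.

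Part (2) is entirely analogous: parametrize $C_D$ by $b=(a^2+18a-27)/8$ and verify
\[R(p) \;=\; (p-1)^2\bigl(p^2-10p+27-2a\bigr);\]
the quadratic has discriminant $8(a-1)$ and has $p=1$ as a root iff $a=9$, so for $a\notin\{1,9\}$, equivalently $(a,b)\notin S$, the three distinct roots are $p=1$ (double) together with $p=5\pm\sqrt{2(a-1)}$ (simple); substituting and computing $q$ verifies directly that the two simple preimages lie on $C_D$ and the double preimage lies on $\ell_P$. For part (3) these factorizations degenerate precisely at the three exceptional points: at $O=(0,0)$ one finds $R(p)=p(p-4)^3$; at $(9,27)\in S$ the $C_D$-factorization becomes $R(p)=(p-1)^3(p-9)$; and at $(1,-1)\in S$ both factorizations collapse to $R(p)=(p-1)^2(p-5)^2$. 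Each has exactly two distinct roots, giving $\sharp\, g_2^{-1}(a,b)=2$. The only conceptual step beyond routine algebra is identifying the factorizations, which is made natural by the prior decomposition of the critical locus into $\ell_P$ and $\ell_A$; once those are in hand, everything is verification.
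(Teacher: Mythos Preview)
Your proof is correct and is precisely the ``direct computation'' the paper invokes without details. Your preliminary step of computing $\det\operatorname{Jac}(g_2)=-8(p-1)(3p-q-4)$ and identifying the images $g_2(\ell_P)\subset C_D$, $g_2(\ell_A)\subset C_C$ is a clean organizing device that motivates the factorization ans\"atze for $R(p)$; all subsequent identities check out exactly as you state.
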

%%%%%%%%%%%%%%%%%%%%%%%%%%%proof
\begin{proof}
 By direct computations, we can prove this proposition.
\end{proof}

Next we consider the inverse images of \ \((a, b)\) \ under the morphism  \(g_3\) in (3.2).  Let  \((P, Q)\) \ denote the right hand side of (3.2).   The resultant \ \(\mbox{ Res}(P-a, Q-b, q)\) \ is a polynomial in  \(p\) of degree 9 over \ \( { \mathbb Z}[x, y] .\)  Its leading term is \ \(-4p^9\).  The resultant \ \(\mbox{Res}(P-a, Q-b, p)\) \ is also a polynomial in \(q\) whose leading term is \ \(64q^9\).
Then   \(g_3\)  is a finite map.  By Proposition 2.9(2), we have   \ deg \ \(g_3 = 9.\)   The discriminant of 
\ \(\mbox{Res}(P-a, Q-b, q)\) \ is written in the form \ \(n(a-1)^6(-27 + 18a + a^2 - 8b)^3(a^3 - b^2)^3\),  where \(n\) is a negative integer. 

We consider the case \ \(a = 1\).  If \ \(a = 1\),  the resultant \ \(\mbox{ Res}(P-a, Q-b, q)\) \ has a zero  \ \(p = 1\) \ with multiplicity 3.  But if  \ \(a = 1\) \ and \(b \ne \pm1\),  then \ \( \sharp \ g_3^{-1}(1, b)  = 9.\)   We compute \ \( \sharp \ g_3^{-1}(1, 1)  = 6\) \  and \ \( \sharp \ g_3^{-1}(1, -1)  = 4.\)   Clearly, the points \ \((1, \pm 1)\) \ belong to \ \(V(a^3 - b^2)\).
%%%%pro3.5
 \begin{pro} \label{pro3.4}
 We have the followings :\\
(1)  If \  \( (a, b) \in C_C\setminus\{O \cup S\} ,\)   then  \ \( \sharp \ g_3^{-1} (a, b) = 6\).  Three  points lie on \(C_C\)  and other three points with multiplicity 2 lie on \ \(V(-27 + 54 p - 27 p^2 + p^3 - 18 q + 18 p q - 4 q^2)\).\\
(2)  If \  \( (a, b) \in C_D\setminus  S\) ,   then  \ \( \sharp \ g_3^{-1}(a, b) = 6\).  Three  points lie on \(C_D\)  and other three points with multiplicity 2 lie on \ \(V(p^2 - 2q)\).\\
(3)  \( \sharp \ g_3^{-1}(0, 0)  = 3,\) \ \( \sharp \ g_3^{-1}(9, 27)  = 3,\) \ and \ \( \sharp \ g_3^{-1}(1, -1)  = 4.\) 
 \end{pro}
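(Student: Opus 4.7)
The plan is to mirror the strategy used for $g_2$ in Proposition 3.3, taking advantage of the fact that by Proposition 2.9(2) and the finiteness of $g_3$, each generic fibre carries $\deg g_3 = 9$ points counted with multiplicity. The discriminant of $\mathrm{Res}(P-a,\,Q-b,\,q)$ with respect to $p$ has already been exhibited as $n(a-1)^6(-27+18a+a^2-8b)^3(a^3-b^2)^3$, and the locus $a=1$ has been seen to contribute no genuine branching (since $\sharp g_3^{-1}(1,b) = 9$ for $b \ne \pm 1$). Hence the only candidates for proper branching of $g_3$ are the curves $C_C$ and $C_D$, together with a finite number of special coincidence points.

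For part (1) I would exploit that $C_C$ is $g_3$-invariant; the polynomial parametrization $P_{C_C}$ of (1.3) conjugates $g_3|_{C_C}$ to the Chebyshev polynomial $T_3$ on $\mathbb C$, so the restriction has degree $3$. Therefore over any $(a,b) \in C_C \setminus \{O \cup S\}$, three preimages already lie on $C_C$, and the remaining six, accounted for by the factor $(a^3-b^2)^3$ of the discriminant, must consist of three doubled points. To pin down the auxiliary curve they fill out, I would substitute the cuspidal parametrization $a = t^2,\ b = t^3$ into $\mathrm{Res}(P-a,\,Q-b,\,q)$, factor the resulting polynomial in $p$, and check that the three doubled roots satisfy $-27 + 54p - 27p^2 + p^3 - 18q + 18pq - 4q^2 = 0$ after eliminating $t$ via the companion resultant that recovers $q$.

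Part (2) is structurally identical: $C_D$ is $g_3$-invariant and $g_3|_{C_D}$ is conjugate to $T_3$ through the parametrization $P_{C_D}$ of (1.3), giving three preimages on $C_D$; substituting the rational parametrization $a = t,\ b = (t^2 + 18t - 27)/8$ of the parabola into the same resultant and factoring, one identifies the three doubled roots as lying on $V(p^2 - 2q)$.

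Part (3) treats the three exceptional base points $O$, $(9,27)$, $(1,-1)$, where components of $C_C$, $C_D$ and the auxiliary curves coalesce. For each point I would solve the system $P(p,q) = a,\ Q(p,q) = b$ directly, using the explicit resultants already computed and, if needed, a Gröbner basis elimination to extract the distinct solutions. The main obstacle is the multiplicity bookkeeping: at these three points the simple preimages on $C_C$ or $C_D$ collide with the doubled preimages off those curves, which is precisely why the counts drop from the generic $6$ to $3,\,3,\,4$. This local analysis of how the two branches intersect at $O$, $(9,27)$, and $(1,-1)$ is the one genuinely delicate step, but it is settled by a finite direct verification.
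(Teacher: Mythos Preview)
Your proposal is sound, and in fact considerably more detailed than the paper's own argument, which consists of the single line ``By direct computations, we can prove this proposition.'' The paper offers no further structure: it simply asserts that solving the system $P(p,q)=a$, $Q(p,q)=b$ explicitly (via the resultants already displayed before the proposition) yields the stated counts and loci.

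Your route is compatible with this but adds a conceptual layer. You use the $g_3$-invariance of $C_C$ and $C_D$ together with the conjugacy $g_d\circ P_V = P_V\circ T_d$ from (1.3) to predict in advance that three preimages must lie on the branch curve itself, and then use the discriminant factorization to infer that the remaining six collapse to three doubled points on an auxiliary curve. This is a genuine gain in understanding over a bare computation, and it makes the appearance of the specific auxiliary curves $V(-27+54p-27p^2+p^3-18q+18pq-4q^2)$ and $V(p^2-2q)$ less mysterious. The trade-off is that you are invoking the invariance statements (Theorems \ref{theorem3} and \ref{theorem4}) which in the paper's ordering come \emph{after} this proposition; there is no circularity, since those theorems do not depend on the present proposition, but if you want to keep the paper's order you would need either to reorganize or to fall back on the pure resultant computation at this point. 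Either way, the final verification of the auxiliary curves and the special fibres in part (3) is, as you acknowledge, a finite direct check---exactly what the paper claims and no more.
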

%%%%%%%%%%%%%%%%%%%%%%%%%%%proof
\begin{proof}
 By direct computations, we can prove this proposition.
\end{proof}
%%%%pro3.6
 \begin{pro} \label{pro3.5}
(1)  The branch locus of  \ \(g_d : {\mathbb C}^2 \to {\mathbb C}^2\) \  consists of \ \(C_C\) \ and \ \(C_D\), for \ \(d = 2, 3.\)\\
(2)  The branch locus of \ \(\tilde{F} : {\mathbb C}^2 \to V_F\)  is \(C_{C}\).
 \end{pro}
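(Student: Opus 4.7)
The plan is to combine the explicit discriminant computations already displayed in the text with the fiber-count Propositions \ref{pro3.2} and \ref{pro3.4} to pin down the branch locus of \(g_d\) for \(d=2,3\), and then to analyze the ramification of the quotient map \(\tilde F\) directly, using the \(D_3\)-action on \(R_2\simeq\mathbb R^2\).

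For part (1), I would first argue that if \((a,b)\notin C_C\cup C_D\) then \((a,b)\) is not a branch point. The resultant \(\mbox{Res}(P-a,Q-b,q)\in\mathbb C[p]\) has degree \(\deg g_d\), and whenever its discriminant does not vanish its roots are simple; pairing each simple root \(p_0\) with the unique common root \(q_0\) of \(P(p_0,q)-a\) and \(Q(p_0,q)-b\) then yields \(\deg g_d\) distinct preimage points, so by Theorem \ref{theorem6} the point \((a,b)\) is unramified. For \(g_2\), the discriminant is (up to a nonzero constant) \((-27+18a+a^2-8b)(a^3-b^2)\), hence nonvanishing off \(C_C\cup C_D\); combined with Proposition \ref{pro3.2}, which shows each point of \(C_C\cup C_D\) has strictly fewer than \(\deg g_2=4\) preimages, the branch locus of \(g_2\) equals \(C_C\cup C_D\).

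The delicate step is the extraneous factor \((a-1)^6\) appearing in the discriminant for \(g_3\). My plan is to show that \(V(a-1)\setminus(C_C\cup C_D)\) contributes nothing to the branch locus: the fiber computation recorded in the text just before Proposition \ref{pro3.4} gives \(\sharp g_3^{-1}(1,b)=9\) for every \(b\neq\pm1\), so a generic point of \(V(a-1)\) attains the full \(\deg g_3=9\) preimages and is not a branch point. Together with Proposition \ref{pro3.4}, which provides strictly fewer than 9 preimages above every point of \(C_C\cup C_D\), this pins down the branch locus of \(g_3\) as \(C_C\cup C_D\).

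For part (2), I would work directly with the quotient map \(\tilde F:\mathbb C^2\to V_F\simeq\mathbb C^2\), which is finite of degree \(|D_3|=6\). By Theorem \ref{theorem6} applied to \(\tilde F\), a point \((p,q)\in V_F\) is a branch value iff some (equivalently every) preimage \((x,y)\in\mathbb C^2\) has a non-trivial \(D_3\)-stabilizer. Using the matrix descriptions of \(R\) (rotation by \(2\pi/3\)) and \(C\) (reflection in the \(x\)-axis) on \(R_2\) from Section 2.1, the locus of points with non-trivial stabilizer consists of the origin together with the three reflection axes through the origin. These three lines form a single \(D_3\)-orbit under \(R\), so their image under \(\tilde F\) is just \(\tilde F(\{y=0\})\); a direct computation gives \(\tilde F(x,0)=(x^2,x^3)\), which parametrizes precisely the cuspidal cubic \(C_C=V(p^3-q^2)\). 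Hence the branch locus of \(\tilde F\) is \(C_C\). The main obstacle is the extraneous \((a-1)^6\) factor in the \(g_3\) discriminant: the resultant-discriminant locus strictly contains the branch locus, and eliminating the spurious component \(V(a-1)\) requires the direct fiber computation at \((1,b)\), \(b\neq\pm1\), rather than the discriminant alone.
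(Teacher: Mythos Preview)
Your argument for part (1) is essentially the paper's argument: the paper's proof of (1) is just ``follows from the above arguments,'' meaning precisely the discriminant computations together with Propositions \ref{pro3.2} and \ref{pro3.4}, including the explicit fiber count $\sharp g_3^{-1}(1,b)=9$ for $b\neq\pm1$ that disposes of the spurious factor $(a-1)^6$. One small point you leave implicit is why a simple root $p_0$ of the resultant determines a \emph{unique} $q_0$: this holds because $P-a$ is linear in $q$ (with leading coefficient $-4$ for $g_2$ and $6-6p$ for $g_3$), so for $p_0\neq 1$ the common root in $q$ is forced; and $p_0=1$ can only occur when $a=1$, which you handle separately.

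For part (2) your approach is correct but genuinely different from the paper's. The paper eliminates $y$ directly: from $p=x^2+y^2$ and $q=x^3-3xy^2$ one obtains $q=4x^3-3px$, and the discriminant of the cubic $4x^3-3px-q$ in $x$ is $432(p^3-q^2)$, so $C_C$ is exactly where $\tilde F$ fails to have the generic number of preimages. Your route instead invokes the general fact that for a finite-group quotient the branch locus is the image of the locus of non-trivial stabilizers, identifies that locus as the three reflection lines, and parametrizes their common image as $(x^2,x^3)$. The paper's computation is shorter and entirely self-contained; your argument is more conceptual and would generalize immediately to other reflection-group quotients without any elimination. Either proof is acceptable here.
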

%%%%%%%%%%%%%%%%%%%%%%%%%%%proof
\begin{proof}
 (1)  This assertion follows from the above arguments.\\
(2)  Recall that  \ \(p = x^2 + y^2\) \ and \ \(q = x^3 - 3xy^2.\)  Then we have a equation \ \(4x^3 - 3px - q = 0.\)  Its discriminant with respect to \(x\) is \(432(p^3 - q^2).\)  Then \(C_C\) is the branch divisor of \(\tilde{F}\).
\end{proof}
In the next subsections we study the dynamics of \ \(g_d\)  on these branch divisors.

%%%%%%%%Subsection3.2
\subsection{\rm{The dynamics of \ \(g_d\) \ on the cuspidal cubic \(C_C\).}}\hspace{0cm}

In the previous subsection, we see the relation between \ \(g_d^{-1}(C_C)\) \ and \(C_C\)  for \ \(d = 2, 3\).  We will prove that \ \(g_d(C_C) = C_C\) \ for any \ \(d \in {\mathbb N}\) \ and show the dynamics of 
\ \(g_d\) \ on  \(C_C\).  A polynomial parametric representation of \ \(C_C = V(p^3 - q^2)\) consists of \ \(p = t^2\) \ and \ \(q = t^3\).
We consider a mapping \ \(G_C :  {\mathbb C} \to {\mathbb C}^2\) \ defined by  
\[G_C(z) = ((1+z)^2, (1+z)^3).\]
%%%%%%%%%%%%%%%%%%%%%%%%theorem 3.7
\begin{theorem} \label{theorem3} 
Let \ \(g_d\) be a morphism on \ \( {\mathbb C}^2/D_3\) in Definition 2.4.\\
(1) \(\ g_d(G_C(z)) = G_C(T_d(z)),\) \ for any \ \(z \in  { \mathbb C} \quad and \quad d \in  { \mathbb N}.\)  \\
 (2) \(g_d(C_C) = C_C, \) \ for any \  \( d \in  { \mathbb N}\).
\end{theorem}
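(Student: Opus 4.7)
The plan is to realize $G_C$ as a restriction of the two-variable parametrization $G_{\mathbb{C}^2}$ from (3.10) and then deduce the theorem from Theorem \ref{theorem2b}(1). The key insight I expect to use is the easy fact that $2$ is a fixed point of every one-variable Chebyshev polynomial, i.e.\ $T_d(2) = 2$ for all $d \in \mathbb{N}$, which follows from $T_d(s + s^{-1}) = s^d + s^{-d}$ with $s = 1$.

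First, I would try to solve $(p(u,v), q(u,v)) = G_C(z) = ((1+z)^2, (1+z)^3)$ in the variables $u, v$. Matching $p(u,v) = 1 + uv + v^2$ with $(1+z)^2 = 1 + 2z + z^2$ strongly suggests the substitution $v = z$, $u = 2$. A direct computation using the formula $q(u,v) = \tfrac{1}{2}(-2 + u^2 + 6v^2 + uv(3 + v^2))$ should verify $q(2, z) = (1+z)^3$, giving the identity
\[
G_C(z) = G_{\mathbb{C}^2}(2, z) \qquad \text{for every } z \in \mathbb{C}.
\]

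With this in hand, part (1) follows by applying Theorem \ref{theorem2b}(1) with $(u, v) = (2, z)$, which yields
\[
g_d(G_C(z)) = G_{\mathbb{C}^2}(T_d(2), T_d(z)) = G_{\mathbb{C}^2}(2, T_d(z)) = G_C(T_d(z)),
\]
using $T_d(2) = 2$ in the middle equality.

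For part (2), I would combine (1) with two elementary surjectivity statements: the map $z \mapsto 1 + z$ is surjective from $\mathbb{C}$ to $\mathbb{C}$, so $G_C(\mathbb{C}) = C_C$ (since $(t^2, t^3)$ with $t \in \mathbb{C}$ sweeps out the cuspidal cubic), and $T_d : \mathbb{C} \to \mathbb{C}$ is surjective. Chaining these gives
\[
g_d(C_C) = g_d(G_C(\mathbb{C})) = G_C(T_d(\mathbb{C})) = G_C(\mathbb{C}) = C_C.
\]
I do not anticipate any real obstacle: once the substitution $(u,v) = (2, z)$ is spotted, both parts reduce to (1) of Theorem \ref{theorem2b} together with a fixed-point property of $T_d$. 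The only mildly creative step is guessing the correct specialization, which is forced by matching leading terms in $z$ after setting $v = z$.
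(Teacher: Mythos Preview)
Your proof is correct and follows essentially the same approach as the paper: both specialize the parametrization $G_{\mathbb{C}^2}$ at $u=2$, obtain $G_C(z)=G_{\mathbb{C}^2}(2,z)$, and then invoke Theorem~\ref{theorem2b}(1) together with $T_d(2)=2$. The only difference is cosmetic---the paper arrives at $u=2$ by setting $\alpha=0$ in the trigonometric parametrization (3.9) after factoring $p^3-q^2=4(\sin 3\alpha)^2(\cos 3\alpha-\cos 3\beta)^2$, whereas you reach it by direct polynomial matching.
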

%%%%%%%%%%%%%%%%%%%%%%%%%%%theorem 3.4 proof begin
\begin{proof}
(1)The proof is similar to the proof of Theorem 3.3.  

Using (3.9) we compute
\[p(x, y)^3 - q(x, y)^2 = 4(\cos 3\alpha - \cos 3\beta)^2(\sin 3\alpha)^2.\]
Since \ \(p^3 = q^2\), \ we assume that \ \(\alpha = 0.\) 
Clearly  \ \(d\alpha = 0.\)  Hence we put \ \(u = 2\) \ in (3.10).  Then from \ \(G_{ {\mathbb C}^2}\) \ we get \(G_C\).
Then  we have the assertion (1).\\
(2)  We consider a polynomial parametrization of \(C_C\) given by \  \(p = t^2\) \ and\  \(q = t^3\).
Equivalently, we consider the mapping \ \(G : {\mathbb C}  \to {\mathbb C}^2 \) \ defined by  
\ \(G(t) = (t^2, t^3)\).  We know that the polynomial parametrization  \ \(G({\mathbb C})\) \ fills up all of \ 
\(V(p^3 - q^2).\) \\
%%%%%%%%%%%%%%%%%%%%%%%%%%%theorem 3.4 proof end
Then the assertion (2) follows from (1).
\end{proof} 
We remark that if we assume that \ \(\cos3\alpha = \cos3\beta\), \ then \ \(p(x, y) = (1 + 2\cos 2\beta)^2, \  q(x, y) = (1 +2\cos 2\beta)^3 \).  Then the similar result holds.

We may view \ \(V(p^3-q^2)\) \ as an affine curve in \({\mathbb C}^2 \).    Then 
by the argument similar to that in the proof of Theorem 3.3 (2), we have
\[K(g_d\mid_{C_C}) =  \{((1+2\cos\theta)^2, \ (1+2\cos\theta)^3) : 0 \le \theta <2\pi\}.\]
The set \ \(K(g_d\mid_{C_C})\) \ is a part of the cuspidal cubic \ \(V(p^3-q^2)\) \ in   \({\mathbb R}^2 \)  containing the cusp point \((0, 0)\).  It is a Jordan arc,  denoted by \(\gamma_1\).  Its end points are \((1, -1)\)  and  \((9, 27)\).  Recall that the set of these two points coincides with the intersection \ \(C_C \cap C_P \) \ in  \ \({\mathbb C}^2 \). 

The dynamics of \ \(g_d\mid_{C_C}\) can be deduced by the dynamics of the Chebyshev map \(T_d : {\mathbb C} \to {\mathbb C}\).  It is known (e. g. in \cite[\S7]{M})  that \ \(K(T_d) = [-2, 2], \ T_d([-2, 2]) = [-2, 2]\), 
and Julia set of \ \(T_d\) is also \ \( [-2, 2]\).  The Jordan arc \(\gamma_1\) corresponds to the interval \([-2, 2]\).  Then all preperiodic points of \ \(g_d\mid_{C_C}\)\ 
lie on \(\gamma_1\).  It is know that \ \(T_2\)  restricted to \ \( [-2, 2]\) \ is conjugate to a typical chaotic map \ \(L(x) = 4x(1-x)\) \ on \ \([0, 1]\) which is studied by Ulam and von Newman \cite{UN}. 
In this meaning, we say that \ \(g_d\mid_{C_C}\)\ is chaotic on \(\gamma_1\).  
%%%%%%%%%%%%%%%%%%%%%%%%%%%%%%%%%%%%%%Corollary 3.8
\begin{cor} \label{cor2}
Under the above notations we have the following :\\
(1) \(K(g_d\mid_{C_C}) = \gamma_1, \ g_d(\gamma_1) =  \gamma_1\) \ and \ \(g_d\mid_{C_C}\)\ is chaotic on \(\gamma_1\).  \\
(2) If \ \(3\not|\ d,\) \  then the cusp point \(O\) in  \(\gamma_1\) is a fixed point of \(g_d\). \\
  If \ \(3\mid d\), \ then \ \(g_d(O) = (9, 27)\).  And \ \(g_d(9, 27) = (9, 27)\) \ for any \  \(d \geq 1\). 
\end{cor}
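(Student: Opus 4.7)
The plan is to leverage the semi-conjugacy $g_d \circ G_C = G_C \circ T_d$ from Theorem 3.4, together with the classical dynamics of the one-variable Chebyshev polynomial $T_d$ on $[-2,2]$. The unifying strategy is to transport every assertion about $g_d\mid_{C_C}$ through the parametrization $G_C(z)=((1+z)^2,(1+z)^3)$ back to a statement about $T_d$ on $\mathbb{C}$.

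For part (1), the identity $K(g_d\mid_{C_C})=\gamma_1$ is essentially produced in the remark immediately preceding the statement: applying Theorem 3.4 iteratively reduces the boundedness of $\{g_d^k(G_C(z))\}$ to that of $\{T_d^k(z)\}$, and the classical equality $K(T_d)=[-2,2]=\{2\cos\theta:0\le\theta<2\pi\}$ then gives the parametrization claimed. For $g_d(\gamma_1)=\gamma_1$ I would apply the semi-conjugacy pointwise, writing $g_d(G_C(2\cos\theta))=G_C(T_d(2\cos\theta))=G_C(2\cos(d\theta))$, and observing that $\theta\mapsto d\theta\pmod{2\pi}$ is surjective on $[0,2\pi)$, so the image is all of $\gamma_1$. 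For the chaotic assertion, I would first verify that $G_C$ restricted to $[-2,2]$ is a homeomorphism onto $\gamma_1$: continuity is immediate, and injectivity follows because the second coordinate $(1+z)^3$ has a unique real cube root, so $G_C(z_1)=G_C(z_2)$ forces $z_1=z_2$. The semi-conjugacy then upgrades to a genuine topological conjugacy between $T_d\mid_{[-2,2]}$ and $g_d\mid_{\gamma_1}$, and the chaotic behavior of $T_d\mid_{[-2,2]}$ (which is conjugate to the Ulam--von Neumann map, as cited) transfers.

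For part (2), I would use the explicit values $O=G_C(-1)$ and $(9,27)=G_C(2)$. Evaluating $T_d$ via its defining identity $T_d(s+s^{-1})=s^d+s^{-d}$ at $s=e^{2\pi i/3}$, which satisfies $s+s^{-1}=-1$, gives $T_d(-1)=2\cos(2\pi d/3)$; this equals $-1$ when $3\nmid d$ and equals $2$ when $3\mid d$. Applying the semi-conjugacy, $g_d(O)=G_C(T_d(-1))$ is therefore either $G_C(-1)=O$ or $G_C(2)=(9,27)$, as asserted. Similarly, $s=1$ gives $T_d(2)=2$ for every $d\ge 1$, hence $g_d(9,27)=G_C(T_d(2))=G_C(2)=(9,27)$.

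The only mildly delicate step is the rigorous transfer of chaoticity, but since the paper's working definition is precisely that the map be topologically conjugate to the Ulam--von Neumann logistic map on $[0,1]$, the homeomorphism check for $G_C\mid_{[-2,2]}$ is all that is needed; no harder analytic input is required, and every remaining step is a direct computation via the trigonometric identity for $T_d$.
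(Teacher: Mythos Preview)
Your proposal is correct and follows essentially the same route as the paper. Part (1) in the paper is simply ``by the above argument,'' which is exactly the semi-conjugacy $g_d\circ G_C=G_C\circ T_d$ together with $K(T_d)=T_d([-2,2])=[-2,2]$ and the cited conjugacy of $T_d\mid_{[-2,2]}$ to the Ulam--von Neumann map; you make the transfer of chaoticity slightly more explicit by verifying that $G_C\mid_{[-2,2]}$ is a homeomorphism, which the paper leaves implicit. For part (2) the paper's proof is likewise the observation that $1+s+s^{-1}=0$ forces $s$ to be a primitive cube root of unity, so $T_d(-1)=s^d+s^{-d}$ equals $-1$ or $2$ according to whether $3\nmid d$ or $3\mid d$; your computation via $2\cos(2\pi d/3)$ is the same argument written out. (The theorem you cite for the semi-conjugacy is numbered 3.7 in the paper, not 3.4, but the content is identical.)
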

 %%%%%%%%%%%%%%%%%%%%%%%%%%proof
\begin{proof}
(1) By the above argument, we have this assertion.\\
 (2) Note that the  point \ \(O = (0, 0).\)   We set \ \(z = s + s^{-1}\).  If \ \(1 + s + s^{-1} = 0,\)  \ then \(s\) is a primitive cubic root of unity.  Then the assertion (2) is obvious.  
\end{proof}

Next we will show that \ \(g_d\mid_{C_C}\)\ extends to a morphism on  \({\mathbb P}^2 \).  Let \ \(g_d(p, q) = (P_d(p, q), Q_d(p, q))\).
Then \ \(g_d\mid_{C_C}(p, q) = ([P_d(p, q)],\ [Q_d(p, q)])\) where \ \([ P_d(p, q)]\) \ and \  \([ Q_d(p, q)]\)\ are equivalent classes in the coordinate ring \ \({\mathbb C}[p, q]/(p^3-q^2) \). 
%%%%%%%%%%%lemma3.9
\begin{lemma} \label{lemma1}
We can represent the equivalent classes in the form :\\
(1) \ \(P_d(p, q) = p^d + H_1(p, q) \),\\
(2) \ \(Q_d(p, q) \equiv q^d + pH_2(p, q) + H_3(p, q) \quad mod \ (p^3 - p^2)\),\\
where \ \(H_j(p, q) \in  { \mathbb Z}[p, q], \ j = 1, 2, 3,\)\ and the degrees of \ \( H_1(p, q) \) \ and \ \( H_3(p, q) \) \ are less than or equal to \ \(d-1\) \ and \ \( H_2(p, q) \)\ is a homogeneous polynomial of degree \ \(d-1\).
\end{lemma}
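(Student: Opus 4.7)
The plan is to exploit the $(x,y)$-weighted grading on $\mathbb{Z}[p,q]$, in which $p = x^2 + y^2$ and $q = x^3 - 3xy^2$ have $(x,y)$-degrees $2$ and $3$ respectively. Since $f_d = (X_d, Y_d)$ has $(x,y)$-degree $d$, the polynomial $P_d(p,q) = X_d^2 + Y_d^2$ has $(x,y)$-degree at most $2d$ and $Q_d(p,q) = X_d^3 - 3 X_d Y_d^2$ has $(x,y)$-degree at most $3d$. Passing to the complex coordinates $z_1 = x + iy$, $z_2 = x - iy$ used to set up $f_d$, one has $X_d + iY_d = z_1^{(d)}$ and $X_d - iY_d = z_2^{(d)}$. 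Newton's identity with $e_1 = z_1$, $e_2 = z_2$, $e_3 = 1$ gives the recursion $z_1^{(d)} = z_1 z_1^{(d-1)} - z_2 z_1^{(d-2)} + z_1^{(d-3)}$ for $d \geq 4$, and an easy induction on $d$ (with base cases $z_1^{(1)} = z_1$, $z_1^{(2)} = z_1^2 - 2z_2$, $z_1^{(3)} = z_1^3 - 3z_1 z_2 + 3$) shows that the top homogeneous $(z_1,z_2)$-part of $z_1^{(d)}$ is $z_1^d$.

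For part~(1), a monomial $p^i q^j$ has $(x,y)$-weight $2i + 3j$ and total $(p,q)$-degree $i + j$; from $2i + 3j \leq 2d$ with $j \geq 1$ one gets $i + j \leq d - 1$, so the only degree-$d$ monomial that can appear in $P_d$ is $p^d$. Its coefficient equals the coefficient of the top $(x,y)$-degree-$2d$ part of $P_d = z_1^{(d)} z_2^{(d)}$, which is $(x+iy)^d(x-iy)^d = p^d$. Hence $P_d = p^d + H_1$ with $\deg H_1 \leq d - 1$.

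For part~(2), the top $(x,y)$-degree-$3d$ part of $Q_d = \frac{1}{2}\bigl((z_1^{(d)})^3 + (z_2^{(d)})^3\bigr)$ equals $\frac{1}{2}\bigl((x+iy)^{3d} + (x-iy)^{3d}\bigr) = \mathrm{Re}\bigl((x+iy)^{3d}\bigr)$. Writing $(x+iy)^3 = q + ir$ with $r = 3x^2 y - y^3$ and $r^2 = p^3 - q^2$, I expand
\[
\mathrm{Re}\bigl((q + ir)^d\bigr) = \sum_{j \geq 0} (-1)^j \binom{d}{2j} q^{d-2j}(p^3 - q^2)^j \equiv q^d \pmod{p^3 - q^2},
\]
so the weight-$3d$ part of $Q_d$ reduces to $q^d$ modulo the syzygy. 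Now reduce the full $Q_d$ modulo $(p^3 - q^2)$ to the normal form $A(q) + p B(q) + p^2 C(q)$ (using $p^3 \equiv q^2$ repeatedly); since $p^3 - q^2$ is weight-homogeneous, this respects the weighted grading. For each weight $w \leq 3d$ and $i \in \{0,1,2\}$, the equation $2i + 3j = w$ forces the integer total degree $i + j = (w+i)/3 \leq d$, and the only degree-$d$ monomials available in the normal form are $q^d$ (at $w = 3d$), $p q^{d-1}$ (at $w = 3d - 1$), and $p^2 q^{d-2}$ (at $w = 3d - 2$). Since only $q^d$ lacks a factor of $p$ and its coefficient is $1$, collecting yields $Q_d \equiv q^d + p H_2 + H_3 \pmod{p^3 - q^2}$ with $H_2$ homogeneous of degree $d - 1$ and $\deg H_3 \leq d - 1$.

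The key inputs are the two top-part identifications: the induction showing that the top $(z_1,z_2)$-homogeneous part of $z_1^{(d)}$ is $z_1^d$, and the algebraic identity $r^2 = p^3 - q^2$, which encodes the cuspidal-cubic syzygy and is precisely what drives the reduction of $\mathrm{Re}((x+iy)^{3d})$ to $q^d$. Once these are in hand, the degree bookkeeping in the normal form is routine.
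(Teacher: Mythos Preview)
Your proof is correct and follows essentially the same strategy as the paper: use the $(x,y)$-weighted grading on $\mathbb{Z}[p,q]$, identify the top homogeneous part of $z_1^{(d)}$ as $z_1^d$ via the recurrence, and reduce $Q_d$ modulo $p^3-q^2$ to a normal form with $p$-exponent at most $2$. Your execution is in fact a bit cleaner in two places: you get the coefficient of $p^d$ in $P_d$ directly from $P_d = z_1^{(d)}z_2^{(d)}$ with top part $z_1^d z_2^d = p^d$ (the paper argues by tracking the monomial $y^{2d}$), and you obtain the coefficient $1$ of $q^d$ in the reduced $Q_d$ by writing $(x+iy)^3 = q+ir$ with $r^2 = p^3 - q^2$ and expanding $\mathrm{Re}\,(q+ir)^d$ (the paper instead checks the $x^{3d}$ coefficient to show $\sum_h a_h = 1$).
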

%%%%%%%%%%%%%%%%%%%%%%%%%%%proof
\begin{proof}
 (1)  We consider the diagram (3.7) in the case \({z_2} =\overline{z_1}\).  Then \ \(T_d(z_1, \overline{z_1}) = (z_1^{(d)}, \overline{z_1^{(d)}})\).      Let \ \(z_1 = x + iy\),  where \(x\) and \(y\) are real variables.  Then \ \(X(x, y) = Re\ z_1^{(d)}\) \ and \ \(Y(x, y) = Im\ z_1^{(d)}\).  Then \ \(f_d(x,y) = (X(x, y), \ Y(x, y)).\)  We recall that \ \(T_d(z_1, z_2) = (z_1^{(d)}, z_2^{(d)})\).  By \cite{L} and \cite[p.996]{U2}, we have a recurrence equation
\[z_1^{(d)} = z_1z_1^{(d-1)} -z_2z_1^{(d-2)} + z_1^{(d-3)}, \ z_1^{(1)} = z_1,\  z_1^{(2)} = z_1^{2} - 2z_2,\ z_1^{(3)} = z_1^3 -3z_1z_2 + 3.\]
Then \ \(z_1^{(d)}(z_1, z_2)\) \ is a polynomial of degree \(d\) and its leading term  is \ \(z_1^d\).  Then the leading term of \(z_1^{(d)}(z_1, \overline{z_1})\) \ is \ \(z_1^d\).  
 Set \ \((x + iy)^d = X_1 + iY_1\).   Set \ \(X = X_1 + X_2\) \ and \ \(Y = Y_1 + Y_2\), where   degrees of \(X_2\) and \(Y_2\) are less than or equal to \ \(d-1\).   

  If \(d\) is even, the term \ \(y^d\) occurs in \(X_1\).   If \(d\) is odd, the term \ \(y^d\) occurs in \(Y_1\).  Then the term \ \(y^{2d}\) occurs in \(X^2 + Y^2\).  Since \ \(P_d(p({\bf x}),  q({\bf x})) = X^2 + Y^2\), it follows that the term \ \((x^2 + y^{2})^d\) \ must  occur in \ \(P_d(p({\bf x}),  q({\bf x}))\).
Hence the term \(p^d\)  occurs  in \ \(P_d(p, q)\).  The xy-degree of any term in \ \(P_d(p, q)\) is less than or equal to \(2d\).  If there is a term \ \(p^iq^j\) \ in \ \(P_d(p, q)\) \ with \ \(2i + 3j = 2d\) \ and \ \(j  > 0\), \ then \ \(i + j < d\). \\
(2)  We consider  a monomial \ \(p^{3n+j}q^k\), \ where \ \(j, k, n \in{ \mathbb N} \cup \{0\}\) \ and \ \(0 \le j \le 2\).
We define a degree minimization function \(m\) by \ \(m(p^{3n+j}q^k) = p^jq^{2n+k}\).  Clearly, \(m(p^{3n+j}q^k) \equiv p^{3n+j}q^{k} \ mod \ (p^3-q^2)\).  Let \ \(Q_d(p, q) = \sum a_{ij}p^iq^j\).  We replace any term \ \(a_{ij}p^iq^j\) \ with \ \(a_{ij}m(p^iq^j)\).  Since the leading term of \ \(z_1^{(d)}\) \ is \ \(z_1^{d}\),  the xy-degree of any  term  in \ \(Q_d(p, q)\) is less than or equal to \(3d\). \\
(i)  The monomials with the xy-degree  \(3d\) :
\(q^d, q^{d-2}p^3, \dots \dots, q^{d-2k}p^{3k},\)  where \ \(k = d/2\) \ or \ \((d-1)/2\) \ according to whether \(d\) is even or odd. \\
 The images of these monomials under the function \(m\) are \(q^d\).\\
(ii)  The monomials with the xy-degree  \(3d-1\) :
\(q^{d-1}p,  q^{d-3}p^4, \dots \dots, p^{3(d-1)/2} \  \mbox{or} \ p^{3d/2-2}q. \)  
The images of these monomials under the function \(m\) are \(q^{d-1}p\).\\
(iii)  The monomials with the xy-degree  \(3d-2\) :
\(q^{d-2}p^2,  q^{d-4}p^5,  \dots \dots, p^{3d/2-1} \  \mbox{or} \ p^{3d/2-5/2}q. \)  
The images of these monomials under the function \(m\) are \(q^{d-2}p^2\).

Then to prove that the  polynomial \ \(\sum a_{ij}m(p^iq^j)\) \ satisfies the conditions of the right hand side of (2),  it suffices to prove the following.  Let \ \(\sum_{h=0}^ka_hq^{d-2h}p^{3h}\)\ be the sum of terms in \ \(Q_d(p, q)\) \ with the xy-degree  \(3d\).  Then it is enough to prove
\ \(\sum_{h=0}^ka_h = 1\).  Note that 
 \[X_1^3 - 3X_1Y_1^2 = \sum_{h=0}^ka_hq({\bf x})^{d-2h}p({\bf x})^{3h}.\] 
%\[Q_d(p({\bf x}),  q({\bf x})) = X(x, y)^3 - 3X(x, y) Y(x, y)^2  = X_1(x, y)^3 - 3X_1(x, y) Y_1(x, y)^2 + l(x, y),\]
%where \ \(l(x, y)\) \ is a polynomial in \(x, y\) whose degree is less than or equal to \(3d-1\).  
Since \ \(X_1^3 - 3X_1Y_1^2 = Re(X_1+iY_1)^3 = Re(x+iy)^{3d},\)  the term \(x^{3d}\) occurs in \ \(X_1^3 - 3X_1Y_1^2\) \ and so it  occurs in \ \(\sum_{h=0}^ka_h q({\bf x})^{d-2h}p({\bf x})^{3h} \).  Hence \ \(\sum_{h=0}^ka_h = 1\).
\end{proof}
We will show that the morphism  \(g_d\) restricted to \ \(V(p^3-q^2) \) \ can extend to a morphism \ \(\bar{g}_d\) on \  \({\mathbb P}^2({\mathbb C}) \).   Let \ \((p : q : r)\) \ be homogeneous coordinates of \  \({\mathbb P}^2({\mathbb C}) \).  The projective closure of  \ \(V(p^3-q^2) \) \ is the variety  \ \(V(p^3-q^2r) \).  Based on Lemma \ref{lemma1}, we define morphisms on \  \({\mathbb P}^2({\mathbb C}) \).   We set
\[\bar{P}_d(p, q, r) = p^d + r^dH_1(p/r, q/r) = p^d + r\bar{H}_1(p, q, r) ,\]
and
\[\bar{Q}_d(p, q, r) = q^d  +  pH_2(p, q) + r^dH_3(p/r, q/r) = q^d + pH_2(p, q) + r\bar{H}_3(p, q, r) .\]
Here \ \(\bar{H}_1(p, q, r)\) \ and \ \(\bar{H}_3(p, q, r)\) \ are homogeneous polynomials.  Then we define mappings \ \(\bar{g}_d\) on \  \({\mathbb P}^2({\mathbb C}) \)  by 
%%%%%%%%%%%%%%%%%%%%%%%%equation 3.12
\begin{equation}
\bar{g}_d(p : q : r) = (\bar{P}_d : \bar{Q}_d : r^d).
\end{equation}
%%%%pro3.10
 \begin{pro} \label{pro3.9}
Let \ \(\bar{g}_d\)  be a mapping in (3.12).\\
(1)  The mapping \  \(\bar{g}_d : {\mathbb P}^2({\mathbb C}) -\to  {\mathbb P}^2({\mathbb C}) \) \ is a morphism.\\
(2)  \(\bar{g}_d(V(p^3-q^2r)) = V(p^3-q^2r)\).
 \end{pro}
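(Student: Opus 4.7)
The strategy is to verify both assertions by analyzing the behavior of $\bar{g}_d$ on the line at infinity $\{r = 0\}$, since on the affine chart $r \neq 0$ the map restricts to the morphism $g_d$ already studied.

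For (1), a rational map to $\mathbb{P}^2$ defined by three homogeneous polynomials of equal degree is a morphism precisely when those polynomials have no common projective zero. On the affine open $r \neq 0$ the three components $(\bar{P}_d, \bar{Q}_d, r^d)$ specialize to $(P_d(p,q), Q_d(p,q), 1)$, so no common zero exists there. On the line $r = 0$, the formulas from Lemma \ref{lemma1} give
\[
\bar{P}_d(p,q,0) = p^d, \qquad \bar{Q}_d(p,q,0) = q^d + p\,H_2(p,q),
\]
so the equations $\bar{P}_d = 0 = r$ force $p = 0$; substituting then yields $\bar{Q}_d(0,q,0) = q^d$, which forces $q = 0$. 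The only common vanishing locus is $(0,0,0)$, which is not a point of $\mathbb{P}^2$, so $\bar{g}_d$ is regular everywhere.

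For (2), I would first note that the projective closure $V(p^3 - q^2 r)$ meets the line at infinity only where $p^3 = 0$, giving the single point $P_\infty = (0:1:0)$. Direct evaluation shows $\bar{P}_d(0,1,0) = 0$, $\bar{Q}_d(0,1,0) = 1$, and $r^d = 0$, so $\bar{g}_d(P_\infty) = P_\infty$. On the affine part $r \neq 0$, the map $\bar{g}_d$ coincides with $g_d$, which satisfies $g_d(C_C) = C_C$ by Theorem \ref{theorem3}(2). Hence $\bar{g}_d$ maps the affine part of $V(p^3-q^2r)$ into itself, and combined with $\bar{g}_d(P_\infty) = P_\infty$ we conclude that $\bar{g}_d(V(p^3-q^2r)) \subseteq V(p^3-q^2r)$. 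Surjectivity follows because projective morphisms are closed: the image is a closed subset of $V(p^3-q^2r)$ containing the dense affine part $C_C$, so it must equal the whole projective curve.

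The main point requiring care is the interplay between the leading-term structure supplied by Lemma \ref{lemma1} and the behavior at $r = 0$: we need precisely that $\bar{P}_d$ has pure leading term $p^d$ (with all lower-degree corrections divisible by $r$) and that the only term of $\bar{Q}_d$ not divisible by $r$ beyond $q^d$ is $p\,H_2(p,q)$. The factor of $p$ in this second summand is essential — it is what forces $q = 0$ once $p = 0$ and $r = 0$, and without it the map would acquire an indeterminacy point at $(0:1:0)$. Everything else is a short verification, so the whole proposition reduces to quoting Lemma \ref{lemma1} and Theorem \ref{theorem3}(2) together with the closedness of projective morphisms.
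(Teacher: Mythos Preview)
Your proof is correct and follows essentially the same approach as the paper: for (1) you check that the only common zero of $\bar{P}_d,\bar{Q}_d,r^d$ is $(0,0,0)$ via the leading-term structure from Lemma~\ref{lemma1}, and for (2) you combine Theorem~\ref{theorem3}(2) on the affine chart with the computation $\bar{g}_d(0:1:0)=(0:1:0)$ at infinity. Your closedness argument for surjectivity is a valid alternative, though slightly more than needed since $g_d(C_C)=C_C$ already gives surjectivity on the affine part and $P_\infty\mapsto P_\infty$ covers the remaining point.
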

%%%%%%%%%%%%%%%%%%%%%%%%%%%proof
\begin{proof}
(1) We suppose that \ \(\bar{P}_d = 0, \  \bar{Q}_d = 0\) \ and\  \(r^d = 0\).  Then we have \ \(p = q = r = 0.\)\\
(2) If \ \(r = 1\), then by Theorem \ref{theorem3}(2), we have \  \(\bar{g}_d(V(p^3-q^2)) = V(p^3-q^2)\).  Clearly \ 
 \(V(p^2-q^2r) \cap \{(p : q : 0)\} = (0 : 1 :0), \) \ and \ 
 \((\bar{g}_d)^{-1}(0 : 1 : 0) = \{(0 : 1 : 0)\}\).
\end{proof}
By \cite[II2]{S2},  we know that \  \(\bar{g}_d : V(p^3-q^2r) \to V(p^3-q^2r)\) \ is  finite.  Diller \cite{D}  and Uehara \cite{Ue} studied surface automorphisms preserving the cuspidal cubic. 
%%%%%%%%%%%%%%%%%%%%%%%%Subsection3.3
\subsection{\rm{The dynamics of \ \(g_d\) \ on the affine curve \(C_D\) and \(K\) sets.}}\hspace{0cm}

We first show the relation between the curve \(C_D\) and a relative orbit variety of a deltoid which  has a connection with the set of critical values of \ \(f_d\) on \(R_2\).  The critical set of \ \(P_{A_2}^d \ (d \ge 2)\) \ on \({\mathbb C}^2\) \ is described in Lemma 2.1 in \cite{U2} : 
\[z_1 = t_0 + \varepsilon t_0 + \frac 1{ \varepsilon t_0^2},\quad z_2 = \frac 1t_0 + \frac 1{\varepsilon t_0} + { \varepsilon t_0^2}, \ where \ \varepsilon = e^{2\pi\sqrt{-1}/d},  \quad t_0 \in {\mathbb C}^*, \ j \in \mathbb{N}.\] 
We see that \ \(z_1 = \bar{z}_2\) \ if and only if \ \(\mid t_0\mid = 1.\)  The critical values are written in the form \ \(z_1^{(d)} = t_0^d +  t_0^d +  t_0^{-2d}.\) \ Note that the set of critical values of \(f_d\) is the same set for any \ \(d \ge 2\).   We set \ \(t = t_0^d\).  Then \ \(z_1^{(d)} = t + t + t^{-2}.\)  We set \ \(t = e^{i\alpha}\).  
Then the set of critical values of \(f_d\) on \(R_2\)  can be parametrized as
%%%%%%%%%%%%%%%%%%%%%%%%equation 3.13
\begin{equation}
x = 2\cos\alpha + \cos2\alpha, \quad y = 2\sin\alpha - \sin2\alpha. 
\end{equation}
The real variety parametrized by the above equations for \ \(0 \le \alpha < 2\pi\) is a deltoid. Its implicit representation is \  \(h(x, y) : = (x^2 + y^2 + 9)^2 + 8(-x^3 + 3xy^2)-108 = 0\). (See \cite[p. 998]{U2}).  

We regard \(\alpha\)  as a complex variable in (3.13). Hence we view  \ \(h(x, y) \) as a polynomial in \({\mathbb C}[x, y]\).   Since \  
\(h(x, y) \) \ is invariant under the action of \(D_3\), we can construct the relative orbit variety \ \(V(h(x, y))/D_3 \)\ whose points are \(D_3\)-orbits of zeros of  \ \(h(x, y) \).  See \cite[\S2.6]{St}.   By Algorithm 2.6.2 in \cite{St},  we see  that \ \(V(h(x, y))/D_3 = V(-27 + 18p + p^2 -8q)\).  Hence the \ \(C_D = V(-27 + 18p + p^2 -8q)\) \ is the relative orbit variety \ \(V(h(x, y))/D_3 \).  

Note that by setting \ \(\beta = 0\) \ in (3.8), we have (3.13).  Hence,  we put \ \(v = 2\) \ in (3.10).
Then  we have
\[p( u) = 5 + 2u, \quad q( u) = 11 + 7u + \frac{ u^2}2.\]
Hence from  \(G_{{ \mathbb C}^2}\) we get a mapping from \ \({\mathbb C}\) to \({\mathbb C}^2\) defined by \ \(G_D( u) = (5 + 2u, 11+ 7u +  u^2/2).\)
We prove in the next theorem that the mapping \ \(G_D\) \  parametrizes the curve \ \(C_D\).
The morphism  \(g_d\) restricted to \(C_D\) is represented in the following form.
%%%%%%%%%%%%%%%%%%%%%%%%theorem 3.11
\begin{theorem}\label{theorem4}  
Let \ \(g_d\) be a morphism on \ \( {\mathbb C}^2/D_3\) in Definition 2.4.\\
(1) \ \(g_d(G_D( u)) = G_D(T_d( u)), \)  \quad for any \ \( u \in  { \mathbb C} \ and \ d \in  { \mathbb N}.\)\\
 (2) \(g_d(C_D) = C_D, \) \ for any \  \( d \in  { \mathbb N}\).
\end{theorem}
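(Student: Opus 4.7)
The plan is to mimic the strategy used for the cuspidal cubic $C_C$ in Theorem \ref{theorem3}: identify $G_D$ as a specialization of the parametrization $G_{\mathbb{C}^2}$ from (3.10), then use Theorem \ref{theorem2b}(1) to push the Chebyshev intertwining down from $\mathbb{C}^2$ to $C_D$.

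First I would verify that $G_D$ genuinely parametrizes the parabola $C_D$. Substituting $p = 5 + 2u$ and $q = 11 + 7u + u^2/2$ into the defining polynomial $p^2 + 18p - 8q - 27$ of $C_D$ yields the zero polynomial in $u$, so $G_D(\mathbb{C}) \subset C_D$. Conversely, $p = 5 + 2u$ is a bijection $\mathbb{C}\to\mathbb{C}$ in the $p$-coordinate, and on $C_D$ the equation determines $q$ uniquely from $p$; hence $G_D:\mathbb{C}\to C_D$ is a bijection (in particular surjective onto $C_D$).

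For part (1), I would observe that $G_D(u) = G_{\mathbb{C}^2}(u, 2)$, since setting $v = 2$ in (3.10) recovers the displayed formulas $p(u)=5+2u$ and $q(u)=11+7u+u^2/2$ used to define $G_D$. The key arithmetic fact is that $T_d(2) = 2$ for every $d \in \mathbb{N}$, which follows from the Chebyshev relation $T_d(s+s^{-1}) = s^d + s^{-d}$ evaluated at $s = 1$. Applying Theorem \ref{theorem2b}(1) then gives
\[
g_d(G_D(u)) \;=\; g_d(G_{\mathbb{C}^2}(u,2)) \;=\; G_{\mathbb{C}^2}(T_d(u), T_d(2)) \;=\; G_{\mathbb{C}^2}(T_d(u), 2) \;=\; G_D(T_d(u)).
\]

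For part (2), combine (1) with two surjectivities: $G_D:\mathbb{C}\to C_D$, which was established above, and $T_d:\mathbb{C}\to\mathbb{C}$, which holds since any non-constant polynomial is surjective over $\mathbb{C}$. Then
\[
g_d(C_D) \;=\; g_d(G_D(\mathbb{C})) \;=\; G_D(T_d(\mathbb{C})) \;=\; G_D(\mathbb{C}) \;=\; C_D.
\]
The entire argument rests on Theorem \ref{theorem2b}(1) and the identity $T_d(2) = 2$, so there is no genuine obstacle; the only point requiring any alertness is recognizing $G_D$ as the $v = 2$ slice of $G_{\mathbb{C}^2}$ and noting that $v = 2$ is a common fixed point of every $T_d$.
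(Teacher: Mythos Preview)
Your proof is correct and follows essentially the same approach as the paper: both recognize $G_D$ as the $v=2$ slice of $G_{\mathbb{C}^2}$ (the paper arrives at this by setting $\beta=0$ in (3.8)) and use the invariance of this slice under $g_d$. Your presentation is slightly more economical in that you invoke Theorem~\ref{theorem2b}(1) together with $T_d(2)=2$ directly, whereas the paper says to repeat the argument of Theorem~3.3(1); for part (2) you give a direct bijection argument for $G_D(\mathbb{C})=C_D$ while the paper cites the implicitization results from \cite{CLS}, but these are equivalent here.
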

%%%%%%%%%%%%%%%%%%%%%%%%%%%proof
\begin{proof}
(1) By the argument similar to that used in the proof of Theorem 3.3(1), we can prove this assertion.  \\
(2)  We can show that by Theorem 1 in  \cite[Chapter 3, \S3]{CLS},  \(V(p^2 + 18p - 8q - 27)\)\ is the smallest algebraic set containing   
\ \(G_D ( { \mathbb C})\).  We know from \cite[Chapter 3 \S3, p.131]{CLS}  that \(G_D ({ \mathbb C})\) covers all points of  \(V(p^2 + 18p - 8q - 27)\). Then the assertion (2) follows.
\end{proof}

By the similar way to the case of \  \(g_d\mid_{ C_C}\),  we have 
\[K(g_d\mid_{C_D}) = \{(5 + 4\cos\alpha, 11 + 14\cos\alpha + 2\cos^2\alpha)  : 0 \le \alpha \le 2\pi\}, \ \mbox{for} \ d \ge 2.\] 
Set \ \(\gamma_2 = K(g_d\mid_{C_D})\).
The Jordan arc \(\gamma_2\)  corresponds to the interval \([-2, 2]\) for \(T_d\).  Then \ \(g_d(\gamma_2) = \gamma_2\) \ and \  \(g_d\mid_{C_D} \) \ is chaotic on \(\gamma_2\). The end points of \(\gamma_2\)  are \((1, -1)\) and (9, 27).  Then we can connect two Jordan arcs \(\gamma_1\)  and  \(\gamma_2\)  and get a Jordan curve
%%%%%%%%%%%%%%%%%%%%%%%%equation 3.14
\begin{equation}
\gamma = \gamma_1 \cup \gamma_2.
\end{equation}
The Jordan curve \(\gamma\) lies on the real slice \ \({\mathbb R}^2 \)  in \ \({\mathbb C}^2 \). See Figure 2.  We will show that it is the boundary of \ \(K(g_d\mid_{{\mathbb C}^2})\) \ in \ \({\mathbb R}^2 \).  See Figure 1.  Clearly \(g_d\) maps \ \({\mathbb R}^2 \)  to \   \({\mathbb R}^2 \) \ and \(g_d(\gamma) = \gamma\) and  \(g_d\mid_{ {\mathbb R}^2} \) \ is chaotic on \(\gamma\)  for \ \(d \ge 2\).
%%%%%%%%%%%%%%%%%%%%%%%%%%%%figure 
\begin{figure}[htbp]
\begin{tabular}{lcr}
\begin{minipage}{0.39\hsize}
\begin{center}
\includegraphics[scale=0.45]{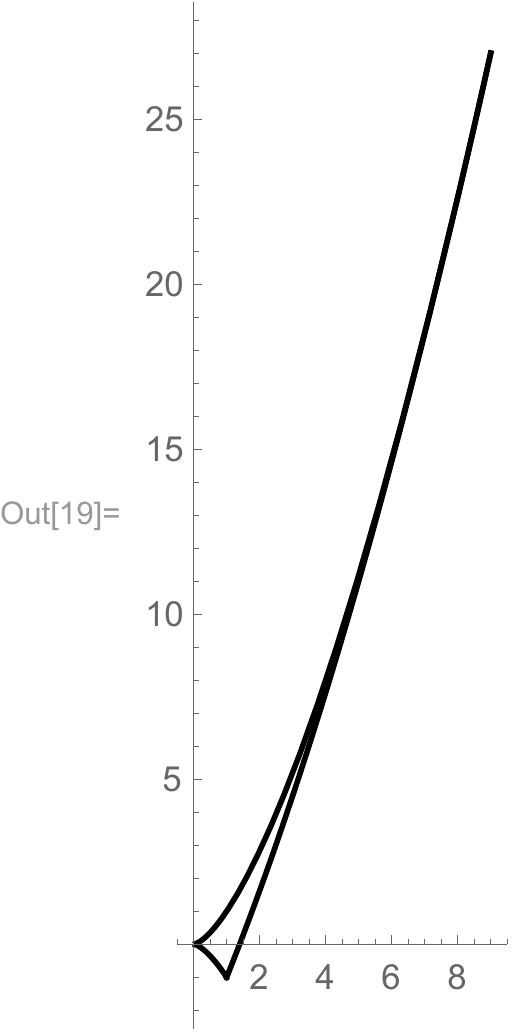} \\
\caption{The  Jordan curve \(\gamma\).}
\label{Figure2}
\end{center}
\end{minipage}
\begin{minipage}{0.4\hsize}
\begin{center}
\includegraphics[scale=0.45]{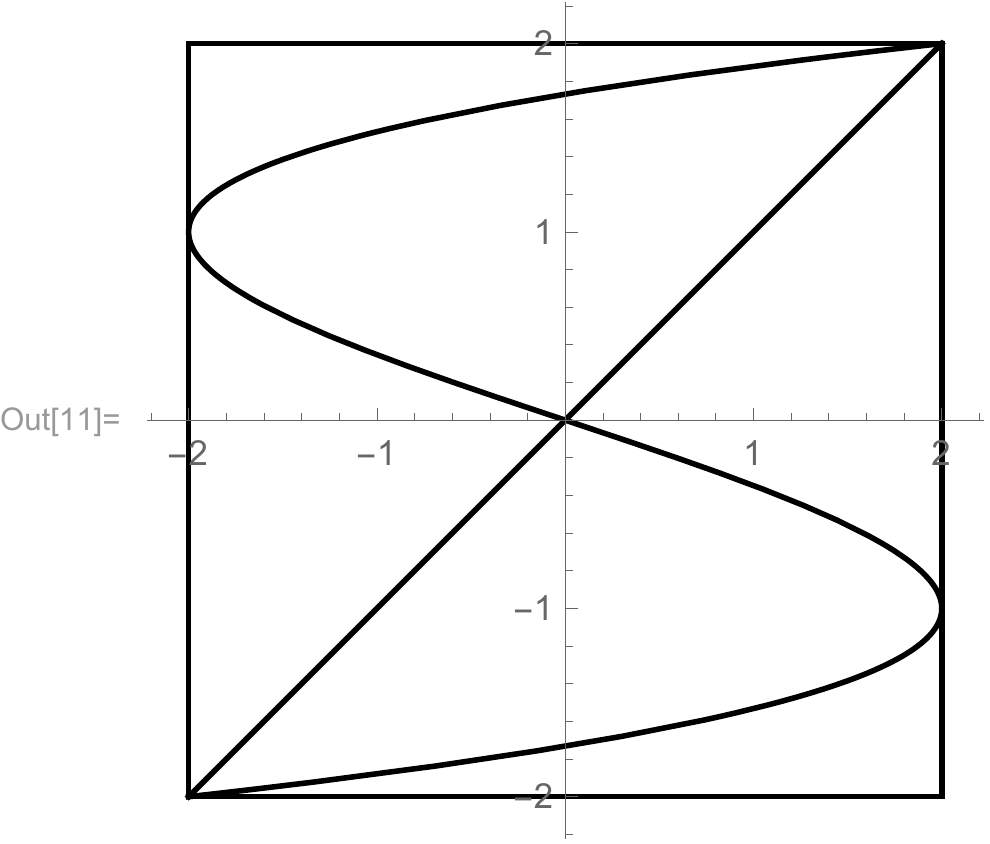}\\
\caption{The zeros of det[J].}
\label{Figure3}
\end{center}
\end{minipage} 
\begin{minipage}{0.4\hsize}
\begin{center}
\includegraphics[scale=0.45]{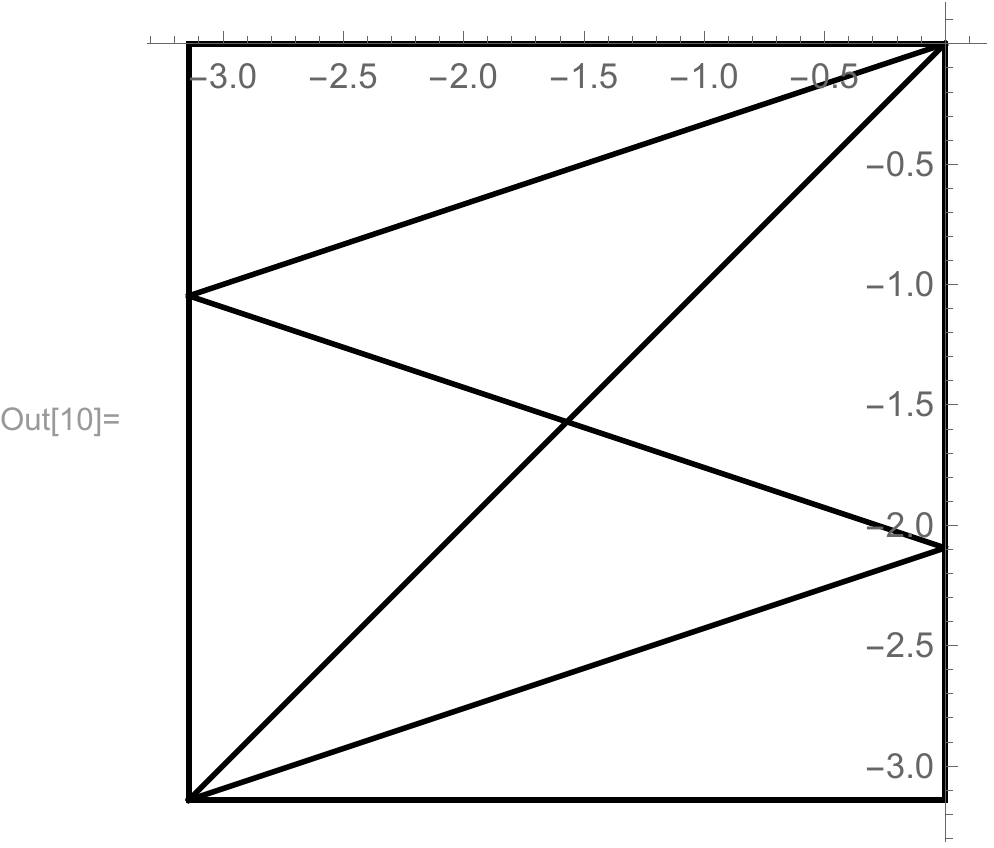} \\
\caption{A partition in the (\(\alpha^\prime, \beta\)) space.}
\label{Figure4}
\end{center}
\end{minipage}
\end{tabular}
\end{figure} 

%%%%%%%%%%%%%%%%%%%%%%%%%%%%%%%%%%%%%%Theorem 3.12
\begin{theorem} \label{theorem3.12} 
Let \(\gamma\) be the the Jordan curve defined in (3.14).  
Then \  \(\gamma\)  is the boundary of \ \(K(g_d\mid_{{\mathbb C}^2})\) \ in \({\mathbb R}^2\).   
\end{theorem}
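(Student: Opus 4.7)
The plan is to exploit the parametrization
\[K(g_d\mid_{{\mathbb C}^2})\;=\;G_{{\mathbb C}^2}\bigl([-2,2]^{2}\bigr)\]
supplied by Theorem \ref{theorem2b}(2) (reading $u=2\cos 3\alpha$ and $v=2\cos\beta$ as real variables in $[-2,2]$), and then to identify the topological boundary of this image directly. The set $K(g_d\mid_{{\mathbb C}^2})$ is compact and connected as the continuous image of a compact connected square, and a standard argument from differential topology shows that
\[\partial K(g_d\mid_{{\mathbb C}^2})\;\subseteq\; G_{{\mathbb C}^2}\bigl(\partial [-2,2]^{2}\bigr)\;\cup\;G_{{\mathbb C}^2}\bigl(\{(u,v)\in(-2,2)^{2}:\det J(u,v)=0\}\bigr),\]
where $J$ denotes the Jacobian of $G_{{\mathbb C}^2}$ viewed as a smooth map ${\mathbb R}^{2}\to{\mathbb R}^{2}$; indeed, an interior regular point of $G_{{\mathbb C}^2}$ maps by the inverse function theorem to an interior point of the image.

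First I would show that the image of the boundary of the parameter square is exactly \(\gamma\). Substituting $u=\pm 2$ into (3.10) gives $(p,q)=((1\pm v)^{2},\,(1\pm v)^{3})$, which traces the arc $\gamma_{1}=K(g_d\mid_{C_{C}})$ as $v\in[-2,2]$; substituting $v=\pm 2$ gives $(p,q)=(5\pm 2u,\,11\pm 7u+u^{2}/2)$, which traces the arc $\gamma_{2}=K(g_d\mid_{C_{D}})$. Hence $G_{{\mathbb C}^2}(\partial [-2,2]^{2}) = \gamma_{1}\cup\gamma_{2} = \gamma$.

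Next I would compute $\det J(u,v)$ explicitly and describe its zero set inside the open square. These zero curves (depicted in Figure \ref{Figure3}) partition $(-2,2)^{2}$ into finitely many open subregions (Figure \ref{Figure4}) on each of which $G_{{\mathbb C}^2}$ is a local diffeomorphism. The decisive claim is that each interior critical value lies in the interior of $K(g_d\mid_{{\mathbb C}^2})$: along every branch of the critical locus there is an adjacent subregion whose image sheet covers the critical curve from the other side, so the same point is attained as a regular value by another branch of $G_{{\mathbb C}^2}$ and is therefore interior to the image. Combined with the previous step this gives $\partial K(g_d\mid_{{\mathbb C}^2}) \subseteq \gamma$.

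Finally, $K(g_d\mid_{{\mathbb C}^2})$ is compact, connected, has nonempty two-dimensional interior (the Jacobian is not identically zero), and its topological boundary is contained in the Jordan curve \(\gamma\). By the Jordan curve theorem ${\mathbb R}^{2}\setminus \gamma$ has exactly one bounded component, and $K(g_d\mid_{{\mathbb C}^2})$ must be its closure, so $\partial K(g_d\mid_{{\mathbb C}^2}) = \gamma$. The main obstacle is the interior--critical analysis in the third paragraph: one must rule out the possibility that some branch of $\{\det J=0\}$ maps to an arc on the outer boundary of $K(g_d\mid_{{\mathbb C}^2})$. This is precisely the sheet bookkeeping recorded in Figures \ref{Figure3} and \ref{Figure4}, and it is where the technical work lies.
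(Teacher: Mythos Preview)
Your setup is essentially the same as the paper's: the parametrization $K(g_d\mid_{\mathbb C^2})=G_{\mathbb C^2}([-2,2]^2)$, the Jacobian $\det J=-(u-v)(u+3v-v^3)$, and the partition of the square into six subregions on which $G_{\mathbb C^2}$ is a local diffeomorphism (your Figures \ref{Figure3} and \ref{Figure4} are exactly the paper's Figures 3 and 4). Where you diverge is in the hard step. You propose to show $\partial K\subseteq\gamma$ by arguing that every interior critical value is already an interior point of $K$, and you correctly flag this as the main obstacle without actually carrying it out.

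The paper sidesteps this obstacle entirely, and its resolution is worth learning. Rather than analyzing critical values, the paper proves the two containments $K\subseteq D_1$ and $D_1\subseteq K$ separately (where $D_1$ is the closed region bounded by $\gamma$). The first containment is purely algebraic: one checks the identities
\[
p^3-q^2=4(\sin 3\alpha)^2(\cos 3\alpha-\cos 3\beta)^2\ge 0,\qquad
27+8q-18p-p^2=16(\sin\beta)^2(\cos 3\alpha-\cos\beta)^2\ge 0
\]
for real $\alpha,\beta$, so every point of $K$ lies on the correct side of both $C_C$ and $C_D$, hence in $D_1$. For the reverse containment the paper does \emph{not} do sheet bookkeeping over all six subregions; instead it selects a single subregion $D_2=\{(u,v): u\ge v^3-3v,\ v\ge u\}$ on which $\det J>0$, verifies that $h$ carries $\partial D_2$ once around $\gamma$, and invokes the topological argument principle (winding number $=$ signed count of preimages) to conclude that $h$ maps the interior of $D_2$ onto the interior of $D_1$. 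Thus $D_1\subseteq h(D_2)\subseteq K$.

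So your outline is not wrong, but the step you identify as the crux is one the paper avoids; the algebraic factorizations in (3.15) plus a single winding-number argument on one Jacobian-definite subregion replace the multi-sheet analysis you envisage, and give a considerably shorter and more explicit proof.
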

%%%%%%%%%%%%%%%%%%%%%%%%%%%proof
\begin{proof}
Set \ \(I^2 = [-2, 2]\times[-2, 2]\).  Let \ \(h(u, v)\) \ denote the map \ 
\(G_{ {\mathbb C}^2}\mid_ {I^2} , \ i. e. ,\)  
\[h(u, v) = (1 + uv + v^2, \ \frac 12(-2 + u^2 + 6v^2 + uv(3+v^2)), \ -2 \le u, v\le 2.\]
By Theorem 3.3(2), \ we have \ \(h(I^2) = K(g_d\mid_{{\mathbb C}^2})\).   We consider \(p(\alpha, \beta)\) and \(q(\alpha, \beta)\) in (3.9) for \(0 \le \alpha, \beta < 2\pi\).  Then 
%%%%%%%%%%%%%%%%%%%%%%%%equation 3.15
\begin{equation}
\begin{split}
p^3 - q^2 = 4(\sin 3\alpha)^2(\cos 3\alpha - \cos 3\beta)^2 \ge 0,  \\
\mbox{and} \qquad \qquad \qquad \qquad \qquad \qquad \qquad \qquad \qquad \qquad \qquad \qquad\\
27+8q-18p-p^2 = 16(\sin\beta)^2(\cos 3\alpha - \cos\beta)^2 \ge 0.
\end{split}
\end{equation}
Let \(D_1\) denote the closed domain in \({\mathbb R}^2\) bounded by \(\gamma\).  Then \ \(h(I^2) \subset D_1\).  We will show that \(h\) maps \(I^2\) onto \(D_1\).

We compute the Jacobian matrix \ \(J = \partial(p,q)/\partial(u,v)\) \ of \ \(h(u, v)\) \ and its determinant.  Then
\[\det[J] = -(u-v)(u+3v-v^3).\]
The set of zeros of \(\det[J]\) in the \ \((u, v)\) space is depicted in Figure 3. 
The square \(I^2\) is partitioned into six domains.  Set 
\[D_2 = \{(u, v) : u \ge v^3-3v, \ v \ge u, \ -2 \le u, v \le 2\}.\]
We will show \(D_2\) is mapped onto \(D_1\)  under the map \(h\).  Let \(\Gamma\) be the boundary of \(D_2\).  Then \ \(h(\Gamma) = \gamma.\)  As a point on \(\Gamma\)  moves anticlockwise round \(\Gamma\),  its image under \(h\) moves round \(\gamma\)  once  anticlockwise.  Note that \ \(J > 0\)\ in the interior of \(D_2\).  We use a topological argument principle (\cite[Chapter 7, IV, 4]{N}).  It states that \ \(\nu(h(\Gamma), w) = \Sigma\nu(a_j),\)  \   where \ \(\nu(h(\Gamma), w)\) \ is the winding number of a closed loop \ \(h(\Gamma) = \gamma\)  \ about a point \(w\) in the interior of \ \(D_1\) \ and the sum is over \(w\)-points \(a_j\) lying inside  \(\Gamma\)   and  \ \(\nu(a_j) =\) the sign of det\([J[a_j]]\).  Then we have that the mapping  \(h\) maps the interior of \(\Gamma\)  onto the interior of 
\(\gamma\)   in one-to-one fashion.  Then \(h(D_2) = D_1.\)  Any of other five domains in \(I^2\) is also mapped onto \(D_1\) under \(h\).
\end{proof}

Set \ \(\alpha^\prime = 3\alpha\).  Set \ \(\tilde{I}^2 : = \{(\alpha^\prime, \beta) : -\pi \le \alpha^\prime, \beta \le 0\}.\)  The partion of the square \(\tilde{I}^2\)  depicted in Figure 4 corresponds to the partition of \(I^2\) depicted in Figure 3.

Next we will show that \ \(g_d\mid_{C_D}\) \ extends to a morphism on \ \({\mathbb P}^2 \) \  as in the case of \(C_C\).    We use the same notations used in Lemma \ref{lemma1}.   
%%%%%%%%%%%lemma3.13
\begin{lemma} \label{lemma2}
Let \ \(g_d(p, q) = (P_d(p, q), Q_d(p, q))\).\\
(1) \ \(P_d(p, q) = p^d + H_1(p, q) \),\\
(2) \ \(Q_d(p, q) \equiv 2^{d-1}q^d + pH_2(p, q) + H_3(p, q) \quad mod \ (p^2 +18p -8q -27)\),\\
where \ \(H_j(p, q) \in  { \mathbb Z}[p, q], \ j = 1, 2, 3,\)\ and the degrees of \ \( H_1(p, q) \) \ and \ \( H_3(p, q) \) \ are less than or equal to \ \(d-1\) \ and \ \( H_2(p, q) \)\ is a homogeneous polynomial of degree \ \(d-1\).
\end{lemma}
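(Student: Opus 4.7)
The plan is as follows. Part (1) asserts an identity in $\mathbb{Z}[p,q]$ involving the same polynomial $P_d$ that already appeared in Lemma \ref{lemma1}(1); since the statement and the polynomial are identical, the argument given there applies verbatim. Specifically, I would recall that the leading xy-degree part of $P_d(p({\bf x}),q({\bf x}))$ is $p({\bf x})^d$ (because $X^2+Y^2 = |z_1^{(d)}|^2$ whose leading contribution is $|z_1^d|^2 = (x^2+y^2)^d$), and that no monomial $p^iq^j$ of $P_d$ other than $p^d$ itself has $i+j = d$, since $2i+3j\le 2d$ together with $i+j = d$ forces $j=0$.

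For part (2), my plan is to exploit the parametrization $G_D(u) = (5+2u,\ 11+7u+u^2/2)$ together with the semiconjugation $g_d\circ G_D = G_D\circ T_d$ established in Theorem \ref{theorem4}. First I would reduce $Q_d(p,q)$ modulo $(p^2+18p-8q-27)$ to a canonical form $A(q) + p\,B(q)$ with $A,B\in\mathbb{Z}[q]$; this stays in $\mathbb{Z}[p,q]$ because the relation gives the integral substitution $p^2 \equiv 8q - 18p + 27$. Evaluating both representatives on $G_D(u)$ and applying Theorem \ref{theorem4} then yields
\[
A(q(u)) + p(u)\,B(q(u)) \;=\; q(T_d(u)) \;=\; 11 + 7\,T_d(u) + \tfrac{1}{2}\,T_d(u)^2,
\]
a polynomial in $u$ of degree $2d$ with leading coefficient $\tfrac12$.

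Next I would compare $u$-degrees and leading coefficients. Since $q(u) = \tfrac12 u^2 + \cdots$ and $p(u) = 2u + \cdots$, one has $\deg_u A(q(u)) = 2\deg_q A$ and $\deg_u\bigl(p(u)B(q(u))\bigr) = 2\deg_q B + 1$; bounding both by $2d$ forces $\deg_q A \le d$ and $\deg_q B \le d-1$. Matching the coefficient of $u^{2d}$, to which $p(u)B(q(u))$ cannot contribute since its $u$-degree is at most $2d-1$, gives $a_d\cdot 2^{-d} = \tfrac12$, hence $a_d = 2^{d-1}$, where $a_d$ is the leading coefficient of $A$.

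Finally, I would recast the canonical form in the requested shape by setting $H_2(p,q) := b_{d-1}\,q^{d-1}$ (homogeneous of degree $d-1$, with $b_{d-1}$ the leading coefficient of $B$) and $H_3(p,q) := \bigl(A(q) - 2^{d-1}q^d\bigr) + p\,\bigl(B(q) - b_{d-1}q^{d-1}\bigr)$, whose total degree is at most $d-1$. The main obstacle is the bookkeeping: keeping straight the three notions of degree in play (total, weighted xy-degree, and $u$-degree) and verifying that the leading-coefficient match produces exactly $2^{d-1}$ rather than some other power of two; integrality of $H_2,H_3$ over $\mathbb{Z}$ is automatic because the reduction uses the integer-coefficient relation.
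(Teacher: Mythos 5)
Your proposal is correct. Part (1) is handled exactly as in the paper, by deferring to Lemma \ref{lemma1}(1). For part (2), however, you take a genuinely different route. The paper works directly with the monomials of \(Q_d(p,q)\): it reduces each \(p^{2n+j}q^k\) by the substitution \(p^2 \mapsto 27-18p+8q\), uses the xy-degree grading to see that the reduced polynomial has the claimed shape, and then extracts the coefficient \(2^{d-1}\) of \(q^d\) by substituting \(-x^2\) for \(y^2\) in the identity \(X_1^3-3X_1Y_1^2=\sum_h a_h q({\bf x})^{d-2h}p({\bf x})^{3h}\) and evaluating a binomial sum. You instead take the canonical remainder \(A(q)+pB(q)\) of \(Q_d\) under division by the relation (monic in \(p\), so integrality is automatic), restrict to the parametrized curve via \(G_D\), and invoke the semiconjugacy \(g_d\circ G_D=G_D\circ T_d\) of Theorem \ref{theorem4} to turn the problem into a one-variable degree and leading-coefficient comparison; the coefficient \(2^{d-1}\) then falls out of \(a_d\cdot 2^{-d}=\tfrac12\). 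This is cleaner and reuses machinery already established in \S 3.3, at the price of one small point you should make explicit: the individual bounds \(\deg_q A\le d\) and \(\deg_q B\le d-1\) follow from \(\deg_u\bigl(A(q(u))+p(u)B(q(u))\bigr)=2d\) only because \(\deg_u A(q(u))=2\deg_q A\) is even while \(\deg_u\bigl(p(u)B(q(u))\bigr)=2\deg_q B+1\) is odd, so the two leading terms cannot cancel. With that parity remark inserted, the argument is complete, and the final repackaging into \(H_2=b_{d-1}q^{d-1}\) and \(H_3\) of degree at most \(d-1\) is exactly what Proposition \ref{pro3.13} requires.
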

%%%%%%%%%%%%%%%%%%%%%%%%%%%proof
\begin{proof}
(1) The proof is given in Lemma  \ref{lemma1}(1). \ 
(2)  We consider   a monomial  \ \(p^{2n+j}q^k\),  where \ \(j, k, n \in {\mathbb N} \cup\{0\}\) \ and \ \(0 \le j \le 1\).  We define a function \(m\) by \ \(m(p^{2n+j}q^k) = (27 - 18q + 8q)^np^jq^k\).   We replace any term \ \(a_{ij}p^{i}q^j\) \ in \ \(Q_d(p, q)\) \ with \ \(a_{ij}m(p^{i}q^j)\). \\
(i)  The monomials with the xy-degree \(3d\) : \ \(q^d, q^{d-2}p^3, \dots , q^{d-2k}p^{3k}.\)  Then
\[ m(q^{d-2}p^3) = (27 - 18p + 8q)pq^{d-2}, \ \dots \ .\]
(ii) The monomials with the xy-degree \(3d-1\) : \(q^{d-1}p, q^{d-3}p^4, q^{d-5}p^{7}, \dots .\)  Then
\[m(q^{d-3}p^4) = (27 - 18p + 8q)^2q^{d-3}, \ \dots \ .\]
Then to prove that the replaced polynomial satisfies the properties of the right hand side of (2),  it suffices to prove the term \(2^{d-1}q^d\)  occurs in \ \(Q_d(p, q)\).  
We see in the proof of  Lemma  \ref{lemma1} that 
 \[X_1^3 - 3X_1Y_1^2 = \sum_{h=0}^ka_hq({\bf x})^{d-2h}p({\bf x})^{3h}.\]
We substitute \ \(x^2\) \ for \ \(-y^2\) in the both sides.
Let \(u\) and \(v^2\) denote the polynomials obtained by substituting \(x^2\) for \ \((iy)^2\) \ in \(X_1\) and \ \(Y_1^2\),  respectively.  Then \ \(u = mx^d\) \ and \ \(v^2 = -n^2x^{2d}\),  \ where
\[m = \sum_{j=0}^{[d/2]}{}_dC_{2j}, \quad  n = \sum_{j=0}^{[(d-1)/2]}{}_dC_{2j+1}.\]
By the identity \ \({}_dC_{k} = {}_{d-1}C_k +  {}_{d-1}C_{k-1},\) \ we know \ \(m = n = 2^d/2 = 2^{d-1}\).   \ Hence  \ \(a_0 = (m^3 + 3mn^2)/4^d = 2^{d-1}.\)
\end{proof}
The projective closure of \ \(V(p^2 + 18p - 8q -27)\) \ is \ \(V(p^2 + 18pr - 8qr -27r^2)\).   Hence we define a morphism \ \(\bar{g}_d\) \ on \ \({\mathbb P}^2({\mathbb C}) \) \ by the similar way to (3.12).
%%%%pro3.14
\begin{pro} \label{pro3.13}
There is a morphism \ \(\bar{g}_d : {\mathbb P}^2({\mathbb C}) \to  {\mathbb P}^2({\mathbb C})\) \ such that
\[\bar{g}_d(V(p^2 + 18pr - 8qr -27r^2)) = V(p^2 + 18pr - 8qr -27r^2).\]
 \end{pro}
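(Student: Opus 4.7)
The plan is to mimic the construction in Proposition \ref{pro3.9}, but using Lemma \ref{lemma2} in place of Lemma \ref{lemma1}. First, I would write Lemma \ref{lemma2} as
\[P_d(p,q) = p^d + H_1(p,q), \quad Q_d(p,q) \equiv 2^{d-1}q^{d} + pH_2(p,q) + H_3(p,q) \pmod{p^2+18p-8q-27},\]
with $H_2$ homogeneous of degree $d-1$ and $\deg H_1, \deg H_3 \le d-1$. Homogenising with respect to the new variable $r$, I set
\[\bar{P}_d(p,q,r) = p^d + r\,\bar{H}_1(p,q,r), \quad \bar{Q}_d(p,q,r) = 2^{d-1}q^d + pH_2(p,q) + r\,\bar{H}_3(p,q,r),\]
where $\bar{H}_1$ and $\bar{H}_3$ are the degree-$d$ homogenisations of $H_1$ and $H_3$. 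Then I define
\[\bar{g}_d(p:q:r) = (\bar{P}_d(p,q,r) : \bar{Q}_d(p,q,r) : r^{d}).\]

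Next I would verify that $\bar{g}_d$ is a morphism of $\mathbb{P}^2(\mathbb{C})$, i.e.\ that $\bar{P}_d$, $\bar{Q}_d$ and $r^d$ have no common zero. Suppose $r^d = 0$; then $r = 0$. Substituting $r=0$ into $\bar{P}_d$ gives $\bar{P}_d = p^d$, forcing $p=0$. Substituting $r=0$ and $p=0$ into $\bar{Q}_d$ leaves $2^{d-1}q^d$, which forces $q=0$, contradicting the fact that $(p:q:r)$ is a point of $\mathbb{P}^2$. Hence $\bar{g}_d$ is a morphism. This is exactly the same mechanism as in Proposition \ref{pro3.9}, with the nonzero leading coefficient $2^{d-1}$ from Lemma \ref{lemma2}(2) playing the role previously played by the coefficient $1$.

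Finally, I would check that $\bar{g}_d$ preserves the projective closure $\bar{C}_D := V(p^2 + 18pr - 8qr - 27r^2)$. On the affine chart $r=1$ this is Theorem \ref{theorem4}(2), which already gives $g_d(C_D) = C_D$. The only point at infinity on $\bar{C}_D$ is $(0:1:0)$, and a direct evaluation yields
\[\bar{P}_d(0,1,0) = 0, \quad \bar{Q}_d(0,1,0) = 2^{d-1}, \quad r^d\big|_{(0,1,0)} = 0,\]
so $\bar{g}_d(0:1:0) = (0:1:0) \in \bar{C}_D$. Combining the two chart computations gives $\bar{g}_d(\bar{C}_D) \subset \bar{C}_D$, and surjectivity onto $\bar{C}_D$ follows because $g_d(C_D) = C_D$ already on the affine chart and the image includes $(0:1:0)$.

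The only non-routine step is verifying the normal form in Lemma \ref{lemma2}(2) — specifically, producing the coefficient $2^{d-1}$ in front of $q^d$ so that the point at infinity is not a base point of $\bar{g}_d$. Granted that lemma, the argument above is a direct transcription of the proof of Proposition \ref{pro3.9}.
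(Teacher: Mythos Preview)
Your proposal is correct and follows exactly the approach the paper intends: the paper's own proof simply says ``The proof is similar to the argument given in Proposition \ref{pro3.9},'' and you have carried out precisely that transcription using Lemma \ref{lemma2} in place of Lemma \ref{lemma1}. The only quibble is notational---your $\bar{H}_1,\bar{H}_3$ should be homogeneous of degree $d-1$ (so that $r\bar{H}_j$ has degree $d$), not degree $d$---but the structure and verification are sound.
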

%%%%%%%%%%%%%%%%%%%%%%%%%%%proof
\begin{proof}
The proof is similar to the argument given in Proposition \ref{pro3.9}.
\end{proof}
\S 3.4.  An infinite number of invariant curves.

In Sections 3.2 and 3.3, we see two invariant curves \(C_C\) and  \(C_D\) under \(g_d\).  The invariability of  \(C_C\) is based on the fact that  \(C_C\) corresponds to \ \(\alpha = 0\)  and \  \(g_d(C_C)\) \ corresponds to \ \(d\alpha = 0\).  Also the invariability of  \(C_D\) is based on the fact that  \(C_D\) corresponds to \ \(\beta = 0\)  and \  \(g_d(C_D)\) \ corresponds to \ \(d\beta = 0\).   These properties can be generalized.  Let \(C\) be the curve corresponding to \ \(\beta - \frac nm \alpha = 0,  \  (m, n  \in { \mathbb N)}\). Then \ \(g_d(C)\) \ corresponds to \ \(d\beta - \frac nm d\alpha = 0\).  Then we will show that there is an infinite number of invariant affine algebraic curves under \(g_d\) for any \ \(\alpha \in { \mathbb N}\).

To prove this we need some preparations.  We begin to show the following lemma.
%%%%%%%%%%%lemma3.15
\begin{lemma} \label{lemma315}
Assume that an affine algebraic set \(C\) in \ \( { \mathbb C}^2\) has a polynomial parametric representation \ \(x = p(t), \ y = q(t), \) where \ \(p(t), \ q(t) \in { \mathbb C}[t]\).  Then the  polynomial parametric representation covers all points of \(C\).
\end{lemma}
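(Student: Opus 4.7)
The plan is to view the polynomial parametrization as a morphism of varieties and control its behavior at infinity via projective closure. Let \(\phi:\mathbb{A}^1\to\mathbb{A}^2\) be defined by \(\phi(t)=(p(t),q(t))\). The hypothesis that \(C\) has this as a polynomial parametric representation means \(C\) is the smallest affine algebraic set containing \(\phi(\mathbb{C})\); equivalently, \(C\) is the Zariski closure of \(\phi(\mathbb{C})\) in \(\mathbb{A}^2\). What I need to prove is that this closure equality is in fact a pointwise equality.

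First I would dispose of the degenerate case where both \(p(t)\) and \(q(t)\) are constants: then \(\phi(\mathbb{C})\) is a single point, its Zariski closure equals that point, and the conclusion is trivial. So I may assume at least one of \(p,q\) is non-constant; then the image is infinite and \(C\) is an irreducible affine curve (the image of the irreducible variety \(\mathbb{A}^1\) is irreducible, and its closure has dimension one since non-constant polynomials take infinitely many values).

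Next I would extend \(\phi\) to a morphism \(\bar\phi:\mathbb{P}^1\to\mathbb{P}^2\). Setting \(d=\max(\deg p,\deg q)\), homogenize by writing
\[\bar\phi(t:u)=\bigl(u^d\,p(t/u)\,:\,u^d\,q(t/u)\,:\,u^d\bigr),\]
which is a well-defined morphism of projective varieties. Evaluating at \(u=0\) (the point at infinity in \(\mathbb{P}^1\)), the third coordinate vanishes, and inspection of the leading terms of \(p\) and \(q\) shows that at least one of the first two coordinates is nonzero, since \(d\ge 1\). Thus \(\bar\phi(\infty)\) lies on the line at infinity \(\{Z=0\}\) of \(\mathbb{P}^2\), hence \textit{not} in the affine chart \(\mathbb{A}^2\).

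Since \(\mathbb{P}^1\) is projective and \(\bar\phi\) is a morphism, the image \(\bar\phi(\mathbb{P}^1)\) is Zariski closed and irreducible in \(\mathbb{P}^2\); being closed and containing \(\phi(\mathbb{C})\), it contains the projective closure \(\bar C\) of \(C\); conversely, \(\bar\phi(\mathbb{P}^1)\) is an irreducible closed set containing the dense subset \(\phi(\mathbb{C})\subset\bar C\), so \(\bar\phi(\mathbb{P}^1)\subset\bar C\). Therefore \(\bar\phi(\mathbb{P}^1)=\bar C\). Now for any \((a,b)\in C\subset\mathbb{A}^2\), the preimage \(\bar\phi^{-1}(a,b)\) is non-empty; but by the previous paragraph the point \(\infty\in\mathbb{P}^1\) maps outside \(\mathbb{A}^2\), so every preimage of \((a,b)\) lies in \(\mathbb{A}^1\). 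Hence \((a,b)\in\phi(\mathbb{C})\), which is the desired conclusion.

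The only genuine obstacle is the bookkeeping in the homogenization step — one must confirm that \(\bar\phi\) is indeed everywhere defined (equivalently, that the three homogeneous coordinates do not simultaneously vanish on \(\mathbb{P}^1\)) and that \(\bar\phi(\infty)\) lies on the line at infinity; both follow from the choice \(d=\max(\deg p,\deg q)\) together with the assumption that not both of \(p,q\) are constant. Once this is verified, the projectivity argument closes the proof cleanly.
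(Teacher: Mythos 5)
Your proof is correct, but it takes a genuinely different route from the paper. The paper argues by elimination theory: it sets \(f_1 = x - p(t)\), \(f_2 = y - q(t)\), \(I = (f_1, f_2)\), identifies \(C\) with \(V(I_1)\) for the elimination ideal \(I_1 = I \cap \mathbb{C}[x,y]\), and then invokes the Extension Theorem in the form of Corollary 4 of \cite[Chapter 3, \S1]{CLS} --- applicable because \(f_1\), viewed as a polynomial in \(t\), has a nonzero \emph{constant} leading coefficient \(-a_n\) --- to conclude that every partial solution \((x_0,y_0) \in V(I_1)\) lifts to some \((t_0,x_0,y_0) \in V(I)\). Your argument instead completes \(\phi\) to a morphism \(\bar\phi:\mathbb{P}^1\to\mathbb{P}^2\), checks that the point at infinity maps into the line at infinity, and uses completeness of \(\mathbb{P}^1\) (images of projective varieties under morphisms are closed) to see that \(\bar\phi(\mathbb{P}^1)\) is a closed set containing \(\phi(\mathbb{C})\), hence containing \(C\); any affine point of \(C\) then has a preimage, necessarily in \(\mathbb{A}^1\). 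Your verification that the three homogeneous coordinates have no common zero is exactly the right bookkeeping, and it is the geometric counterpart of the constant-leading-coefficient hypothesis in the paper's extension-theorem argument. The paper's route is more computational and stays within the Gr\"obner-basis toolkit it uses throughout; yours is more geometric and makes transparent \emph{why} the image misses nothing (the only possible escape is to infinity, and infinity goes to infinity). One small remark: in your ``conversely'' step, a closed set containing a dense subset of \(\bar C\) \emph{contains} \(\bar C\) rather than being contained in it; the correct justification for \(\bar\phi(\mathbb{P}^1)\subseteq\bar C\) is continuity, \(\bar\phi(\overline{\mathbb{A}^1})\subseteq\overline{\bar\phi(\mathbb{A}^1)}\). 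But this inclusion is not actually needed for your conclusion, so the slip is harmless.
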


We remark that by Proposition 5 in \cite[Chapter 4 \S5]{CLS},  we know that \(C\) is irreducible.  Then if \ \(p(t)\) \ and \  \(q(t)\) \ are non-constant polynomials, then  \(C\) is an affine algebraic curve. 
%%%%%%%%%%%%%%%%%%%%%%%%%%%proof
\begin{proof}
Let \ \(f_1 = x - p(t)\) \ and \  \(f_2 = y - q(t)\).  Set \ \(I = (f_1, f_2)\).

Let \ \(I_1 = I \cap  { \mathbb C}[x, y]\).  Then \ \(C = V(I_1)\).

Let \ \(p(t) = a_nt^n + a_{n-1}t^{n-1} +  \cdot\cdot \cdot + a_0, \enskip a_n \ne 0\)

\qquad \(q(t) = b_mt^m + b_{m-1}t^{m-1} +  \cdot\cdot \cdot + b_0, \enskip b_n \ne 0\).

We assume the lex order with \ \(t > x > y\).  Then by Corollary 4 in \cite[Chapter 3 \S1]{CLS},  we see that if \ \((x_0, y_0) \in V(I_1)\), then there is a \(t_0 \in  { \mathbb C}\) \ such that \ \((t_0, x_0, y_0) \in V(I)\).
\end{proof}

We consider the case that \ \(\beta = \frac nm \alpha, \  (m, n  \in { \mathbb N)}\) \ in (3.9).  Set \ \(t = 2\cos\frac \beta n\).  Then
\ \(2\cos 3\alpha = T_{3m}(t)\) \ and \ \(2\cos \beta = T_{n}(t)\).  Then we define an affine algebraic curve \ \(C_{m,n}\) \ by the following polynomial parametric representation :
%%%%%%%%%%%%%%%%%%%%%%%%equation 3.16
\begin{equation}
\begin{split}
p(t) = 1 + T_{3m}(t)T_n(t) + T_n(t)^2,  \qquad \qquad \qquad \qquad\\
q(t) = \frac 12(-2 + T_{3m}(t)^2 + 6T_n(t)^2 + T_{3m}(t)T_n(t)(3 +  T_n(t)^2).
\end{split}
\end{equation}
%%%%%%%%%%%lemma3.16
\begin{lemma} \label{lemma316}
Let \(C_{m,n}\) \ be the affine algebraic curve defined in (3.16).  Then \ \(g_d(C_{m,n}) = C_{m,n}\).
\end{lemma}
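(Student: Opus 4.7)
The strategy is to use the parametric representation together with Theorem~\ref{theorem2b}(1) and the composition law $T_d\circ T_k = T_{dk}$ from (2.3) to reinterpret $g_d$ acting on the parametrization as a reparametrization of $C_{m,n}$.

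First I would observe that by the very definition of $C_{m,n}$ in (3.16), a point of the form $(p(t),q(t))$ with $t\in\mathbb{C}$ is exactly $G_{\mathbb{C}^2}(T_{3m}(t),T_n(t))$. Applying Theorem~\ref{theorem2b}(1) with $u=T_{3m}(t)$ and $v=T_n(t)$ gives
\[
g_d(p(t),q(t)) \;=\; G_{\mathbb{C}^2}\!\bigl(T_d(T_{3m}(t)),\,T_d(T_n(t))\bigr).
\]
Now invoke (2.3) in the form $T_d\circ T_k = T_{dk} = T_k\circ T_d$ to rewrite the right-hand side as $G_{\mathbb{C}^2}\!\bigl(T_{3m}(T_d(t)),\,T_n(T_d(t))\bigr) = (p(T_d(t)),\,q(T_d(t)))$. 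Thus $g_d$ maps the point of $C_{m,n}$ parametrized by $t$ to the point parametrized by $T_d(t)$, so $g_d$ preserves the image of the parametrization.

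Next I would upgrade this to the containment $g_d(C_{m,n})\subseteq C_{m,n}$ as algebraic sets. By Lemma~\ref{lemma315}, the polynomial parametrization $t\mapsto(p(t),q(t))$ in fact covers all of $C_{m,n}$, so every point of $C_{m,n}$ has the form treated above, and the previous paragraph gives that $g_d$ sends it into the image of the parametrization, which is again all of $C_{m,n}$.

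For the reverse inclusion, note that for $d\ge 1$ the Chebyshev polynomial $T_d:\mathbb{C}\to\mathbb{C}$ is surjective (for $d=1$ it is the identity and for $d\ge 2$ it is a nonconstant polynomial, hence surjective on $\mathbb{C}$). So given any $s\in\mathbb{C}$, pick $t\in\mathbb{C}$ with $T_d(t)=s$; then $(p(s),q(s))=g_d(p(t),q(t))\in g_d(C_{m,n})$. Again by Lemma~\ref{lemma315}, this exhibits every point of $C_{m,n}$ as lying in $g_d(C_{m,n})$, which completes the equality $g_d(C_{m,n})=C_{m,n}$.

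The only conceptual step is recognizing that the composition identity (2.3) turns $T_d$ applied to $(T_{3m}(t),T_n(t))$ into a reparametrization in the variable $T_d(t)$; everything else is bookkeeping together with the two quoted results (Theorem~\ref{theorem2b}(1) and Lemma~\ref{lemma315}). There is no real obstacle beyond carefully invoking Lemma~\ref{lemma315} so that ``preserving the parametrized points'' translates into ``preserving the affine curve.''
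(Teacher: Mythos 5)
Your proposal is correct and follows essentially the same route as the paper: parametrize $C_{m,n}$ via Lemma~\ref{lemma315}, apply Theorem~\ref{theorem2b}(1) to the point $G_{\mathbb{C}^2}(T_{3m}(t),T_n(t))$, and use the semigroup property of Chebyshev polynomials to recognize the result as $(p(T_d(t)),q(T_d(t)))$. The only difference is that you spell out the reverse inclusion via surjectivity of $T_d$ on $\mathbb{C}$, which the paper leaves implicit in ``Then this lemma follows.''
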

%%%%%%%%%%%%%%%%%%%%%%%%%%%proof
\begin{proof}
By Lemma 3.15, we may assume that any point of \ \(C_{m,n}\) \ is written as \ \((p(t), \ q(t))\) in (3.16).  Set  
\ \(g_d(p(t), \ q(t)) = (P(t), \ Q(t))\).   Then by Theorem 3.3(1), \ \( (P(t), \ Q(t)) = (p(T_d(t), \ q(T_d(t))\).  Then this lemma follows.
\end{proof}
%%%%pro3.17
\begin{pro} \label{pro317}
There is an infinite number of invariant affine algebraic curves under \ \(g_d\) \ for any \ \(d \in { \mathbb N}\).
 \end{pro}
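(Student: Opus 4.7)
The plan is to exhibit an infinite subfamily of the curves $\{C_{m,n}\}$ from Lemma~\ref{lemma316}, all of which are already invariant under every $g_d$, and to show that infinitely many of them are pairwise distinct as algebraic subsets of $\mathbb{C}^2$. For convenience I restrict to the one-parameter slice $\{C_{m,1}\}_{m \ge 1}$ and argue by a degree estimate.

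Specializing (3.16) to $n = 1$ gives $p(t) = 1 + t\,T_{3m}(t) + t^2$ and $q(t) = \tfrac{1}{2}(-2 + T_{3m}(t)^2 + 6 t^2 + t\,T_{3m}(t)(3 + t^2))$. Since $T_k$ has leading term $t^k$, reading off dominant terms yields $\deg_t p = 3m + 1$ and $\deg_t q = 6m$. The parametrizing morphism $\phi_m \colon \mathbb{A}^1 \to C_{m,1}$ factors as $t \mapsto (T_{3m}(t),\,t) \mapsto G_{\mathbb{C}^2}(T_{3m}(t),\,t)$; the first arrow is birational because the second coordinate is $t$ itself, and $G_{\mathbb{C}^2}$ is a finite morphism of degree $6$. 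The latter follows by eliminating $u$ from the defining pair $p = 1 + uv + v^2$, $q = \tfrac{1}{2}(-2 + u^2 + 6v^2 + uv(3+v^2))$ via $u = (p - 1 - v^2)/v$, producing a degree-$6$ polynomial in $v$ with coefficients in $\mathbb{C}[p,q]$; this is consistent with $|D_3| = 6$ and Theorem~\ref{theorem1}. Hence $\deg \phi_m \le 6$.

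By Lemma~\ref{lemma315}, each $C_{m,1}$ is an irreducible affine algebraic curve and $\phi_m$ is surjective onto it. Intersecting with a generic line $\{a p + b q + c = 0\}$ in $\mathbb{P}^2$, the affine intersection points correspond to roots of $a\,p(t) + b\,q(t) + c$, a polynomial of degree $\max(\deg p, \deg q) = 6m$; grouping these roots into $\phi_m$-fibers of size at most $6$, the projective degree of $C_{m,1}$ is at least $6m/6 = m$. Since the degrees are unbounded as $m \to \infty$, the family $\{C_{m,1}\}_{m \ge 1}$ contains infinitely many pairwise distinct invariant curves, proving the proposition. The main technical obstacle is the bound $\deg \phi_m \le 6$; once the degree of $G_{\mathbb{C}^2}$ is pinned down by the resultant computation above, the rest is routine.
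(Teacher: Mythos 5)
Your proposal is correct, and it differs from the paper's proof at the decisive step. Both arguments get invariance for free from Lemma 3.16 applied to the curves $C_{m,n}$ of (3.16); the difference is how infinitely many of them are shown to be pairwise distinct. The paper takes the subfamily $C_{2k,3}$, intersects each curve with the real set $K(g_d\mid_{{\mathbb C}^2})$ to obtain arcs $\gamma_{2k}$ all ending at the fixed point $(1,-1)$, and separates the curves by computing the tangent slopes $\frac{5}{2}+\frac{4}{4k^2-1}$ there, using the values $T_{2k}(\pm 2)$ and $T_{2k}'(-2)$. You instead take $C_{m,1}$ and show its degree is unbounded: the parametrization $t\mapsto G_{{\mathbb C}^2}(T_{3m}(t),t)$ has $\max(\deg_t p,\deg_t q)=6m$, while each fiber has at most $6$ elements because $t\mapsto (T_{3m}(t),t)$ is injective and eliminating $u$ from the defining equations of $G_{{\mathbb C}^2}$ gives a monic sextic in $v$ over ${\mathbb C}[p,q]$; hence a generic line meets $C_{m,1}$ in at least $m$ points and $\deg C_{m,1}\ge m$. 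Your route is purely algebraic (no real slice, no calculus) and yields the slightly stronger conclusion that the invariant curves have unbounded degree, whereas the paper's computation is more elementary and exhibits concrete geometric data (distinct tangent directions at a common fixed point). Two places to tighten: the identification of $\deg G_{{\mathbb C}^2}$ with $|D_3|=6$ via Theorem 2.3 is not literal, since $G_{{\mathbb C}^2}$ is not the quotient map $\tilde F$ — but only the upper bound ``at most $6$ preimages'' is needed, and that is exactly what your sextic in $v$ provides (the exceptional solutions with $v=0$ occur only over $p=1$ and are avoided by a generic line, or can be counted separately); and one should choose the line so that $a\,p(t)+b\,q(t)+c$ has $6m$ simple roots, which holds for generic $c$. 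With these routine adjustments the argument is complete.
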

%%%%%%%%%%%%%%%%%%%%%%%%%%%proof
\begin{proof}
We consider a family of curves \ \(\{C_{2k,3} : k = 1, 2, \dots\}\).  By Lemma 3.16, each curve \ \(C_{2k, 3}\) \ is invariant under \(g_d\).  Then, to prove this proposition it suffices to show that all curves \ \(C_{2k, 3}\) \ are different from each other.  To prove this, we consider continuous arcs \ 
\(\gamma_{2k} : = C_{2k, 3} \cap   K(g_d\mid_{{\mathbb C}^2})\).  Then we will show that the point \ \(P = (1, -1)\) \ is an end point of each \ \(\gamma_{2k}\)  \ and that the slopes of the tangent lines of \ \(\gamma_{2k}\) \ at \(P\) in \ \( K(g_d\mid_{{\mathbb C}^2})\) \ are different from each other. The point \ \(P = (1, -1)\) \ is a fixed point of \(g_d\).

To prove the above facts we need some preparations.  Let \ \(T_d(z)\) \ be the \(d^{th}\) Chebyshev polynomial in one variable.  Then
%%%%%%%%%%%%%%%%%%%%%%%%equation 3.17 
\begin{equation}
T_{d+2}(z) = zT_{d+1}(z) - T_{d}(z), \enskip T_{0}(z) = 2,   \enskip T_{1}(z) = z.
\end{equation}
By induction, we have
%%%%%%%%%%%%%%%%%%%%%%%%equation 3.18 
\begin{equation}
\begin{split}
T_{2k}(-2) = 2, \enskip T_{2k+1}(-2) = -2, \enskip  T_{2k}(0) = (-1)^k\times 2, \\
 T_{2k+1}(0) = 0, \enskip  T_{2k}(2) = T_{2k+1}(2) =2, \enskip \mbox{for} \enskip k \in {\mathbb N}.
\end{split}
\end{equation}
Let \ \(T'_d(z)\)\  denote the derivative of \ \(T_d(z)\).  Then by (3.17) and (3.18), we have
%%%%%%%%%%%%%%%%%%%%%%%%equation 3.19 
\begin{equation}
T_{2k}'(-2) = -(2k)^2,\ \mbox{and} \enskip T_{2k+1}'(-2) = (2k+1)^2,  \enskip \mbox{for} \enskip k \in {\mathbb N}.
\end{equation}

We consider a polynomial parametric representation of \ \(C_{2k,3}\).  We set \ \(v = 2\cos \beta\).  Then \ \(2\cos 3\alpha = T_{2k}(v)\).  Then we have a polynomial parametric representation :
%%%%%%%%%%%%%%%%%%%%%%%%equation 3.20 
\begin{equation}
\begin{split}
p(v) = 1 + vT_{2k}(v) + v^2, \qquad \qquad \qquad \qquad\\
q(v) = \frac 12(-2 + T_{2k}(v)^2 + 6v^2 + T_{2k}(v)(3v + v^3)).
\end{split}
\end{equation}
By Theorem 3.3,
\[ C_{2k, 3} \cap   K(g_d\mid_{{\mathbb C}^2}) = \{(p(v), \ q(v)) : -2 \le v \le 2\}.\] 
Then we set 
\[\gamma_{2k}(v) = \{(p(v), \ q(v)) : -2 \le v \le 2\}.\]

We claim that if \ \(p(v) = 1\) \ and \ \(q(v) = -1,\) \ in \ \(C_{2k,3}\) \  for \ \(v \in  {\mathbb C}\), \ then \ \(v = -2\).\\
 Indeed.  Suppose  \ \(p(v) = 1\)  in (3.20).  Then \ \(v = 0\)\ \ or \ \(T_{2k}(v) = -v\).  If \ \(v = 0\), \ then by (3.18),  \ \(q(0) = 1 \neq -1\).  If \  \(T_{2k}(v) = -v\), \ then \ \(q(v) = -1 + 2v^2 - \frac 12 v^4\).   We assume that \ \(T_{2k}(v) = -v\), \ \(q(v) = -1\) \ and \ \(v \neq 0\).  Then \ \(v = \pm2.\) \ If \ \(v = 2, \enskip T_{2k}(2) = 2 \neq -2 \).  If \ \(v = -2\), \ then we have \ \(p(-2) = 1\) \ and \ \(q(-2) = -1.\)  This completes the proof the claim.

Hence if \ \(\gamma_{2k}(v) = (1, -1),\)  \ then \ \(v = -2\) \ and an end point of any \ \(\gamma_{2k}(v) \) \ is the point \ \(P = (1, -1)\).

We compute the slope of the tangent line of \ \(\gamma_{2k}(v) \)  \ at the point \(P\).  Clearly, \\
\[\frac{dp}{dv} = vT_{2k}'(v) + T_{2k}(v) +2v.\]  
Then by (3.18) and (3.19),
 \[\left. \frac{dp}{dv}\right |_{v = -2} = 2(2k)^2 -2 > 0.\]
Hence 
\[\left. \frac{dp}{dv}\right |_{v} > 0, \ \mbox{for} \ v \in (-2-\varepsilon, \ -2+\varepsilon), \ \mbox{with  a small positive number}\   \varepsilon.\]
Similarly, we have that \ \(dg/dv\) \ is a polynomial in \(v\) and 
\[\left. \frac{dq}{dv}\right |_{v = -2} = 5(2k)^2 + 3.\]
Then 
\[\left. \frac{dq}{dp}\right |_{(p,q) = (1,  -1)}  = \ \frac{\left. \frac{dq}{dv}\right |_{v = -2}} { \left. \frac{dp}{dv}\right |_{v = -2}} \ = \frac52 + \frac{4}{4k^2-1}.\]
Hence
\[\lim_{v \to -2+0}\left. \frac{dq}{dp}\right |_{\gamma_{2k}(v)}  =  \frac52 + \frac{4}{4k^2-1}.\]
The slopes of the tangent lines of the continuous arcs \({\gamma_{2k}(v)}, \ (k \in  { \mathbb N})\), \ at \ \(v = -2 + 0\) \ are different from each other.
\end{proof}
%%%%%%%%%%%%%%%%%%%%%%%%%%%%%%%%%%%%%%%%%%%%%%%%%%%%%%%%%%%%%%%%%%%%%%%%%%%%%%%%%%%%%%%%%%%%
%%%%%%%%%%%%%%%%%%%%%%%%%%%%%%%%%%%%%%%%Section 4
\section{Morphisms on \ \( { \mathbb C}^3 / D_{4}\)}
%%%%%%%%Subsection4.1
\subsection{\rm{The orbit  variety\ \( { \mathbb C}^3 / D_{4}\) \ and morphisms defined over \({ \mathbb Z}\)}}\hspace{0cm}

We begin with the study of the orbit variety \ \( { \mathbb C}^3 / D_{4}\).  We set \ \(x : = x_1, \quad y : = x_2\) \and \ \(z : = x_3\).  By Algorithm 2.5.14 in \cite{St},  we know that \ \(\{x^2 + y^2, \ z^2, \ z(x^2-y^2), \ x^2y^2\}\) \ is a  fundamental system of invariants  of \ \( { \mathbb C}[x, y, z]^{ D_{4}}\).  But, to simplify the \ syzygy \ ideal we select another fundamental system of invariants \ \(\{x^2 + y^2, \ z^2, \ z(x^2-y^2), \ (x^2-y^2)^2\}\).  Set
%%%%%%%%%%%%%%%%%%%%%%%%equation 4.1
\begin{equation}
 p = x^2 + y^2, \ q = z^2, \ r = z(x^2-y^2), \ s = (x^2-y^2)^2,
\end{equation}
and \ \(F = (p, q, r, s).\)

Then by Proposition 3 in \cite[Chapter 7 \S4]{CLS},  we  see that the  \ syzygy \ ideal \(I_F\) for \(F\) is the ideal \ \((r^2 - qs)\).  
By Theorem \ref{theorem1}, we have \  \({ \mathbb C}^3 / D_{4} \simeq V_F = V(I_F)\).
Then the orbit variety \  \( V_F\) \ can be embedded into \ \({ \mathbb C}^4\)  as an affine subvariety \ \(V(r^2-qs)\).  The subvariety \ \(V(r^2-qs)\)\ is a quadric hypersurface in  \ \({ \mathbb C}^4\), denoted by \(X_Q\).  The singular locus of \(X_Q\) is the set \ \(\{(p, 0, 0, 0) : p \in { \mathbb C}\}\).

Now we consider a morphism \(g_2\) on \(X_Q\).  By \cite[p.198]{U3}, we have 
\[f_2(x, y, z) = (x^2 - y^2 -2z, \ 2xy, \ z^2 - 2x^2 - 2y^2 +2).\]
Using the generators \ \(\{p, q, r, s\}\) in (4.1),  we can compute \ \(g_2\) :
%%%%%%%%%%%%%%%%%%%%%%%%equation 4.2
\begin{equation}
g_2(p, q, r, s) = (p^2 + 4q - 4r, (q-2p+2)^2,  (q-2p+2)(2s-p^2-4r+4q), (2s-p^2-4r+4q)^2). 
\end{equation}
The morphism \(g_2\) is defined  over\ \( { \mathbb Z}\).  This fact holds for any morphisms \(g_d\) on \(X_Q\).  

%%%%%%%%%%%%%%%%%%%%%%%%theorem 4.1
\begin{theorem} \label{theorem5}   
Let \(g_d\) be a morphism on \ \(X_Q\) as in Definition \ref{definition1}. Then  
 any morphism  \(g_d\) from \(X_Q\)  to   \(X_Q\)  is defined over\ \( { \mathbb Z}\).\\
\end{theorem}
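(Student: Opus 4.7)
The plan is to mimic the proof of Theorem \ref{theorem2}(2), but account for two new complications: the nontrivial syzygy $I_F = (r^2 - qs)$ (so representatives in $\mathbb{Z}[p,q,r,s]$ are not unique) and the fact that, unlike the case $n=2$, the literal analogue $\mathbb{R}[x,y,z]^{D_4}\cap\mathbb{Z}[x,y,z] = \mathbb{Z}[p,q,r,s]/I_F$ fails. Indeed, $x^2y^2$ is an integer-coefficient $D_4$-invariant but $4x^2y^2 = p^2 - s$ shows it admits no $\mathbb{Z}$-polynomial expression in $p,q,r,s$. Hence the proof must exploit the specific shape of $p_j\circ f_d$, not merely the fact that it is integer and $D_4$-invariant.

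First I would observe that $P_j({\bf x}) := p_j(f_d({\bf x}))$ lies in $\mathbb{Z}[x,y,z]$: by (2.2), $z_j^{(d)}$ is obtained from the integer recursion afforded by $\prod_i(t-t_i^d) = t^4 - z_1^{(d)}t^3 + z_2^{(d)}t^2 - z_3^{(d)}t + 1$ (using $\prod t_i^d = 1$) and Newton's identities, so $T_d$ and hence $f_d$ are defined over $\mathbb{Z}$; $P_j$ is $D_4$-invariant by (2.14). Next I would pass to normal form modulo $I_F$: the set $\{r^2-qs\}$ is a Gr\"obner basis with leading term $r^2$ in any order with $r>s$, so every class in $\mathbb{Z}[p,q,r,s]/I_F$ has a unique representative $A(p,q,s)+ rB(p,q,s)$ with $A,B\in\mathbb{Z}[p,q,s]$. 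It suffices to find such $A_j, B_j$ for each $P_j$. Splitting $P_j = P_j^{+} + P_j^{-}$ into its even- and odd-$z$-degree parts matches this decomposition, because the generators $p,q,s$ have only even $z$-degree while $r$ has only odd degree; this forces $A_j(p,q,s)({\bf x})=P_j^{+}$ and $r({\bf x})B_j(p,q,s)({\bf x})=P_j^{-}$, both of which are integer-coefficient $D_4$-invariants in $\mathbb{Z}[x,y,z]$.

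The odd part $P_j^{-}$ lies in $z\cdot\mathbb{Z}[x,y,z^2]$, and $D_4$-invariance forces its $(x,y)$-component (with $q=z^2$ treated as a parameter) to be antisymmetric under the $90^\circ$ rotation, hence divisible by $x^2-y^2$ in $\mathbb{Z}[x,y,q]$; this yields $P_j^{-} = r({\bf x})\,\tilde B_j$ with $\tilde B_j\in\mathbb{Z}[x,y,q]^{D_4} = \mathbb{Z}[p,q,x^2y^2]$. Similarly $P_j^{+}\in\mathbb{Z}[x,y,z^2]^{D_4}=\mathbb{Z}[p,q,x^2y^2]$. The decisive step is then to show that in the expansions of $P_j^{+}$ and $\tilde B_j$ as $\mathbb{Z}$-polynomials in $p,q,x^2y^2$, the coefficient of $(x^2y^2)^k$ is divisible by $4^k$ for every $k$. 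Once this is established, substituting $x^2y^2 = (p^2-s)/4$ converts $P_j^{+}$ and $\tilde B_j$ into elements of $\mathbb{Z}[p,q,s]$, producing the required $A_j, B_j$ and completing the proof.

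The $4^k$-divisibility is the main obstacle and is where the specific arithmetic of $f_d$ matters. I would establish it by working in the complexified coordinates: set $U:=z_1^{(d)}(z_1,z_2,z_3)$ and $V:=z_1^{(d)}(z_3,z_2,z_1)$, so $X+iY=U$, $X-iY=V$, $Z=z_2^{(d)}$, and the four components become
\[
P_1=UV,\quad P_2=Z^2,\quad P_3=Z\cdot\frac{U^2+V^2}{2},\quad P_4=\Bigl(\frac{U^2+V^2}{2}\Bigr)^{\!2}.
\]
Decomposing $U=Q_{+}+Q_{-}$, $V=Q_{+}-Q_{-}$ into symmetric and antisymmetric parts under $z_1\leftrightarrow z_3$, one finds $2Q_{+}\in\mathbb{Z}[z_1+z_3, z_1z_3, z_2]=\mathbb{Z}[2x,p,z]$ and $Q_{-}=(z_1-z_3)R/2 = iy\cdot R$ with $R\in\mathbb{Z}[2x,p,z]$, so the factors of $2$ in the denominators of $Q_{\pm}$ pair naturally with $z_1-z_3 = 2iy$. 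A careful bookkeeping of these $2$'s through the above formulas shows that every occurrence of $x^2y^2$ in $P_j^{+}$ or $\tilde B_j$ is accompanied by the required power of $4$. This can be organized as an induction on $d$ using the four-term linear recursion for $(z_j^{(d)})$, with base cases $d=1,2,3$ verified directly (cf.\ (4.2) and the analogous formula for $g_3$).
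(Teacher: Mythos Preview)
Your framework is sound and your diagnosis of the difficulty is exactly right: the obstruction is the monomial $x^2y^2$, and the whole problem reduces to showing that in the $\mathbb{Z}[p,q,x^2y^2]$-expansion of $P_j^{+}$ and $\tilde B_j$ the coefficient of $(x^2y^2)^k$ is divisible by $4^k$. The paper makes the same remark after its proof, noting that the basis $\{p,q,r,x^2y^2\}$ does satisfy the integral analogue of Theorem~\ref{theorem2}(1) while $\{p,q,r,s\}$ does not. So your reduction is correct and illuminating.

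The gap is that you do not actually prove the $4^k$-divisibility, and neither of your two suggested mechanisms works without substantial new input. The induction route fails: the recurrence for $z_1^{(d)}$ is linear, but the quantities you must control ($X_d^2\pm Y_d^2$) are quadratic, so passing from step $k-1,\dots,k-4$ to step $k$ produces cross terms $X_{k-i}X_{k-j}$ and $X_{k-i}Y_{k-j}$ that are not governed by the inductive hypothesis. (Multiplicativity $g_e\circ g_d=g_{ed}$ only reduces to odd $d$, not to finitely many primes.) The bookkeeping route via $Q_\pm$ is also incomplete as stated: from $2Q_+\in\mathbb{Z}[2x,p,z]$ and $Q_-=iyR$ you get $4P_1=(2Q_+)^2+4y^2R^2$, which lives in $\mathbb{Z}[2x,p,z]+4y^2\mathbb{Z}[2x,p,z]$, but this by itself does not force $P_1\in\mathbb{Z}[p,q,s]$; one must still explain why the odd powers of $2x$ (hence of $x$) that survive after dividing by $4$ conspire to produce only $p$ and $s=(x^2-y^2)^2$ rather than $x^2y^2$ with an odd coefficient.

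The paper supplies the missing mechanism by a different route. It introduces the weighted degree $a+2b+3c$ on monomials $z_1^az_2^bz_3^c$ and shows (Lemma~4.4) that every term of $z_1^{(d)}$ has weighted degree $\equiv d\pmod 4$. After stripping coefficients in $\mathbb{Z}[p,q]$, this forces every surviving factor of the form $z_1^m$ or $z_3^m$ to have $m$ in a prescribed residue class mod $4$. Expanding $X_d^2\pm Y_d^2$ then reduces everything to $\mathrm{Re}(z_1^m)$ with $m$ even, and a short direct computation (Lemma~4.6) shows
\[
\mathrm{Re}(z_1^{4h})\in\mathbb{Z}[p,s],\qquad \mathrm{Re}(z_1^{4h+2})\in(x^2-y^2)\,\mathbb{Z}[p,s],
\]
because $(2xy)^2=p^2-s$ absorbs the factors of $4$ exactly. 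This mod-$4$ weighted-degree control is precisely what your $Q_\pm$ argument lacks: it tells you which $\mathrm{Re}(z_1^m)$ actually occur, so that the $4$'s can be matched term by term rather than tracked through an induction.
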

%%%%%%%%%%%%%%%%%%%%%%%%%%%begin proof theorem4.1
\begin{proof}
 We use recurrence formulas of \(z_1^{(d)}\)  and  \(z_2^{(d)}\)  (See \cite{L} and  \cite[p.201]{U3}) :

 %%%%%equation 4.3
\begin{equation}
\begin{split}
z_1^{(k)} = z_1\cdot z_1^{(k-1)} - z_2\cdot z_1^{(k-2)} + z_3\cdot z_1^{(k-3)} - z_1^{(k-4)} ,\\
z_1^{(0)} = 4, \ z_1^{(1)} = z_1, \ z_1^{(2)} = z_1^2 -2z_2, \ z_1^{(3)} = z_1^3 -3z_1z_2 + z_3,  
\end{split}
\end{equation}

%%%%%equation 4.4
\begin{equation}
\begin{split}
z_2^{(k)} = z_2\cdot z_2^{(k-1)} - (z_1z_3-1)\cdot z_2^{(k-2)} + (z_1^2-2z_2 + z_3^2)\cdot z_2^{(k-3)}\\
 - (z_1z_3-1)\cdot z_2^{(k-4)} +  z_2\cdot z_2^{(k-5)} -  z_2^{(k-6)} ,\\
z_{2}^{(-2)} = z_2^2-2z_1z_2+2,\ z_{2}^{(-1)} = z_2, \ z_{2}^{(0)} = 6, \ z_{2}^{(1)} = z_2, \\
 z_{2}^{(2)} = z_{2}^{(-2)}, \ z_{2}^{(3)} = z_2^3-3z_1z_2z_3+3z_3^2+3z_1^2-3z_2.
\end{split}
\end{equation}
We assume that \ \(z_1 = \bar{z}_3\) \ and \ \(z_2 \in {\mathbb R}\).  Let  \ \(z_1 = x + iy\) and \(\ z_2 = z\)  where \(x, y, z\) are real variables.   We set 
%%%%%equation 4.5
\begin{equation}
\begin{split}
z_1^{(d)} (x + iy, \ z, \ x-iy) = X_d(x, \ y, \ z) + iY_d(x, \ y, \ z),\\
  \quad z_2^{(d)} (x + iy, \ z, \ x-iy) = Z_d(x, \ y, \ z), \\ 
 \mbox{where}  \ X_d,\ Y_d, \ Z_d \in {\mathbb R}[x, \ y, \ z].
\end{split}
\end{equation}
By Proposition \ref{pro2.6} and (4.2), to prove this theorem it suffices to consider only  the case \(d\) is odd.  We assume \(d\) is a positive old integer.  Then it is enough to prove the following lemma.  Set \ \(R =  { \mathbb Z}[x^2+y^2, \ z^2, \ z(x^2-y^2), \ (x^2-y^2)^2]\).
%%%%%%%%%%%lemma4.2.
\begin{lemma} \label{lemma3}
Let \(d\) be a positive odd integer.\\
(a)  \ \(X_d^2 + Y_d^2  \in R\). \ (b) \  \(Z_d^2 \in R\).\  (c) \  \(Z_d(X_d^2 - Y_d^2) \in R \). 
\  (d)\  \((X_d^2 - Y_d^2)^2 \in R \).
\end{lemma}
%%%%%%%%%%%%%%%%%%%%%%%%%%%begin proof of Lemma 4.2
\begin{proof}[Proof of (b)] Using (4.4), we can prove the following properties (4.6) by induction:
%%%%%equation 4.6
\begin{equation}
\begin{split}
 \mbox{if} \ k \ \mbox{is odd}, \quad Z_k = c_1(x^2-y^2) + c_2z,\\
\mbox{if} \ k \ \mbox{is even}, \quad Z_k = c_3z(x^2-y^2) + c_4,\\
\mbox{where} \  c_1, c_2, c_3, c_4 \in { \mathbb Z}[x^2+y^2, z^2,  (x^2-y^2)^2].
\end{split}
\end{equation}
Then the assertion (b) follows.
\end{proof}
To prove the assertions (a), (c) and (d), it suffices to show the following lemma.
%%%%%%%%%%%lemma4.3.
\begin{lemma} \label{lemma4}
Let \(d\) be a positive odd integer.\\
(a)  \ \(X_d^2 + Y_d^2  \in R\).  \ (e)\  \(X_d^2 - Y_d^2 \in (x^2-y^2)R + zR \).
\end{lemma}
The assertion (c) follows from (4.6) and (e).   The assertion (d) follows from  (e).   The proof of Lemma \ref{lemma4} needs some preparations and will be given after Lemma \ref{lemma8}.

We begin to show the following lemma.
%%%%%%%%%%%lemma4.4.
\begin{lemma} \label{lemma5}
Let \(d\) be a positive odd integer.  (1) Let \ \(\alpha_{abc}z_1^az_2^bz_3^c\) \ be a term in \ \(z_1^{(d)}(z_1, z_2, z_3)\).  Then \ \(a + 2b + 3c \equiv d\) \  (mod 4) \ and  \ \(\alpha_{abc} \in  { \mathbb Z}\).\\
(2) \ The term   \(z_1^{d}\) occurs in  \(z_1^{(d)}\) and it is the only term with degree \(d\).
\end{lemma}
%%%%%%%%%%%%%%%%%%%%%%%%%%%proof
\begin{proof}
Using (4.3) ,  we can prove this lemma by induction.
\end{proof}
We define the weighted degree of \ \(z_1^az_2^bz_3^c\) \ to be  \ \(a + 2b + 3c\).
Set \ \(R_0 =  { \mathbb Z}[x^2+y^2, z^2]\).  Since \ \(z_1z_3 = x^2+y^2\) \ and  \(z_2^2 = z^2\),  we pick out monomials \ \(z_1z_3 \) \ and \(z_2^2\)  \ in \(z_1^{(d)}\)  as coefficients belonging to \(R_0\).  Then we may view the polynomial \(z_1^{(d)}\) as a polynomial  \(\tilde{z}_1^{(d)}\)   whose terms are written as 
%%%%%%%%%%%%%%%%%%%%%%%%equation 4.7
\begin{equation}
\alpha z_1^{n_1} \ \mbox{or} \ \beta z_1^{n_2}z_2 \ \mbox{or} \ \gamma z_3^{n_3} \ \mbox{or} \ \delta z_3^{n_4}z_2, \ \mbox{where} \ \alpha, \beta, \gamma, \delta\  \in  R_0, \ n_j \in {\mathbb N}, \ (j = 1, 2, 3, 4).
\end{equation}
The weighted degrees of \ \(z_1z_3\) \ and \(z_2^2\) are 4.   Then by Lemma \ref{lemma5}(1), we see that if \(n_j^,\)s are the degrees of monomials in (4.7), then 
%%%%%%%%%%%%%%%%%%%%%%%%equation 4.8
\begin{equation}
n_1 \equiv n_2 + 2 \equiv 3{n_3} \equiv 3{n_4} + 2 \equiv d \ (\mbox{mod} \ 4). 
\end{equation}
\[\mbox{Let}\quad \tilde{z}_1^{(d)} = \sum a(1, n_1)z_1^{n_1} +  \sum a(2, n_2)z_1^{n_2}z_2 + \sum a(3, n_3)z_3^{n_3} + \sum a(4, n_4)z_1^{n_4}z_2, \] 
where the sums are over \ \(n_j \in I_j \quad j = 1, 2, 3, 4.\)  Here \ \(a(j, n_j) \in R_0, \ j = 1, 2, 3, 4.\)

Now we assume that \ \(z_1 = x + iy, \ z_2 = z, \  z_3 = x - iy\).  Let 
\ \(z_1^m = U_m + iV_m, \ U_m, \ V_m \in  { \mathbb Z}[x, y]\).  Then \ \(z_3^m = U_m - iV_m\).  Hence 
\[\tilde{z}_1^{(d)}(x+iy, z, x-iy) = \sum a(1, n_1)(U_{n_1}+iV_{n_1}) +  \sum a(2, n_2)(U_{n_2}+iV_{n_2})z\] 
\[ + \sum a(3, n_3)(U_{n_3}-iV_{n_3}) + \sum a(4, n_4)(U_{n_4}-iV_{n_4})z. \] 
Then by (4.5),
\[X_d = \sum a(1, n_1)U_{n_1} +  \sum a(2, n_2)U_{n_2}z + \sum a(3, n_3)U_{n_3} + \sum a(4, n_4)U_{n_4}z, \] 
\[Y_d = \sum a(1, n_1)V_{n_1} +  \sum a(2, n_2)V_{n_2}z - \sum a(3, n_3)V_{n_3} - \sum a(4, n_4)V_{n_4}z. \] 
Hence
\[X_d^2 = \sum_{1\le j, k \le 4}(\sum a(j, m_j)U_{m_j}z^{\lambda(j)})(\sum a(k, n_k)U_{n_k}z^{\lambda(k)}) ,\] 
\[Y_d^2 = \sum_{1\le j, k \le 4}\varepsilon_{jk}(\sum a(j, m_j)V_{m_j}z^{\lambda(j)})(\sum a(k, n_k)V_{n_k}z^{\lambda(k)}), \] 
where if \ \(j = 2\) \ or \(4\), then \ \(\lambda(j) = 1\), otherwise \ \(\lambda(j) = 0\),  and if \ \(\{j, k\} = \{1, 3\}\) \ or  \ \(\{1, 4\}\) \ or \ \(\{2, 3\}\) \ or \ \(\{2, 4\}\),  \ then \ \(\varepsilon_{jk} = -1\), otherwise \ \(\varepsilon_{jk} = 1\).  Set
%%%%%%%%%%%%%%%%%%%%%%%%equation 4.9
\begin{equation}
s(\pm, j, k, m_j, n_k) = U_{m_j}U_{n_k} \pm \varepsilon_{jk}V_{m_j}V_{n_k},
\end{equation}
and
%%%%%%%%%%%%%%%%%%%%%%%%equation 4.10
\begin{equation}
s(\pm, j, k) = \sum a(j, m_j) a(k, n_k)s(\pm, j, k, m_j, n_k)z^{\lambda(j)+\lambda(k)}, 
\end{equation}
Then
%%%%%%%%%%%%%%%%%%%%%%%%equation 4.11
\begin{equation}
X_d^2 \pm Y_d^2 = \sum_{1\le j, k \le 4}s(\pm, j, k) .
\end{equation}

First, we compute \ \(s(\pm, j, k, m_j, n_k)\).  
Note that
%%%%%%%%%%%%%%%%%%%%%%%%equation 4.12
\begin{equation}
Re (z_1^mz_3^n) =  U_mU_n + V_mV_n, \quad Re(z_1^{m+n}) =  U_mU_n - V_mV_n.
\end{equation}

By (4.9) and (4.12), we have
%%%%%%%%%%%%%%%%%%%%%%%%equation 4.13
\begin{equation}
s(+, j, k, m_j, n_k) = \left\{
\begin{array}{ll}
\displaystyle{Re (z_1^{m_j}z_3^{n_k})}& \quad \ \mbox{if}\ \varepsilon_{jk} = 1\\
\displaystyle{Re (z_1^{m_j+n_k})}& \quad \ \mbox{if}\ \varepsilon_{jk} = -1
\end{array} \right.
\end{equation}
%%%%%%%%%%%%%%%%%%%%%%%%equation 4.14
\begin{equation}
s(-, j, k, m_j, n_k) = \left\{
\begin{array}{ll}
\displaystyle{Re (z_1^{m_j+n_k})}& \quad \ \mbox{if}\ \varepsilon_{jk} = 1\\
\displaystyle{Re (z_1^{m_j}z_3^{n_k})}& \quad \ \mbox{if}\ \varepsilon_{jk} = -1
\end{array} \right.
\end{equation}
Note that  \ \(Re(z_1^mz_3^n) = Re(z_1^nz_3^m).\) \   By (4.13) and (4.14),  we have 
\[s(+, j, k, m_j, n_k) = c_1 Re (z_1^{m_j - \varepsilon_{j_k}n_k})\ \ \mbox{or} \ c_2 Re (z_1^{n_k - \varepsilon_{j_k}m_k}), \]
\[s(-, j, k, m_j, n_k) = c_3 Re (z_1^{m_j + \varepsilon_{j_k}n_k})\ \ \mbox{or} \ c_4 Re (z_1^{n_k + \varepsilon_{j_k}m_k}), \]
where \ \(c_j \in  { \mathbb Z}[x^2+y^2], \ j= 1, 2, 3, 4.\)
%%%%%%%%%%%lemma4.5.
\begin{lemma} \label{lemma6}
Let \(d\) be a positive odd integer.  Then :\\
(1)  If \ \(\mid j-k \mid\) \ is even, then \ \(m_j + \varepsilon_{jk}n_k \equiv 2\), \ \(m_j - \varepsilon_{jk}n_k \equiv 0\) \quad (mod 4),\\
\qquad \(n_k + \varepsilon_{jk}m_j \equiv 2\), \ \(n_k - \varepsilon_{jk}m_j \equiv 0\) \quad (mod 4).\\
(2)  If \ \(\mid j-k \mid\) \ is odd, then \ \(m_j + \varepsilon_{jk}n_k \equiv 0\), \ \(m_j - \varepsilon_{jk}n_k \equiv 2\) \quad (mod 4),\\
\qquad \(n_k + \varepsilon_{jk}m_j \equiv 0\), \ \(n_k - \varepsilon_{jk}m_j \equiv 2\) \quad (mod 4).\\
\end{lemma}
%%%%%%%%%%%%%%%%%%%%%%%%%%%proof
\begin{proof}
From (4.8) and the definition of \ \(\varepsilon_{jk}\) \ we have this lemma.
\end{proof}
%%%%%%%%%%%lemma4.6.
\begin{lemma} \label{lemma7}
Let \ \(R_1 =  { \mathbb Z}[x^2+y^2, (x^2-y^2)^2]\).\\
(1)  If \ \(m \equiv 0\) \quad (mod 4),\  then \ \(Re(z_1^m) \in R_1\).\\
(2)  If \ \(m \equiv 2\) \quad (mod 4),\  then \ \(Re(z_1^m) \in (x^2-y^2)R_1\).
\end{lemma}
%%%%%%%%%%%%%%%%%%%%%%%%%%%proof
\begin{proof}
(1)  Clearly, \ \(z_1^{4h} = (z_1^2)^{2h} = (x^2 - y^2 + 2xyi)^{2h}, \ h \in {\mathbb N}\).  Hence
\[Re(z_1^{4h}) = \sum_{j=0}^h {}_{2h}C_{2j}(x^2-y^2)^{2h-2j}(2xyi)^{2j}\]
\[ = \sum_{j=0}^h {}_{2h}C_{2j}((x^2-y^2)^2)^{h-j}(-(x^2+y^2)^2+ (x^2-y^2)^2)^{j} \in R_1.\]
(2)  Clearly, \ \(z_1^{4h+2} = (x^2 - y^2 + 2xyi)^{2h+1}\).  Hence
\[Re(z_1^{4h+2}) = \sum_{j=0}^h {}_{2h+1}C_{2j}(x^2-y^2)^{2h+1-2j}(2xyi)^{2j}\]
\[ = \sum_{j=0}^h {}_{2h+1}C_{2j}(x^2-y^2)(x^2-y^2)^{2(h-j)} (-4x^2y^2)^{j} \in (x^2-y^2)R_1.\]
\end{proof}
Hence from Lemmas \ref{lemma6} and \ref{lemma7}, we have the following.
%%%%%%%%%%%lemma4.7.
\begin{lemma} \label{lemma8}
(1)  If \ \(\mid j - k\mid\) \ is even, then 
\[s(+, j, k, m_j, n_k) \in R_1 \ \mbox{and} \ s(-, j, k, m_j, n_k) \in (x^2 - y^2)R_1.\]
(2)  If \ \(\mid j - k\mid\) \ is odd, then 
\[s(+, j, k, m_j, n_k) \in (x^2 - y^2)R_1 \ \mbox{and} \ s(-, j, k, m_j, n_k) \in R_1.\]
\end{lemma}
Now we can prove Lemma \ref{lemma4}.
%%%%%%%%%%%%%%%%%%%%%%%%%%%begin proof of Lemma 4.3
\begin{proof}[proof of Lemma \ref{lemma4}.]
  Note that  if \ \(\mid j - k\mid\) \ is even, then  \ \(\lambda(j) + \lambda(k) = 0 \ \mbox{or}\ 2,\)  and  if \ \(\mid j - k\mid\) \ is odd, then  \ \(\lambda(j) + \lambda(k) = 1\).  Then by (4.10) and Lemma \ref{lemma8}, we have 
\begin{equation*}
\begin{split}
s(+, j, k) \in R, \hspace{5.75cm} \\
s(-, j, k) \in \left\{
\begin{array}{ll}
\displaystyle{(x^2 - y^2)R}& \quad \ \mbox{if}\ \mid j - k \mid \ \mbox{is even},\\
\displaystyle{\quad zR}& \quad \ \mbox{if}\ \mid j - k \mid \ \mbox{is odd}.
\end{array} \right.
\end{split}
\end{equation*}
\[\mbox{Hence} \quad X_d^2 + Y_d^2 \in R \ \mbox{and} \ X_d^2 - Y_d^2 \in zR + (x^2-y^2)R.\qquad \qquad \qquad\]
%%%%%%%%%%%%%%%%%%%%%%%%%%%end proof of Lemma 4.3
\end{proof}
%%%%%%%%%%%%%%%%%%%%%%%%%%%end proof theorem4.1
Then Theorem \ref{theorem5} is proved.
\end{proof}

We remark that the original basis \(\{p, q, r, s^\prime\}\) with \(s^\prime = x^2y^2\)  satisfies 
\[{ \mathbb R}[x, y, z]^{D_4} \cap   { \mathbb Z}[x, y, z] = { \mathbb Z}[p, q, r, s^\prime],\]
while \ \(\{p, q, r, s\}\) does  not. 

Apart from the fundamental system \ \(\{p, q, r, s\}\), there are many fundamental systems of invariants of \ \( {\mathbb C}[x, y, z]^{D_4}\).  Let \ \(F'\) be the ideal generated by such a fundamental system.  Then there are the corresponding affine algebraic varieties \ \(V_{F'}\) \ and morphisms \ \(g'_d\) \ on\ \(V_{F'}\).  We will show in Section 4.7 that there is an isomorphism \ 
\(\varphi : X_Q \to V_{F'}\ \mbox{such that}\ \varphi \circ g_d = g'_d \circ \varphi.\)

We consider the dynamics of \(g_d\) on \(X_Q\).  As in Section 3.1, we introduce a parametrization of  \(X_Q\) which is convenient to understand the dynamics of \(g_d\).  From (2.13), we have 
%%%%%%equation 4.15
\begin{equation}
\begin {array}{ccc}
  \{t_1, t_2, t_3, t_4\} & \spmapright{ }& \{t_1^d, t_2^d, t_3^d, t_4^d\}\\
  \lmapupdown{} &  & \lmapupdown{} \\
  (z_1,z_2,z_3) & \sbmapright{T_d} & (z_1^{(d)}, z_2^{(d)}, z_3^{(d)}) \\
 \rmapdown{M_3^{-1}} &  & \rmapdown{M_3^{-1}} \\
  (x, y, z) & \sbmapright{f_d} & (X, Y, Z) \\
\rmapdown{\tilde{F}} &  & \rmapdown{\tilde{F}} \\
  (p, q, r, s) & \sbmapright{g_d} & (P_d, Q_d, R_d, S_d). 
 \end{array}
\end{equation} 
Set 
 %%%%%equation 4.16
\begin{equation}
t_1 = e^{i(\alpha + \gamma)}, \ t_2 = e^{i(\alpha - \gamma)}, \ t_3 = e^{i\beta}, \ t_4 = e^{-i(2\alpha + \beta)}, \quad \mbox{for} \  \alpha, \beta, \gamma \in {\mathbb C}.
 \end{equation}
 Then
\[z_1 = e^{i(\alpha + \gamma)} +  e^{i(\alpha - \gamma)} + e^{i\beta} + e^{-i(2\alpha + \beta)}, \ z_2 = 2(\cos2\alpha + \cos(\alpha + \beta + \gamma ) + \cos(\alpha + \beta -\gamma)), \]
\[z_3 = e^{-i(\alpha + \gamma)} +  e^{-i(\alpha - \gamma)} + e^{-i\beta} + e^{i(2\alpha + \beta)}.\] 
Hence by (2.8) we have
%%%%%equation 4.17
\begin{equation}
\begin{split}
x =  \cos(\alpha + \gamma) +  \cos(\alpha -\gamma) +  \cos \beta +  \cos(2\alpha + \beta), \qquad\qquad\\
y =  \sin(\alpha + \gamma) +  \sin(\alpha -\gamma) +  \sin \beta -  \sin(2\alpha + \beta), \qquad\qquad \\
z =  2\cos 2\alpha  +  (2\cos(\alpha + \beta))(2\cos \gamma). \qquad\qquad\qquad\qquad\qquad
\end{split}
\end{equation}
Therefore, using trigonometrical identities we have
%%%%%equation 4.18
\begin{equation}
\begin{split}
p(\alpha, \beta, \gamma) =  (2\cos\gamma)^2 +   (2\cos\gamma)(2\cos 2\alpha)(2\cos(\alpha + \beta)) + (2\cos(\alpha + \beta))^2,  \qquad \\
q(\alpha, \beta, \gamma) =  ((2\cos(\alpha + \beta))(2\cos\gamma) +  (2\cos 2\alpha))^2, \qquad\qquad\qquad\qquad  \qquad\qquad\\
r(\alpha, \beta, \gamma) =  ((2\cos(\alpha + \beta))(2\cos\gamma) +  (2\cos 2\alpha))\times \quad\qquad\qquad\qquad  \qquad\qquad\quad \\
 (2(2\cos(\alpha + \beta))(2\cos\gamma) + \frac 12(2\cos 2\alpha)((2\cos(\alpha + \beta))^2 + (2\cos\gamma)^2)),\\
s(\alpha, \beta, \gamma) =  (2(2\cos(\alpha + \beta))(2\cos\gamma) + \frac 12(2\cos 2\alpha)((2\cos(\alpha + \beta))^2 + (2\cos\gamma)^2))^2.
\end{split}
\end{equation}
Note that all the identities which we use to get (4.18) are valid for complex variables \ \(\alpha, \beta\) and \(\gamma\).
Then setting \ \(u = 2\cos 2\alpha, \ v = 2\cos(\alpha + \beta)\) \ and \ \(w = 2\cos\gamma\), \ we have 
%%%%%%%%%%%%%%%%%%%%%%%%%equation4.19
\begin{equation}
\begin{split}
p(u, v, w) = v^2 + uvw + w^2,\qquad\qquad\\
q(u, v, w) = (u + vw)^2,\qquad\qquad\\
r(u, v, w) = (u + vw)(2vw + \frac 12u(v^2 + w^2)),\\
s(u, v, w) = (2vw + \frac 12u(v^2 + w^2))^2. \qquad
\end{split}
\end{equation}
Let \(G_X\) be a mapping from   \({\mathbb C}^3\) to \({\mathbb C}^4\)  defined by 
\[G_X(u, v, w) =  (v^2 + uvw + w^2, \  (u + vw)^2, \  (u + vw)(2vw + \frac 12u(v^2 + w^2)), \  (2vw + \frac 12u(v^2 + w^2))^2).\]

We show that the mapping \ \(G_X(u, v, w)\) \ is a polynomial parametric representation of \(X_Q\).   Indeed. 
By the definition of \(G_X\)  and (4.19), we have  \ \(r(u, v, w)^2 = q(u, v, w) s(u, v, w).\)  Then for any \ \((u, v, w) \in {\mathbb C}^3,\  G_X(u, v, w) \in V(r^2-qs) = X_Q.\)
By an argument similar to that use in the proof of Proposition 3.2,  then we can show that  \(G_X({\mathbb C}^3)\)  covers all points of \(X_Q\).

Using this parametrization we describe the dynamics of  \(g_d\)  on  \(X_Q\). 
%%%%%%%%%%%%%%%%%%%%%%%%theorem 4.8
\begin{theorem}\label{theorem4.8}  
Let \ \(g_d\) be a morphism on \ \(X_Q\)  in Definition 2.4.
 \[(1) \ g_d(G_X(u, v, w)) = G_X(T_d(u), T_d(v), T_d(w)) \ \mbox{for any} \quad u, v, w  \in {\mathbb C}. \]
\[ (2) \ K(g_d \mid _{X_Q}) = \{G_X(2\cos2\alpha,  2\cos(\alpha+\beta), 2\cos\gamma)  : \ 0 \le \alpha,  \beta, \gamma < 2\pi\} \ \mbox{for} \quad d \ge 2.\]
\end{theorem}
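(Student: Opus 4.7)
The plan is to imitate the structure of the proof of Theorem 3.3, now in three parameters $\alpha,\beta,\gamma$ instead of two.

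For part (1), I would start from the parametrization (4.16), namely $t_1=e^{i(\alpha+\gamma)}$, $t_2=e^{i(\alpha-\gamma)}$, $t_3=e^{i\beta}$, $t_4=e^{-i(2\alpha+\beta)}$, and trace through the commutative diagram (4.15). Applying $T_d$ to the top row replaces each $t_j$ by $t_j^d$, which corresponds precisely to substituting $d\alpha$, $d\beta$, $d\gamma$ for $\alpha$, $\beta$, $\gamma$ in (4.17) and hence in the trigonometric expressions (4.18). Now set $s_1=e^{2i\alpha}$, $s_2=e^{i(\alpha+\beta)}$, $s_3=e^{i\gamma}$ so that $u=s_1+s_1^{-1}$, $v=s_2+s_2^{-1}$, $w=s_3+s_3^{-1}$. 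The Chebyshev identity yields $T_d(u)=s_1^d+s_1^{-d}=2\cos 2d\alpha$, and similarly $T_d(v)=2\cos d(\alpha+\beta)$, $T_d(w)=2\cos d\gamma$. Substituting these into (4.19) gives exactly $G_X(T_d(u),T_d(v),T_d(w))$, while the diagram (4.15) shows the same quantity equals $g_d(G_X(u,v,w))$. Since (4.19) is a polynomial identity in the algebraically independent parameters $u,v,w$, the identity extends to all of $\mathbb{C}^3$, as was noted in the remark following (3.10).

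For part (2), I will mimic the boundedness argument of Theorem 3.3(2). Write $u=t_1+t_1^{-1}$, $v=t_2+t_2^{-1}$, $w=t_3+t_3^{-1}$ with $t_j=r_je^{i\theta_j}$, $r_j\geq 1$. By part (1), the $n$-th iterate $g_d^n(G_X(u,v,w))$ has components $G_X(T_d^n(u),T_d^n(v),T_d^n(w))$, where $T_d^n(u)=t_1^{d^n}+t_1^{-d^n}$ and similarly for $v,w$. The task is to inspect each of the four polynomial components $p,q,r,s$ in (4.19) and identify the dominant term. For instance $p$ contains the monomial $uvw$ with coefficient $1$, and in $q$ the leading behaviour after iteration is governed by $(vw)^2$ or $u^2$; similar dominant monomials can be read off for $r$ and $s$. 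If any $r_j>1$, one of these dominant monomials grows like $r_{\max}^{C d^n}$ with $C\geq 1$, forcing the orbit to be unbounded. The only remaining case is $r_1=r_2=r_3=1$, i.e.\ $u,v,w\in[-2,2]$, and then one writes $u=2\cos 2\alpha$, $v=2\cos(\alpha+\beta)$, $w=2\cos\gamma$ for real $\alpha,\beta,\gamma\in[0,2\pi)$. Conversely, for such real values of $u,v,w$ all iterates stay in $[-2,2]$, so the components of $G_X$ remain bounded, giving the reverse inclusion.

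The main obstacle is the dominant-term analysis in part (2): because $q,r,s$ are built out of the combinations $u+vw$ and $2vw+\tfrac{1}{2}u(v^2+w^2)$, one has to rule out the possibility of cancellation between the would-be dominant monomials when some of the $r_j$ coincide or when particular arguments align. The cleanest way to handle this is to treat the cases $r_1>\max(r_2,r_3)$, $r_2>\max(r_1,r_3)$, $r_3>\max(r_1,r_2)$, and ties, showing in each case that at least one of $p,q,r,s$ evaluated at $(T_d^n(u),T_d^n(v),T_d^n(w))$ grows without bound. The symmetry of the roles of $v,w$ in $p$ and $s$, together with the asymmetric way $u$ appears in $q$, should make every case reduce to observing a single leading monomial whose modulus grows doubly exponentially in $n$, after which the argument concludes exactly as in the proof of Theorem 3.3(2).
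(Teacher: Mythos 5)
Your treatment of part (1) is sound and is exactly what the paper intends (the paper simply says the proof is analogous to Theorem 3.3(1) and omits it): pass to the parametrization (4.16), observe that $t_j\mapsto t_j^d$ amounts to $(\alpha,\beta,\gamma)\mapsto(d\alpha,d\beta,d\gamma)$ in (4.18), and read off $T_d(u)=2\cos 2d\alpha$, $T_d(v)=2\cos d(\alpha+\beta)$, $T_d(w)=2\cos d\gamma$ in (4.19).

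Part (2), however, is where the whole content of the proof lies, and you have only described the shape of the argument while explicitly deferring the step that makes it work. You correctly flag that cancellation among the would-be dominant monomials of $q=(u+vw)^2$, $r$ and $s$ is the obstacle, but the case split you propose --- $r_1>\max(r_2,r_3)$, $r_2>\max(r_1,r_3)$, $r_3>\max(r_1,r_2)$, and ties --- is not the comparison that resolves it, because the monomials of $q$, $p$ and $s$ compare $r_1$ against the \emph{products} $r_2r_3$, $r_1r_2r_3$ against $r_2^2$ and $r_3^2$, and $r_1r_2^2$, $r_1r_3^2$ against $r_2r_3$, not the $r_j$ against each other. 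The paper's actual argument is: boundedness of the $q$-component forces $r_1=r_2r_3$; feeding this into $p=v^2+uvw+w^2$, one compares $\mathrm{Dom}(p)=\max\{r_2^2,\,r_1r_2r_3,\,r_3^2\}$ and shows that if the maximum is attained uniquely then $p_n\to\infty$, so one is reduced to the tie cases; the tie $r_2=r_3$ combined with $r_1=r_2r_3$ makes $r_1r_2r_3=r_2^4$ strictly dominant unless $r_2=r_3=1$, and the tie $r_1r_2r_3=r_2^2$ forces $r_3=1$, $r_1=r_2$, after which the $s$-component $\bigl(2vw+\tfrac12 u(v^2+w^2)\bigr)^2$ blows up unless $r_1=r_2=1$. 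Without carrying out this chain (or an equivalent one), the claim ``if any $r_j>1$ then some component is unbounded'' is an assertion, not a proof; in particular your sketch never extracts and exploits the relation $r_1=r_2r_3$, which is the hinge of the case analysis.
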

%%%%%%%%%%%%%%%%%%%%%%%%%%%proof
\begin{proof}
(1)  The proof of the assertion (1) is similar to the proof of Theorem 3.3 (1).  Then it is omitted.\\
(2)  The proof of the assertion (2) is similar to the proof of Theorem 3.3 (2).    Let \ \(u = t_1 + t_1^{-1}\), \ \(v = t_2 + t_2^{-1}\) \ and \ \(w = t_3 + t_3^{-1}\), \ with \ \(t_j = r_je^{i\theta_j}, \ r_j \geq 1, \ \theta_j \in { \mathbb R},\) \ for \ \(j = 1, 2, 3.\)   Then  \ \(T_d(u) = t_1^{d} + t_1^{{-d}}\) \ and \ \(T_d^n(u) = t_1^{d^n} + t_1^{{-d}^n}\).  Let \ \(G_X(u, v, w) = (p(u, v, w), \ q(u, v, w), \ r(u, v, w), \ s(u, v, w))\).   Set
\[g_d^n(p(u, v, w), \ q(u, v, w), \ r(u, v, w), \ s(u, v, w)) = (p_n(u, v, w), \ q_n(u, v, w), \ r_n(u, v, w), \ s_n(u, v, w)).\] 
We assume that \ \(\{q_n(u, v, w) : n = 1, 2, \dots\}\) \ is bounded.  Then we have
%%%%%%%%%%%%%%%%%%%%%%%equation 4.20
\begin{equation}
r_1 = r_2r_3.
\end{equation}
Besides, we assume that \ \(\{p_n(u, v, w) : n = 1, 2, \dots\}\) \ is bounded. 
We consider the set \ \(Dom(p) = max\{r_2^2, \ r_1r_2r_3, \ r_3^2\}.\)  If  \ \(Dom(p)\) \ is a singleton,  then \ \(\lim_{n\to\infty}p_n = \infty.\)  If \ \(r_2 = r_3\),  then we have \ \(r_1 = r_3^2\) \ by (4.20).  If \ \(r_2 = r_3 > 1\),  then \ \(r_1r_2r_3 > r_2^2 = r_3^2\).
We suppose that \ \(r_1r_2r_3  = r_2^2\).  Then by (4.20) we have \ \(r_1 = r_2\) \ and \ \(r_3 = 1\).  If \ \(r_1 = r_2 > 1\), then  \ \(\lim_{n\to\infty}s_n = \infty.\)   Hence if \ \(\{g_d^n(G_X(u, v, w)) : n = 1, 2, \dots\}\) \ is bounded, then \ \(r_1 = r_2 = r_3 = 1\).  In the case \ \(r_1r_2r_3  = r_3^2\), we get also \ \(r_1 = r_2 = r_3 = 1\).
\end{proof}
%%%%%%%%%%%%%%%%%%%%%%%%%%%%%%%%%%%%%%%%%%%%%%%%%%%%%%%%%%%%%%%%%%%%%%%%%%%%%%%%%%%%%%%%%
%%%%%%%%Subsection4.2
\subsection{\rm{The branch loci of \  \(g_2(p, q, r, s)\)  and  \(\tilde{F}(p, q, r, s)\).}}\hspace{1cm}

As in Section 3.1, we consider the branch locus  of \ \(g_2(p, q, r, s)\).  We will show that the branch locus  of \ \(g_2\) \ consists of three affine algebraic varieties \(L_1\),   \(S_P\) and \(S_A\);
%%%%%%%%%%%%%%%%%%%%%%%%equation 4.21
\begin{equation}
\begin{split}
L_1 : = V(q, r, s), \ S_P : = V(p^2-s, qs-r^2), \  S_A : = V(qs-r^2, A_h), \ \mbox{where} \hspace{3.5cm}\\
A_h(p, q, r, s) :  = 256 - 192 p + 48 p^2 - 4 p^3 - 128 q - 80 p q + p^2 q + 16 q^2 + 
 288 r + 36 p r - 8 q r - 108 s.
\end{split}
\end{equation}
The algebraic sets \(S_P\) and \(S_A\)  will be proved to be affine algebraic surfaces.

Recall that \(g_2\) is a morphism from \ \(X_Q\)  to \ \(X_Q\).  We may view it as a morphism from \ \(X\)  to \(Y\), where the coordinate rings \ \({\mathbb C}[X]\) \ and  \ \({\mathbb C}[Y]\) \ are given by \ 
\({\mathbb C}[p, q, r, s]/(qs-r^2)\) \  and \ 
\({\mathbb C}[a, b, c, d]/(bd-c^2)\), respectively.
We consider the inverse images of \ \((a, b, c, d) \in Y\) \ under \(g_2\).  We suppose that \ \(g_2(p, q, r, s) = (a, b, c, d)\).  A polynomial parametric representation of \ \(V(bd - c^2)\) \ is given by \ \(b = u^2, \ c = uv\) \ and \ \(d = v^2\).  So by (4.2), we consider equations : 
%%%%%equation 4.22
\begin{equation}\left\{
\begin{split}
p^2 + 4q -4r - a= 0,\qquad\qquad\\
q - 2p +2 - u = 0,\qquad\qquad\\
(q - 2p +2)(2s -p^2 +4q -4r)  - uv = 0,\\
 2s - p^2 + 4q - 4r - v = 0,\qquad
\end{split}
\right .
\end{equation}
%%%%%equation 4.23
\begin{equation}\left\{
\begin{split}
p^2 + 4q - 4r - a= 0,\qquad\qquad\\
q - 2p +2 + u = 0,\qquad\qquad\\
(q - 2p +2)(2s -p^2 +4q -4r)  - uv = 0,\\
 2s - p^2 + 4q - 4r + v = 0.\qquad
\end{split}
\right .
\end{equation}
Let \(I_+\) \ be the ideal generated by the four polynomials in the left hand sides of (4.22).  Substituting -u and -v for u and v in \(I_+\), we have another ideal \(I_-\) for (4.23).
We compute Gr\(\ddot{\mbox o}\)bner basis \ \(\mathcal{G}_{\pm}\)\ for the ideals \ \(I_{\pm}+ (qs - r^2)\),  using lex order with \ \(q > r > s > p > a > u > v :\)
\[\mathcal{G}_+ = \{64 + a^2 - 128 p + 80 p^2 - 2 a p^2 - 16 p^3 + p^4 - 64 u + 64 p u - 
  8 p^2 u + 16 u^2 + 16 v - 16 p v - 8 u v,\]
\[ a - 2 p^2 + 2 s - v,  2 - 2 p + q - u, 8 + a - 8 p - p^2 + 4 r - 4 u\},\]
\[\mathcal{G}_- = \{64 + a^2 - 128 p + 80 p^2 - 2 a p^2 - 16 p^3 + p^4 + 64 u - 64 p u + 
  8 p^2 u + 16 u^2 - 16 v + 16 p v - 8 u v, \]
\[a - 2 p^2 + 2 s + v, 2 - 2 p + q + u, 8 + a - 8 p - p^2 + 4 r + 4 u\}.\]
Let
%%%%%equation 4.24
\begin{equation} \left\{
\begin{split}
a - 2p^2 + 2s - v = 0,\qquad\qquad\\
2 - 2p + q - u = 0,\qquad\qquad\\
8 + a -8p - p^2 + 4r - 4u = 0,\qquad
\end{split}
\right .
\end{equation}
%%%%%equation 4.25
\begin{equation} \left\{
\begin{split}
a - 2p^2 + 2s + v = 0,\qquad\qquad\\
2 - 2p + q + u = 0,\qquad\qquad\\
8 + a -8p - p^2 + 4r + 4u = 0.\qquad
\end{split}
\right .
\end{equation}
We set
\[h_{\pm} = \mathcal{G}_{\pm} \cap {\mathbb C}[a, p, u, v].\]
Then
\[h_+ = 64 + a^2 - 128 p + 80 p^2 - 2 a p^2 - 16 p^3 + p^4 - 64 u + 64 p u - 
  8 p^2 u + 16 u^2 + 16 v - 16 p v - 8 u v,\]
\[h_- = 64 + a^2 - 128 p + 80 p^2 - 2 a p^2 - 16 p^3 + p^4 + 64 u - 64 p u + 
  8 p^2 u + 16 u^2 - 16 v + 16 p v - 8 u v. \]

The product   \(h_+h_-\) \ is invariant under  the action
\(\sigma : u \mapsto -u, \ v \mapsto -v\).   
Hence, if a monomial \(u^iv^j\) occurs in  \(h_+h_-\), then \(i+j\) is even.  Then  \(h_+h_-\) is reduced to a polynomial  in \({\mathbb Z}[a, b, c, d, p]\).  Thus  \(h_+h_-\) \ is a monic polynomial in \(p\)  of degree 8 with coefficients in \( {\mathbb C}(Y)\),   denoted by \(H(p)\).

%%%%pro4.9
 \begin{pro} \label{pro4.8}
The morphism \ \(g_2\)  is a finite morphism.
 \end{pro}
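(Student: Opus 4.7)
The plan is to prove that the four generators $p, q, r, s$ of $\mathbb{C}[X_Q] = \mathbb{C}[p,q,r,s]/(qs-r^2)$ are each integral over the subring $g_2^{*}\mathbb{C}[Y] \subset \mathbb{C}[X_Q]$, where $\mathbb{C}[Y] = \mathbb{C}[a,b,c,d]/(bd-c^2)$ is embedded via the four components of (4.2). Since $\mathbb{C}[X_Q]$ is generated as a $\mathbb{C}$-algebra by $p, q, r, s$, integrality of each of these generators will show that $\mathbb{C}[X_Q]$ is a finitely generated module over $g_2^{*}\mathbb{C}[Y]$, and hence that $g_2$ is finite.

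The first and essential step is to use the polynomial $H(p) = h_+ h_-$ to produce an integrality equation for $p$. Because $H(p)$ is $\sigma$-invariant, every monomial $u^i v^j$ occurring in it has $i+j$ even and therefore lies in the subring generated by $u^2 = b$, $uv = c$, $v^2 = d$. Reducing modulo $bd - c^2$, I would conclude that $H(p)$ is a monic polynomial in $p$ of degree $8$ with coefficients in $g_2^{*}\mathbb{C}[Y]$, which is precisely an integrality relation for $p$ over $g_2^{*}\mathbb{C}[Y]$.

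Once $p$ is integral, the integrality of $q, r, s$ follows directly from the explicit formulas (4.2). The identity $b = (q - 2p + 2)^2$ yields the monic quadratic relation $(q - 2p + 2)^2 - b = 0$ in $q$, whose coefficients lie in $g_2^{*}\mathbb{C}[Y][p]$; the equation $a = p^2 + 4q - 4r$ expresses $r$ as $(p^2 + 4q - a)/4$, which lies in $g_2^{*}\mathbb{C}[Y][p, q]$; and finally $d = (2s - p^2 - 4r + 4q)^2$ gives a monic quadratic relation in $s$ with coefficients in $g_2^{*}\mathbb{C}[Y][p, q, r]$. Since the integral closure of $g_2^{*}\mathbb{C}[Y]$ inside $\mathbb{C}[X_Q]$ is itself a ring and contains $p, q, r, s$, it must equal all of $\mathbb{C}[X_Q]$.

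I expect the principal obstacle to lie in the first step: confirming that $H(p)$, which after Gröbner elimination and the two-sign case split in (4.22)--(4.23) a priori sits only in $\mathbb{C}(Y)[p]$, actually has coefficients in $g_2^{*}\mathbb{C}[Y]$ rather than in its fraction field. This rests on two structural observations that must be combined carefully: the $\sigma$-symmetry of the product $h_+ h_-$, and the fact that the $\sigma$-invariant subring of $\mathbb{C}[u,v]$ is $\mathbb{C}[u^2, uv, v^2]$, which maps surjectively onto $\mathbb{C}[b, c, d]/(bd-c^2) \subset \mathbb{C}[Y]$. Once this point is verified, the subsequent integrality statements for $q, r, s$ are immediate from (4.2) and standard properties of integral extensions.
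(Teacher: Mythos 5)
Your proposal is correct and follows essentially the same route as the paper: obtain a monic degree-$8$ integrality relation for $p$ from $H(p)=h_+h_-$ (using the $\sigma$-invariance to land the coefficients in $\mathbb{C}[Y]$ rather than its fraction field), then bootstrap $q$, $r$, $s$ via the explicit formulas in (4.2) and transitivity of integral dependence. You merely make explicit the quadratic relations for $q$ and $s$ and the linear expression for $r$ that the paper leaves implicit.
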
  
%%%%%%%%%%%%%%%%%%%%%%%%%%%proof
\begin{proof}
Since \ \(g_2 : X \to Y\) \ is a surjective morphism, we may regard \ \({\mathbb C}[Y]\) \ as a subring of \ \({\mathbb C}[X]\) .  Then it is enough to verify that \ \(p, q, r\) \ and \ \(s\) are integral over
\ \({\mathbb C}[Y]\).  Considering \ \(H(p)\), we know that \(p\) is integral over\ \({\mathbb C}[Y]\).  The function \(q\) is integral over \ \({\mathbb C}[Y][p]\) \  and \(r\) is integral over 
\ \({\mathbb C}[Y][p,q]\) \  and \(s\) is integral over 
\ \({\mathbb C}[Y][p,q,r]\).  Thus  \ \(p, q, r\) \ and \ \(s\) are integral over
\ \({\mathbb C}[Y]\) by the transitivity property of integral dependence (See \cite[Corollary 5.4]{AM}). 
 \end{proof}

Then by Proposition 2.9, we have the following.
%Proposition 4.10
 \begin{pro} \label{pro4.9}
Let \(g_2 : X_Q \to X_Q\)  be the morphism in (4.2).   Then \ deg \(g_2 = 8\). 
 \end{pro}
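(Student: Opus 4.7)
The plan is to combine the two immediately preceding results and nothing more; Proposition 4.9 is essentially a corollary of what is already in place. First I would invoke Proposition 4.8, which has just established that $g_2 : X_Q \to X_Q$ is a finite morphism by exhibiting integral dependence relations derived from the Gröbner basis computation (in particular, the monic polynomial $H(p) = h_+ h_-$ of degree $8$ in $p$ over $\mathbb{C}(Y)$, together with the obvious integrality of $q,r,s$ over $\mathbb{C}(Y)[p]$).

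Having finiteness in hand, I would then apply Proposition 2.9(2) with the parameters $n = 3$ (because $X_Q \simeq \mathbb{C}^3/D_4$) and $d = 2$. That proposition asserts precisely that whenever $g_d$ is a finite morphism on the orbit variety $V_F \simeq \mathbb{C}^n/D_{n+1}$, its degree equals $d^n$. Substituting gives $\deg g_2 = 2^3 = 8$.

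There is essentially no obstacle: the only content lies in Proposition 4.8, and the degree formula $d^n$ is read off from the commutative diagram (4.15) by tracking that the generic fiber of $g_d$ has the same cardinality as the generic fiber of the map $\{t_1,t_2,t_3,t_4\} \mapsto \{t_1^d,t_2^d,t_3^d,t_4^d\}$ on the subset of $(\mathbb{C}^*)^4$ cut out by $t_1t_2t_3t_4 = 1$, which is $d^3$. As a sanity check, the polynomial $H(p)$ produced in the derivation of Proposition 4.8 is a monic polynomial of degree exactly $8$, consistent with the value $\deg g_2 = 8$; this could be included as a brief remark but is not needed for the argument. The entire proof is therefore a one-line invocation of Propositions 4.8 and 2.9(2).
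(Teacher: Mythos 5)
Your proposal matches the paper's argument exactly: the paper states Proposition 4.9 as an immediate consequence of the finiteness established in Proposition 4.8 together with Proposition 2.9(2), which gives $\deg g_d = d^n$ with $n=3$, $d=2$. Your additional remark that $H(p)$ is monic of degree $8$ is a harmless consistency check not present in (nor needed by) the paper.
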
  

We consider the ramification points of  \(g_2\).  We study multiple points of the inverse images \ \((p, q, r, s)\) \ of \ \((a, b, c, d)\) \ under \(g_2\).  We assume that the values of \ \(a, b, c, d\)  and \ \(u = \sqrt b\), \ \(v = \sqrt d\) \ are given.  Let \(p_+\) and \(p_-\) are zeros of \(h_+\) and \(h_{-}\), respectively.  Then for \(p_+\),  there is the unique solution 
\((p_+,  q_+,  r_+,  s_+)\) \ of (4.24) and for \(p_-\),  there is the unique solution  \((p_-,  q_-,  r_-,  s_-)\)  of (4.25).
If there are multiple points of the inverse image \ \((p, q, r, s)\),  then  \(H(p) \) must have a multiple zero.   Then there are three cases :(i) \(h_+\) has a multiple zero, \quad (ii) \(h_-\) has a multiple zero, \quad (iii) \(h_+\) and \(h_-\) \ have a common zero. 

Cases \ (i) \ and \ (ii).  We compute the discriminants of \(h_+\) and \(h_{-}\) \ with respect to \(p\) :
\[disc(h_+) = -16384 (a + v)^2 (-256 + 192 a - 48 a^2 + 4 a^3 + 128 u^2 \]
\[\qquad+ 80 a u^2 - a^2 u^2 - 16 u^4 - 288 u v - 36 a u v + 8 u^3 v + 108 v^2),\]

\[disc(h_-) =-16384 (a - v)^2 (-256 + 192 a - 48 a^2 + 4 a^3 + 128 u^2 \]
\[\qquad + 80 a u^2 - a^2 u^2 - 16 u^4 - 288 u v - 36 a u v + 8 u^3 v + 108 v^2).\]
Clearly, 
\[V(v+a) \cup V(v-a) = V(a^2-d),\] and
\[V(-256 + 192 a - 48 a^2 + 4 a^3 + 128 u^2 + 80 a u^2 -
    a^2 u^2 - 16 u^4 - 288 u v - 36 a u v + 8 u^3 v + 108 v^2)\]
\[= V(256 - 192 a + 48 a^2 - 4 a^3 - 128 b - 80 a b + a^2 b + 16 b^2 + 
 288 c + 36 a c - 8 b c - 108 d).\]

Then we have two algebraic sets \ \(V(a^2-d, bd-c^2)\) \ and 
\[ V(256 - 192 a + 48 a^2 - 4 a^3 - 128 b - 80 a b + a^2 b + 16 b^2 +  288 c + 36 a c - 8 b c - 108 d, bd-c^2).\]
These are  \ \(S_P\) \ and \ \(S_A\) in (4.21).
These will be proved to be irreducible and 
2-dimensional varieties in Sections 4.4 and 4.5.

Case (iii). \ 
We assume that  \ \((p_+, q_+, r_+, s_+) = (p_-, q_-, r_-, s_-)\).  Then from (4.24) and (4.25),  we have \ \(u = v = 0\).  The equality \ \(u = v = 0\) \  is equivalent to \ \(b = c = d = 0\).
If \ \(u = v = 0\), then \  \(h_+ = h_-\).  Then  \(H(p) \)  has four multiple zeros.  Then we have a variety \ \(L_1 = V(b, c, d)\).  This variety is the singular locus of  \(X_Q\). 

As in Section 3.1,  we  consider the ramification points of the varieties \ \(L_1, \ S_P\) \ and \ \(S_A\).  In 
Sections 4.4 and 4.6,  we will show that the singular loci of \ \(S_P\) \ and \ \(S_A\)  are 
\ \(V(a, c, d)\) \ and \ \(V(-12+3a-b, c^2-bd, -54d+108c+bc, 108b+b^2-54c)\), \ which are denoted by \(L_2\) and \(C_A\), respectively.  

By direct computations we have the following.
%%%%pro4.11
 \begin{pro} \label{pro4.11}
Under the above notations we have the followings :
\[\mbox{(1)} \ g_2^{-1}(L_1) = V(r^2 + s - 2ps, \ -8 + 8 p - p^2 - 4 r + 
 2 s, \ 2 - 2 p + q).\]

If \ \(y \in L_1\), \ then \  \(g_2^{-1}(y)\) \ consists of four points with multiplicity 2.
\[\mbox{(2)} \ g_2^{-1}(S_P \setminus L_2) = V(qs-r^2, \ 4q - 4 r +  s) \cup S_P.\qquad \qquad \qquad \qquad \qquad \qquad \qquad\]

If \ \(y \in S_P \setminus L_2\), \ then \  \(\sharp g_2^{-1}(y) = 6\).  Two points with multiplicity 2 lie on \ 
\(V(qs-r^2, \ 4q - 4 r +  s)\) \ and four points with multiplicity 1 lie on \(S_P\).
\[\mbox{(3)} \ g_2^{-1}(L_2) = V(2r-s, \ 4q-s, \ p^2- s) .\qquad \qquad \qquad \qquad \qquad \qquad \qquad\qquad\]

If \ \(y \in L_2\), \ then \  \(g_2^{-1}(y)\) \  consists of four points with multiplicity 2.  
\[\mbox{(4)} \ g_2^{-1}(S_A) = V(qs-r^2, \ p^2q - 4p r +  4s) \cup S_A.\qquad \qquad \qquad \qquad \qquad \qquad \qquad\]

If \ \(y \in S_A \setminus C_A\), \ then \  \(\sharp g_2^{-1}(y) = 6\).  Two points with multiplicity 2 lie on  
\ \(V(qs-r^2, \ p^2q - 4p r +  4s)\) \  and four points with multiplicity 1 lie on \(S_A\).

If \ \(y \in C_A\), \ then \  \(\sharp g_2^{-1}(y) = 4\).  Two points with multiplicity 3 lie on  
\ \(V(qs-r^2, \ p^2q - 4p r +  4s) \cap S_A\) \  and other two points with multiplicity 1 lie on \(S_A\).
\end{pro}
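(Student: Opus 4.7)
The plan is to derive all four statements uniformly from an explicit analysis of the system $g_2(p,q,r,s) = (a,b,c,d)$. Following the setup leading to (4.22)--(4.25), I introduce the branch variables $u = q - 2p + 2$ and $v = 2s - p^2 - 4r + 4q$, so that $(b, c, d) = (u^2, uv, v^2)$. For a target point $(a, b, c, d) \in X_Q$ this splits the preimage computation into subsystems indexed by the sign choices $u = \pm\sqrt{b}$ and $v = \pm\sqrt{d}$; each subsystem determines its $(p,q,r,s)$-solutions through the univariate polynomial $h_\pm(p)$ computed in Section 4.2, and the multiplicity of a preimage point equals the order of vanishing of the corresponding factor of $H(p) = h_+(p)\,h_-(p)$. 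Every forced collision $u = 0$ or $v = 0$ thus contributes a factor of $2$ to the local multiplicity.

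For part (1) a point of $L_1$ has $b = c = d = 0$, forcing simultaneously $u = 0$ and $v = 0$; both square factors in $g_2$ then degenerate, so each preimage point carries multiplicity $2$. Substituting $q = 2p - 2$ and $2s = p^2 + 4r - 8p + 8$ into $qs - r^2 = 0$ and eliminating with a lex Gr\"obner basis ($q > r > s > p$) yields exactly the three generators $r^2 + s - 2ps$, $-8 + 8p - p^2 - 4r + 2s$, $2 - 2p + q$; after eliminating $q, r, s$ the residual relation is a quartic in $p$, accounting for the generic $4$ distinct preimage points. Part (3) proceeds identically: on $L_2 = V(a, c, d)$ the same branch collapse occurs, the Gr\"obner basis yields $V(2r - s, 4q - s, p^2 - s)$, and again four points of multiplicity $2$ result.

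For the surface cases (2) and (4), the pullback through $g_2$ of the defining polynomial of $S_P$ (resp.\ $S_A$) factors modulo $qs - r^2$. For $S_P$ the pullback of $a^2 - d$ equals $(p^2 + 4q - 4r)^2 - (2s - p^2 - 4r + 4q)^2$, which factors as $4(p^2 - s)(4q - 4r + s)$; the first factor cuts out $S_P$ itself (four simple preimages) and the second cuts out $V(qs - r^2, 4q - 4r + s)$ (two preimages of multiplicity $2$). For $S_A$ one verifies the analogous polynomial identity $A_h(g_2(p,q,r,s)) \equiv \kappa \cdot A_h(p,q,r,s) \cdot (p^2 q - 4 p r + 4 s) \pmod{qs - r^2}$ for a suitable nonzero constant $\kappa$; the two factors again split the preimage. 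Along the singular curve $C_A$ the two components become tangent, converting two pairs of simple preimages into two triples and reducing the total count from $6$ to $4$.

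The main obstacle is establishing the factorization $A_h \circ g_2 \equiv \kappa \, A_h \, (p^2 q - 4 p r + 4 s) \pmod{qs - r^2}$ of part (4), together with the accompanying claim that along $C_A$ the Jacobian rank of $g_2$ drops by one extra unit. Both are polynomial identities of bounded degree that are straightforward but tedious to verify by hand and are best handled by a computer algebra system; once they are in place, all multiplicity and cardinality assertions follow mechanically, and the total preimage count in each part correctly balances $\deg g_2 = 8$.
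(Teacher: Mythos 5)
Your approach coincides with the paper's: the paper offers no argument for this proposition beyond ``by direct computations,'' the computations being exactly the Section 4.2 framework you reconstruct (the branch variables \(u = q-2p+2\), \(v = 2s-p^2-4r+4q\), the polynomials \(h_\pm\) and \(H(p)=h_+h_-\), discriminants, and Gr\"obner eliminations). Your observation that the pullback of \(a^2-d\) factors as \(4(p^2-s)(4q-4r+s)\) is correct and gives part (2) cleanly, and the multiplicity bookkeeping against \(\deg g_2 = 8\) is the right organizing principle.

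Two concrete slips, however, show that the computations were asserted rather than carried out — and since the entire content of the proposition is those computations, that matters. First, your claim that the elimination in part (1) ``yields exactly'' the listed generators cannot be right: the point \((2,2,2,2)\) lies on \(X_Q\) and \(g_2(2,2,2,2) = (4,0,0,0) \in L_1\), yet \(r^2+s-2ps = -2\) there. Substituting \(q = 2p-2\) into \(qs-r^2\) gives \(-(r^2+2s-2ps)\), so the first generator must read \(r^2+2s-2ps\); the statement as printed contains a typo, and an honest run of the elimination would have exposed it rather than reproduced it. Second, in part (4) your description of the degeneration over \(C_A\) — ``converting two pairs of simple preimages into two triples'' — does not conserve total multiplicity: the correct picture is that each of the two double points on \(V(qs-r^2,\ p^2q-4pr+4s)\) absorbs one of the four simple points on \(S_A\), giving \(2\cdot 3 + 2\cdot 1 = 8\) and the count \(6 \to 4\). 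Neither slip invalidates the strategy, but the factorization identity for \(A_h\circ g_2\) modulo \(qs-r^2\), the Gr\"obner bases in (1) and (3), and the Jacobian-rank claim along \(C_A\) all still need to be actually verified for the proof to stand.
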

%%%%pro4.12
 \begin{pro} \label{pro4.10}
(1)  The branch locus of \ \(g_2\):  \(X_Q \to X_Q\) \ consists of the three affine algebraic sets  \ \(L_1, \ S_P\) \ and \ \(S_A\).\\
(2)  The branch locus of \ \(\tilde{F} : {\mathbb C}^3 \to X_Q\) \ consists of  \ \(L_1\) \ and \ \(S_P\).\\
\end{pro}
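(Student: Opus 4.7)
For part (1), the plan is to show that the branch locus of $g_2$ equals $L_1 \cup S_P \cup S_A$, combining the discriminant calculations of Section~4.2 with Proposition~\ref{pro4.11}. The resultant and Gr\"obner computations carried out above produced the monic polynomial $H(p) = h_+(p)\,h_-(p)$ of degree $8$ in $p$ whose roots parametrize the fiber of $g_2$ over $(a,b,c,d) \in X_Q$; the discriminants of $h_\pm$ with respect to $p$ factored as products of $(v \pm a)^2$ with the defining polynomials of $S_P$ and $S_A$, and case~(iii) of the earlier analysis accounts for $L_1$. Outside $L_1 \cup S_P \cup S_A$, $H(p)$ has eight distinct roots, and the Gr\"obner basis relations in $\mathcal G_\pm$ express $q,r,s$ linearly in $p$ (with the signs of $u = \sqrt b$ and $v = \sqrt d$ forced by whether one uses $h_+$ or $h_-$), so each root of $H$ lifts to a single preimage in $X_Q$. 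A generic fiber therefore has exactly $\deg g_2 = 8$ distinct points, which shows that no such point is a branch point. The reverse inclusion is immediate from Proposition~\ref{pro4.11}, which records that every fiber above $L_1$, $S_P$, or $S_A$ has strictly fewer than eight distinct preimages.

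For part (2), my approach exploits the fact that $\tilde F : \mathbb C^3 \to X_Q \simeq \mathbb C^3/D_4$ is the quotient morphism by the finite linear group $D_4 \subset GL(3, \mathbb R)$, so its branch locus is the image under $\tilde F$ of the union of the fixed loci of the non-identity elements of $D_4$. Using the block-diagonal matrices for $R|_{R_3}$ and $C|_{R_3}$ from Section~2.1 with $n = 3$, I would enumerate the eight elements of $D_4$ acting on $(x,y,z)$ and identify their fixed loci: $\mathrm{Fix}(C) = V(y)$ and $\mathrm{Fix}(R^2C) = V(x)$ are reflection hyperplanes, while $\mathrm{Fix}(R^2) = V(x,y)$, $\mathrm{Fix}(RC) = V(x-y, z)$, and $\mathrm{Fix}(R^3C) = V(x+y, z)$ are axes of order-two rotations, and $R, R^3$ fix only the origin. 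Substituting into $\tilde F(x,y,z) = (x^2 + y^2,\ z^2,\ z(x^2 - y^2),\ (x^2 - y^2)^2)$, the image $\tilde F(V(y)) = \{(x^2, z^2, zx^2, x^4)\}$ satisfies $p^2 - s = 0$ and $r^2 - qs = 0$, and a short parametrization argument identifies it with the whole of $S_P$; the image $\tilde F(V(x))$ is again $S_P$. The three lower-dimensional fixed loci all map into $L_1 = V(q,r,s)$. Hence $\tilde F$ carries the union of all non-trivial fixed loci onto $L_1 \cup S_P$, and this union is exactly the branch locus because a generic point of $L_1$ or $S_P$ has a $D_4$-orbit of size less than $|D_4| = 8$.

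The main obstacle I foresee lies in part~(1), namely the bookkeeping needed to confirm that introducing the auxiliary square roots $u = \sqrt b$ and $v = \sqrt d$ (subject to $uv = c$) does not artificially distort the preimage count. One must verify that the two sign choices compatible with a fixed $(b,c,d)$ produce the same polynomial $H(p)$ and the same set of eight preimages; this is where the precise form of $\mathcal G_+$ and $\mathcal G_-$ is decisive, since the swap $(u,v) \leftrightarrow (-u,-v)$ simply interchanges $h_+$ and $h_-$ and therefore preserves $H$. Once this sign-tracking is in place, the reduction of the branch analysis of $g_2$ to the multiple-root locus of $H(p)$ is routine.
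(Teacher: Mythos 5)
Your part (1) is essentially the paper's argument: the author likewise reduces the branch locus of $g_2$ to the multiple-root analysis of $H(p)=h_+h_-$ via the discriminants of $h_\pm$ (cases (i),(ii) giving $S_P$ and $S_A$) together with the common-root case (iii) forcing $u=v=0$ and hence $L_1$, and then cites Proposition~\ref{pro4.11} for the fibers over those loci; your extra care about the sign ambiguity $(u,v)\leftrightarrow(-u,-v)$ and about distinguishing ``multiple root of $H$'' from ``coincident preimage points'' is exactly the content of case (iii). Part (2), however, is a genuinely different route. The paper argues by direct computation of fiber cardinalities: $\sharp\tilde F^{-1}\le 4$ when $q=s=0$, $\sharp\tilde F^{-1}\le 4$ when $p^2=s$, and $\sharp\tilde F^{-1}=8$ otherwise. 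You instead use the structural fact that for the quotient of $\mathbb C^3$ by the faithful linear action of $D_4$ the branch locus is $\tilde F\bigl(\bigcup_{g\neq e}\mathrm{Fix}(g)\bigr)$, and you enumerate the fixed loci from the explicit matrices of Section~2.1. This is valid (the fiber over $\tilde F({\bf x})$ is the orbit $D_4\cdot{\bf x}$, of size $8/|\mathrm{Stab}({\bf x})|$), and it buys a conceptual explanation of \emph{why} only $L_1$ and $S_P$ appear, at the cost of requiring the correct identification of $\deg\tilde F=8$, which the paper obtains computationally. One small slip: $\mathrm{Fix}(R^2)=V(x,y)$ does \emph{not} map into $L_1$; its image is $\tilde F(0,0,z)=(0,z^2,0,0)$, i.e.\ the line $L_2=V(p,r,s)$, not $L_1=V(q,r,s)$. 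This is harmless because $L_2\subset S_P$ (it satisfies $p^2-s=0$ and $qs-r^2=0$), so your final answer $L_1\cup S_P$ is unchanged, but the sentence ``the three lower-dimensional fixed loci all map into $L_1$'' should be corrected to say that $V(x\mp y,z)$ map onto $L_1$ while $V(x,y)$ maps onto $L_2\subset S_P$.
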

%%%%%%%%%%%%%%%%%%%%%%%%%%%proof
\begin{proof}
(1)  This assertion follows from the above arguments.\\
(2)  Let \(\tilde{F}(x, y, z) = (p, q, r, s)\).  Clearly \ \(\tilde{F}\) is finite.  By direct computations we have the following.  If \(q = 0\) and \(s= 0\), then \ \(\sharp \tilde{F}^{-1}(p, 0, 0, 0) \le 4,\) else if \ \(p^2 = s\), then  \ \(\sharp \tilde{F}^{-1}(p, q, r, s) \le 4,\) \ else  
\(\sharp \tilde{F}^{-1}(p, q, r, s) = 8.\)   Then the assertion (2) follows.
\end{proof}
In the following subsections, we consider the behavior of \(g_d\)  on the sets \ \(L_1, S_P, S_A\).   The line \(L_1\)  is preperiodic under \(g_d\) and the branch divisors \(S_P\) and \(S_A\) are invariant   under \(g_d\) for any \ \(d \in {\mathbb N}\).

%%%%%%%%Subsection4.3
\subsection{\rm{The line \(L_1\) and the morphism  \ \(g_d\)}}\hspace{1cm}

In this section we will show the  behavior of \(g_d\)  on  \ \(L_1\).  We prepare two lemmas. 
%%%%%%%%%%%lemma4.13
\begin{lemma} \label{lemma9}
Let \ \(T_d(w)\) \ be the \ \(d^{th}\) \ Chebyshev polynomial in one variable.  Then 
\[T_d(w) =  \left\{
\begin{array}{ll}
\displaystyle{p_{d/2}(w^2)}  & \quad \ \mbox{if}\ d \ \mbox{is even},\\
\displaystyle{ wq_{(d-1)/2}(w^2)}& \quad \ \mbox{if}\ d  \ \mbox{is odd},
\end{array} \right.\]
where \ \(p_n(x)\) \ and \ \(q_n(x)\) \ are monic polynomials of degree \(n\).
\end{lemma}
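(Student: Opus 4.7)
The plan is to prove this by induction on $d$, using the three-term recurrence $T_{d+2}(w) = w T_{d+1}(w) - T_d(w)$ with initial data $T_1(w) = w$ and $T_2(w) = w^2 - 2$ already recorded in (3.17). The base cases are immediate: $T_1(w) = w\cdot q_0(w^2)$ with $q_0 \equiv 1$, and $T_2(w) = p_1(w^2)$ with $p_1(x) = x - 2$; both are monic of the stated degree.

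For the inductive step I will treat the two parities together. Suppose the claim holds for $T_d$ and $T_{d+1}$. If $d$ is odd, write $T_d(w) = w q_{(d-1)/2}(w^2)$ and $T_{d+1}(w) = p_{(d+1)/2}(w^2)$, each monic of the stated degree. Substituting into the recurrence gives
\[
T_{d+2}(w) = w\,p_{(d+1)/2}(w^2) - w\,q_{(d-1)/2}(w^2) = w\bigl(p_{(d+1)/2}(w^2) - q_{(d-1)/2}(w^2)\bigr),
\]
and one sets $q_{(d+1)/2}(x) := p_{(d+1)/2}(x) - q_{(d-1)/2}(x)$; this is a polynomial in $x$ of degree $(d+1)/2$, and it is monic because the leading term comes from $p_{(d+1)/2}$, whose degree strictly exceeds that of $q_{(d-1)/2}$. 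If $d$ is even, write $T_d(w) = p_{d/2}(w^2)$ and $T_{d+1}(w) = w\,q_{d/2}(w^2)$, so
\[
T_{d+2}(w) = w^2\,q_{d/2}(w^2) - p_{d/2}(w^2),
\]
and $p_{(d+2)/2}(x) := x\,q_{d/2}(x) - p_{d/2}(x)$ is monic of degree $d/2 + 1 = (d+2)/2$ for the same reason.

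There is no serious obstacle here; the identity is essentially a restatement of the fact that $T_d$ is even (resp.\ odd) when $d$ is even (resp.\ odd), which one could alternatively deduce at once from the generating identity $T_d(s + s^{-1}) = s^d + s^{-d}$ by replacing $s$ with $-s$. The recurrence-based argument is preferred because it simultaneously yields the monic degree information needed in subsequent lemmas.
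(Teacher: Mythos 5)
Your proof is correct and follows essentially the same route as the paper, which simply cites the three-term recurrence $T_{d+2}(w)=wT_{d+1}(w)-T_d(w)$ with $T_0(w)=2$, $T_1(w)=w$ and states that the lemma follows by induction. You have merely written out the inductive step (and the monicity/degree bookkeeping) that the paper leaves implicit.
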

%%%%%%%%%%%%%%%%%%%%%%%%%%%proof
\begin{proof}
Chebyshev polynomials \ \(T_d(x)\) \ satisfies the recurrence (See \cite[6.2]{S1}) : \\
\(T_0(w) = 2, \ T_1(w) = w\), \ and \ \(T_{d+2}(w) = wT_{d+1}(w) - T_d(w) \) \ for \ \(d \geq 0\).  Then this lemma follows by induction.
 \end{proof}

By (4.2), we see that the morphism \(g_2\) maps \ \(L_1 = \{(t, 0, 0, 0)\}\) \ to \ \(\{(t^2, \ (2t-2)^2, \ (2t-2)t^2, \ t^4)\}\).

Then we consider a polynomial parametric representation given by
\[p = t^2, \ q = (2t-2)^2, \ r = (2t-2)t^2,  \ s = t^4.\]
Let \ \(G_1(t) = (t^2, \ (2t-2)^2,\  (2t-2)t^2, \ t^4)\).  Then by the implicitization algorithm for polynomial parametrization \cite[Chapter 3, \S3, Theorem 1]{CLS},  we compute the Gr\(\ddot{o}\)bner basis for\ \(I = (t^2 - p, \ (2 - 2 t)^2 - q, \ (2 t - 2) t^2 - r,\  s - p^2),\) \ using graded reverse lex order with \ \(t > p > q > r > s\).  The  Gr\(\ddot{o}\)bner basis for \(I\) is given by
\[\{-4 - 4 p + q + 8 t, \ r^2 - q s, -16 p - 12 r + q r + 20 s - 4 p s + q s, \  4 p r + 4 s - 4 p s + q s, \]
\[16 - 8 q + q^2 + 32 r - 16 s, \ 4 p + p q + 4 r - 4 s, p^2 - s\}.\] 
Set
%%%%%%%%%%%%%%%%%%%%%%%%equation 4.25
\begin{equation}
\begin{split}
C_1 = V( r^2 - q s, -16 p - 12 r + q r + 20 s - 4 p s + q s, \\  
4 p r + 4 s - 4 p s + q s, \ 16 - 8 q + q^2 + 32 r - 16 s, \ 4 p + p q + 4 r - 4 s, p^2 - s).
\end{split}
\end{equation}
By \cite[Chapter 3, \S3, p.131]{CLS}, we know that \ \(G_1({\mathbb C}) \) \ covers all points of \(C_1\).  By Proposition 5 in \cite[Chapter 4, \S5]{CLS}, we have that \(C_1\)  is irreducible.  Clearly, \ \(dim \ C_1 = 1\).  Then \(C_1\) is an affine algebraic curve.
%%%%%%%%%%%lemma4.14
\begin{lemma} \label{lemma10}
Let \(C_1\) be the algebraic set defined in (4.26).  Then \(C_1\) is an affine algebraic curve.
\end{lemma}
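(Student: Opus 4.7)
The plan is to extract the content from the implicitization setup that immediately precedes the lemma, since the essential ingredients have already been assembled. I need to verify two things: that $C_1$ is irreducible, and that $\dim C_1 = 1$; an irreducible affine algebraic set of dimension one is by definition an affine algebraic curve.

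First I would record the polynomial parametrization $G_1 : \mathbb{C} \to \mathbb{C}^4$ given by $G_1(t) = (t^2,\, (2t-2)^2,\, (2t-2)t^2,\, t^4)$. By the implicitization theorem (\cite[Chapter 3, \S3, Theorem 1]{CLS}) applied to the Gr\"obner basis computed just above (4.26), the generators of $C_1$ are exactly those obtained by eliminating $t$, and by \cite[Chapter 3, \S3, p.131]{CLS} the image $G_1(\mathbb{C})$ covers all of $C_1$. I would state this once carefully.

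Next, for irreducibility, I would invoke Proposition 5 in \cite[Chapter 4, \S5]{CLS}: the Zariski closure of the image of $\mathbb{C}$ (an irreducible variety) under a polynomial map is irreducible. Since $G_1(\mathbb{C}) = C_1$ is already Zariski closed (it equals the variety $V(I \cap \mathbb{C}[p,q,r,s])$ cut out by the eliminated generators), $C_1$ itself is irreducible. For the dimension count, the coordinate functions of $G_1$ are non-constant polynomials in $t$; in particular, the first coordinate $p = t^2$ already shows that the projection $C_1 \to \mathbb{C}$, $(p,q,r,s)\mapsto p$, is dominant, so $\dim C_1 \ge 1$. On the other hand, $C_1$ is the image of a one-dimensional irreducible variety $\mathbb{C}$ under a polynomial morphism, so $\dim C_1 \le 1$. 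Hence $\dim C_1 = 1$.

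The argument is essentially a bookkeeping exercise; the only point requiring a little care is to remark that the equalities $G_1(\mathbb{C}) = C_1$ (surjectivity of the polynomial parametrization onto the vanishing locus of the elimination ideal) and the dimension bound $\dim C_1 \le 1$ both rely on standard facts from \cite[Chapters 3--4]{CLS} and do not require any new computation beyond the Gr\"obner basis already displayed. I do not expect any genuine obstacle here; the lemma is essentially a formal consequence of the preceding paragraph, and the whole proof should fit in a few lines.
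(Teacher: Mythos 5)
Your proposal is correct and follows essentially the same route as the paper: the paper's ``proof'' is exactly the paragraph preceding the lemma, which invokes the implicitization theorem and the surjectivity statement from \cite[Chapter 3, \S3]{CLS} to get $G_1(\mathbb{C}) = C_1$, cites Proposition 5 of \cite[Chapter 4, \S5]{CLS} for irreducibility, and asserts $\dim C_1 = 1$. The only difference is that you spell out the dimension count (dominant projection to the $p$-coordinate for $\dim \ge 1$, image of a one-dimensional variety for $\dim \le 1$) where the paper simply writes ``Clearly''.
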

%%%%pro4.15
 \begin{pro} \label{pro4.14}
Under the above notations,  we have  :\\
(1)  If \  \(d\) is odd,  then \ \(g_d(L_1) = L_1.\)  In particular, \ \(g_d(t, 0, 0, 0) = (T_d(\sqrt{t})^2, 0, 0, 0).\)\\
(2)  \(g_2(L_1) = C_1\).\\
(3)  \(g_d(C_1) = C_1\), \ for any \ \(d \geq 1\).  In particular, 
\[g_d(G_1(w+2)) = G_1(T_d(w) + 2), \ \mbox{for any} \ w \in {\mathbb C} .\]
 \end{pro}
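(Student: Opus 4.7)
The plan is to prove all three parts by pulling back through the polynomial parametrization \(G_X\) of \(X_Q\) established in Theorem 4.8, which converts each \(g_d\) into coordinatewise application of the Chebyshev polynomial \(T_d\). The key is to identify, for each target variety \(L_1\) and \(C_1\), a slice of \((u,v,w)\)-space whose image under \(G_X\) is that variety and which is preserved by the map \((T_d, T_d, T_d)\).

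For part (1), the formulas (4.19) give immediately \(G_X(0, 0, w) = (w^2, 0, 0, 0)\), so the slice \(u = v = 0\) parametrizes \(L_1\). By Lemma 4.13, \(T_d(0) = 0\) whenever \(d\) is odd, so Theorem 4.8(1) yields
\[
g_d(w^2, 0, 0, 0) \;=\; G_X(0, 0, T_d(w)) \;=\; (T_d(w)^2, 0, 0, 0).
\]
Setting \(t = w^2\) and noting that \(T_d\) is odd for odd \(d\) (again by Lemma 4.13), so that \(T_d(w)^2\) depends only on \(w^2\), yields the stated explicit formula. Surjectivity of \(g_d|_{L_1}\) onto \(L_1\) is immediate from the surjectivity of \(T_d : \mathbb{C} \to \mathbb{C}\).

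For part (2), direct substitution of \((t, 0, 0, 0)\) into the explicit formula (4.2) for \(g_2\) gives
\[
g_2(t, 0, 0, 0) \;=\; (t^2, (2t-2)^2, (2t-2)t^2, t^4) \;=\; G_1(t),
\]
and the Gr\"obner-basis implicitization preceding Lemma 4.14 already shows \(G_1(\mathbb{C}) = C_1\).

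Part (3) is the delicate step, and the main obstacle is finding a slice of \(G_X\) that parametrizes \(C_1\). Comparing (4.19) with the expansion
\[
G_1(w+2) \;=\; \bigl((w+2)^2,\; 4(w+1)^2,\; 2(w+1)(w+2)^2,\; (w+2)^4\bigr)
\]
suggests the ansatz \((u, v, w') = (2, 2, w)\), which one verifies coordinatewise to give \(G_X(2, 2, w) = G_1(w+2)\). Since \(T_d(2) = 2\) for every \(d \ge 1\) (cf.\ (3.18)), Theorem 4.8(1) then yields
\[
g_d(G_1(w+2)) \;=\; G_X(T_d(2), T_d(2), T_d(w)) \;=\; G_X(2, 2, T_d(w)) \;=\; G_1(T_d(w)+2),
\]
which is the stated equivariance formula. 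Because both \(w \mapsto w+2\) and \(T_d\) are surjective on \(\mathbb{C}\), the right-hand side sweeps out all of \(G_1(\mathbb{C}) = C_1\), so \(g_d(C_1) = C_1\) for every \(d \ge 1\).
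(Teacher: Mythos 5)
Your proof is correct, and for parts (1) and (3) it takes a genuinely different and shorter route than the paper. The paper establishes (1) and (3) by going back to the quotient space \(U_4/\langle R, C\rangle\): it proves the explicit correspondences (4.28) and (4.29) identifying which classes \([\{t_1,t_2,t_3,t_4\}]\) sit over \(L_1\) and over \(C_1\), tracks the action \(t_j \mapsto t_j^d\) on those classes for real \(\theta\), and then upgrades the resulting identity of polynomials from the real arc \(t = 4\cos^2\theta\) to all of \(\mathbb{C}\) by the principle that two polynomials agreeing on infinitely many points coincide. You instead observe that both loci are images of explicit slices of the already-established parametrization \(G_X\) — namely \(G_X(0,0,w) = (w^2,0,0,0)\) and \(G_X(2,2,w) = G_1(w+2)\), both of which check out coordinatewise against (4.19) — and then simply specialize the master intertwining identity \(g_d \circ G_X = G_X \circ (T_d,T_d,T_d)\) of Theorem 4.8(1), using \(T_d(0)=0\) for odd \(d\) and \(T_d(2)=2\) for all \(d\). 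This buys a uniform, purely algebraic argument valid at once for all complex parameter values, at the cost of having to guess the right slices; the paper's route is heavier but makes the root-system picture behind the invariance explicit and is the template it reuses for \(L_2\), \(C_2\) and \(L_3\) later. Your part (2) coincides with the paper's (direct substitution into (4.2) plus the Gr\"obner implicitization showing \(G_1(\mathbb{C}) = C_1\)), and your surjectivity conclusions from the surjectivity of \(T_d\) on \(\mathbb{C}\) are sound.
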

%%%%%%%%%%%%%%%%%%%%%%%%%%%proof
\begin{proof}
(1) We note that by Lemma \ref{lemma9},  \ \(T_d(\sqrt{t})^2\) \ is a polynomial in \(t\).  Set 
\[g_d (p, q, r, s) = (P_d (p, q, r, s), \ Q_d (p, q, r, s), \ R_d (p, q, r, s), \ S_d (p, q, r, s)).\]
We must show that 
%%%%%%%%%%%%%%%%%%%%%%%%equation 4.27
\begin{equation}
\begin{split}
P_d(t, 0, 0, 0) = T_d(\sqrt{t})^2, \hspace{3cm}\\
Q_d(t, 0, 0, 0) = R_d(t, 0, 0, 0) = S_d(t, 0, 0, 0) = 0.
\end{split}
\end{equation}
Since \ \(T_d(\sqrt{t})^2, \ P_d(t), \ Q_d(t), \ R_d(t)\) \ and \  \(S_d(t)\) \ are polynomials, it is enough to prove (4.27) for infinitely many values \ \(t\).  Then it suffices to show (4.27) for \ \(t = 4\cos^2\theta, \ 0 \le \theta \le 2\pi\). See \cite[Chapter V, \S4]{Ls}.   We consider  the diagram (4.15)  in the case \ \({\bar z}_3 = z_1\)  and \(z_2 \in {\mathbb R}\).
 
We consider a correspondence between the top part and the bottom part in the diagram.  The operators \(R\) and \(C\) defined in Section 2.1 act on the space 
\[U_4 = \{\{t_1, t_2, t_3, t_4\} : t_j \in {\mathbb C}^*, \ \mid t_j \mid =1, \ t_1 t_2 t_3 t_4 = 1\}.\]
Then we consider the quotient space \ \(U_4/<R, C>\).  We claim that
%%%%%%%%%%%%%%%%%%%%%%%%equation 4.28
\begin{equation}
\begin{split}
(4\cos^2\theta, 0, 0, 0) \in X_Q \ \mbox{ if and only if}\hspace{3cm}\\
[\{\xi,\  -\xi, \ \xi e^{i \theta}, \ \xi e^{-i \theta}\}] \in U_4/<R, C>, \ \mbox{where} \  \xi = e^{2\pi i/8}, \ 0 \le \theta \le 2\pi.
\end{split}
\end{equation}
Proof of the "if " part.    Recall that \ \(R(t_j) = \xi^2t_j, j = 1, 2, 3, 4.\)  Then 
\[\{\xi, \ R(\xi), \ R^2(\xi), \ R^3(\xi)\} = \{\xi, \ \xi^3, \ \xi^5, \ \xi^7\} = \{(\pm 1 \pm i)/\sqrt{2}\}.\]
Since \ \(C(\xi) = \xi^7,\) \  we may consider the sets \ \(\{\xi^k,\  -\xi^k, \ \xi^k e^{i \theta}, \ \xi^k e^{-i \theta}\}\) \ for \ \(k = 1, 3, 5, 7\).
Hence \ \(z_1 = \xi^k\cdot 2\cos\theta, \ z_2 = 0,\)\ and so \ \(p = 4\cos^2\theta, \ q = 0, \ r = 0, \ s = 0\).\\
Proof of the "only if  "  part.  By the assumption \ \(q = r = s = 0\), we have \ \(x = \pm y,\) \ and \ \(z = 0.\)  Hence \ \(x = \pm\sqrt{2}\cos\theta\) \ and \ \(y = \pm\sqrt{2}\cos\theta\). \ Then \({t_j}^\prime s\) \  satisfy one of the equations
\[t^4 - (2\cos \theta)\xi^kt^3 - (2\cos \theta)\bar{\xi}^{k}t + 1 = 0, \  k = 1,3, 5, 7.\]
Then the roots satisfy the property that
\[\{t_1, t_2, t_3, t_4\}  \in \{\{\xi^k,\  -\xi^k, \ \xi^k e^{i \theta}, \ \xi^k e^{-i \theta}\} : k = 1,3, 5, 7\}.\]
This completes the proof of the claim (4.28).

 We consider the morphism \ \(g_d\) \ with \(d\) odd.  Hence by (4.15) and (4.28), \ \([\{\xi,\  -\xi, \ \xi e^{i \theta}, \ \xi e^{-i \theta}\}]\) \ is mapped to \ \([\{\xi^d,\  -\xi^d, \ \xi^d e^{i d\theta}, \ \xi^d e^{-i d\theta}\}]\).  Note that if \(\xi\) is a primitive eighth root of unity, then \  \(\xi^d\) is also a primitive eighth root of unity.
Thus 
\[g_d(4\cos^2\theta, 0, 0, 0) = (4(\cos d\theta)^2, 0, 0, 0) .\]
Setting \ \(t = (2\cos\theta)^2\),  we have (4.27), for all \ \(0 \le \theta \le 2\pi\).\\
(2) Since \ \(g_2(t, 0, 0, 0) = G_1(t),\) the assertion (2) is  proved by the argument before Lemma \ref{lemma10}.\\
(3) Clearly, \ \(t = 2 + 2\cos 2\theta\).  Then the parametric representation  \(G_1\)  implies
\[p  = (2 + 2\cos 2\theta)^2, \ q  = (2 + 4\cos 2\theta)^2, \ r  = (2 + 4\cos 2\theta)( 2 + 2\cos 2\theta)^2, \ s = (2 + 2\cos 2\theta)^4.\]
To the element \ \((p, q, r, s)\) we consider the corresponding set \ \(\{t_1, t_2, t_3, t_4\}\).  Then we claim the following.
%%%%%%%%%%%%%%%%%%%%%%%%equation 4.29
\begin{equation}
\begin{split}
( (2 + 2\cos 2\theta)^2, \  (2 + 4\cos 2\theta)^2, \  (2 + 4\cos 2\theta)( 2 + 2\cos 2\theta)^2, \  (2 + 2\cos 2\theta)^4) \in X_Q \\
 \mbox{ if and only if} \quad [\{1,\  1, \  e^{i 2\theta}, \  e^{-i 2\theta}\}]  \in  U_4/<R, C>.
\end{split}
\end{equation}
We can prove (4.29) by the similar argument given in the proof of (4.28).  Then  we can prove the assertion (3) similarly.
%\end{proof}
\end{proof}
The line \(L_1\) is the singular locus of \(X_Q\).  For any invariant  subvariety \(V\)  under \(g_d\) of \(X_Q\),  we define the set \ \(K(g_d \mid_ V)\) \ of bounded orbits of \(V\) under \(g_d\) by
\[K(g_d\mid_{V}) = \{w \in V : \{g_d^k(w) : k = 1, 2, \cdots \} \  \mbox{is bounded in }\ {\mathbb C}^4\}.\]
%%%%%%%%%%%%%%%%%%%%%%%%%%%%%%%%%%%%%%Corollary 4.16
\begin{cor} \label{cor4}
(1) If \(d\) \ is odd and greater than 2, then 
\[K(g_d\mid_{L_1})   =  \{ ((2\cos \theta)^2, \ 0, 0, 0) : \ 0 \le \theta \le 2\pi\}.\qquad \qquad \qquad \qquad \qquad \qquad\]
\[(2) K(g_d\mid_{C_1})   =  \{G_1 (2\cos \theta + 2) : 0 \le \theta \le 2\pi\}, \ \mbox{for} \ d \ge 2.\qquad \qquad \qquad \qquad \qquad \qquad\]
\((3) \ C_1 \subset S_A \cap S_P\).
\end{cor}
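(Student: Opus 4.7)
The plan is to derive all three claims directly from Proposition 4.14 together with the classical fact that the filled Julia set of the one-variable Chebyshev polynomial is $K(T_d) = [-2,2]$, on which $T_d$ is chaotic.

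For (1), since $d$ is odd the polynomial $T_d$ is odd, so $T_d(\sqrt{t})^2$ depends only on $t$ and not on the choice of square root. Proposition 4.14(1) then gives $g_d(t,0,0,0) = (T_d(\sqrt{t})^2,0,0,0)$. Picking any $w_0$ with $w_0^2 = t$ and setting $w_{n+1} = T_d(w_n)$, one checks inductively that $g_d^n(t,0,0,0) = (T_d^n(w_0)^2,0,0,0)$. Thus the orbit of $(t,0,0,0)$ is bounded precisely when $\{T_d^n(w_0)\}$ is bounded, i.e.\ when $w_0 \in [-2,2]$, i.e.\ when $t \in [0,4] = \{(2\cos\theta)^2 : 0 \le \theta \le 2\pi\}$.

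For (2), I would use the semiconjugacy in Proposition 4.14(3), namely $g_d(G_1(w+2)) = G_1(T_d(w)+2)$. The parametrization $G_1 : \mathbb{C} \to C_1$ is surjective by the remark preceding Lemma 4.13, and a brief check comparing the $p$- and $q$-coordinates shows $G_1$ is injective, so each point of $C_1$ has the form $G_1(w+2)$ for a unique $w$ and its $g_d$-orbit is $\{G_1(T_d^n(w)+2) : n \ge 0\}$. Since the coordinates of $G_1$ include $t^2$ and $t^4$, we have $|G_1(\tau)| \to \infty$ as $|\tau| \to \infty$, so this orbit is bounded in $\mathbb{C}^4$ iff $\{T_d^n(w)\}$ is bounded, iff $w \in [-2,2]$, which gives the claimed parametrization with $w = 2\cos\theta$.

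For (3), I would verify the containment by substituting the parametric form $G_1(t) = (t^2, (2t-2)^2, (2t-2)t^2, t^4)$ into the defining equations of $S_P$ and $S_A$. The identities $p^2 - s = (t^2)^2 - t^4 = 0$ and $qs - r^2 = (2t-2)^2 t^4 - ((2t-2)t^2)^2 = 0$ immediately give $C_1 \subset S_P$. For $C_1 \subset S_A$ it remains to expand $A_h(t^2,(2t-2)^2,(2t-2)t^2, t^4)$ as a polynomial in $t$ and confirm that the coefficients of $t^0,\dots,t^6$ all cancel. This is the only place in the corollary where a mildly nontrivial computation is required. The main obstacle, such as it is, lies in part (1): the square-root ambiguity in $g_d(t,0,0,0) = (T_d(\sqrt{t})^2,0,0,0)$ is resolved only because $d$ odd forces $T_d$ to be odd; otherwise the corollary is essentially a translation of the one-variable dynamics of $T_d$ on $[-2,2]$ to $L_1$ and $C_1$ via Proposition 4.14, supplemented by the polynomial identity in (3).
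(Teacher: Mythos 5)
Your proposal is correct and follows essentially the same route as the paper: parts (1) and (2) reduce to the one-variable fact $K(T_d)=[-2,2]$ via the semiconjugacies of Proposition 4.14 (the paper's "similar to the proof of Theorem 4.8(2)" is exactly this reduction, phrased through $u=t_1+t_1^{-1}$ with $|t_1|\ge 1$), and part (3) is the same substitution of $G_1(t)$ into $A_h$ and into the defining equations of $S_P$. The only cosmetic difference is that the paper deduces $C_1\subset S_P$ directly from the generators listed in (4.26) rather than by re-expanding $p^2-s$ and $qs-r^2$ along the parametrization.
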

%%%%%%%%%%%%%%%%%%%%%%%%%%%proof
\begin{proof}
(1) and (2) \  The proof is similar to the proof of Theorem \ref{theorem4.8} (2).\\
(3) \ If we put \ \(p = t^2, \ q = (2-2t)^2, \ r = (2t-2)t^2, s = t^4\) \ in the polynomial \ \(A_h\) in (4.21), 
then the polynomial vanishes.  Note that \ \(S_A = V(A_h, \ qs-r^2)\).  Hence \ \(C_1 \subset S_A\).  Then
from (4.26), we have this assertion.
\end{proof}

We will show that two surfaces \(S_A\) and \(S_P\) are invariant under \(g_d\). 

%%%%%%%%Subsection4.4
\subsection{\rm{The dynamics of \(g_d\) on the surface \(S_P\)}}\hspace{0cm}  

We will show that \(S_P\) is an affine algebraic surface and is invariant under \(g_d\).  Recall that \ \(S_P = V(p^2-s, qs-r^2)\). 
%%%%pro4.17
 \begin{pro} \label{pro4.16}
The algebraic set \(S_P\) is an affine algebraic surface and is birationally equivalent to \({\mathbb C}^2\).
 \end{pro}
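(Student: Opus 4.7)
The plan is to exhibit a polynomial parametrization $G_P : \mathbb{C}^2 \to S_P$ that is surjective and birational, from which irreducibility, dimension two, and birational equivalence to $\mathbb{C}^2$ all follow simultaneously. The defining equations $s = p^2$ and $r^2 = qs = qp^2$ of $S_P$ suggest setting $u := p$ and $t := r/p$ where $p \neq 0$; then $s$ and $q$ are forced to be $u^2$ and $t^2$ respectively. This motivates the map
\[ G_P(u,t) \;=\; (u,\; t^2,\; ut,\; u^2). \]
The first step is a direct substitution verifying $G_P(u,t) \in S_P$ for every $(u,t) \in \mathbb{C}^2$: $p^2 - s = u^2 - u^2 = 0$ and $qs - r^2 = t^2 u^2 - (ut)^2 = 0$.

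Next I will establish surjectivity $G_P(\mathbb{C}^2) = S_P$ by a case split on whether $p$ vanishes. If $(p_0,q_0,r_0,s_0) \in S_P$ has $p_0 \neq 0$, then the relations $s_0 = p_0^2$ and $r_0^2 = q_0 p_0^2$ show that $(u,t) = (p_0,\, r_0/p_0)$ is a preimage. If $p_0 = 0$, the defining equations force $s_0 = 0$ and $r_0 = 0$, and the point $(0,q_0,0,0)$ is the image of $(0,t)$ for either square root $t$ of $q_0$.

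Once surjectivity is in hand, Proposition 5 of \cite[Chapter 4, \S5]{CLS}, invoked in the same way as in the proof of Lemma \ref{lemma10}, gives that $S_P$, being the image of a polynomial parametrization, is an irreducible affine variety. Its dimension equals $2$: over any point with $p \neq 0$ the fiber of $G_P$ is the single point $(p,\, r/p)$, so $G_P$ has generically finite fibers and $\dim S_P = \dim \mathbb{C}^2 = 2$. Thus $S_P$ is an affine algebraic surface.

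Finally, the rational inverse $\Psi : S_P \to \mathbb{C}^2$ given by $\Psi(p,q,r,s) = (p,\, r/p)$ on the Zariski open subset $\{p \neq 0\}$ of $S_P$ satisfies $\Psi \circ G_P = \mathrm{id}$ on $\{u \neq 0\}$ and $G_P \circ \Psi = \mathrm{id}$ on $\{p \neq 0\} \cap S_P$ (using $s = p^2$ and $r^2 = qs$ to recover $q$ from $(r/p)^2$). This exhibits $S_P$ as birationally equivalent to $\mathbb{C}^2$. The only mildly delicate point in the entire argument is handling the collapsed locus $p = 0$ in the surjectivity step; no essential obstacle arises.
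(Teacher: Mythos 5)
Your proof is correct and follows essentially the same route as the paper: the same parametrization $(u,t)\mapsto(u,t^2,ut,u^2)$ and the same rational inverse $(p,q,r,s)\mapsto(p,r/p)$. The only difference is cosmetic — you verify surjectivity by a direct case split on $p=0$ versus $p\neq 0$, whereas the paper appeals to the Gr\"obner-basis implicitization argument it used for $C_1$; both work.
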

%%%%%%%%%%%%%%%%%%%%%%%%%%%proof
\begin{proof}
We consider a polynomial parametric representation of \(S_P\) given by \\
 \(\phi(u, v) = (u, v^2, uv, u^2)\).  By the similar arguments given in the argument above Lemma  \ref{lemma10}, we can verify that \ \(\phi({\mathbb C}^2)\) \ fills up  all of \(S_P\).  Then \(S_P\) is irreducible.  Clearly, \ dim \(S_P = 2\).  We consider a rational map from \(S_P\) to \ \({\mathbb C}^2\)  given by \ \(\psi (p, q, r, s) = (p, r/p)\).  Then \ \(\psi \circ \phi = id_{{\mathbb C}^2}\) \ and \ \(\phi \circ \psi = id_{S_P}\).
\end{proof}

Next we consider the map  \(g_d\) on \(S_P\).
We consider a mapping \(\rho\) from \({\mathbb C}^2\) \ to\  \({\mathbb C}^2\) defined  by \  \(\rho(w, t) = ((w+t)^2, (wt+2))\). 
Then we consider a mapping \  \(G_P : {\mathbb C}^2 \to {\mathbb C}^4\)  defined by \  \(G_P = \phi \circ \rho\). 
%%%%%%%%%%%%%%%%%%%%%%%%theorem 4.18
\begin{theorem} \label{theorem17} 
Let \(g_d\) be a morphism in Theorem 4.8.\\ 
(1) \(g_d(G_P(w, t)) = G_P(T_d(w), T_d(t)),\)
\ for any \ \(w, \ t \in {\mathbb C}\) \ and \ \(d \in {\mathbb N}\).\\
(2) \(g_d(S_P) = S_P\) ,  \ for any \ \(d \in {\mathbb N}\).\\
(3)  \(K(g_d \mid _{S_P}) = \{G_P(2\cos \alpha,  2\cos \beta)  : \ 0 \le \alpha, \beta \le 2\pi\}, \ \mbox{for} \ d \ge 2.\)
\end{theorem}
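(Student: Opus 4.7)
My plan is to derive all three parts by specializing the parametrization $G_X$ of $X_Q$ from Theorem \ref{theorem4.8} to the slice fixed by setting its first parameter equal to $2$, which is a fixed point of every $T_d$ (since $T_d(2) = 2$).

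First I would verify by direct substitution in (4.19), writing the three parameters of $G_X$ as $(u, \xi, \eta)$ to avoid a clash with the names $w, t$ in the statement, that
\[
G_X(2, \xi, \eta) = \bigl((\xi + \eta)^2,\ (\xi\eta + 2)^2,\ (\xi\eta + 2)(\xi + \eta)^2,\ (\xi + \eta)^4\bigr) = G_P(\xi, \eta).
\]
Granted this and $T_d(2) = 2$, part (1) is then immediate from Theorem \ref{theorem4.8}(1):
\[
g_d(G_P(w, t)) = g_d(G_X(2, w, t)) = G_X(2, T_d(w), T_d(t)) = G_P(T_d(w), T_d(t)).
\]

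For part (2), I would show that $G_P \colon \mathbb{C}^2 \to S_P$ is surjective; part (1) then gives $g_d(S_P) \subseteq S_P$, and equality follows because $T_d \colon \mathbb{C} \to \mathbb{C}$ is surjective. Surjectivity of $G_P = \phi \circ \rho$ follows from Proposition \ref{pro4.16} (which supplies $\phi(\mathbb{C}^2) = S_P$) together with the surjectivity of $\rho(w, t) = ((w+t)^2,\ wt + 2)$: given a target $(a, b) \in \mathbb{C}^2$, pick any $\sigma$ with $\sigma^2 = a$ and solve the quadratic $X^2 - \sigma X + (b - 2) = 0$ over $\mathbb{C}$.

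For part (3), I would follow the modulus-analysis scheme used in the proof of Theorem \ref{theorem4.8}(2). Write $w = s_1 + s_1^{-1}$ and $t = s_2 + s_2^{-1}$ with $|s_j| \ge 1$, so by part (1)
\[
g_d^n(G_P(w, t)) = G_P(T_d^n(w), T_d^n(t)), \quad T_d^n(w) = s_1^{d^n} + s_1^{-d^n},\ T_d^n(t) = s_2^{d^n} + s_2^{-d^n}.
\]
The first coordinate $(T_d^n(w) + T_d^n(t))^2$ expands as a sum of four modes with bases $s_1^{\pm 1}, s_2^{\pm 1}$, and the second coordinate $(T_d^n(w)T_d^n(t) + 2)^2$ expands in the modes $(s_1 s_2)^{\pm d^n}$ and $(s_1 s_2^{-1})^{\pm d^n}$. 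A case analysis by modulus, parallel to the $r_1 = r_2 r_3$ argument used for $X_Q$, shows that if either $|s_1| > 1$ or $|s_2| > 1$, no choice of arguments forces persistent cancellation of the dominant modes for every iterate $d^n$ simultaneously, except in the degenerate subcase $t = \pm w$. In that subcase the surviving nontrivial coordinates collapse to expressions in $T_d^n(w)$ alone (such as $(2 - T_d^n(w)^2)^2$), which are bounded only for $w \in K(T_d) = [-2, 2]$. The reverse inclusion is immediate since $T_d([-2, 2]) \subseteq [-2, 2]$, so iterates starting at $G_P(w, t)$ with $w, t \in [-2, 2]$ remain inside the compact set $G_P([-2, 2]^2)$.

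The main obstacle I anticipate is the bookkeeping in part (3): cleanly ruling out accidental cancellation of the dominant modes along the entire sequence of exponents $d^n$, and then isolating the degenerate locus $t = \pm w$ where only a reduced one-dimensional Chebyshev dynamics remains. Parts (1) and (2) are essentially formal consequences of the $G_X$ parametrization together with Proposition \ref{pro4.16}.
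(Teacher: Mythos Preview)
Your proposal is correct and follows essentially the same route as the paper: part (1) is obtained there too by specializing $G_X$ at $u=2$ (reached via $\alpha=0$ in (4.16)), part (2) via the surjectivity of $\rho$ and $\phi$, and for part (3) the paper simply invokes the modulus analysis of Theorem \ref{theorem4.8}(2). One small simplification for your part (3): if you examine the second coordinate $(T_d^n(w)T_d^n(t)+2)^2$ first, the condition $|s_1||s_2|>1$ already yields a single dominant mode $(s_1s_2)^{d^n}$ with no possible cancellation, so the degenerate subcase $t=\pm w$ need not be isolated at all.
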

%%%%%%%%%%%%%%%%%%%%%%%%%%%proof
\begin{proof}
(1) We consider the condition \ \(p^2 = s\) \ in \ \(V(p^2-s, \ qs-r^2)\).  Since \ \(p = x^2 + y^2\) \ and \  \(s = (x^2 - y^2)^2,\) \ then  the condition \ \(p^2 = s\) \ is equivalent to the condition \ \(xy = 0.\)  We consider this equation \ \(xy = 0\).
In (4.16), we set 
%%%%%equation%%%%%%%% 4.15%%%%%%%%%%%%%%%%
\begin{equation*}
t_1 = e^{i(\alpha + \gamma)}, \ t_2 = e^{i(\alpha - \gamma)}, \ t_3 = e^{i\beta}, \ t_4 = e^{-i(2\alpha + \beta)} .
 \end{equation*}
Then by (4.17), we have 
%%%%%equation 4.16%%%%%%%%%%%%%%%%%%%%%
\begin{equation*}
\begin{split}
x =  \cos(\alpha + \gamma) +  \cos(\alpha -\gamma) +  \cos \beta +  \cos(2\alpha + \beta), \qquad\qquad\\
y =  \sin(\alpha + \gamma) +  \sin(\alpha -\gamma) +  \sin \beta -  \sin(2\alpha + \beta), \qquad\qquad \\
= 4\sin\alpha \sin(\frac12(\alpha + \beta + \gamma))\sin(\frac 12(\alpha + \beta-\gamma)), \qquad\qquad \\
z =  2\cos 2\alpha  +  (2\cos(\alpha + \beta))(2\cos \gamma). \qquad\qquad\qquad\qquad\qquad
\end{split}
\end{equation*}
We assume that \ \(\alpha = 0\).  Then \ \(y = 0\) \ and so \ \(xy = 0\).  If \ \(\alpha = 0\),  then \ \( x = (2\cos\beta + 2\cos\gamma),  \ y = 0\) \ and \ \(z = (2\cos\beta)(2\cos\gamma) +2\).
Hence putting \ \(\alpha = 0\) \ in (4.18), we have \ \(p, q, r\) and \(s\),  and so  we put  \(u = 2\) \ in (4.19).  Then from the parametrization \ \(G_X\) we get \(G_P\).

To prove the assertion (1) we use an argument similar to that used in the proof of Theorem 3.3(1).  \\
(2)  Clearly \(\rho\) is a surjection.  Set \  \(\rho(w, t) = (u, v) \)\ and consider a polynomial parametric representation \  \(\phi (u, v) \) \ of \(S_P\).  Then we have the assertion (2).\\
(3)  The proof  is similar to the proof of Theorem \ref{theorem4.8} (2).\\
\end{proof}
We remark that if we assume that \ \(\alpha + \beta = \pm\gamma\),  then we have the same result.
 
By computing the rank of the Jacobian matrix \ \(\partial(p^2-s, qs-r^2)/\partial(p, q, r, s)\), \ we can prove that the singular locus of \(S_P\) is \ \(V(p, r, s)\).  We consider the singular locus  \(L_2 = \{(0, q, 0, 0) : q \in {\mathbb C}\}\) \ of  \(S_P\).   The inverse images of \(L_2\) under \(g_2\) is described in Proposition \ref{pro4.11}(3).  We have the following proposition that is similar to  Proposition \ref{pro4.14}. 
%%%%pro4.19
 \begin{pro} \label{pro4.18}
Let \(g_d\) be a morphism in Theorem \ref{theorem4.8}.\\
(1)  If \(d\) is odd, \ \(g_d(L_2) = L_2\).  In particular,   \(g_d(0, t^2, 0, 0) = (0, T_d(t)^2, 0, 0).\)\\
(2)  \(g_2(L_2) = C_2\),  where 
\[C_2 = V(-64 + 16 q - 8 r + s, -8 p + 4 r - s,  16 r^2 - 64 s - 8 r s + s^2).\]
\qquad \(C_2\) is an affine algebraic curve.  The curve  \(C_2\) has a polynomial parametric representation \ \(G_2(T) = (4T, \ (T+2)^2, \ 4T(T+2), \ 16T^2).\)\\
(3)   \(g_d(C_2) = C_2\),  for any \ \(d \ge 1\).  In particular, 
\[g_d(G_2(t^2)) =  G_2(T_d(t)^2), \ \mbox{for any} \ t \in {\mathbb C}.\]
\end{pro}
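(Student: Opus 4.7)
The proof plan is to mirror the structure of the proof of Proposition 4.14, using the top row of the diagram (4.15): the morphism $g_d$ acts on $D_4$-orbits of 4-tuples $\{t_1,t_2,t_3,t_4\}\subset(\mathbb{C}^*)^4$ with $t_1t_2t_3t_4=1$ by $t_i\mapsto t_i^d$. The strategy is to determine which orbits correspond to points of $L_2$ and of $C_2$, observe how raising to the $d$-th power transforms these orbit-shapes, and then translate the result back to the $(p,q,r,s)$-coordinates via (4.1).

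For part (1), a point $(0,q,0,0)\in L_2$ corresponds to $z_1=z_3=0$, so the defining polynomial $t^4+z_2t^2+1=0$ has root-set of the form $\{\tau,-\tau,\tau^{-1},-\tau^{-1}\}$ for some $\tau\in\mathbb{C}^*$, with associated $z$-coordinate $z_2=-(\tau^2+\tau^{-2})$. For odd $d$, the $d$-th power set $\{\tau^d,-\tau^d,\tau^{-d},-\tau^{-d}\}$ is of the same shape, so $g_d(L_2)\subseteq L_2$ with new $z$-coordinate $-(\tau^{2d}+\tau^{-2d})$. Since $T_d(z)=(-1)^d(\tau^{2d}+\tau^{-2d})$ by the Chebyshev identity $T_d(s+s^{-1})=s^d+s^{-d}$ (applied with $s=-\tau^2$), squaring eliminates the sign and yields $T_d(z)^2=(\tau^{2d}+\tau^{-2d})^2$, which is exactly the new $q$-coordinate. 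This gives the stated formula $g_d(0,t^2,0,0)=(0,T_d(t)^2,0,0)$. The ``only if'' step---that every orbit of $L_2$ arises from such a 4-tuple---is an analogue of (4.28) and will be proved by solving the characteristic polynomial along lines parallel to Proposition 4.14.

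For part (2), the same initial 4-tuple is mapped by $g_2$ to $\{\tau^2,\tau^2,\tau^{-2},\tau^{-2}\}$, a shape different from the $L_2$ one. Setting $w=\tau^2+\tau^{-2}$, a direct computation of elementary symmetric functions gives $(x,y,z)=(2w,0,w^2+2)$, and the image point in coordinates (4.1) is $(p,q,r,s)=(4w^2,(w^2+2)^2,4w^2(w^2+2),16w^4)=G_2(w^2)$. As $w$ ranges over $\mathbb{C}$, so does $T=w^2$, hence $g_2(L_2)=G_2(\mathbb{C})$. To identify this with the stated $C_2$, I will first verify by direct substitution that the three generators of the ideal defining $C_2$ vanish on $G_2(T)$, and conversely solve the system: the second equation gives $s=4r-8p$, substitution into the third yields $r=p(p+8)/4$ (whence $s=p^2$), and the first then forces $16q=(p+8)^2$. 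Setting $T=p/4$ recovers $G_2(T)$, so $C_2=G_2(\mathbb{C})$. Irreducibility of $C_2$ and $\dim C_2=1$ then follow from the polynomial parametrization (CLS, Chapter 4, \S5, Proposition 5).

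For part (3), I will parametrize $C_2$ intrinsically by the orbits $\{\sigma,\sigma,\sigma^{-1},\sigma^{-1}\}$, $\sigma\in\mathbb{C}^*$ (with $T=(\sigma+\sigma^{-1})^2$). Since $g_d$ sends this to $\{\sigma^d,\sigma^d,\sigma^{-d},\sigma^{-d}\}$---of the same shape with $\sigma$ replaced by $\sigma^d$---the inclusion $g_d(C_2)\subseteq C_2$ is immediate for any $d$, and surjectivity follows from the surjectivity of $\sigma\mapsto\sigma^d$ on $\mathbb{C}^*$. The Chebyshev identity $T_d(\sigma+\sigma^{-1})=\sigma^d+\sigma^{-d}$ then yields $g_d(G_2(t^2))=G_2(T_d(t)^2)$ with $t=\sigma+\sigma^{-1}$. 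The main obstacle in the whole argument is the careful ``only if'' direction needed to identify which $D_4$-orbits correspond to $L_2$ and $C_2$; this is a notation-heavy but routine analysis of the action of $R$ and $C$ on the candidate 4-tuples, in the style of (4.28) and (4.29).
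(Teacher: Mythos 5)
Your proposal is correct and takes essentially the same route as the paper: both arguments identify the $D_4$-orbits $\{\tau,-\tau,\tau^{-1},-\tau^{-1}\}$ underlying $L_2$ and $\{\sigma,\sigma,\sigma^{-1},\sigma^{-1}\}$ underlying $C_2$, track them under $t_i\mapsto t_i^d$, and translate back through (4.1), the only cosmetic difference being that you work with arbitrary $\tau\in\mathbb{C}^*$ where the paper uses $e^{i\theta/2}$ on the real slice (claim (4.30)) and extends by polynomial identity. Your explicit solution of the three defining equations of $C_2$ to recover $G_2(T)$ is a slightly more self-contained justification of $C_2=G_2(\mathbb{C})$ than the paper's appeal to the implicitization algorithm, but the substance is identical.
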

%%%%%%%%%%%%%%%%%%%%%%%%%%%proof
\begin{proof}
(1)  The proof is similar to the argument given in Proposition \ref{pro4.14}.  Then it suffices to prove the following claim : \\
%%%%%%%%%%%%%%%%%%%%%%%%equation 4.30
\begin{equation}
\begin{split}
(0,  4\cos^2\theta,  0, 0) \in X_Q \ \mbox{ if and only if}\\
[\{e^{i \theta/2},\  -e^{i \theta/2}, \ e^{-i \theta/2},\  -e^{-i \theta/2}\}] \in U_4/<R, C> \ \mbox{where} \ 0 \le \theta \le 2\pi.
\end{split}
\end{equation}
Proof of the "if " part.    Under the condition, we have \ \(z_1 = 0, \ z_2 = \pm2\cos\theta\) \ and so \ \(q = 4\cos^2\theta\).\\
Proof of the "only if "  part.  Then \ \([\{e^{i \theta/2},\  -e^{i \theta/2}, \ e^{-i \theta/2},\  -e^{-i \theta/2}\}]\) \ is the set of roots of the equation : 
\(t^4 \pm 2\cos \theta t^2 + 1 = 0.\)  This completes the proof of the claim (4.30).\\
(2)  By (4.2), we have \ \(g_2(0, t^2, 0, 0) = (4t^2,\  (t^2+2)^2, \ 4t^2(t^2+2), \ 16t^4) \).  We consider the polynomial parametric representation
\[G_2(T) = (4T, \ (T+2)^2, \ 4T(T+2), \ 16T^2).\]
Then we see that \ \(G_2({\mathbb C})\) fills up all of \(C_2\).  Hence \(C_2\)  is an affine algebraic curve.\\
(3)  We consider the case \ \(t = 2\cos\theta\).  We see from (2) and (4.30) that
\[ (4(2\cos\theta)^2, \ ((2\cos\theta)^2+2)^2, \ 4(2\cos\theta)^2((2\cos\theta)^2+2), \ 16(2\cos\theta)^4) \in X_Q.\]
if and only if 
\[[\{e^{i \theta},\  e^{i \theta}, \ e^{-i \theta},\  e^{-i \theta}\}] \in U_4/<R, C>.\]
Then the assertion (3) follows.
\end{proof}
%%%%%%%%%%%%%%%%%%%%%%%%%%%%%%%%%%%%%%Corollary 4.20
\begin{cor} \label{cor5}\hspace{0.1cm} 
 (1) If \(d\) is odd and greater than 2, then\\ 
\quad \ \(K(g_d\mid_{L_2})   =  \{ (0, (2\cos \theta)^2,  0, 0) :  0 \le \theta \le 2\pi\}.\)\\
\((2) K(g_d\mid_{C_2})   =  \{G_2( (2\cos \theta )^2) : 0 \le \theta \le 2\pi\}, \ \mbox{for} \ d \ge 2.\)\\
\((3) \ C_2 \subset S_A \cap S_P\).
\end{cor}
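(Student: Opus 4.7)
The plan is to derive all three parts from Proposition 4.18 and the classical Julia set description $K(T_d) = [-2,2]$ for the one-variable Chebyshev polynomial (cf. Corollary 4.15 for the analogous statements on $L_1, C_1$). Parts (1) and (2) reduce boundedness of $g_d$-orbits on $L_2$ and $C_2$ to boundedness of one-variable Chebyshev orbits, and part (3) is an explicit substitution check.

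For part (1), I would use Proposition 4.18(1): for odd $d$, $g_d(0, t^2, 0, 0) = (0, T_d(t)^2, 0, 0)$. Iterating yields $g_d^k(0, t^2, 0, 0) = (0, T_d^k(t)^2, 0, 0)$. Every point of $L_2$ has the form $(0, t^2, 0, 0)$ for some $t \in \mathbb{C}$, and since $L_2 \subset \mathbb{C}^4$ the point has bounded $g_d$-orbit iff the scalar sequence $\{T_d^k(t)^2\}_{k \geq 1}$ is bounded iff $\{T_d^k(t)\}_{k\geq 1}$ is bounded. For $d \geq 3$ this happens exactly when $t \in K(T_d) = [-2, 2]$, equivalently $t^2 = (2\cos\theta)^2$ for some $\theta \in [0, 2\pi]$. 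Part (2) is parallel: Proposition 4.18(3) gives $g_d^k(G_2(t^2)) = G_2(T_d^k(t)^2)$, and since $G_2 : \mathbb{C} \to \mathbb{C}^4$ is a polynomial map and $C_2 = G_2(\mathbb{C})$ by Proposition 4.18(2), the orbit of $G_2(T) \in C_2$ (write $T = t^2$) is bounded iff $\{T_d^k(t)\}$ is bounded iff $t \in [-2, 2]$ iff $T = (2\cos\theta)^2$, giving the claimed description.

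For part (3), I would substitute the polynomial parametrization $G_2(T) = (4T, (T+2)^2, 4T(T+2), 16T^2)$ into the defining ideals of $S_P = V(p^2 - s, qs - r^2)$ and $S_A = V(A_h, qs - r^2)$. The identities $p^2 - s = 16T^2 - 16T^2 = 0$ and $qs - r^2 = 16T^2(T+2)^2 - 16T^2(T+2)^2 = 0$ are immediate, giving $C_2 \subset S_P$ with no real work. The substitution into $A_h$ yields a polynomial in $T$ of degree at most $4$, and one checks that the coefficients of $T^0, T^1, T^2, T^3, T^4$ all vanish; this is the only place in the proof that requires any actual calculation, but it is a short bookkeeping exercise with the explicit $A_h$ from (4.21). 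There is no real conceptual obstacle: the dynamics in (1) and (2) collapse to one-variable Chebyshev dynamics through the semiconjugacies supplied by Proposition 4.18, and (3) is an algebraic identity.
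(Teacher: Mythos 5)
Your proposal is correct and follows essentially the same route as the paper: the paper dismisses (1) and (2) as obvious (resting on exactly the semiconjugacies of Proposition 4.18 and $K(T_d)=[-2,2]$ that you invoke), and proves (3) by the same substitution of $G_2(T)=(4T,(T+2)^2,4T(T+2),16T^2)$ into $p^2-s$, $qs-r^2$, and $A_h$. The only point worth making explicit in (2) is that unboundedness of $\{T_d^k(t)\}$ forces unboundedness of the image because the first coordinate of $G_2$ is $4T$; with that remark your argument is complete.
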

%%%%%%%%%%%%%%%%%%%%%%%%%%%proof
\begin{proof}
(1) and (2) are obvious.\\
(3)  We use the polynomial parametric representation \(G_2(T)\).  Then \ \( C_2 \subset S_P\).  If we put \ \(p = 4T, \ q = (T+2)^2, \ r = 4T(T+2), \) \ and \ \(s= 16T^2\) \ in the polynomial \ \(A_h\) 
in (4.21) of \(S_A\), then the polynomial vanishes.
\end{proof}
Then combining Corollary \ref{cor4} with Corollary \ref{cor5}, we have \ \(C_1 \cup C_2 \subset S_A \cap S_P\).  In the next subsection we will show that \  
 \[C_1 \cup C_2 = S_A \cap S_P \ \mbox{and } \ 
K(g_d\mid_{C_1}) \cup K(g_d\mid_{C_2}) = K(g_d\mid_{S_A}) \cap K(g_d\mid_{S_P}).\]
%%%%%%%%Subsection4.5
\subsection{\rm{The dynamics of \(g_d\) on the surface \(S_A\)}}\hspace{0cm}  

We begin to show that \(S_A\) is an affine algebraic surface and is invariant under \(g_d\).  The surface \ \(S_A\)  is related to the astroidalhedron \(\mathcal{A}\)  that is studied in \cite[p.205]{U3}.  The astroidalhedron \(\mathcal{A}\) has connection with catastrophe theory (See \cite{PS1, PS2}).  It is a ruled surface in \( {\mathbb R}^3\).  In particular, it is the tangent developable of the astroid in space  (See \cite[Figures 3 and 4]{U3}).   The astroidalhedron \(\mathcal{A}\) lies in \(R_3\)  and it is the boundary of \ \(K(P_{A_3}^d)\) .
We will show that \(\mathcal{A}\)  is invariant under the action of \(D_4\) and that the relative orbit variety \ \(\mathcal{A}/D_4\)  is related to the surface \(S_A\).

The implicit representation of \(\mathcal{A}\) is given in \cite[p.205]{U3} by the following ;
\[ 256 -27(z_1^4+{\bar z}_1^4) + (z_1^2+{\bar z}_1^2)(144z_2-4z_2^3 + 18z_1{\bar z}_1z_2)\]
\[-80z_1{\bar z}_1z_2^2+z_1^2{\bar z}_1^2z_2^2-192z_1{\bar z}_1-4z_1^3{\bar z}_1^3-6z_1^2{\bar z}_1^2 -128z_2^2 + 16z_2^4 = 0.\]
Setting \ \(z_1 = x + iy\) \ and \ \(z_2 = z\), we have the equation ;
\[A(x, y, z) = 256 - 192 x^2 - 60 x^4 - 4 x^6 - 192 y^2 + 312 x^2 y^2 - 12 x^4 y^2 - 
 60 y^4 - 12 x^2 y^4 - 4 y^6 + 288 x^2 z \]
\[+ 36 x^4 z - 288 y^2 z - 
 36 y^4 z - 128 z^2 - 80 x^2 z^2 + x^4 z^2 - 80 y^2 z^2 + 
 2 x^2 y^2 z^2 + y^4 z^2 - 8 x^2 z^3 + 8 y^2 z^3 + 16 z^4 = 0.\]
We can write \ \(A(x, y, z) \) \ in the terms \ \(p, q, r, s \) \ in (4.1).  Then we have
\[A_h = 256 - 192 p + 48 p^2 - 4 p^3 - 128 q - 80 p q + p^2 q + 16 q^2 + 
 288 r + 36 p r - 8 q r - 108 s.\]
That is,
\[A_h(p({\bf x}), \ q({\bf x}), \ r({\bf x}), \ s({\bf x})) = A({\bf x}), \ \mbox{where} \ {\bf x} = (x, y, z).\]
Then \(\mathcal{A}\) is invariant under \(D_4\).  Hence we can construct the relative orbit variety \ 
  \(\mathcal{A}/D_4\) \ by Algorithm 2.6.2 in \cite[\S 2.6]{St}.  By computing a Gr\(\ddot{o}\)bner basis for the ideal
\[(A(x, y, z), \ x^2+y^2-p, \ z^2-q,\ z(x^2-y^2)-r, \ (x^2-y^2)^2-s),\]
we know that the ideal of \ \(\mathcal{A}/D_4\) is \ \((A_h(p, q, r, s), \ r^2-qs)\).  We view the ideal \ \((A_h(p, q, r, s), \ r^2-qs)\) \ as an ideal in 
\({\mathbb C}[(p, q, r, s].\)  Then we have the affine algebraic surface \ \(S_A = V(A_h(p, q, r, s), \ r^2-qs)\).

Next, using parametric representation of  \(\mathcal{A}\), we construct a polynomial parametric representation of the affine algebraic surface  \(S_A\).  The astroidalhedron \(\mathcal{A}\) is defined in \(R_3\) using the coordinates \ \((t_1, t_2, t_3, t_4)\) \ satisfying 
\[t_1 = e^{i\alpha}, \ t_2 = e^{i\alpha}, \ t_3 = e^{i\beta}, \ t_4 = e^{-i(2\alpha+\beta)},  \ (0 \le \alpha, \beta < 2\pi),\]
 (See \cite[pp. 204-205]{U3}).   Note that this is the case \ \(\gamma = 0\) \ in (4.16). 
Then  putting \ \(w = 2\) \ in (4.19), we have 
%%%%%%%%%%%%%%%%%%%%%%%equation 4.31
\begin{equation}
\begin{split}
p = 4 + 2uv+ v^2, \quad  q = (u+2v)^2, \\
r = (u + 2v)(2u + 4v +uv^2/2), \quad s = (2u + 4v +uv^2/2)^2.
\end{split}
\end{equation}
We define a mapping \(G_A\) from \({\mathbb C}^2\) to \({\mathbb C}^4\) by 
\[G_A(u, v) = (4 + 2uv+ v^2, \  (u+2v)^2, \  (u + 2v)(2u + 4v +uv^2/2), \ (2u + 4v +uv^2/2)^2).\]
Then we can show that \ \(G_A(u, v)\) \ is the polynomial parametrization of \ \(S_A\).  Indeed.  Using lex order with \ \(u > v> p> q> r> s,\) \ we compute the  Gr\(\ddot{o}\)bner basis for the ideal
\[(p - (4 + 2uv+ v^2), \ q -  (u+2v)^2, \ r - (u + 2v)(2u + 4v +uv^2/2), \ s - (2u + 4v +uv^2/2)^2).\] 

The Gr\(\ddot{o}\)bner basis has 12 elements altogether.  The following two elements contain only \ \(p, q, r, s\) \ terms ; 
\[-r^2 + q s, -256 + 192 p - 48 p^2 + 4 p^3 + 128 q + 80 p q - p^2 q - 
  16 q^2 - 288 r - 36 p r + 8 q r + 108 s.\]
Then \ \(G_A(u, v)\) \ is a polynomial parametrization of \ 
\(S_A = V(-r^2 + q s, A_h)\).
The Gr\(\ddot{o}\)bner basis contains also the following two polynomials ;  \(-16 + 8 p -  p^2 - 24 q + 12 r - 2 q v^2 + 9 v^4, \  -8 + 2 p - q + u^2 + 2 v^2.\)
Then we know from \cite[Chapter 3, \S3, p.131]{CLS} that \ \(G_A({\mathbb C}^2)\) \ fills up all of \ \(V(A_h, r^2-qs)\).  By Proposition 5 in \cite[Chapter 4, \S5]{CLS}, \ \(S_A\) is irreducible.  Also we have \ dim\(S_A = 2\) \ by the argument after Theorem 8 in \cite[Chapter 9, \S3]{CLS}. 

We remark that if we assume \ \(\alpha + \beta = 0\) \ in (4.16) in stead of \ \(\gamma = 0\), \  then we  get also the same variety \ \(S_A\).
%%%%pro4.21
 \begin{pro} \label{pro4.20}
The algebraic set \(S_A\) is an affine algebraic surface. 
 \end{pro}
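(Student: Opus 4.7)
The plan is to assemble the pieces already established in the paragraphs immediately preceding the statement. All the real work has been done by the Gröbner basis computation and the parametrization $G_A$; what remains is to formally invoke the standard criteria that turn these facts into the three ingredients (algebraic set, irreducibility, dimension two) that constitute the definition of an affine algebraic surface.

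First I would observe that $S_A = V(A_h,\, r^2 - qs)$ is an affine algebraic set by construction, being cut out by two polynomials in $\mathbb{C}[p,q,r,s]$. Next I would verify irreducibility by appealing to Proposition~5 of \cite[Chapter~4, \S5]{CLS}: since $S_A$ is the image of the polynomial map $G_A : \mathbb{C}^2 \to \mathbb{C}^4$, and since the Gröbner basis calculation displayed above the statement shows that the smallest variety containing $G_A(\mathbb{C}^2)$ is exactly $V(A_h,\, r^2 - qs)$, the variety $S_A$ is the Zariski closure of the image of an irreducible variety under a polynomial map, hence irreducible.

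For the dimension, I would use the criterion noted just before the proposition, namely the discussion following Theorem~8 in \cite[Chapter~9, \S3]{CLS}: the parametrization $G_A$ is generically finite onto its image (as one can see from the Gröbner basis elements $-16 + 8p - p^2 - 24q + 12r - 2qv^2 + 9v^4$ and $-8 + 2p - q + u^2 + 2v^2$, which allow one to recover $u$ and $v$ from $(p,q,r,s)$ up to finitely many choices), so $\dim S_A = \dim \mathbb{C}^2 = 2$. Combining this with irreducibility yields that $S_A$ is an irreducible $2$-dimensional affine algebraic variety, i.e.\ an affine algebraic surface.

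The main obstacle is really not the logical assembly but rather confirming that the parametrization $G_A$ is generically finite so that the dimension equals that of the parameter space; however, since the two Gröbner basis elements exhibited above provide explicit algebraic relations expressing $u^2$ and $v^4$ in terms of $(p,q,r,s)$, this is immediate and no further computation is needed.
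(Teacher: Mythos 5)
Your proposal is correct and follows essentially the same route as the paper: the paper likewise establishes the proposition through the preceding Gr\"obner basis computation, citing Proposition~5 of \cite[Chapter~4, \S5]{CLS} for irreducibility of the image of the polynomial parametrization $G_A$ and the argument after Theorem~8 in \cite[Chapter~9, \S3]{CLS} for $\dim S_A = 2$. Your added remark that the two extra Gr\"obner basis elements make $G_A$ generically finite (so the dimension equals that of the parameter space) is a correct and slightly more explicit justification of the dimension count than the paper gives.
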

We call \ \(S_A\) an astroid surface.

Next we consider the morphisms \(g_d\) on the  astroid surface  \(S_A\).  \\

%%%%%%%%%%%%%%%%%%%%%%%%theorem 4.22
\begin{theorem} \label{theorem8} 
Let \(g_d\) be a morphism in Theorem 4.8.\\ 
(1) \(g_d(G_A(u, v)) = G_A(T_d(u), T_d(v)). \)\\
(2) \(g_d(S_A) = S_A\) .  
\end{theorem}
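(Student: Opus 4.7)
The plan is to deduce both parts directly from Theorem 4.8 by observing that $G_A$ is the restriction of $G_X$ to the slice $w = 2$. First, I would verify by plugging $w = 2$ into the expressions in (4.19) that
\[
G_X(u, v, 2) = \bigl(4 + 2uv + v^2,\ (u + 2v)^2,\ (u+2v)(2u + 4v + uv^2/2),\ (2u + 4v + uv^2/2)^2\bigr) = G_A(u, v).
\]
This matches the geometric origin of $G_A$, namely the specialization $\gamma = 0$ in (4.16) (so that $w = 2\cos\gamma = 2$), which is precisely the parameter choice that led to the astroidalhedron $\mathcal{A}$ in the discussion preceding the theorem.

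For part (1), the key arithmetic observation is $T_d(2) = 2$ for every $d \in \mathbb{N}$, which follows immediately from the defining identity $T_d(s + s^{-1}) = s^d + s^{-d}$ by setting $s = 1$. Applying Theorem 4.8(1) with $w = 2$ then gives
\[
g_d(G_A(u, v)) = g_d(G_X(u, v, 2)) = G_X(T_d(u), T_d(v), T_d(2)) = G_X(T_d(u), T_d(v), 2) = G_A(T_d(u), T_d(v)),
\]
which is exactly (1).

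For part (2), I would use that $G_A(\mathbb{C}^2) = S_A$, which has already been established in the paragraph before the theorem by computing a Gröbner basis and invoking \cite[Chapter 3, \S3, p.131]{CLS}. Combining this surjectivity with (1) and the surjectivity of $T_d : \mathbb{C} \to \mathbb{C}$ (Chebyshev polynomials are non-constant polynomials over $\mathbb{C}$, hence surjective) yields
\[
g_d(S_A) = g_d(G_A(\mathbb{C}^2)) = G_A(T_d(\mathbb{C}), T_d(\mathbb{C})) = G_A(\mathbb{C}^2) = S_A.
\]
There is no serious obstacle here: the entire argument is a clean reduction to Theorem 4.8 via the identity $T_d(2) = 2$. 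The only subtlety worth recording is that this reduction works precisely because the slice $w = 2$ of the Chebyshev dynamics on $\mathbb{C}^3$ is invariant under each $T_d$, which is the one-variable analogue of the branch/critical phenomenon already encountered in Section 3 for the curves $C_C$ (slice $u = 2$) and $C_D$ (slice $v = 2$).
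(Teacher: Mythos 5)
Your proof is correct and follows essentially the same route as the paper: the paper also obtains $G_A$ as the $\gamma = 0$ (i.e.\ $w = 2\cos\gamma = 2$) specialization of the parametrization $G_X$ and proves (2) from the surjectivity of $G_A$. Your version is in fact slightly cleaner in that it deduces (1) formally from Theorem 4.8(1) via the identity $T_d(2)=2$, where the paper merely appeals to ``the similar argument'' used for Theorems 3.3(1) and 4.17.
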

%%%%%%%%%%%%%%%%%%%%%%%%%%%proof
\begin{proof}
(1)  In this case we consider the case \ \(\gamma = 0\) \ in (4.16). 
Hence by the similar argument given in Theorems \ref{theorem2b}(1)  and \ref{theorem17}, we have the assertion (1).
\\
(2)  This assertion follows from the fact that the polynomial parametrization  \ \(G_A({\mathbb C}^2)\) \ covers all points of \(S_A\).
\end{proof}
%%%%pro4.23
\begin{pro} \label{pro4.22}\hspace{10cm}\\ 
\((1) K(g_d\mid_{S_A})   =  \{G_A (2\cos 2\alpha, 2\cos(\alpha+\beta)) : 0 \le \alpha, \beta < 2\pi\}, \ \mbox{for} \ d \ge 2. \)\\
\((2) \  S_A \cap S_P = C_1 \cup C_2 \).\\
\((3) \ K(g_d\mid_{S_A})  \cap  K(g_d\mid_{S_P}) = K(g_d\mid_{C_1})  \cup  K(g_d\mid_{C_2}), \ \mbox{for} \ d \ge 2.\)\\
\((4) \ K(g_d\mid_{S_A})  \cup  K(g_d\mid_{S_P}) \)  \ is a real 2-dimensional surface.\\
The surface \ \(K(g_d\mid_{S_A})  \cup  K(g_d\mid_{S_P})\) \ is  invariant under \(g_d\) for any \ \(d \in {\mathbb N}\).
 \end{pro}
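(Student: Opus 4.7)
The plan is to prove the four parts in order, in each case reducing to the parametric descriptions already obtained in Theorems \ref{theorem17} and \ref{theorem8} and the surrounding propositions.

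For (1) I would mimic the proof of Theorem \ref{theorem4.8}(2) almost verbatim. Combining Theorem \ref{theorem8}(1), which gives $g_d\circ G_A = G_A\circ(T_d,T_d)$, with the one-variable fact that the filled Julia set of the Chebyshev polynomial $T_d$ is $[-2,2]$, I write $u = t_1 + t_1^{-1}$ and $v = t_2 + t_2^{-1}$ with $|t_j|\ge 1$ and inspect the dominant monomials of the four coordinates of $G_A(T_d^n(u),T_d^n(v))$ as $n\to\infty$: boundedness of the iterates forces $|t_1|=|t_2|=1$, so $u,v\in[-2,2]$, and $u=2\cos 2\alpha$, $v=2\cos(\alpha+\beta)$ is then just a convenient choice of real parameters.

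The main obstacle will be (2). The inclusion $C_1\cup C_2\subset S_A\cap S_P$ is already recorded in Corollaries \ref{cor4}(3) and \ref{cor5}(3). For the reverse inclusion I would pull $S_A\cap S_P$ back to $\mathbb{C}^2$ via the polynomial parametrization $\phi(u,v)=(u,v^2,uv,u^2)$ of $S_P$ from Proposition \ref{pro4.16} and substitute into the defining polynomial $A_h$ from (4.21). A direct expansion should yield the key identity
\[
A_h\bigl(u,v^2,uv,u^2\bigr) = -(u-4v+8)^2\bigl(4u-(v+2)^2\bigr),
\]
so the locus cut out by $A_h$ on the $\phi$-image of $S_P$ is the union of the line $u=4v-8$ and the parabola $4u=(v+2)^2$. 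Under $\phi$ these recover exactly the given polynomial parametrizations $u=4T$, $v=T+2$ of $C_2$ and $u=t^2$, $v=2t-2$ of $C_1$. Since $\phi$ is one-to-one away from the line $u=0$, I would finish by treating that remaining fiber by hand: $A_h(0,q,0,0)=16(q-4)^2$, so the only point of the singular line $L_2\cap S_A$ is $(0,4,0,0)$, which already lies in $C_2$, and nothing is missed.

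Part (3) will then follow formally. By (2), any $x\in K(g_d\mid_{S_A})\cap K(g_d\mid_{S_P})$ lies in $C_1\cup C_2$ and by definition has bounded $g_d$-orbit, so $x\in K(g_d\mid_{C_1})\cup K(g_d\mid_{C_2})$; conversely $C_j\subset S_A\cap S_P$ makes the three boundedness conditions equivalent on each curve. For (4), part (1) together with Theorem \ref{theorem17}(3) exhibits $K(g_d\mid_{S_A})$ and $K(g_d\mid_{S_P})$ as images of real two-parameter families into $\mathbb{C}^4\cong\mathbb{R}^8$, hence generically real $2$-dimensional surfaces whose union is still at most $2$-dimensional. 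Invariance of the union under $g_d$ uses $g_d(S_A)=S_A$ and $g_d(S_P)=S_P$ (Theorems \ref{theorem8}(2) and \ref{theorem17}(2)) together with the immediate observation that the forward orbit of $g_d(x)$ is contained in the forward orbit of $x$, hence is still bounded.
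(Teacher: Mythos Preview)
Your proposal is correct and parallels the paper's proof closely for parts (1), (3), and (4). The only genuine difference is in part (2): the paper takes the \emph{dual} route, parametrizing $S_A$ rather than $S_P$. It substitutes $G_A(u,v)$ from (4.31) into the defining equation $p^2-s$ of $S_P$ and obtains the factorization
\[
p(u,v)^2 - s(u,v) = \tfrac14(2-u)(2+u)(2-v)^2(2+v)^2,
\]
so the intersection is cut out by $u=\pm2$ or $v=\pm2$; it then checks $G_A(\pm2,v)\in C_1$ and $G_A(u,\pm2)\in C_2$ directly. Your approach via $\phi(u,v)=(u,v^2,uv,u^2)$ and the factorization of $A_h\circ\phi$ is equally valid; the paper's choice has the minor advantage that $p^2-s$ is simpler than $A_h$, and that surjectivity of $G_A$ onto $S_A$ was just established with no exceptional locus, so no separate treatment of a degenerate fiber is needed. (Your handling of the $u=0$ fiber is in fact unnecessary: since $\phi$ is surjective onto $S_P$, any point of $S_A\cap S_P$ is $\phi(u,v)$ for some $(u,v)$, and your factorization forces that $(u,v)$ onto the line or parabola regardless of injectivity.) For (3) your abstract argument is cleaner than the paper's, which instead verifies explicitly that $G_A(\pm2,2\cos(\alpha+\beta))\in K(g_d\mid_{C_1})$ and $G_A(2\cos2\alpha,\pm2)\in K(g_d\mid_{C_2})$.
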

%%%%%%%%%%%%%%%%%%%%%%%%%%%proof
\begin{proof}
(1)  The proof is similar to the proof of Theorem \ref{theorem4.8} (2).\\
(2)  By Corollaries \ref{cor4} and \ref{cor5},  We have \ \(C_1 \cup C_2 \subset S_A \cap S_P\).  To prove the opposite inclusion, we consider the parametrization (4.31) of \(S_A\).  Suppose 
\[(p(u, v), \ q(u, v), \ r(u, v), \ s(u, v)) \in  S_A \cap S_P.\]
Hence if \ \(  p^2(u, v) - s(u, v) = 0,\) then
\[0 = p^2(u, v) - s(u, v) = \frac 14(2-u)(2+u)(2-v)^2(2+v)^2.\]

Then the two cases occur : \ \(u = \pm 2\) \ and \ \(v = \pm 2\). \\
Case 1: \ \(u = \pm 2\).  Then  
\[G_A( \pm 2, \ v) = ((2 \pm v)^2, \  4(1 \pm v)^2, \  2(1 \pm v) (2 \pm v)^2, \  (2 \pm v)^4).\] 
By the parametrization \ \(G_1\),   we have
\[C_1 = \{(t^2, \ (2t-2)^2, \ (2t-2)t^2, \ t^4) : t \in {\mathbb C}\}.\]
Then  \ \(G_A(\pm 2, v) \in C_1.\)\\
\\
Case 2: \ \(v = \pm 2\).  Then  
\[G_A(u, \ \pm 2) = (4(2 \pm u), \  (4 \pm u)^2, \  4(2 \pm u) (4 \pm u), \ 16(2 \pm u)^2).\] 
By the parametrization \(G_2\)  in Proposition \ref{pro4.18}(2), we have
\[C_2 = \{(4T, \ (T+2)^2, \ 4T(T+2), \ 16T^2) : T \in {\mathbb C}\}.\]
Then  \ \(G_A(u, \ \pm 2) \in C_2.\)\\
(3)  From (2), we have \  \(K(g_d\mid_{C_1})  \cup  K(g_d\mid_{C_2}) \subset K(g_d\mid_{S_A}) \cap K(g_d\mid_{S_P}) \).  To prove the opposite inclusion we note that the points \ \(G_A(\pm2, 2\cos(\alpha + \beta))\) \ lie on \(K(g_d\mid_{C_1})\) \ by Corollary \ref{cor4}(2).
  Also the points \ \(G_A(2\cos2\alpha, \pm2)\) \ lie on \ \(K(g_d\mid_ {C_2})\).\\
(4)  \( K(g_d\mid_{S_A}) \) \ is described in (1) and \ \( K(g_d\mid_{S_P}) \) \ is described in Theorem \ref{theorem17}(3).  These are connected by \  \(K(g_d\mid_{C_1})  \cup  K(g_d\mid_{C_2})\).  Clearly
\( K(g_d\mid_{S_A}) \) \ is invariant under  \(g_d\) and \ \( K(g_d\mid_{S_P}) \) \ is invariant under  \(g_d\).
\end{proof}

We have the inequalities (3.15) for \ \(K(g_d\mid_{{\mathbb C}^2})\) \ in \S3.3.   Similarly we have the following ; for \ \(0 \le \alpha, \beta, \gamma \le 2\pi\),
%%%%%%%%%%%%%%%%%%%%%%%%equation 4.32
\begin{equation}
%\begin{split}
p^2 - s = 64(\cos\alpha)^2(\sin\alpha)^2(\cos (\alpha+\beta) - \cos \gamma)^2(\cos (\alpha+\beta) + \cos \gamma)^2 \ge 0,  \\
\end{equation}
\[256 - 192 p + 48 p^2 - 4 p^3 - 128 q - 80 pq + p^2 q + 16q^2 + 288r + 36 pr - 8 qr - 108 s  = \]
\[ 64 (1 + \cos 4\alpha + \cos2(\alpha + \beta) - 4\cos2\alpha \cos(\alpha + \beta) \cos\gamma +\cos2\gamma)^2 (\sin(\alpha + \beta))^2(\sin\gamma)^2 \ge 0.\]
Then we conjecture that \ \(K(g_d\mid_{S_A})  \cup  K(g_d\mid_{S_P})\) \ is the boundary of \ \(K(g_d\mid_{X_Q})\) \ in \({\mathbb R}^4\).

We have seen that the  astroid surface  \(S_A\) has relation to the  astroidalhedron \(\mathcal{A}\).  We know that the astroidalhedron \(\mathcal{A}\) \  is a ruled surface in the sense of differential geometry.  It is natural to ask whether the affine algebraic surface \(S_A\) is related to a ruled surface in the sense of algebraic geometry.   The projective closure of \(S_A\) is a birationally ruled surface.  Indeed. We will show that  \(S_A\) is birationally equivalent to an affine quadric cone \ \(S_ C\) \ in \({\mathbb C}^3\).   It is known that the  blowing-up of the projective closure of  \(S_C\) at the vertex \ \(P_0 = (1 : 0 : 0 : 0)\) \ is isomorphic to the Hirzebruch surface \ \({\mathbb P}({\mathcal O} \oplus {\mathcal O}(2))\) \ which is a geometrically ruled surface. (See \cite{H}.)   We assume that the coordinate ring \ \({\mathbb C}[S_C]\)  \ is  \  \({\mathbb C}[a, b, c]/(ac-b^2)\).  
 The cone \ \(V(ac-b^2)\) has a polynomial parametric representation given by  \ \(a = u^2, \ b = uv\) \ and \ \ \(c = v^2\). 

%%%%pro4.24
 \begin{pro} \label{pro4.23}
Under the above notation, the astroid surface \(S_A\)  is  birationally equivalent to the affine quadric cone \(S_C\).
 \end{pro}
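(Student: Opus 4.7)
The strategy is to construct explicit rational maps \ \(\psi : S_C \to S_A\) \ and \ \(\phi : S_A -\to S_C\) \ that are mutually inverse.  First I would observe that the polynomial parametrization \ \(G_A(u,v)\) \ of \ \(S_A\) \ constructed above is invariant under the involution \ \(\sigma : (u, v) \mapsto (-u, -v)\) \ on \ \({\mathbb C}^2\), since each of its four components is visibly a polynomial in the generators \ \(u^2,\ uv,\ v^2\) \ of the invariant ring \ \({\mathbb C}[u, v]^{\langle \sigma \rangle}\).  As this invariant ring is \ \({\mathbb C}[u^2,\ uv,\ v^2]\) \ with the single syzygy \ \(u^2 \cdot v^2 - (uv)^2 = 0\), the quotient \ \({\mathbb C}^2/\langle \sigma \rangle\) \ is canonically identified with \ \(S_C = V(ac - b^2)\) \ via \ \((a, b, c) = (u^2, uv, v^2)\).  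Hence \ \(G_A\) \ factors through \ \(S_C\) \ and induces a polynomial morphism \ \(\psi : S_C \to S_A\) \ whose components are obtained by rewriting each coordinate of \ \(G_A\) \ in \ \(a, b, c\); for instance \ \(p = 4 + 2b + c\), \ \(q = a + 4b + 4c\), while \(r\) and \(s\) become polynomials in \ \(a, b, c\) \ after the identity \ \(ac = b^2\) \ is used to reduce certain monomials.

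To produce the rational inverse \ \(\phi\), I would solve algebraically for \ \(a, b, c\) \ as rational functions of \ \(p, q, r, s\) \ on \ \(S_A\).  The first two components of \ \(\psi\) \ give immediately \ \(c = p - 4 - 2b\) \ and \ \(a = q + 4b - 4p + 16\), reducing the problem to a rational expression for \ \(b\).  Substituting these into the \(r\)- and \(s\)-components of \ \(\psi\) \ produces two polynomial relations for \(b\), respectively quadratic and cubic:
\[3b^2 - 2(p-4)b + 2(r - 2q) = 0,\]
\[2b^3 + (28 - p)b^2 - 16(p-4)b + 4s - 16q = 0.\]
Using the first relation to express \ \(b^2\) \ and \ \(b^3\) \ linearly in \ \(b\), then substituting into the second, causes the cubic and quadratic terms to cancel and collapses the system to a single linear equation in \ \(b\), yielding
\[b = \frac{(p-4)(r-2q) + 72r - 72q - 18s}{(p-4)^2 - 6r + 12q},\]
defined on the Zariski open subset where the denominator is nonzero.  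The map \ \(\phi\) \ is then specified by this \ \(b\) \ together with the recovered \ \(a\) \ and \ \(c\).

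To conclude, it suffices to verify \ \(\psi \circ \phi = \mathrm{id}_{S_A}\) \ on a dense open subset of \ \(S_A\); since both \ \(S_A\) \ and \ \(S_C\) \ are irreducible of dimension \(2\) (the former established in Proposition \ref{pro4.20}, the latter obvious), the identity \ \(\phi \circ \psi = \mathrm{id}_{S_C}\) \ follows by the usual dimension argument.  The main obstacle is the elimination step producing the rational formula for \ \(b\): the two polynomial relations obtained from \ \(r\) \ and \ \(s\) \ are not individually solvable by radicals of degree one in \ \(b\), and it is the existence of a linear combination that collapses them that ensures \ \(\phi\) \ is genuinely rational rather than merely algebraic of degree two.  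The explicit formulas above will also be useful in the sequel, where the morphism \ \(g_d\) \ on \ \(S_A\) \ is transported across this birational equivalence to a morphism on the cone \ \(S_C\) \ admitting a clean description in terms of Chebyshev polynomials in one variable.
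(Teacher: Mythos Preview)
Your approach is correct and follows the same strategy as the paper---construct explicit $\psi$ and $\phi$ and show they are mutual rational inverses---but with two noteworthy differences.

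First, your construction of $\psi$ is more conceptual: you observe that $G_A$ is $\sigma$-invariant and hence factors through the quotient $\mathbb{C}^2/\langle\sigma\rangle \cong S_C$, whereas the paper simply writes down the components of $\psi$ and checks directly that $\psi(u^2,uv,v^2)=G_A(u,v)$. Second, your derivation of $\phi$ uses the $r$- and $s$-components of $\psi$ (after the reduction $ac=b^2$), whereas the paper's formula for $\phi$---with denominator $-12+3p-q$---arises instead from combining the $r$-component with the cone relation $ac=b^2$ expressed in terms of $b,p,q$. Your formula and the paper's agree as rational functions on $S_A$ (both reduce to $b=uv$ on the parametrization), but they have different indeterminacy loci; the paper's version is undefined precisely along $C_A$, which is convenient for the subsequent discussion.

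Two small points to tighten. You assert that $\psi\circ\phi=\mathrm{id}_{S_A}$ holds ``by construction,'' but your elimination used only two of the available relations (those from $r$ and $s$), so strictly you should invoke the surjectivity of $\psi$ (which follows since $G_A$ covers $S_A$) to conclude that the unique $b$ solving your linear equation really does give a preimage. Also, your phrase ``the usual dimension argument'' for deducing $\phi\circ\psi=\mathrm{id}_{S_C}$ is imprecise: what you need is that $\phi$ is dominant (clear, since $\psi\circ\phi=\mathrm{id}$), so $\phi^*$ is an injective field homomorphism, and then from $\phi^*\circ\psi^*=\mathrm{id}$ one cancels $\phi^*$ to get $\psi^*\circ\phi^*=\mathrm{id}$ as well. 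The paper avoids this by simply checking both compositions on the parametrization.
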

%%%%%%%%%%%%%%%%%%%%%%%%%%%proof
\begin{proof}
We construct rational mappings \ \(\phi : S_A -\to S_C\) \ and \ \(\psi : S_C -\to S_A\).  We define the mapping \(\psi\) by
\[\psi(a, b, c) =  (4 + 2 b + c, \ a + 4 b + 4 c, \ 2 a + 8 b + 8 c + \frac12 a c + b c, \ 4 a + 16 b + 16 c + 2 a c + 4 b c + \frac14 a c^2).\]
And we define the mapping  \(\phi\) by
\[\phi(p, q, r, s) = (\frac{-64 + 32 p - 4 p^2 + 4 q + 5 p q - q^2 - 12 r}{-12 + 3 p - q}, \ 
\frac{64 - 32 p + 4 p^2 + 16 q - p q - 6 r}{2 (-12 + 3 p - q)}, \] 
\[\frac{-16 + 8 p - p^2 - 12 q + 6 r}{-12 + 3 p - q}).\]
We use the polynomial parametrizations of \(S_C\)  and \(S_A\).  By the definition of  \(\psi\), we have 
%%%%%%%%%%%%%%%%%%%%%%%equation 4.33
\begin{equation}
\psi(u^2, uv, u^2) = (p(u, v), \ q(u, v), \ r(u, v), \ s(u, v)), 
\end{equation}
where \ \(p(u, v), \ q(u, v), \ r(u, v)\) \ and \  \(s(u, v)\) \ are the polynomials  in (4.31).\\
We set 
\[\phi (p(u, v), \ q(u, v), \ r(u, v), \ s(u, v)) =  (a(u, v), \ b(u, v), \ c(u, v)).\]
We compute \ \(a(u, v), \ b(u, v) \) \ and \ \(c(u, v)\).  Then the numerator of \ \(a(u, v) = -u^2(u-v)^2\) \ and the denominator of \ \(a(u, v) = -(u-v)^2\) .  Then  if \ \(u \ne v\), the function \ \(a(u, v)\)\ is reduced to \(u^2\).  Similarly,  if \ \(u \ne v\),
\[b(u, v) = \frac{-2uv(u-v)^2}{-2(u-v)^2} = uv, \ \mbox{and} \ c(u, v) = \frac{-v^2(u-v)^2}{-(u-v)^2} = v^2.\]
Hence if \ \(u \ne v\), then \ \(\phi \circ \psi(u^2, uv, v^2) = (u^2, uv, v^2)\).

Next we compute \ \(\psi \circ \phi\).  Let \( (p(u, v), \ q(u, v), \ r(u, v), \ s(u, v))\) \ be the parametrization of \(S_A\) in (4.31). Hence if \ \(u \ne v\), then 
%%%%%%%%%%%%%%%%%%%%%%%equation 4.34
\begin{equation}
 \phi(p(u, v), \ q(u, v), \ r(u, v), \ s(u, v)) =  (u^2, \ u v, \ v^2 ).
\end{equation}
Then by (4.33), we have 
\[\psi \circ \phi(p(u, v), \ q(u, v), \ r(u, v), \ s(u, v)) =  (p(u, v), \ q(u, v), \ r(u, v), \ s(u, v)).\]
We consider a proper subvariety \ \(V_{S_C}(I)\) \ of \(S_C\) satisfying \ \(u = v\).  Then \ \(V_{S_C}(I) = V(a-b, b-c).\)  We denote  a proper subvariety \ \(V_{S_A}(J)\) \ of \(S_A\) satisfying \ \(u = v\) \ by \(C_A\).  Then 
\ \(\phi \circ \psi\) \ is the identity on \ \(S_C \setminus V(a-b, b-c)\) , and
\ \(\psi \circ \phi\) \ is the identity on \ \(S_A \setminus C_A\).  Hence \ \(\phi \circ \psi =id_{S_C}\) \ and \  \(\psi \circ \phi =id_{S_A}\).
\end{proof}

The subvariety \(C_A\) will be studied in the next subsection.  It is an affine algebraic curve.

We consider the mapping \ \(\phi \circ g_d \circ \psi (a, b, c)\) \ from \(S_C\) to \(S_C\).  By (4.33),  (4.34) and Theorem \ref{theorem8}, it follows that if \ \(u \ne v\), then 
%%%%%%%%%%%%%%%%%%%%%%%equation 4.35
\begin{equation}
\phi \circ g_d \circ \psi(u^2, uv, u^2) = (T_d(u)^2, \ T_d(u)T_d(v), \ T_d(v)^2).
\end{equation}
We use lemma \ref{lemma9} to study Chebyshev polynomials.  Set \ \(j = d/2\) \ if \(d\) is even and \ \ \(j = (d-1)/2\) \ if \(d\) is odd.
Then
\[T_d(w) =  \left\{
\begin{array}{ll}
\displaystyle{p_{j}(w^2)}  & \quad \ \mbox{if}\ d \ \mbox{is even},\\
\displaystyle{ wq_{j}(w^2)}& \quad \ \mbox{if}\ d  \ \mbox{is odd},
\end{array} \right.\]
where \ \(p_j(x)\) \ and \ \(q_j(x)\) \ are monic polynomials of degree \(j\).  First, we assume that \(d\) is odd. Recall that \ \(a = u^2, \ b=uv\) \ and \ \(c = v^2\).  Then by (4.35), we have 
%%%%%%%%%%%%%%%%%%%%%%%equation 
\begin{equation*}
\phi \circ g_d \circ \psi(a, b, c) = (aq_j(a)^2, \ bq_j(a)q_j(c), \ cq_j(c)^2).
\end{equation*}
Note that \ \(bq_j(a)q_j(c) \equiv b^d + b_{d-1}(a, b, c) \quad mod \ (b^2 - ac),\) \ where  \ \(b_{d-1}(a, b, c) = bq_j(a)q_j(c) - ba^jc^j.\)  Next we assume that \(d\) is even.  Then by (4.35), we have 
%%%%%%%%%%%%%%%%%%%%%%%equation 
\begin{equation*}
\phi \circ g_d \circ \psi(a, b, c) = (p_j(a)^2, \ p_j(a)p_j(c), \ p_j(c)^2).
\end{equation*}
Note that \ \(p_j(a)p_j(c) \equiv b^d + b^\prime_{d-1}(a, b, c) \quad mod \ (b^2 - ac),\) \ where  \ \(b^\prime_{d-1}(a, b, c) = p_j(a)p_j(c) - a^jc^j.\)  

Then we can define a morphism \ \(h_d : S_C \to S_C\) \ by the following: \\
if \(d\) is odd, then
\[h_d(a, b, c) =  (aq_j(a)^2, \ b^d + b_{d-1}(a, b, c), \ cq_j(c)^2), \]
if \(d\) is even, then
\[h_d(a, b, c) =  (p_j(a)^2, \ b^d + b^\prime_{d-1}(a, b, c), \ p_j(c)^2). \]
Hence
\[h_d(a, b, c) = (a^d + A_{d-1}(a), \ b^d + B_{d-1}(a, b, c), \ c^d + C_{d-1}(c)), \]
where \ \(A_{d-1}, \ B_{d-1}, \ C_{d-1}\) \ are polynomials  of degree \ \(\le d-1\).
The morphism \(h_d\) can be extended to a morphism \ \(\bar{h}_d : {\mathbb P}^3 \to {\mathbb P}^3\).\\

Note that the projective closure \ \(\bar{S}_c\) \ of \ \({S}_c\) \ is given by \ \(V(x_2^2 - x_1x_3)\), where \ \((x_0 : x_1 : x_2 : x_3)\) \ are homogeneous coordinates on\ \({\mathbb P}^3\).
  Then by the argument similar to that in the proof of Proposition \ref{pro3.9}, we get the following proposition.
%%%%pro4.25
 \begin{pro} \label{pro4.25}
Under the above notation, we have \ \(\bar{h}_d(\bar{S}_c) = \bar{S}_c\).  
 \end{pro}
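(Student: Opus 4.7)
The plan is to mimic the argument used in Proposition~3.9, verifying $\bar{h}_d(\bar{S}_c)\subseteq\bar{S}_c$ and surjectivity onto $\bar{S}_c$ separately on the affine chart $x_0\neq 0$ and on the hyperplane at infinity $x_0=0$. First I would use the explicit shape
\[
h_d(a,b,c)=(a^d+A_{d-1}(a),\,b^d+B_{d-1}(a,b,c),\,c^d+C_{d-1}(c))
\]
recorded just before the proposition, homogenizing with respect to $x_0$ so that $\bar{h}_d$ is realized by the quadruple of degree-$d$ forms
\[
\bar{h}_d(x_0\!:\!x_1\!:\!x_2\!:\!x_3)=\bigl(x_0^d\,:\,x_1^d+x_0\widetilde{A}_{d-1}\,:\,x_2^d+x_0\widetilde{B}_{d-1}\,:\,x_3^d+x_0\widetilde{C}_{d-1}\bigr),
\]
with $\widetilde{A}_{d-1},\widetilde{B}_{d-1},\widetilde{C}_{d-1}$ the standard homogenizations. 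Well-definedness on $\mathbb{P}^3$ is immediate: if all four coordinates vanished then $x_0=0$ would force $x_1^d=x_2^d=x_3^d=0$ and hence $x_1=x_2=x_3=0$, contradicting being a point of $\mathbb{P}^3$.

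On the affine chart $x_0\neq 0$ the claim reduces to $h_d(S_C)=S_C$. Containment is built into the construction $h_d=\phi\circ g_d\circ\psi$; for surjectivity I would invoke formula (4.35): under the polynomial parametrization $(u,v)\mapsto(u^2,uv,v^2)$ of $S_C$, the map $h_d$ reads $(u^2,uv,v^2)\mapsto(T_d(u)^2,T_d(u)T_d(v),T_d(v)^2)$, and since $T_d\colon\mathbb{C}\to\mathbb{C}$ is surjective, every point of $S_C$ lies in the image.

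At infinity the formula collapses to $\bar{h}_d(0\!:\!x_1\!:\!x_2\!:\!x_3)=(0\!:\!x_1^d\!:\!x_2^d\!:\!x_3^d)$. The slice $\bar{S}_c\cap V(x_0)$ is the conic $V(x_0,\,x_2^2-x_1x_3)$, and it is preserved because $(x_2^d)^2-x_1^d x_3^d = x_2^{2d}-(x_1x_3)^d$ is divisible by $x_2^2-x_1x_3$. Parametrizing this conic by $(s\!:\!t)\mapsto(s^2\!:\!st\!:\!t^2)$ turns the restriction of $\bar{h}_d$ into the $d$-fold covering $(s\!:\!t)\mapsto(s^d\!:\!t^d)$ of $\mathbb{P}^1$, which is surjective. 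Combining the affine and infinite parts yields $\bar{h}_d(\bar{S}_c)=\bar{S}_c$. The main obstacle is essentially bookkeeping of the homogenization; once the formula above is fixed, invariance at infinity is the elementary identity $(x_2^2)^d=(x_1x_3)^d$ on the conic, and surjectivity everywhere reduces to surjectivity of $z\mapsto z^d$ and of $T_d$ on $\mathbb{C}$.
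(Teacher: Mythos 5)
Your proposal is correct and follows essentially the same route as the paper, which simply invokes "the argument similar to that in the proof of Proposition 3.9" (homogenize, check the map is a morphism by noting the coordinates have no common zero, verify invariance on the affine chart via the parametrization and surjectivity of $T_d$, and check the locus at infinity). Your treatment of the conic at infinity via $(s:t)\mapsto(s^d:t^d)$ is a correct and slightly more explicit version of what the paper leaves implicit.
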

%%%%%%%%Subsection4.6
\subsection{\rm{The dynamics of \(g_d\) on an astroid curve  \(C_A\)}}\hspace{0cm}  

From the previous subsection,  the parametrization of \(C_A\) is obtained  by setting \ \(v = u\) \ in \ \(G_A\) ;
\[G_A(u, u) = (4+3u^2, \ (3u)^2, \ 3u(6u+u^3/2), \ (6u+u^3/2)^2).\]
Then we define a polynomial parametrization by
\[G_{CA}(t) = (4+3t, \ 9t, \ 3t(6+t/2), \ t(6+t/2)^2).\]
Hence by polynomial implicitization algorithm in Theorem 1 in \cite[Chapter 3, \S3]{CLS}, we compute implicit representation of the variety.
We compute the  Gr\(\ddot{\mbox o}\)bner basis for the ideal 
\[(4 + 3 t - p, \  9 t - q,  \ 3 t (6 + t/2) - r, \ 
   t (6 + t/2)^2 - s),\]
using graded reverse lex order with \ \(t >  p > q > r> s.\)  The  Gr\(\ddot{\mbox o}\)bner basis is
\[(-12 + 3 p - q, \ -q + 9 t, \ r^2 - q s, \ 108 r + q r - 54 s, \ 
 108 q + q^2 - 54 r).\]
Set
\[I = (-12 + 3 p - q,  \ r^2 - q s, \ 108 r + q r - 54 s, \ 
 108 q + q^2 - 54 r).\]
Then the smallest variety containing \ \(G_{CA}({\mathbb C})\) \ is \(V(I)\).
And  \ \(G_{CA}({\mathbb C})\) \ fills up all of \(V(I)\).
Then \ \(C_A = V(I)\) \ and \(C_A\) is an affine algebraic curve.  The curve  \(C_A\) is related to the astroid in space that is studied in \cite[p. 205]{U3}.  

The astroid in space is the curve \ \(\{(4\cos^3\theta, \ 4\sin^3\theta, \ 6\cos2\theta) : 0 \le \theta < 2\pi\}\) \ in \({\mathbb R}^3\).  It is a part of the edge of the astroidalhedron \(\mathcal{A}\).  Setting \ \(\cos\theta = (1-t^2)/(1+t^2),\) \ and \ \(\sin\theta = 2t/(1+t^2), \) \ we have a rational parametric representation of the astroid in space.
We define a mapping \ \(\rho : {\mathbb R} \to {\mathbb R}^3\), \(t\mapsto (x, y, z)\) \ defined by 
\[\rho(t) = (4((1-t^2)/(1+t^2))^3, \ 4(2t/(1+t^2))^3, \ 6(1-6t^2 + t^4)/(1+t^2)^2).\]
Using rational  implicitization algorithm  in \cite[Chapter 3, \S3]{CLS}, \ we compute a variety.  Then we have a set \ \(V({\mathcal F})\), \ where 
\[{\mathcal F} = (f_1 : = -216 + 108x^2 - 108z -18z^2 -z^3, \ f_2 : = -216 + 108y^2 + 108z -18z^2 + z^3).\]
The set \ \(V({\mathcal F})\) is the smallest variety \ in \({\mathbb R}^3\) \ containing  \ \(\rho ({\mathbb R})\).  The variety 
\ \(V({\mathcal F})\) \ is invariant under the action of \ \(D_4 = <R, C>\).  Indeed.   Note that \(R\) and \(C\) are elements of \ \(GL(3, {\mathbb R})\) \ and are described in Section 2.1.  We see that \ \(f_1 \circ R = f_2\), \ \(f_2 \circ R = f_1\) \ and \ \(f_1 \circ C = f_1\), \ \(f_2 \circ C = f_2\).  Then we can construct a relative orbit variety
\ \(V({\mathcal F})/D_4\).  From now on we view \(f_1\) and \(f_2\) as elements of  \ \({\mathbb C}[x, y, z]\).
Using Algorithm 2.6.2  in \cite[\S2.6]{St}, \ we compute a relative variety \ \(V({\mathcal F})/D_4\). Then we have \ \(V({\mathcal F})/D_4 =V(I)\). 
Hence \(C_A\) has  relation to the relative orbit variety of the astroid in space.  Then we call \(C_A\) an astroid curve.

Since \ \(C_A = \{G_A(u, u) : u \in {\mathbb C} \},\) \ from Theorem 4.22, we have the following proposition.
%%%%pro4.26
 \begin{pro} \label{pro4.26}
Let \ \(g_d\) be a morphism in Theorem 4.8.\\
(1) \ \(g_d(G_{CA}(t^2)) = G_{CA}(T_d(t)^2).\)\\
(2) \ \(g_d(C_A) = C_A\).
\end{pro}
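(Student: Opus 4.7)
My plan is to derive both parts directly from Theorem 4.22, using the fact that $C_A$ is precisely the image of the diagonal $\{v=u\}$ under $G_A$. As noted at the opening of \S4.6, the parametrization $G_{CA}$ is obtained from $G_A$ by setting $v=u$ and reparametrizing by $t = u^2$. A direct comparison of the four components of $G_A(u,u) = (4+3u^2, 9u^2, 3u(6u+u^3/2), (6u+u^3/2)^2)$ with $G_{CA}(u^2) = (4+3u^2, 9u^2, 3u^2(6+u^2/2), u^2(6+u^2/2)^2)$ gives the elementary identity
\[
G_A(u,u) \;=\; G_{CA}(u^2), \qquad u \in \mathbb{C},
\]
which I would record as the first step.

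For part (1), I would apply Theorem 4.22(1) with $v$ specialized to $u$, obtaining $g_d(G_A(u,u)) = G_A(T_d(u), T_d(u))$. Rewriting both sides via the identity above yields
\[
g_d(G_{CA}(u^2)) \;=\; G_{CA}(T_d(u)^2),
\]
which is exactly the claimed formula (after renaming $u$ to $t$). Note that both sides are genuinely polynomial in $t^2$: by Lemma 4.13, $T_d(t)^2$ is a polynomial in $t^2$, so the right-hand side is well-defined as a polynomial expression in $t^2$.

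For part (2), the Gr\"obner basis computation immediately preceding the proposition establishes that $C_A = G_{CA}(\mathbb{C})$, and since squaring is surjective on $\mathbb{C}$ we may write $C_A = \{G_{CA}(t^2) : t \in \mathbb{C}\}$. The inclusion $g_d(C_A) \subset C_A$ is then immediate from (1). For the reverse inclusion $C_A \subset g_d(C_A)$, given any point $G_{CA}(s') \in C_A$, I would choose $w \in \mathbb{C}$ with $w^2 = s'$ and then $t \in \mathbb{C}$ with $T_d(t) = w$; the latter is possible because the Chebyshev polynomial $T_d : \mathbb{C} \to \mathbb{C}$ is a surjective polynomial map of positive degree. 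Then (1) gives $g_d(G_{CA}(t^2)) = G_{CA}(T_d(t)^2) = G_{CA}(s')$, completing the proof. There is no serious obstacle here: the proposition is essentially a specialization of Theorem 4.22 to the diagonal in parameter space, which the author has already identified (via the birational correspondence of Proposition 4.23 and the subsequent Gr\"obner basis computation) with the astroid curve $C_A$.
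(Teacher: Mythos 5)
Your proposal is correct and follows essentially the same route as the paper: the author's entire proof is the observation that $C_A = \{G_A(u,u) : u \in \mathbb{C}\}$ combined with Theorem 4.22, which is exactly your specialization of $g_d(G_A(u,v)) = G_A(T_d(u),T_d(v))$ to the diagonal together with the identity $G_A(u,u)=G_{CA}(u^2)$. You merely make explicit the details the paper leaves implicit (the componentwise check, the parity fact from Lemma 4.13, and the surjectivity of $T_d$ for the reverse inclusion in part (2)), all of which are accurate.
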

We consider the singular locus of the surface \(S_A\).  We compute the Jacobian matrix \ \(\partial(A_h, \ -r^2+qs)/\partial(p, q, r, s).\)
Then using the parametrization \ \(G_A\), we see that the rank of the Jacobian matrix at \ \(G_A(u, v)\) \ is less than 2 if and only if \ \(u = v\).
%%%%pro4.27
 \begin{pro} \label{pro27}
The singular locus of  \(S_A\) is equal to the astroid curve \(C_A\).
 \end{pro}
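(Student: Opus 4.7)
The plan is to identify the singular locus of $S_A$ as the locus where the Jacobian of its defining equations drops rank, then pull this condition back through the surjective polynomial parametrization $G_A$ and show the rank drops precisely along the diagonal $u=v$, which by construction cuts out $C_A$.

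First, I would record that $S_A = V(A_h,\, qs - r^2)$ is an irreducible surface in $\mathbb C^4$ (established in Section 4.5), so a point $y \in S_A$ is smooth precisely when the Jacobian
\[
J(p,q,r,s) \;=\; \frac{\partial(A_h,\, qs - r^2)}{\partial(p,q,r,s)}
\]
has rank $4 - 2 = 2$ at $y$. Since $G_A : \mathbb C^2 \to S_A$ is surjective (as shown via the Gröbner basis computation preceding Proposition 4.20), every point of $S_A$ is of the form $G_A(u,v)$ for some $(u,v)\in\mathbb C^2$, and it suffices to study the rank of $J(G_A(u,v))$.

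Next, I would substitute the four coordinate polynomials of $G_A(u,v)$ (given explicitly in (4.31)) into $J$ to obtain a $2 \times 4$ matrix $\widetilde J(u,v)$ with entries in $\mathbb Z[u,v]$. The rank condition $\operatorname{rank}\widetilde J(u,v) < 2$ is equivalent to the simultaneous vanishing of all six $2\times 2$ minors of $\widetilde J(u,v)$. I would compute these minors and factor each one; based on the structure of $A_h$ and of $qs-r^2$, and on the fact that $\partial(qs-r^2)/\partial p = 0$, each minor will carry the factor $(u-v)$ (with some positive multiplicity) together with a factor that has no common zero with the others. Gathering these factorizations gives that the common vanishing locus of the six minors in $\mathbb C^2$ is exactly the diagonal $\{u = v\}$.

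Finally, from the introduction of the astroid curve at the beginning of Section 4.6, the parametrization $G_{CA}(t)$ is obtained from $G_A(u,u)$ by the substitution $t=u^2$, and $C_A = \{G_A(u,u) : u \in \mathbb C\}$. Combining this with the previous step yields
\[
\operatorname{Sing}(S_A) \;=\; G_A(\{u=v\}) \;=\; C_A,
\]
which is the assertion. The main obstacle is the bookkeeping for the six minors: showing that $(u-v)$ is a common factor is immediate from the fact that pulling back $qs - r^2$ along $G_A$ must vanish identically (so its differential along $G_A$ lies in the column span of $dG_A$), but isolating the residual factors and verifying that they have no common zero outside $\{u=v\}$ requires a careful factorization rather than a conceptual argument.
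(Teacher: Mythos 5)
Your proposal is correct and follows essentially the same route as the paper: the paper likewise computes the Jacobian matrix $\partial(A_h,\,-r^2+qs)/\partial(p,q,r,s)$, evaluates it along the surjective parametrization $G_A$, and observes that the rank drops below $2$ exactly when $u=v$, which is the locus parametrizing $C_A$. Your added bookkeeping about the six $2\times 2$ minors and the common factor $(u-v)$ is just an explicit elaboration of the computation the paper states in one line.
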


We show a line which is preperiodic for the morphisms \(g_d\).
%%%%pro4.28
 \begin{pro} \label{pro28}
Let \ \(L_3 = \{(1, 0, 0, s) : s \in {\mathbb C} \}.\) \\
(1) If \ \(3\not| d,\)   then \ \ \(g_d(L_3) = L_3\). \\
(2) If \ \(3\mid d\), \ then \ \(g_d(L_3) = C_A\).
 \end{pro}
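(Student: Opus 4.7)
The plan is to exploit the polynomial parametrization $G_X : \mathbb{C}^3 \to X_Q$ of Theorem~4.8 together with the semiconjugation identity $g_d \circ G_X = G_X \circ (T_d, T_d, T_d)$, and then specialize to a one‑parameter slice that covers $L_3$.

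First I would parametrize $L_3$ through $G_X$. A point $G_X(u,v,w)$ lies in $L_3$ precisely when $q = 0$ and $p = 1$. The vanishing of $q = (u+vw)^2$ forces $u = -vw$, and substituting this into $p = v^2 + uvw + w^2 = 1$ gives $(1 - v^2)(1 - w^2) = 0$. Since $G_X$ is symmetric in its last two arguments, we may take $w = 1$ and $u = -v$. A routine computation from (4.19) then yields $s = \tfrac{1}{4} v^2 (3 - v^2)^2 = \tfrac{1}{4}(3v - v^3)^2$, so the map $v \mapsto G_X(-v, v, 1)$ surjects onto $L_3$ because $t \mapsto 3t - t^3$ is surjective on $\mathbb{C}$.

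The second ingredient is the special value $T_d(1)$: since $1 = 2\cos(\pi/3)$ and $T_d(2\cos\gamma) = 2\cos(d\gamma)$, a case analysis on $d \bmod 6$ gives $T_d(1) = (-1)^{d+1}$ when $3 \nmid d$ and $T_d(1) = (-1)^d \cdot 2$ when $3 \mid d$. Combining with $T_d(-v) = (-1)^d T_d(v)$, Theorem~4.8(1) yields
\[ g_d(G_X(-v,v,1)) \;=\; G_X\bigl((-1)^d T_d(v),\; T_d(v),\; T_d(1)\bigr). \]
In case (1), with $3 \nmid d$, the identity $T_d(1) = (-1)^{d+1}$ forces $u' + v'w' = T_d(v)\bigl((-1)^d + T_d(1)\bigr) = 0$, while $w'^2 = 1$ gives $p' = T_d(v)^2 - T_d(v)^2 + 1 = 1$; hence the image lies in $L_3$. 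Its $s$-coordinate is $\tfrac{1}{4}\bigl(3T_d(v) - T_d(v)^3\bigr)^2$, which covers $\mathbb{C}$ because both $T_d$ and $t \mapsto 3t - t^3$ are surjective on $\mathbb{C}$; therefore $g_d(L_3) = L_3$.

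In case (2), with $3 \mid d$, we have $T_d(1) = \pm 2$; substituting $w' = (-1)^d \cdot 2$, $v' = T_d(v)$, $u' = (-1)^d T_d(v)$ into (4.19) and simplifying gives
\[ g_d(G_X(-v,v,1)) \;=\; \bigl(3T_d(v)^2 + 4,\; 9T_d(v)^2,\; 3T_d(v)^2(6 + T_d(v)^2/2),\; T_d(v)^2(6 + T_d(v)^2/2)^2\bigr) \;=\; G_{CA}(T_d(v)^2), \]
using the parametrization $G_{CA}$ of $C_A$ from \S4.6. Surjectivity onto $C_A$ follows because $v \mapsto T_d(v)^2$ is surjective on $\mathbb{C}$ and $G_{CA}(\mathbb{C}) = C_A$, so $g_d(L_3) = C_A$. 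The main obstacle is the algebraic identification in this last case, but once the correct sign of $T_d(1)$ is used the four coordinates collapse onto $G_{CA}$ by an elementary calculation; no extra ideas beyond Theorem~4.8 and the computation of $T_d(1)$ are required.
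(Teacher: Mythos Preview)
Your proof is correct. Both your argument and the paper's rest on the same semiconjugation $g_d\circ G_X=G_X\circ(T_d,T_d,T_d)$, but the executions differ. The paper works through the $(t_1,t_2,t_3,t_4)$--coordinates on the real slice $R_3$: it identifies $L_3$ with the parameters $\alpha=\beta=-\theta/3$, $\gamma=2\pi/3$ (so that $t_1,t_2,t_3$ differ by cube roots of unity), observes that for $3\nmid d$ the $d$th powers preserve this pattern, and then invokes a polynomial--identity argument as in Proposition~\ref{pro4.14} to pass from the one--real--parameter family to all of $L_3$; for part~(2) it treats $d=3$ by hand and reduces the general case to this via $g_e\circ g_d=g_{de}$. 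Your route is more direct: you locate $L_3$ inside the image of $G_X$ algebraically (via $u=-vw$, $w=1$), and the whole dichotomy collapses to the single computation of $T_d(1)$, which cleanly separates the two cases without any detour through real parameters or the monoid relation. The paper's approach makes the link to the astroid parametrization in $R_3$ more visible, while yours is shorter and handles all $d$ uniformly.
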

%%%%%%%%%%%%%%%%%%%%%%%%%%%proof
\begin{proof}
(1) The proof is similar to the argument given in Proposition \ref{pro4.14}.  We consider the diagram (4.15) in the case \ \(z_3 = {\bar z_1}\) \ and \ \(z_2 \in {\mathbb R}\).  We consider the condition that \ \(p = 1\) \ and \ \(q = r = 0\) \ in \(R_3\).  
Then  \(x^2 + y^2 = 1\) \ and \ \(z = 0\).  
Hence we set \ \(x = \cos\theta, \ y = \sin\theta, \ (0 \le \theta \le 2\pi) \  \mbox{and} \ z = 0\).  
Then \ \(z_1 = e^{i\theta}, \ z_2 = 0.\)

We consider the equation \ \(t^4 -  e^{i\theta}t^3 - e^{-i\theta}t +1 = 0.\)  The solutions are  
\[t_1 =  \omega e^{-i\theta/3}, \ t_2 =  \omega^2 e^{-i\theta/3}, \ t_3 =  e^{-i\theta/3}, \ t_4 = e^{i\theta}, \ \mbox{where} \ \omega = e^{2\pi i/3}.\]
This is the case that \(\alpha = \beta = -\theta/3\)  and  \(\gamma = 2\pi/3\) \  in (4.16).
 If \(d\) is not divisible by \(3\),    then we can write \ \((X, Y, Z)\) in (4.15) as \ \(X = \cos d\theta, \ Y = \sin d\theta, \ Z = 0.\)  Hence \ \(g_d(1, 0, 0, (\cos2\theta)^2) = (1, 0, 0, (\cos2d\theta)^2)\).  Then the  assertion (1) follows by the similar argument given in the proof of Proposition \ref{pro4.14} (1).\\
(2)  If \ \(d = 3\), \  \(t_1^3 = t_2^3 = t_3^3 = e^{-i\theta}\) \ and \ \(t_4^3 = e^{3i\theta}\). \  Then \ \(X = 4\cos^3\theta, \ Y = -4\sin^3\theta, \ Z = 6\cos2\theta\).  As we have noted above, it is a parametrization of the astroid in space.  Then by setting \ \(u = 2\cos2\theta\) \ we have 
\[g_3(1, 0, 0, (u/2)^2) = (3u^2 + 4,\ 9u^2, \ 3u^2(6+u^2/2), \  u^2(6+u^2/2)^2) = G_{CA}(u^2).\]
Then by Proposition 2.6 we have the assertion (2).
\end{proof}
%%%%%%%%Subsection4.7
\subsection{\rm{Other morphisms  and conjugacy}}\hspace{0cm} 

We studied a fundamental system of invariants of \ \( { \mathbb C}[x, y, z]^{D_{4}}\) \ in Section 4.1.  In (4.1),  we select a fundamental system \ \(\{p, q, r, s\}\) \ of invariants of  \ \( { \mathbb C}[x, y, z]^{D_{4}}\).  In Section 4.1, we considered the ideal  \ \(F = (p, q, r, s)\) \ and studied the orbit variety \ \(V_F\) \ and morphisms \ \(g_d\) \ on \ \(V_F\). 

In this section, we consider the ideal  \(F^\prime \) generated by  any other fundamental system   of invariants of  \ \( { \mathbb C}[x, y, z]^{D_{4}}\).  Then we have another syzygy ideal \ \(I_{F'},\)  an affine algebraic set \ \(V_{F'} = V(I_{F'}),\) \ and morphisms
\ \(g^\prime _d\) \ on \ \(V_{F^\prime }\).  We will show that there is an isomorphism \ \(\varphi : V_F \to V_{F^\prime }\) \ such that \ \(\varphi \circ g_d = g^\prime _d \circ \varphi\).

To start, we consider invariants of \ \( { \mathbb C}[x, y, z]^{D_4} \).   By Molien' theorem \cite[Theorem 2.2.1]{St}, the Hilbert series 
\(\Phi_{D_4}(t)\)\  of  \ \( { \mathbb C}[x, y, z]^{D_4} \) \ is written as 
\[\Phi_{D_4}(t) = 1 + 2t^2 + t^3 +  4t^4 + 2t^5 +  6t^6 + 4t^7 +  9t^8 + 6t^9  + \it{O}(t^{10}).\]
Then \ \(dim({ \mathbb C}[x, y, z]^{D_4}_2) = 2, \)  \ \(dim({ \mathbb C}[x, y, z]^{D_4}_3) = 1\) \ and \ \(dim({ \mathbb C}[x, y, z]^{D_4}_4) = 4.\) \\
Hironaka  decomposition for \ \({ \mathbb C}[p, q, r, s]\)  is written in the form 
\( { \mathbb C}[p, q, r, s]  =  { \mathbb C}[p, q, s] + r { \mathbb C}[p, q, s].\) 
The fundamental system \ \(\{p, q, r, s\}\) \ of invariants is complete.   From the above facts we know that any homogeneous invariant of degree 2  is written in the form 
\ \(\alpha_1(x^2 + y^2) + \alpha_2z^2, \ (\alpha_1, \alpha_2 \in {\mathbb C}),\) \ and that  any homogeneous invariant of degree 3  is written in the form \ \(\beta z(x^2 - y^2), \ (\beta \in {\mathbb C}), \) \ and that any homogeneous invariant of degree 4  is written in the form \ 
\(\gamma_1s + \gamma_2p^2 + \gamma_3pq + \gamma_4q^2, \ (\gamma_1,  \gamma_2, \gamma_3, \gamma_4 \in {\mathbb C}).\) 

We study  any other fundamental system of invariants.   From the above facts we see that the ideal  \(F^\prime\) generated by any other fundamental system  of invariants is written as  \ \(F^\prime = (p^\prime, q^\prime, r^\prime, s^\prime), \)  where 
%%%%%%%%%%%%%%%%%%%%%%%equation 4.36
\begin{equation}
 \left(
\begin{array}{c}
  p^\prime \\ q^\prime 
\end{array}
\right)
=
\left(
\begin{array}{cc}
  a_{11} \  a_{12} \\  a_{21} \  a_{22}
\end{array}
\right)
\left(
\begin{array}{c}
  p \\ q
\end{array}
\right)
,
\end{equation}
where 
\[ A
=
\left(
\begin{array}{cc}
  a_{11} \  a_{12} \\  a_{21} \  a_{22}
\end{array}
\right)
\in \ GL(2, {\mathbb C})
,\]
\[r^\prime  = br, \ (b \in {\mathbb C}^*), \enskip 
s^\prime  = cs + kp^2 + mpq + nq^2, \ (c \in {\mathbb C}^*, \ k, m, n \in {\mathbb C}).\]
We construct another affine algebraic variety \ \(V_{F^\prime}\) for the ideal \ \(F^\prime = (p^\prime, q^\prime, r^\prime, s^\prime).\) 

We define a morphism \(\varphi\) from \( { \mathbb C}^4\) to \( { \mathbb C}^4\) by
%%%%%%%%%%%%%%%%%%%%%%%equation 4.37
\begin{equation}
 \varphi(p, q, r, s) = (a_{11}p + a_{12}q, \ a_{21}p + a_{22}q, \ br, \ cs + kp^2 + mpq + nq^2).
\end{equation}
Clearly \(\varphi\) is a one-to-one map.

We consider the restriction \ \(\varphi \mid_{V_F}\) \ of the map \ \(\varphi\) \ to \ \(V_F\).  
%%%%%%%%%%%lemma4.29
\begin{lemma} \label{lemma4.29}
The map \ \(\varphi \mid_{V_F}\) \ maps \ \(V_F\) \ to \ \(V_{F^\prime}\). 
It is an isomorphism.
\end{lemma}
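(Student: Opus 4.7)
The plan is to exhibit $\varphi$ as the restriction of a polynomial automorphism of $\mathbb{C}^4$ and then to use the surjectivity of the orbit maps $\tilde{F}$ and $\tilde{F}'$ supplied by Theorem \ref{theorem1} to identify $\varphi(V_F)$ with $V_{F'}$.

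First I would verify that $\varphi : \mathbb{C}^4 \to \mathbb{C}^4$ defined by (4.37) is a polynomial automorphism of $\mathbb{C}^4$. Writing $A^{-1} = (\alpha_{ij}) \in GL(2, \mathbb{C})$, the candidate inverse recovers $p$ and $q$ from $p'$ and $q'$ via $A^{-1}$, recovers $r = r'/b$, and finally recovers $s = (s' - k p^2 - m p q - n q^2)/c$ after substituting the preceding polynomial expressions for $p$ and $q$. Every component is a polynomial in $(p', q', r', s')$, so $\varphi$ has a polynomial inverse defined on all of $\mathbb{C}^4$.

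Next I would establish the compatibility $\varphi \circ \tilde{F} = \tilde{F}'$ as maps $\mathbb{C}^3 \to \mathbb{C}^4$, where $\tilde{F}'$ denotes the orbit map attached to $F'$. This is forced by the construction: $\tilde{F}(x, y, z)$ equals $(p, q, r, s)$ with the four invariants evaluated at $(x, y, z)$, and substituting into (4.37) produces exactly $(p', q', r', s')$ evaluated at $(x, y, z)$ by (4.36) together with the stated formulas for $r'$ and $s'$, which is $\tilde{F}'(x, y, z)$. Applying Theorem \ref{theorem1}(1) to both orbit maps then gives $\tilde{F}(\mathbb{C}^3) = V_F$ and $\tilde{F}'(\mathbb{C}^3) = V_{F'}$, whence $\varphi(V_F) = \varphi(\tilde{F}(\mathbb{C}^3)) = \tilde{F}'(\mathbb{C}^3) = V_{F'}$; the same reasoning applied to $\varphi^{-1}$ yields $\varphi^{-1}(V_{F'}) = V_F$. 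Therefore $\varphi|_{V_F}$ is a bijective morphism whose inverse is also a morphism, i.e., an isomorphism of affine varieties.

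There is no serious obstacle once this picture is set up; the only point that needs attention is that the inverse of $\varphi$ must be genuinely polynomial rather than merely rational, which is exactly what the nondegeneracy conditions $A \in GL(2, \mathbb{C})$ and $b, c \in \mathbb{C}^*$ guarantee. (Note that the constants $k, m, n$ play no role here, since they only contribute lower-order terms that the polynomial inverse absorbs.)
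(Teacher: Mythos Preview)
Your proof is correct and complete, but it takes a different route from the paper's. The paper proceeds algebraically via coordinate rings: it invokes the isomorphisms $\mathbb{C}[V_F] \simeq \mathbb{C}[x,y,z]^{D_4} \simeq \mathbb{C}[V_{F'}]$ (from \cite[Chapter 7, \S4, Proposition 2]{CLS}), constructs the ring isomorphism $\phi = \phi_2^{-1}\circ\phi_1$ explicitly, and then appeals to the equivalence between affine varieties and their coordinate rings (\cite[Chapter 5, \S4, Proposition 8 and Theorem 9]{CLS}) to produce the variety isomorphism $\alpha$, which is finally identified with $\varphi|_{V_F}$. You instead work directly in the ambient space: you observe that the nondegeneracy conditions $A\in GL(2,\mathbb{C})$ and $b,c\in\mathbb{C}^*$ make $\varphi$ a polynomial automorphism of $\mathbb{C}^4$, and you use the factorization $\tilde{F}' = \varphi\circ\tilde{F}$ together with the surjectivity of $\tilde{F}$ and $\tilde{F}'$ (Theorem \ref{theorem1}) to conclude that $\varphi$ restricts to a biregular map $V_F\to V_{F'}$. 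Your argument is more elementary and self-contained, and it makes transparent that the isomorphism is induced by an ambient automorphism; the paper's approach is more categorical and would transfer verbatim to settings where an explicit ambient inverse is not so easily written down.
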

%%%%%%%%%%%%%%%%%%%%%%%%%%%proof
\begin{proof}
To prove this lemma, we use the following three facts:\\
(i) (\cite[Chapter 7, \S4, Proposition 2]{CLS}).
\[ { \mathbb C}[V_F] \simeq { \mathbb C}[p, q, r, s]/I_F \simeq { \mathbb C}[x, y, z]^{D_4} \simeq { \mathbb C}[p^\prime, q^\prime, r^\prime, s^\prime]/I_{F^\prime} \simeq { \mathbb C}[V_{F^\prime}].\]
(ii) (\cite[Chapter 5, \S4, Proposition 8(ii)]{CLS}).  Let \ \(\phi : { \mathbb C}[V_{F^\prime}] \to { \mathbb C}[V_{F}]\) \ be a ring homomorphism which is the identity on constants.  Then there is a unique polynomial mapping \(\alpha : V_F \to V_{F^\prime}\) \ such that \ \(\phi = \alpha^*\).\\
(iii) (\cite[Chapter 5, \S4, Theorem 9]{CLS}).  The map \ \(\alpha\) is a isomorphism if and only if \(\phi\) is a isomorphism of coordinate rings which is identity on constant functions.\\

Then to prove this lemma, it suffices to show that we construct a ring homomorphism \(\phi\) satisfying (ii) and (iii) and the map \(\alpha\) and that the map 
\(\alpha\)\  coincides with  the map \(\varphi \mid_{V_F}\).

Based on the proof of Proposition 2 in  (\cite[Chapter 7, \S4]{CLS}),   we construct a map
\[ \phi_1 : { \mathbb C}[V{_F^\prime}] \simeq { \mathbb C}[p^\prime, q^\prime, r^\prime, s^\prime]/I_{F^\prime} \to  { \mathbb C}[x, y, z]^{D_4}.\]
That is, for \ \(g \in {\mathbb C}[p^\prime, q^\prime, r^\prime, s^\prime]\) we define
\[ \phi_1([g]) =  g(p^\prime({\bf x}), q^\prime({\bf x}), r^\prime({\bf x}), s^\prime({\bf x})), \ \mbox{where} \ {\bf x} = (x, y, z).\]
We also construct a map 
\[\phi_2 : {\mathbb C}[V_{F}] \to  { \mathbb C}[x, y, z]^{D_4}.\]
The maps \(\phi_1\) \ and \ \(\phi_2\) \ are one-to-one maps.
Then we set 
\[\phi : = \phi_2^{-1} \circ \phi_1 : {\mathbb C}[V_{F^\prime}] \to  { \mathbb C}[V_F].\]
Hence
\[\phi([p^\prime]) = [a_{11}p + a_{12}q], \ \phi([q^\prime]) = [a_{21}p + a_{22}q],\]
\[\phi([r^\prime]) = [br], \ \phi([s^\prime]) = [cs + kp^2 + mpq + nq^2].\]
By the proof of Proposition 8 (ii) in
(\cite[Chapter 5, \S4]{CLS}), from \ \(\phi\)  we construct the map \ \(\alpha : V_F \to V_{F^\prime}\) \ satisfying \ \(\phi = \alpha^*.\)  Then 
\[\alpha = (a_{11}p + a_{12}q,  a_{21}p + a_{22}q, br,  cs + kp^2 + mpq + np^2).\]
Thus \ \(\alpha = \varphi \mid_{V_F} : V_F \to V_{F^\prime}\)  is an isomorphism.
\end{proof}

Let \ \(g^\prime_d\) be the morphism in Definition 2.4  
\ from \ \(V_{F^\prime}\)  \ to \ \(V_{F^\prime}\).  Then we will show that \ \(g^\prime_d : V_{F^\prime}  \to V_{F^\prime}\) \ is conjugate to \ \(g_d : V_{F}  \to V_{F}\) \ via the isomorphism 
\ \(\varphi \mid_{V_F}\) \ in the following proposition.\\
%%%%pro4.30
 \begin{pro} \label{pro30}
Let \  \(\{p^\prime, q^\prime, r^\prime, s^\prime\} \) \ be any other fundamental system of invariants of \ \( { \mathbb C}[x, y, z]^{D_{4}}\).  Set
\ \(F^\prime = (p^\prime, q^\prime, r^\prime, s^\prime).\)  By Definition 2.4 we define the map \ \(g^\prime_d\) 
\ from \ \(V_{F^\prime}\)  \ to \ \(V_{F^\prime}\).  Let
\ \(\varphi \mid_{V_F}\) \ be the map in Lemma 4.29.  Then we have  \ \(\varphi \mid_{V_F} \circ g_d =  g^\prime_d \circ \varphi \mid_{V_F}\).
 \end{pro}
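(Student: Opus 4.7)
The plan is to exploit the fact that both morphisms $g_d$ and $g'_d$ descend from the single underlying polynomial map $f_d$ on $\mathbb{C}^3$ via the two orbit-space projections, and to observe that the conjugating isomorphism $\varphi$ is precisely the one that intertwines these two projections.

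First, I would introduce the analogue of $\tilde{F}$ for the new system: define $\tilde{F}': \mathbb{C}^3 \to V_{F'}$ by $\tilde{F}'(x, y, z) = (p'({\bf x}), q'({\bf x}), r'({\bf x}), s'({\bf x}))$. The key preliminary observation is that on all of $\mathbb{C}^3$ one has $\tilde{F}' = \varphi \circ \tilde{F}$. This is immediate from the defining formulas: by (4.36) and the displayed equations just after, $p'({\bf x}) = a_{11}p({\bf x}) + a_{12}q({\bf x})$, $q'({\bf x}) = a_{21}p({\bf x}) + a_{22}q({\bf x})$, $r'({\bf x}) = b r({\bf x})$, and $s'({\bf x}) = c s({\bf x}) + k p({\bf x})^2 + m p({\bf x}) q({\bf x}) + n q({\bf x})^2$, which is exactly the action of $\varphi$ in (4.37) applied to $\tilde{F}({\bf x})$.

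Next, I would invoke the commutative diagram (2.13) in both versions. Applied to the system $\{p, q, r, s\}$ it gives $g_d \circ \tilde{F} = \tilde{F} \circ f_d$ on $\mathbb{C}^3$, and applied to $\{p', q', r', s'\}$ it gives $g'_d \circ \tilde{F}' = \tilde{F}' \circ f_d$ on $\mathbb{C}^3$. Combining these with the identity $\tilde{F}' = \varphi \circ \tilde{F}$ yields
\begin{equation*}
g'_d \circ \varphi \circ \tilde{F} \;=\; g'_d \circ \tilde{F}' \;=\; \tilde{F}' \circ f_d \;=\; \varphi \circ \tilde{F} \circ f_d \;=\; \varphi \circ g_d \circ \tilde{F},
\end{equation*}
so the equality $(g'_d \circ \varphi \mid_{V_F})(w) = (\varphi \mid_{V_F} \circ g_d)(w)$ holds for every $w$ of the form $\tilde{F}({\bf x})$ with ${\bf x} \in \mathbb{C}^3$.

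Finally, by Theorem 2.1(1) the map $\tilde{F}: \mathbb{C}^3 \to V_F$ is surjective, so the previous identity holds for every $w \in V_F$. This gives $\varphi \mid_{V_F} \circ g_d = g'_d \circ \varphi \mid_{V_F}$ as morphisms $V_F \to V_{F'}$, which is the assertion. There is no substantive obstacle here: the only thing to verify carefully is that $\tilde{F}' = \varphi \circ \tilde{F}$, which is a direct comparison of the two defining formulas; everything else is formal manipulation of the commutative diagram (2.13) together with the surjectivity of $\tilde{F}$ already established in the paper.
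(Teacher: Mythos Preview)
Your proof is correct and essentially identical to the paper's own argument: both establish $\tilde{F}' = \varphi \circ \tilde{F}$, invoke the intertwining relations $g_d \circ \tilde{F} = \tilde{F} \circ f_d$ and $g'_d \circ \tilde{F}' = \tilde{F}' \circ f_d$ from (2.11)/(2.13), chain them together, and conclude via surjectivity of $\tilde{F}$. The only slip is a numbering error: the surjectivity result you cite as ``Theorem 2.1(1)'' is Theorem~2.3(1) in the paper's numbering.
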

%%%%%%%%%%%%%%%%%%%%%%%%%%%proof
\begin{proof}
From (2.11), we have 
\[g_d \circ \tilde{F}({\bf x}) = \tilde{F} \circ f_d({\bf x}), \ {\mbox {and}} \  g^\prime_d \circ \tilde{F}^\prime({\bf x}) = \tilde{F}^\prime \circ f_d({\bf x}), \]
where \ \(\tilde{F}^\prime\) \ is the polynomial map from \ \( { \mathbb C}^3\) \ to \  \(V_{F^\prime}\) \ in Theorem 2.3.
Since \ \(\varphi(p, q, r, s) =  (p^\prime, q^\prime, r^\prime, s^\prime),\)  \ it follows that \  \(\tilde{F}^\prime = \varphi \circ \tilde{F} \).  Then 
\[\varphi \circ g_d \circ \tilde{F} = \varphi \circ \tilde{F} \circ f_d =  \tilde{F}^\prime \circ f_d = g^\prime_d \circ \tilde{F}^\prime =  g^\prime_d \circ \varphi \circ \tilde{F}.\]
By Theorem 2.3(1), we know that \ \(\tilde{F}\) \ is surjective.  Then 
\(\varphi \mid_{V_F} \circ g_d  = g^\prime_d  \circ \varphi \mid_{V_F}.\)
\end{proof}

Acknowledgment.\\
The author thanks Professor Hirokazu Nasu for some useful advices.  The author also thanks the referee for his careful reading of the paper and many useful advices.

%%%%%%%%%%%%%%%%%%%%%%%%%%%%%%%%%%%%%%%%%%%%%%%%%%%%%%%%%%%%%%%%%%%%%%%%%%%%%%%%%%%%%%%%%%%%%%%%%%%%%%%%%%%%%%%%%%%% 
%%%%%%%%%fig1 fig2
%\begin{figure}[htbp]
%\begin{tabular}{cc}
%\begin{minipage}{0.5\hsize}
%\begin{center}
%\includegraphics[scale=0.29]{kfig-1.eps} \\
%\caption{ The natural domain \(R^\prime\).  }
%\label{fig1}
%\end{center}
%\end{minipage}
%\begin{minipage}{0.55\hsize}
%\begin{center}
%\includegraphics[scale=0.29]{kfig-2.eps}\\
%\caption{ The fundamental region $R$.}
%\label{kfig-2.eps}
%\end{center}
%\end{minipage} 
%\end{tabular}
%\end{figure} 
%%%%%%%%%%%%%%%%%%%%%%%%%%%%%%%%%%%%%%%%%%%%%%%%%%%%%%%%%%%%%%%%%%%%%%%%%%%%%%%%%%%%%%%%%%%%%%%%%%%%%%%%%%%%%%%%%%%%%%%%%%%%%%%%%%%%%%%%%%%%%%%%%%%%%%%%%%%%%%%%%%%55

\bibliographystyle{amsplain}

\end{document}